\documentclass[12pt]{article}
\title{Visceral theories without assumptions}
\author{Will Johnson}

\usepackage{amsmath, amssymb, amsthm}    	
\usepackage{fullpage} 	
\usepackage{amscd}
\usepackage{hyperref}
\usepackage[all]{xy}
\usepackage{centernot}
\usepackage{enumitem}

\DeclareMathOperator*{\forkindep}{\raise0.2ex\hbox{\ooalign{\hidewidth$\vert$\hidewidth\cr\raise-0.9ex\hbox{$\smile$}}}}

\newcommand{\rk}{\operatorname{rk}}

\newcommand{\RCVF}{\operatorname{RCVF}}
\newcommand{\RCF}{\operatorname{RCF}}
\newcommand{\ac}{\operatorname{ac}}

\newcommand{\res}{\operatorname{res}}
\newcommand{\Aut}{\operatorname{Aut}}

\newcommand{\acl}{\operatorname{acl}}
\newcommand{\im}{\operatorname{im}}
\newcommand{\dcl}{\operatorname{dcl}}
\newcommand{\tp}{\operatorname{tp}}

\newcommand{\dom}{\operatorname{dom}}

\newcommand{\trdeg}{\operatorname{tr.deg}}
\newcommand{\bd}{\operatorname{bd}}

\newcommand{\dpr}{\operatorname{dp-rk}}

\newcommand{\ter}{\operatorname{int}}
\newcommand{\slice}{\operatorname{slice}}

\newtheorem{theorem}{Theorem}[section] 
\newtheorem{nontheorem}[theorem]{Non-Theorem}
\newtheorem{lemma}[theorem]{Lemma}

\newtheorem{corollary}[theorem]{Corollary}
\newtheorem{fact}[theorem]{Fact}

\newtheorem{proposition}[theorem]{Proposition}
\newtheorem*{theorem-star}{Theorem}
\newtheorem*{lemma-star}{Theorem}
\newtheorem*{conjecture-star}{Conjecture}

\theoremstyle{definition}
\newtheorem{definition}[theorem]{Definition}
\newtheorem{example}[theorem]{Example}

\newtheorem{remark}[theorem]{Remark}

\theoremstyle{remark}
\newtheorem{claim}[theorem]{Claim}

\newtheorem*{acknowledgment}{Acknowledgments}

\newcommand{\Qq}{\mathbb{Q}}
\newcommand{\Cc}{\mathbb{C}}
\newcommand{\eq}{\textrm{eq}}

\newcommand{\Rr}{\mathbb{R}}

\newcommand{\Zz}{\mathbb{Z}}

\newcommand{\Nn}{\mathbb{N}}

\newcommand{\Mm}{\mathbb{M}}

\newcommand{\Oo}{\mathcal{O}}

\newcommand{\ba}{{\bar{a}}}
\newcommand{\bb}{{\bar{b}}}
\newcommand{\bc}{{\bar{c}}}
\newcommand{\be}{{\bar{e}}}
\newcommand{\barf}{{\bar{f}}}

\newcommand{\bz}{{\bar{z}}}
\newcommand{\bx}{{\bar{x}}}
\newcommand{\by}{{\bar{y}}}

\newenvironment{claimproof}[1][\proofname]
               {
                 \proof[#1]
                 
               }
               {
                 \endproof
               }

\begin{document}
\maketitle


\begin{abstract}
  Let $T$ be a theory with a definable topology.  $T$ is
  \emph{t-minimal} in the sense of Mathews if every definable set in
  one variable has finite boundary.  If $T$ is t-minimal, we show that
  there is a good dimension theory for definable sets, satisfying
  properties similar to dp-rank in dp-minimal theories, with one key
  exception: the dimension of $\dom(f)$ can be less than the dimension
  of $\im(f)$ for a definable function $f$.  Using the dimension
  theory, we show that any definable field in a t-minimal theory is
  perfect.

  We then specialize to the case where $T$ is \emph{visceral} in the
  sense of Dolich and Goodrick, meaning that $T$ is t-minimal and the
  definable topology comes from a definable uniformity (i.e., a
  definable uniform structure).  We show that almost all of Dolich and
  Goodrick's tame topology theorems for visceral theories hold without
  their additional assumptions of \emph{definable finite choice} (DFC)
  and \emph{no space-filling functions} (NSFF).  Lastly, we produce an
  example of a visceral theory with a space-filling curve, answering a
  question of Dolich and Goodrick.
\end{abstract}

\section{Introduction}
Let $T$ be a theory endowed with a Hausdorff definable topology.  Then
$T$ is \emph{t-minimal} in the sense of Mathews~\cite[Definition~2.4]{mathews} if the
following property holds:
\begin{quote}
  The interior $\ter(D)$ of a definable set $D \subseteq M^1$ is
  non-empty if and only if $D$ is infinite.
\end{quote}
Equivalently,
\begin{quote}
  There are no isolated points in $M$, and the boundary $\bd(D)$ of
  any definable set $D \subseteq M^1$ is finite.
\end{quote}
If the definable topology is induced by a definable uniformity (i.e.,
a definable uniform structure), then $T$ is \emph{visceral} in the
sense of Dolich and Goodrick \cite[Definition~3.3]{viscerality}.  We review
uniformities and definable topologies/uniformities in
Section~\ref{uniformity-review} below.

The class of visceral theories is very general.  For example, it
includes the following:
\begin{enumerate}
\item O-minimal expansions of DOAG and RCF.
\item C-minimal expansions of ACVF \cite{c-source,cminfields}.
\item Henselian valued fields of residue characteristic $0$, such as
  $\Cc((t))$ and $\Qq((t))$.
\item $p$-adically closed fields such as $\Qq_p$, and more generally,
  their $P$-minimal expansions \cite{p-min}.
\item dp-minimal expansions of fields (by \cite[Theorem~1.3]{WJ}).
\end{enumerate}
At the same time, viscerality is a strong enough assumption to imply
topological tameness results.  Building off earlier work of Simon and
Walsberg \cite{simonWalsberg}, Dolich and Goodrick \cite{viscerality}
proved generic continuity and cell decomposition theorems for visceral
theories, and developed a dimension theory.  However, Dolich and
Goodrick's cell decomposition requires an additional assumption of
\emph{definable finite choice} (DFC), and the dimension theory
requires a further assumption of \emph{no space-filling functions}
(NSFF).

The goal of the present paper is to drop these assumptions, as much as
possible.  We have four main results:
\begin{enumerate}
\item Cell decomposition works without the assumption of DFC (as expected).
\item There are visceral theories in which NSFF fails.  This answers a
  question in \cite[Section~1.1]{viscerality}.
\item Dimension theory can be generalized to arbitrary visceral
  theories, though it has some pathologies when NSFF fails.
\item In fact, the dimension theory works even in (non-visceral)
  t-minimal theories, a setting where generic continuity and cell
  decomposition can fail.
\end{enumerate}
To obtain these generalizations, it is necessary to slightly weaken
the definition of ``cell'' and modify the definition of ``dimension''.

\subsection{Statement of results}
Let $\Mm$ be a monster model of a complete theory $T$, with a
Hausdorff definable topology.
\begin{definition}[{\cite[Definition~3.22]{viscerality}}]
  $T$ has \emph{definable finite choice} (DFC) if every definable
  surjection $f : X \to Y$ with finite fibers has a definable section
  $Y \to X$.
\end{definition}
DFC holds in many natural examples of visceral theories, such as
ordered theories and P-minimal theories.  But it fails in other
natural examples, like ACVF.  One of the main goals of this paper is
to remove the assumption of DFC from all the results of \cite{viscerality}.
\begin{definition}[{\cite[Definition~3.29]{viscerality}}]
  $T$ has a \emph{space-filling function} if there is a definable
  surjection $f : X \to Y$ where $X \subseteq \Mm^n$, $Y \subseteq
  \Mm^m$, $Y$ has non-empty interior, and $n < m$.  Otherwise, $T$ has
  \emph{no space-filling functions} (NSFF).
\end{definition}
The assumption of NSFF is a bit ad hoc, and again we would like to
remove it.  There are two more well-known properties which are closely
related to NSFF:
\begin{definition}
  $T$ is \emph{dp-minimal} if the following holds: if $I_1, I_2$ are
  two infinite sequences whose concatenation $I_1 + I_2$ is
  indiscernible, and if $a \in \Mm^1$, then $I_1$ or $I_2$ is
  $a$-indiscernible.
\end{definition}
\begin{definition}
  $T$ has the \emph{exchange property} if $\acl(-)$ satisfies
  the Steinitz exchange property:
  \begin{equation*}
    a \in \acl(Cb) \setminus \acl(C) \implies b \in \acl(Ca) \qquad\qquad \text{for } a,b \in \Mm^1, ~ C \subseteq \Mm^\eq.
  \end{equation*}
\end{definition}
See \cite{dpExamples} or \cite[Chapter~4]{NIPguide} for more about
dp-minimality and dp-rank.  Dp-minimality implies NSFF by properties
of dp-rank.  Similarly, the exchange property implies NSFF by
properties of acl-rank.  See \cite[Proposition~3.31]{viscerality} for
details.
\begin{definition}
  If $X \subseteq \Mm^n$ is definable, the \emph{naive topological
  dimension} $d_t(X)$ is the maximum $k$ such that $\pi(X)$ has
  non-empty interior for some coordinate coordinate projection $\pi :
  \Mm^n \to \Mm^k$.
\end{definition}
Dolich and Goodrick showed that under the
assumptions of DFC and NSFF, there is a good dimension theory using
$d_t(-)$.  They asked whether every visceral theory satisfies NSFF.
In Section~\ref{sec:rcvf3} we give a negative answer:
\begin{theorem} \label{thm-main-ctx}
  There is a visceral theory with DFC with a definable space-filling
  function (NSFF fails).
\end{theorem}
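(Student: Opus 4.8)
The plan is to write down an explicit theory $T$, obtained by enriching the theory of real closed valued fields, and to verify in turn that $T$ is t-minimal, that its topology is induced by a definable uniformity, that $T$ has DFC, and that $T$ admits a definable space-filling curve. One observation shapes the whole construction: the dp-rank argument behind ``dp-minimal $\Rightarrow$ NSFF'' in fact shows that NSFF holds whenever $\dpr(\Mm^1)$ is finite, since a definable surjection $f\colon X \to Y$ with $X \subseteq \Mm^n$, $Y \subseteq \Mm^m$ and $Y$ of nonempty interior would give $m\cdot\dpr(\Mm^1) \leq \dpr(Y) \leq \dpr(X) \leq n\cdot\dpr(\Mm^1)$ (using that a nonempty open subset of $\Mm^m$ contains a box, affinely isomorphic to $\mathcal O^m$, of dp-rank $m\cdot\dpr(\Mm^1)$), hence $m \leq n$. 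So $T$ must be arranged to have $\dpr(\Mm^1) = \infty$, and the natural way to do this over a real closed valued field is to name a countable family of further (convex, hence clopen) valuations, together with just enough extra structure that a single home-sort variable carries definably-separable data filling out a two-dimensional box.

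First I would fix the language and axioms of $T$ precisely and exhibit a model — a Hahn-series real closed valued field, with the extra operations read off from the series expansion — giving consistency and, after pinning down the relevant invariants (divisibility of the successive quotients, and so on), completeness. The key technical input is then a quantifier elimination, or at least a strong quantifier reduction, for $T$ in this language; I expect this to go through by the standard valued-field back-and-forth, building on QE for RCF and handling the valuations and the added structure one layer at a time.

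Granting the quantifier elimination, viscerality is mostly bookkeeping. The topology is the valuation topology of the finest valuation: it is Hausdorff and is induced by the definable uniformity of balls $\{(x,y) : v(x-y) > \gamma\}$, exactly as for RCVF, so $T$ has a definable uniformity. For t-minimality one reads off from QE that every definable $D \subseteq \Mm^1$ is a finite Boolean combination of semialgebraic sets (with finite boundary, as in RCF) and preimages under polynomials of valuation rings, maximal ideals, and annuli (each clopen, or clopen away from a single point); since the boundary of a finite Boolean combination lies in the union of the individual boundaries, $\bd(D)$ is finite. DFC is then immediate, as $T$ expands an ordered field: any definable finite-to-one surjection $f$ has the definable section $y \mapsto \min f^{-1}(y)$.

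The heart of the matter is the space-filling curve itself. Here the extra structure is designed precisely so that there is a definable map $f\colon X \to Y$ with $X \subseteq \Mm^1$, $Y \subseteq \Mm^2$ of nonempty interior, and $f$ surjective — i.e.\ so that two coordinates' worth of data can be definably recovered from one home-sort element, which the dp-rank remark shows is impossible when $\dpr(\Mm^1)$ is finite, and which is exactly why RCVF itself satisfies NSFF. I would exhibit $f$ explicitly, verify that it is continuous and surjective (surjectivity is where a density/approximation statement for the chosen family of valuations enters, guaranteeing the prescribed pair of outputs is always attained), and confirm that $Y$ genuinely has nonempty interior. I expect the main obstacle to be the tension between supporting such an $f$ and keeping $T$ t-minimal: the added structure must be rich enough to define the curve yet tame enough that definable subsets of $\Mm^1$ still have finite boundary, so isolating the right language and establishing its quantifier elimination is where the real work lies.
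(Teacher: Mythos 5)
Your proposal is aimed at the right phenomenon but goes down a wrong road at the crucial decision point, and the construction is left unfinished at exactly the place where the paper's proof is concrete.

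The key gap is the multi-sorted observation. The paper's witness is $\RCVF_{\ac,3}$, the three-sorted theory of a real closed valued field $K$ with value group $\Gamma$, residue field $k$, and an angular component $\ac\colon K^\times \to k^\times$; the space-filling function is simply $x \mapsto (v(x),\ac(x))$, a definable surjection from $K^\times$ onto $\Gamma \times k^\times$, which has non-empty interior once the three sorts are merged into a single home sort via the disjoint-union trick (Proposition~\ref{to-1-sort}). No extra valuations, cross-sections, or countable families of structure are needed; viscerality, DFC (each sort is linearly ordered), and t-minimality all drop out of Pas's relative quantifier elimination. Your proposal never considers this route, and instead tries to make the home sort itself carry two coordinates' worth of data.

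That alternative is shaped by a dp-rank argument that is incorrect for exactly this kind of theory. You argue that a non-empty open subset of $\Mm^m$ contains a box ``affinely isomorphic to $\mathcal{O}^m$'' of dp-rank $m\cdot\dpr(\Mm^1)$, and conclude that NSFF can only fail when $\dpr(\Mm^1)=\infty$. But when $\Mm$ is the disjoint union of sorts of differing dp-rank (which is what Proposition~\ref{to-1-sort} produces), a non-empty open subset of $\Mm^m$ need only contain a box $B_1\times\cdots\times B_m$ with each $B_i$ inside whichever sort it likes: the set $\Gamma\times k^\times$ has dp-rank $2$, not $2\cdot\dpr(\Mm^1)$. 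In the paper's example $\dpr(\Mm^1)$ is finite (indeed $\dpr(K)\ge 2$ forces the theory to be non-dp-minimal, but nothing pushes it to $\infty$), and NSFF still fails. So the premise that drove your design---that you must blow up the dp-rank of the home sort---is false, and it sends you toward a construction that is both more complicated and, as you yourself flag, not actually carried out: you never specify the ``extra structure,'' never write down $f$, and never verify the approximation statement that would make $f$ surjective. That is precisely where the paper does the work, and where your sketch stops.
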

The example is not hard to describe: it is 3-sorted RCVF with an angular component map.

Given that NSFF fails, one can ask what happens to dimension theory.
One can no longer use the topological dimension $d_t$ in this case.
Indeed, a failure of NSFF immediately gives an example of a definable
bijection $f : X \to Y$ for which $d_t(X) \ne d_t(Y)$.

In Sections~\ref{tmin-dim} and Section~\ref{sec:mdt}, we develop the
following dimension theory for t-minimal and visceral theories:
\begin{theorem} \label{long-theorem}
  Suppose $T$ is t-minimal.  To every definable set $X$, we can
  assign a dimension in $\{-\infty\} \cup \Nn$ in such a way that the
  following conditions hold:
  \begin{enumerate}
  \item \label{lt1} $\dim(X) = -\infty \iff X = \varnothing$.
  \item \label{lt2} $\dim(X) \le 0$ iff $X$ is finite.
  \item \label{lt3} If $X \subseteq \Mm^n$, then $\dim(X) \le n$, with
    equality iff the interior $\ter(X)$ is non-empty.
  \item \label{lt5} $\dim(X \times Y) = \dim(X) + \dim(Y)$.
  \item \label{lt7} $\dim(X \cup Y) = \max(\dim(X),\dim(Y))$.
  \item \label{lt8-e} If $X$ and $Y$ are in definable bijection, then
    $\dim(X) = \dim(Y)$.
  \item \label{lt8} If $f : X \to Y$ is a definable injection,
    then $\dim(X) \le \dim(Y)$.    
  \item \label{lt8.5} If $f : X \to Y$ is a definable
    surjection with finite fibers, then $\dim(X) = \dim(Y)$.
  \item \label{lt11} If $\phi(x,y)$ is a formula, then
    $\dim(\phi(\Mm,b))$ depends definably on $b$, in the sense that
    $\{b : \dim(\phi(\Mm,b)) = k\}$ is definable for each $k$.
  \item \label{lt10} If $f : X \to Y$ is a
    definable function and every fiber $f^{-1}(b)$ has
    dimension $\le k$, then $\dim(X) \le k + \dim(Y)$.
  \item \label{lt23} If $T$ has the exchange property, then $\dim(X)$
    agrees with the acl-rank of $X$.
  \item \label{lt24} If $T$ is dp-minimal, then $\dim(X)$ agrees with
    the dp-rank of $X$.
  \item \label{lt20} If $T$ is visceral and has NSFF, then $\dim(X)$
    agrees with the naive topological dimension $d_t(X)$.
  \end{enumerate}
\end{theorem}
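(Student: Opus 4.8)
The plan is to produce a single function $\dim$ and then verify the thirteen clauses in a deliberate order, since no one definition makes all of them transparent. I would adopt as the official definition
\[
  \dim(X) = \max\{\, k : \text{there is a definable injection } U \to X \text{ for some nonempty open } U \subseteq \Mm^k \,\},
\]
with $\dim(\varnothing) = -\infty$, and run alongside it an inductive, Cantor--Bendixson-style rank $\crk$ (put $\crk(X) \ge k+1$ iff there is a definable $f : X \to \Mm^1$ and a nonempty open $V \subseteq f(X)$ with $\crk(f^{-1}(t)) \ge k$ for all $t \in V$), showing the two coincide; bijection invariance of $\crk$ is itself a routine induction on the rank. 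The injection definition makes \ref{lt8} and \ref{lt8-e} immediate (compose injections) and gives the easy halves of \ref{lt3} and \ref{lt5} ($\id$ on a box, products of boxes); the inductive description makes \ref{lt1}, \ref{lt2}, \ref{lt7} and the lower bounds in \ref{lt10} and \ref{lt11} tractable.

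The first real difficulty is the calibration \ref{lt3}: for $X \subseteq \Mm^n$ one needs $\dim X \le n$, i.e.\ there is no definable injection from a nonempty open subset of $\Mm^m$ into $\Mm^n$ with $m > n$ --- an invariance-of-domain statement with only t-minimality available (equivalently: no space-filling \emph{bijection}). I would reduce to $m = n+1$ by slicing, write the injection $g$ on an open $U \subseteq \Mm^n \times \Mm^1$, and observe that the sets $g(\{x\} \times U_x)$, for $x$ in the open projection of $U$, form a definable family of pairwise-disjoint subsets of $\Mm^n$ each in definable bijection with an infinite subset of $\Mm^1$; an induction on $n$ using t-minimality on those slices gives a contradiction, and the same argument shows a definable injection from an open subset of $\Mm^n$ into $\Mm^n$ has image with nonempty interior, which is the equality clause of \ref{lt3}. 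After that \ref{lt1} is trivial and \ref{lt2} follows from the inductive description.

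The technical heart is additivity: the fiber bound \ref{lt10}, from which \ref{lt5} follows (for $W$ open in $\Mm^k$ injecting into $X \times Y$, push to $X$; the fibers inject into $Y$, so have dimension $\le \dim Y$, and apply \ref{lt10}) and, with one more lemma, \ref{lt8.5}. Here the absence of DFC and of cell decomposition bites: one cannot decompose $f : X \to Y$ into cells and count. Instead I would compose an injection $U \to X$ ($U$ open in $\Mm^m$) with $f$ to get $h : U \to Y$ all of whose fibers have dimension $\le k$, and prove by induction on $m$ that then $m \le k + \dim(\im h)$, peeling off one coordinate of $U$ and using t-minimality together with the definability statement \ref{lt11}; I would prove \ref{lt11} in parallel by induction on the ambient dimension, bottoming out at the t-minimal fact that $\{b : \phi(\Mm, b)\text{ is infinite}\}$ is definable. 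For \ref{lt8.5}, given a finite-to-one surjection $f : X \to Y$, pulling an injection $U \to Y$ back along $f$ to the fiber product gives a set that maps injectively to $X$ and finite-to-one onto $U$; together with the lemma that a definable set carrying a finite-to-one surjection onto a nonempty open $U \subseteq \Mm^k$ has dimension $\ge k$ (an induction via $\crk$, again peeling a coordinate), this yields $\dim X \ge \dim Y$, and the reverse inequality is \ref{lt10} with $k = 0$.

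Finally, \ref{lt23}, \ref{lt24} and \ref{lt20} are each handled by comparison with a rank $\rho$ --- $\acl$-rank under exchange, dp-rank under dp-minimality, $d_t$ under viscerality with NSFF --- which is already a definable-bijection invariant taking the value $n$ on open subsets of $\Mm^n$; then $\dim X \le \rho(X)$ is immediate from the definition, while $\rho(X) \le \dim X$ follows once one shows a set of $\rho$-rank $\ge k$ contains a definable bijective copy of a nonempty open $k$-box, by extracting an $\acl$-independent coordinate subtuple (exchange), invoking the standard dp-rank witnesses (dp-minimality), or, under NSFF, noting that $d_t$ then satisfies all the axioms already established for $\dim$, so the uniqueness extracted from those axioms forces $d_t = \dim$. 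The single hardest step is the fiber inequality \ref{lt10}, with its companion \ref{lt11}, in this DFC-free and cell-decomposition-free setting.
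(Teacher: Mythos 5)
Your proposal rests on the definition $\dim(X) = \max\{k : \exists \text{ a definable injection } U \hookrightarrow X,\ U \subseteq \Mm^k \text{ open, nonempty}\}$, but this is exactly the characterization that the paper proves only in the \emph{visceral} case (Proposition~\ref{inject-dim}), and which it explicitly warns is ``more involved'' to define correctly in the general t-minimal setting. The proof of Proposition~\ref{inject-dim} goes through cell decomposition, which in turn needs generic continuity, which needs the Hammer Lemma and hence the definable uniformity; none of this is available under t-minimality alone. The underlying obstruction is definable finite choice: under t-minimality, what one can produce is a definable set with a \emph{finite-to-one projection onto} an open box (a weak $k$-cell), but without DFC one cannot reverse such a map into a definable injection \emph{from} an open box. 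Already clause~\ref{lt2} is in jeopardy for your definition: an infinite $X \subseteq \Mm^n$ has some coordinate projection with infinite image, hence nonempty interior, but turning that into a definable \emph{injection} of an open subset of $\Mm^1$ into $X$ requires choosing a point in each fiber. Your parallel rank $\crk$ avoids this, but then the real work is proving $\crk = \dim$ in your sense, and the proposal treats that as a ``routine induction'' when it is in fact where the DFC issue lives.

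The paper sidesteps all of this by never defining dimension through injections or through functions at all: it sets $\dim(\ba/C)$ to be the length of any $\acl$-independent tuple \emph{interalgebraic} with $\ba$ over $C$, and $\dim(X) = \max_{\ba \in X} \dim(\ba/C)$. Interalgebraicity, being symmetric and choice-free, makes lt8.5 (finite-fiber surjections preserve dimension) essentially automatic, whereas in your framework lt8.5 and lt2 are exactly the clauses DFC would be needed for. The single hard step in the paper's route is the well-definedness of $\dim(\ba/C)$: if two $\acl$-independent tuples are interalgebraic, they have the same length (Proposition~\ref{rank-equality}), which reduces to the statement $(\mathbf{A}_n)$ that there is no finite-to-finite correspondence from a broad subset of $\Mm^n$ onto $\Mm^{n-1}$. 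This is a strictly stronger and more delicate statement than the ``no space-filling bijection'' you reduce to, and its proof (the special-triple argument) occupies all of Section~\ref{tech:sec}. Your sketch for the bijective case --- slicing off a coordinate and contemplating a definable family of pairwise-disjoint infinite sets parameterized by an open set --- does not visibly close: getting a contradiction from such a family already amounts to a form of additivity or subadditivity of dimension, which is a later clause in the theorem, so the argument as written is circular. I would encourage you to look at whether your intended induction can be made to work at $n=1$ first; I suspect you will find it already requires something like the relation-based argument $(\mathbf{A}_n)$, at which point you are reconstructing the paper's approach.
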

In the visceral case, there is a simple characterization of $\dim(X)$
in terms of definable injections (see Proposition~\ref{inject-dim}), but
the definition in the t-minimal case is more involved.

In the case where $T$ is visceral and dp-minimal, almost all of
Theorem~\ref{long-theorem} is proven in \cite{simonWalsberg}, with
$\dim(-)$ interpreted as dp-rank.  In the case where $T$ is visceral
with NSFF and DFC, almost all of Theorem~\ref{long-theorem} appears in
\cite{viscerality}, with $\dim(-)$ interpreted as naive topological
dimension $d_t(-)$.  Our contribution is to deal with the case in
which NSFF (or DFC) fails, and to extend everything to non-visceral
t-minimal theories.

Here is an example application of the dimension theory:
\begin{corollary}[{= Corollary~\ref{perfect-field-cor}}]
  If $(K,+,\cdot)$ is a definable field in a t-minimal theory, then
  $K$ is perfect.
\end{corollary}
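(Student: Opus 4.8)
The plan is to reduce to positive characteristic and show that the Frobenius endomorphism is onto, using only the ``tame'' clauses of Theorem~\ref{long-theorem} (none of the pathological features enter). If $\characteristic(K) = 0$ there is nothing to prove; and if $K$ is finite, the Frobenius $x \mapsto x^p$ is an injective self-map of a finite set, hence bijective, so $K = K^p$. So we may assume $\characteristic(K) = p > 0$ and $K$ is infinite; writing $K \subseteq \Mm^n$, clauses~\ref{lt1}--\ref{lt3} of Theorem~\ref{long-theorem} give $1 \le \dim(K) < \infty$.

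Next we record two facts. The Frobenius $\phi \colon K \to K$, $\phi(x) = x^p$, is a \emph{definable} field homomorphism --- it is given by a term in the definable field operations --- and it is injective, since $\ker\phi = \{x : x^p = 0\} = \{0\}$. Its image is the definable subfield $K^p$, so $\phi$ restricts to a definable bijection $K \to K^p$, whence $\dim(K^p) = \dim(K)$ by clause~\ref{lt8-e}.

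Now suppose toward a contradiction that $K^p \ne K$, so $[K : K^p] \ge 2$. Pick $a, b \in K$ linearly independent over $K^p$ and set $V = K^p a + K^p b \subseteq K$, a definable set (over the parameters $a, b$). The map $(K^p)^2 \to V$ sending $(c,d)$ to $ca + db$ is a definable bijection: surjectivity is the definition of $V$, and injectivity is immediate from the $K^p$-linear independence of $a$ and $b$. Hence clauses~\ref{lt8-e} and~\ref{lt5}, together with the previous paragraph, give $\dim(V) = \dim((K^p)^2) = 2\dim(K^p) = 2\dim(K)$; but $V \subseteq K$ forces $\dim(V) \le \dim(K)$ by clause~\ref{lt8}, so $2\dim(K) \le \dim(K)$, contradicting $\dim(K) \ge 1$. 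Therefore $K^p = K$, the Frobenius is surjective, and $K$ is perfect.

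We do not expect a real obstacle here. The point to get right is that one should \emph{not} attempt the classical identity $\dim(K) = [K:K^p]\cdot\dim(K)$ --- a priori $[K:K^p]$ need not be finite --- since restricting attention to a single two-dimensional $K^p$-subspace $V$ already produces the contradiction. One must also check that each map in play (the Frobenius, and $(c,d)\mapsto ca+db$) is definable, possibly over the auxiliary parameters $a,b$, so that the clauses of Theorem~\ref{long-theorem} apply verbatim.
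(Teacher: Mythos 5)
Your proof is correct and essentially the same as the paper's: both reduce to infinite characteristic-$p$ fields, use the definable bijection $x \mapsto x^p$ to get $\dim(K^p) = \dim(K)$, and then exploit a definable injection of $(K^p)^2$ into $K$ to force $\dim(K) \ge 2\dim(K)$, hence $\dim(K) = 0$, contradicting infiniteness. The only cosmetic difference is that the paper takes the injection $(x,y) \mapsto x + by$ directly into $K$ (i.e.\ uses $\{1,b\}$ as the $K^p$-independent pair), whereas you introduce an intermediate set $V = K^p a + K^p b$ and compose a bijection onto $V$ with the inclusion $V \subseteq K$.
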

Several properties are conspicuously missing from
Theorem~\ref{long-theorem}, and require additional assumptions.  First
of all, dimension may behave badly in definable surjections:
\begin{theorem}[{see Sections~\ref{sec:mdt} and \ref{sec:ndep}}] \phantomsection \label{thm-aoa}
  \begin{enumerate}
  \item Let $\Mm$ be a visceral theory with NSFF.  If $f : X
    \to Y$ is definable surjection, then $\dim(X) \ge \dim(Y)$.
  \item For any $n$, there is a visceral theory with DFC and
    a definable surjection $f : X \to Y$ such that $\dim(X) = 1$ and
    $\dim(Y) \ge n$.
  \end{enumerate}
\end{theorem}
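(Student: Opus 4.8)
The plan is to leverage Theorem~\ref{long-theorem}(\ref{lt20}), which says that since $T$ is visceral with NSFF, $\dim$ agrees with the naive topological dimension $d_t$. Given a definable surjection $f : X \to Y$ (we may assume $Y \ne \varnothing$), set $m = \dim(Y)$; by (\ref{lt20}) fix a coordinate projection $\pi$ onto $\Mm^m$ with $U := \ter(\pi(Y))$ non-empty. Put $X_0 = f^{-1}(\pi^{-1}(U)) \subseteq X$ and $g := (\pi\circ f)|_{X_0} : X_0 \to U$, still surjective. Then I would argue: $\dim(X) \ge \dim(X_0)$ by (\ref{lt8}); the graph $\Gamma_g = \{(g(x),x) : x\in X_0\}$ is in definable bijection with $X_0$, so $\dim(\Gamma_g) = \dim(X_0)$ by (\ref{lt8-e}); and the projection of $\Gamma_g$ onto its first $m$ coordinates is exactly $g(X_0) = U$, which has non-empty interior, so $d_t(\Gamma_g) \ge m$ and hence $\dim(\Gamma_g) \ge m$ by (\ref{lt20}) once more. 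Chaining these gives $\dim(X) \ge \dim(X_0) = \dim(\Gamma_g) \ge m = \dim(Y)$.

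\textbf{Part (2).} Here I would iterate the construction behind Theorem~\ref{thm-main-ctx}. Fix $n \ge 2$ and take $T$ to be the theory of a tower of Henselian valued fields of residue characteristic $0$: sorts $L_1,\dots,L_n$ with $L_n = \Rr$, where for $1 \le i \le n-1$ the sort $L_i$ is a (real closed) valued field with value group $\Gamma_i$ and residue field $L_{i+1}$, carrying a valuation $v_i$, a residue map, and an angular component map $\ac_i : L_i^\times \to L_{i+1}^\times$; all sorts have their valuation/order topologies. Since the residue characteristic is $0$ at every level, an Ax--Kochen--Ershov-type transfer applied from the bottom sort $\Rr$ upward should show that $T$ is visceral; and since every sort is real, one can choose least elements of finite definable sets, so $T$ has DFC. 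The space-filling map will be
\[
  f : L_1^\times \longrightarrow \Gamma_1 \times \cdots \times \Gamma_{n-1} \times L_n^\times,
  \qquad
  x \longmapsto \bigl( v_1(x),\ v_2(\ac_1 x),\ \ldots,\ v_{n-1}(\ac_{n-2}\cdots\ac_1 x),\ \ac_{n-1}(\ac_{n-2}\cdots\ac_1 x) \bigr)
\]
(for $n=2$ this is $x \mapsto (v_1(x), \ac_1(x))$, recovering Theorem~\ref{thm-main-ctx}). Using that at each level $(v_i,\ac_i):L_i^\times \to \Gamma_i \times L_{i+1}^\times$ is surjective — take a cross-section of $v_i$ and a lift of the prescribed residue — a downward induction on levels gives that $f$ is surjective. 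Its domain $L_1^\times$ is open in the one-dimensional sort $L_1$, so $\dim(L_1^\times)=1$ by (\ref{lt3}); its image is all of $\Gamma_1\times\cdots\times\Gamma_{n-1}\times L_n^\times$, a product of $n$ sets each of dimension $1$ by (\ref{lt3}), so has dimension $n$ by (\ref{lt5}). Hence $\dim(X)=1$ and $\dim(Y)=n$, as required.

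\textbf{Expected obstacle.} In part (1) both invocations of (\ref{lt20}) are essential: without NSFF the graph $\Gamma_g$ still satisfies $d_t(\Gamma_g) \ge \dim(Y)$, but $\dim(\Gamma_g) = \dim(X)$ can be far smaller — precisely the pathology realized in part (2) — so even this short argument genuinely uses the hypothesis. The real work lies in part (2): proving that the tower of valued fields is visceral, i.e.\ that definable subsets of the field sorts have finite boundary even though the residue-field sorts carry their own valued-field structure. That is the step I expect to require care, and it should follow from the relative (Ax--Kochen--Ershov) model theory of Henselian valued fields of residue characteristic $0$.
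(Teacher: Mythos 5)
Your argument rests on two invocations of Theorem~\ref{long-theorem}(\ref{lt20}) (that $\dim = d_t$ for visceral theories with NSFF, i.e.\ Theorem~\ref{naive}). The first use ($d_t(Y) \ge \dim(Y)$) is harmless: that direction is Theorem~\ref{comparison}(3) and holds without NSFF. But the second use is the whole problem: you conclude $\dim(\Gamma_g) \ge d_t(\Gamma_g) \ge m$, which is precisely the nontrivial direction of $\dim = d_t$. In the paper, Theorem~\ref{naive} is proved \emph{from} the surjection inequality (Theorem~\ref{surjections}); the proof of Theorem~\ref{naive} literally reads ``If there is a definable surjection $X \to \Mm^k$ whose image has non-empty interior, then $\dim(X) \ge k$ by Theorem~\ref{surjections}.'' So you are using the surjection inequality to prove the surjection inequality. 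The real content hides in Lemma~\ref{strong-nsff} and the proposition that follows it, which establish $\bb \in \acl(C\ba) \implies \dim(\bb/C) \le \dim(\ba/C)$ directly from NSFF (via a Hammer-Lemma argument that refines a finite-to-broad correspondence into a genuine space-filling function); the paper's Theorem~\ref{surjections} is then a two-line consequence, and $\dim = d_t$ is derived afterwards. Your remark that ``the real work lies in part (2)'' has it backwards: the work in part (1) is exactly the step you are taking for granted.

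\textbf{Part (2) is a genuinely different construction, but under-verified.} The paper (Section~\ref{sec:ndep}) uses a two-sorted structure $(V,\Gamma)$ with $n$ independent ``valuation-like'' maps on a single divisible ordered abelian group $V$, proving consistency from a number-theoretic approximation argument, quantifier elimination from a back-and-forth test, and viscerality from QE; the space-filling map is simply $x \mapsto (v_1(x),\ldots,v_n(x))$. Your tower of valued fields with angular components is a plausible alternative, but three things are left hanging. First, you must make $\Gamma_1,\ldots,\Gamma_{n-1}$ explicit sorts (the paper deliberately excludes interpretable sets from the dimension theory), which you haven't stated. Second, viscerality of the tower is not automatic: the residue field $L_{i+1}$ carries further structure (its own valuation and angular component), so the relative QE the paper uses for $\RCVF_{\ac,3}$ (where the residue sort is a pure real closed field) does not apply verbatim, and ``should follow from AKE'' is exactly the step that needs an argument. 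Third, even granting QE, one must check (as the paper does via Lemma~\ref{rcvf3-key}) that a definable subset of $L_1$ in one variable has finite boundary, tracking how $\Gamma$-sorted and $L_2$-sorted terms produced by QE behave. The idea is reasonable, but it trades the paper's elementary abelian-group construction for a more delicate model theory of towers of valued fields, and the verification burden is higher, not lower.
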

Second, the frontier dimension inequality may or may not hold:
\begin{theorem}[{see Sections~\ref{dof}, \ref{sec:rcvf2}--\ref{sec:ndep}}] \phantomsection \label{thm-frontiers}
  \begin{enumerate}
  \item \label{tf1} Let $\Mm$ be a visceral theory with the exchange
    property.  If $X \subseteq \Mm^n$ is definable, then
    $\dim(\partial X) <\dim(X)$.
  \item Let $\Mm$ be a visceral theory with NSFF.  If $X \subseteq
    \Mm^n$ is definable, then $\dim(\partial X) \le \dim(X)$.
  \item There is a visceral theory with NSFF and a
    definable set $X$ such that $\dim(X) = \dim(\partial X) = 1$.
  \item For any $n$, there is a visceral theory and a
    definable set $X$ such that $\dim(X) = 1$ and $\dim(\partial X)
    \ge n$.
  \end{enumerate}
\end{theorem}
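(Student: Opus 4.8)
Throughout, write $\partial X=\overline{X}\setminus X$ for the frontier of $X$. The statement has two positive parts, (1) and (2), for which the plan is to replace $\dim(-)$ by one of its concrete incarnations from Theorem~\ref{long-theorem}, and two existence parts, (3) and (4), which require explicit theories built in the sections cited. For part~(2): since $\Mm$ is visceral with NSFF, Theorem~\ref{long-theorem}(\ref{lt20}) lets us read $\dim$ as the naive topological dimension $d_t$, so it suffices to prove $d_t(\partial X)\le d_t(X)$. Let $k=d_t(\partial X)$ and fix a coordinate projection $\pi:\Mm^n\to\Mm^k$ with $\ter(\pi(\partial X))\ne\varnothing$. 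Because $\pi$ is continuous and $\partial X\subseteq\overline{X}$, we have $\pi(\partial X)\subseteq\overline{\pi(X)}$, so $\overline{\pi(X)}$ has non-empty interior; hence $\pi(X)$ is a definable subset of $\Mm^k$ dense in some non-empty open set. A definable set dense in a non-empty open set has non-empty interior --- a basic fact in the t-minimal setting, obtained by slicing and using finiteness of boundaries in one variable --- so $\ter(\pi(X))\ne\varnothing$ and $d_t(X)\ge k$.

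For part~(1): the exchange property implies NSFF, so part~(2) already gives $\dim(\partial X)\le\dim(X)$, and the content is strictness. By Theorem~\ref{long-theorem}(\ref{lt23}) we may interpret $\dim$ as acl-rank, and we induct on the ambient dimension $n$. Fix a cell decomposition $X=X_1\cup\cdots\cup X_r$ (available in visceral theories without DFC). Since $\partial X\subseteq\bigcup_i\partial X_i$, it is enough to show $\dim(\partial X_i)<\dim(X)$ for each $i$. If $X_i$ is an open cell --- which forces $\dim X_i=n=\dim X$ --- then $\partial X_i$ is a closed set with empty interior (a non-empty open subset of $\overline{X_i}$ cannot avoid $X_i$), so $\dim(\partial X_i)<n=\dim(X)$. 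Otherwise $X_i$ is, after permuting coordinates, the graph of a continuous definable function $g:G_0\to\Mm^{n-k}$ on an open set $G_0\subseteq\Mm^k$ with $k=\dim(X_i)<n$. Continuity of $g$ together with Hausdorffness forces any point of $\overline{X_i}$ lying over $G_0$ to lie in $X_i$, so $\partial X_i$ projects into $\partial G_0$, which by the inductive hypothesis has dimension $<k$. By Theorem~\ref{long-theorem}(\ref{lt10}) this gives $\dim(\partial X_i)\le\dim(\partial G_0)<k\le\dim(X)$, \emph{provided the fibres of the projection $\partial X_i\to\partial G_0$ are finite} --- equivalently, provided $\overline{\Gamma(g)}$ meets each vertical fibre $\{u_0\}\times\Mm^{n-k}$, for $u_0\in\partial G_0$, in a finite set.

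The main obstacle is exactly this fibre-finiteness statement, and it is the one place the exchange property genuinely does work (beyond identifying $\dim$ with acl-rank): one must select the cell-defining functions $g$ so that their graphs do not ``fan out'' in the limit, which requires a uniform-structure argument combined with generic continuity and exchange. When exchange fails this breaks down, and that breakdown is precisely the phenomenon exhibited in parts (3) and (4).

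Parts (3) and (4) are witnessed by explicit theories constructed in Sections~\ref{sec:rcvf2}--\ref{sec:ndep}: expansions of closed valued fields --- in the spirit of the three-sorted $\RCVF$ with an angular component map used for Theorem~\ref{thm-main-ctx} --- in which the exchange property fails. In each case one exhibits a definable set $X$ with $\dim(X)=1$ whose frontier $\partial X$ contains a definable ``ball'' (sitting in a value group, or in a product of several sorts): for (3) this ball is one-dimensional, giving $\dim(X)=\dim(\partial X)=1$, and one additionally verifies that the theory satisfies NSFF; for (4) one works in an $n$-fold product of sorts so that the ball, hence $\partial X$, has dimension at least $n$. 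The remaining work is the routine but non-automatic verification that these theories are visceral --- the topology comes from a definable uniformity and definable one-variable sets have finite boundary --- and, for (3), that NSFF holds.
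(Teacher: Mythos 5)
Your proof of part~(2) is correct and is essentially the paper's argument, run contrapositively: the paper shows directly that if $d_t(X) < \dim(\overline X)$ then $\pi^{-1}(\overline{\pi(X)})$ would be a closed set containing $X$ with $\overline{\pi(X)}$ narrow, contradicting broadness of $\pi(\overline X)$; you instead take $\pi$ witnessing $d_t(\partial X)=k$ and invoke $\pi(\partial X)\subseteq\overline{\pi(X)}$ plus Corollary~\ref{small-closure}. Same ingredients, same result.

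For part~(1) there is a real gap, which you acknowledge yourself. You decompose $X$ into cells, reduce to a cell $X_i=\Gamma(g)$ with base $G_0\subseteq\Mm^k$, and observe that $\partial X_i$ projects into $\partial G_0$ (correct, using continuity plus Hausdorffness). But you then need the projection $\partial X_i\to\partial G_0$ to have finite fibres, and you say only that this ``requires a uniform-structure argument combined with generic continuity and exchange'' --- that is an assertion, not a proof, and it is precisely the crux of the theorem. Indeed, the counterexample of part~(3) is a cell $C$ (the graph of $x\mapsto\ac(x)$ over $K^\times$) whose frontier fibres infinitely over $\partial(\dom g)=\{0\}$, so nothing about being a cell forces fibre-finiteness; exchange must enter in some specific way and you haven't said how. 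The paper's proof avoids this obstacle by going the other direction: it first picks the coordinate projection $\pi$ so that $\pi(\partial X)$ is broad, and then uses strict additivity of $\acl$-rank (exactly where exchange enters) to show that over a suitable ball $B\subseteq\pi(\partial X)$ the set $X$ itself has finite fibres; generic continuity then makes $X\cap\pi^{-1}(B)$ the graph of a continuous correspondence, hence relatively closed, contradicting $B\subseteq\pi(\partial X)$. In other words the paper chooses the projection to fit $\partial X$, whereas your cell decomposition is chosen to fit $X$ and so need not interact well with the frontier.

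Parts~(3) and~(4) are not proved; they are only gestured at. You cite that the constructions live in Sections~\ref{sec:rcvf2}--\ref{sec:ndep}, but you give neither the set $X$ nor the verification of NSFF (for part~(3)) nor the dimension computation for $\partial X$. The description for~(4) is also off: the theory $T_n$ there is a two-sorted divisible ordered abelian group with $n$ jointly independent valuations, not an expansion of a valued field, and the relevant set is the graph of $x\mapsto(v_2(x)-v_1(x),\dots,v_n(x)-v_1(x))$, whose frontier contains $\{0\}\times\Gamma^{n-1}$; one then uses $T_{n+1}$ to get $\dim(\partial X)\ge n$. Since the theorem is an existence statement, ``such examples exist in the cited sections'' does not constitute a proof unless you actually exhibit and verify them.
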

Theorem~\ref{thm-frontiers}(\ref{tf1}) generalizes
\cite[Corollary~3.35]{viscerality} to the case without DFC.  Simon and Walsberg
also show that (\ref{tf1}) holds when $\Mm$ is dp-minimal
\cite[Proposition~4.3]{simonWalsberg}.

In order to state the cell decomposition and generic continuity
theorems, we need some definitions from \cite{simonWalsberg}.  Let $X,
Y$ be definable sets in $\Mm$.
\begin{definition}[Section 3.1 in \cite{simonWalsberg}] \label{corresp-def}
  Let $m$ be a positive integer.
  An \emph{$m$-correspondence} from $X$ to $Y$ is a function $f$
  assigning to each element $a \in X$ a subset $f(a) \subseteq Y$ of
  size $m$.  A \emph{correspondence} is an $m$-correspondence for some $m > 0$.

  The \emph{graph} $\Gamma(f)$ of an $m$-correspondence $f : X
  \rightrightarrows Y$ is the set of $(a,b) \in X \times Y$ such that
  $b \in f(a)$.  An $m$-correspondence is \emph{definable} if its
  graph is.
\end{definition}
\begin{definition}
  An $m$-correspondence $f : X\rightrightarrows Y$ is
  \emph{continuous} if the following equivalent conditions hold:
  \begin{enumerate}
  \item $f$ looks locally like the graph of $m$ distinct continuous
    functions.  More precisely, for any $p \in X$, there is a
    neighborhood $U \ni p$ and continuous functions $g_1, \ldots, g_m
    : U \to Y$ such that $f(a) = \{g_1(a),\ldots,g_m(a)\}$ for all $a
    \in U$.
  \item If $p \in X$ and $f(p) = \{q_1, \ldots, q_m\}$, then for any
    neighborhoods $E_1 \ni q_1, \ldots, E_m \ni q_m$, there is a
    neighborhood $U \ni p$ such that for any $p' \in U$, there are
    $q'_1 \in E_1, \ldots, q'_m \in E_m$ with $f(p') =
    \{q'_1,\ldots,q'_m\}$.
  \end{enumerate}
\end{definition}
\begin{definition}
  A \emph{$k$-cell} is a definable set of the form
  $\sigma(\Gamma(f))$, where $U \subseteq \Mm^k$ is a non-empty open
  set, $f : U \rightrightarrows \Mm^{n-k}$ is a continuous
  $m$-correspondence for some $m \ge 1$, and $\sigma$ is a coordinate
  permutation.  A \emph{cell} is a $k$-cell for some $k$.
\end{definition}
It turns out that a $k$-cell $C$ has $\dim(C) = k$
(Remark~\ref{pathetic-rem}(\ref{pr1})).  Here is our main cell
decomposition result:
\begin{theorem}[{= Theorems~\ref{cd1}, \ref{cd2}}] \label{cd-intro}
  Let $T$ be visceral.
  \begin{enumerate}
  \item Every definable set $X \subseteq \Mm^n$ can be written as a
    disjoint union of cells.
  \item If $f : X \to \Mm^m$ is a definable function, then we can
    write $X$ as a disjoint union of cells $X = \coprod_{i=1}^N C_i$
    in such a way that $f \restriction C_i$ is continuous for each
    $i$.
  \item More generally, if $f : X \rightrightarrows \Mm^m$ is a
    definable correspondence, then we can write $X$ as a disjoint
    union of cells $X = \coprod_{i=1}^N C_i$ in such a way that $f
    \restriction C_i$ is continuous for each $i$.
  \end{enumerate}
\end{theorem}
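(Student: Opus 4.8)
The plan is to prove all three parts simultaneously by induction on the ambient dimension $n$, where $X\subseteq\Mm^n$. It is enough to establish the correspondence statement (3): a definable function is a $1$-correspondence, so (2) is the case $m=1$, and (1) is a decomposition that the proof of (3) produces along the way. The organizing principle — and the reason the notion of ``cell'' has been weakened to allow $m$-correspondences in place of honest functions — is that wherever the classical, DFC-assuming argument would pick a single point out of each finite fiber of an auxiliary definable map, we instead carry along the \emph{whole} finite fiber as a definable correspondence of constant size, having first partitioned the base to make that size constant (legitimate by compactness, since the fiber sizes are uniformly bounded).

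Two standing reductions streamline the induction. First, a cell decomposition of some $X\subseteq\Mm^n$ compatible with a correspondence $f$ transports along the defining data $\sigma(\Gamma(g))$ of a cell: decomposing the open base $U\subseteq\Mm^k$ of a $k$-cell, compatibly with the correspondence obtained by combining $g$ with the pullback of $f$, yields a compatible decomposition of the cell itself, and since $k<n$ unless the cell is an open subset of $\Mm^n$, this feeds the inductive hypothesis. Second, on an honest open set $U\subseteq\Mm^n$ we may assume $f$ is continuous: generic continuity for definable correspondences (available in visceral theories) makes $f$ continuous off a definable set whose fibers over $\Mm^{n-1}$ have empty interior, hence are finite by t-minimality, so that set is a constant-size correspondence over $\Mm^{n-1}$ and is handled by induction. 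Thus everything comes down to decomposing an arbitrary $X\subseteq\Mm^n$ into cells. The base case $n=1$ is immediate: t-minimality gives $X=\ter(X)\sqcup(X\cap\bd(X))$, an open $1$-cell together with finitely many $0$-cells, and on the $1$-cell, generic continuity followed by one more application of t-minimality to the resulting nowhere-dense bad set makes $f$ continuous.

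For the inductive step, let $\pi:\Mm^n\to\Mm^{n-1}$ drop the last coordinate, put $Y=\pi(X)$, and for $b\in Y$ split the fiber as $X_b=\ter(X_b)\sqcup(X_b\cap\bd(X_b))$ with the second part finite of size bounded uniformly in $b$. The ``boundary part'' $\{(b,c)\in X: c\in\bd(X_b)\}$ is then a uniformly-finite correspondence over $Y$: partition $Y$ by fiber size, apply the inductive hypothesis in $\Mm^{n-1}$, pull back, and apply a coordinate permutation to exhibit it as a disjoint union of cells. For the ``interior part'' $X^\circ=\{(b,c)\in X: c\in\ter(X_b)\}$ one proves the complementary lemma that, after a suitable cell decomposition of $Y$ — again obtained by feeding the fiber-boundary correspondence into the inductive hypothesis, which removes the lower-dimensional locus of base points where that correspondence is discontinuous — the set $X^\circ$ is in fact \emph{open} in $\Mm^n$ over each cell, hence a cell of dimension one larger (or a coordinate permutation thereof). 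Combining the two parts, and then invoking the second reduction above to make $f$ continuous on the top-dimensional pieces, completes the step.

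The main obstacle is the bookkeeping needed to keep all of this consistent: each time a fiberwise-finite or fiberwise-open piece is reorganized, via a coordinate permutation, onto a cell base living in $\Mm^{n-1}$, one must verify that the permuted set genuinely is a cell of the stated dimension and that continuity of the original correspondence $f$ survives the pullbacks and permutations — the latter is precisely where the ``looks locally like $m$ distinct continuous functions'' formulation of continuity of correspondences, and its stability under composition with the projections and permutations in play, has to be used with care. By contrast, the topological inputs — t-minimality (finite boundary, hence finite fibers whenever the fibers have empty interior) and generic continuity of definable correspondences — enter essentially as black boxes, and the dimension theory of Theorem~\ref{long-theorem} is needed only to run the inner inductions, for instance to see that the various ``bad'' loci have strictly smaller-dimensional fibers.
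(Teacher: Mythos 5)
Your proposal follows the classical o-minimal template---induction on the ambient dimension $n$, with each fiber $X_b$ split into $\ter(X_b)$ and $X_b\cap\bd(X_b)$, and all three parts proved by a simultaneous induction---whereas the paper deliberately takes a different route. It first proves a purely dimension-theoretic decomposition into ``weak cells'' via near-injective projections onto $\Mm^{\dim D}$ (Lemma~\ref{pathetic-lem}, Proposition~\ref{pathetic-cells}), then proves Theorem~\ref{cd1} by induction on $\dim(D)$, and only afterwards derives Theorem~\ref{cd2} from Theorem~\ref{cd1} together with Lemma~\ref{gorilla}; the paper explicitly notes that the two cell decomposition theorems are \emph{not} proved by a joint induction. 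A concrete payoff of that organization is that in Lemma~\ref{just-now} the ``base'' is always an honest open subset of $\Mm^k$, so the cells-over-cells bookkeeping that you correctly identify as the main obstacle of your approach never arises.

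There is also a genuine gap. The inductive step hinges on the ``complementary lemma'' that, after decomposing $Y$ so that a fiber-boundary correspondence is continuous, the fiberwise-interior set $X^\circ=\{(b,c)\in X : c\in\ter(X_b)\}$ becomes open in $\Mm^n$ over each cell of $Y$. In a visceral theory the topology can be totally disconnected (e.g.\ a valued field), and then continuity of $b\mapsto\bd(X_b)$ does not yield openness of $X^\circ$: knowing that a ball $W\ni c_0$ misses $\bd(X_b)$ for all $b$ near $b_0$ only gives $W\subseteq\ter(X_b)\cup\ter(\Mm\setminus X_b)$, and without order-connectivity and definable choice one cannot conclude $W\subseteq X_b$. (Equivalently, $X^\circ\setminus\ter(X)$ lies inside $\bd(X)$, which is narrow, but narrowness does not control its image under a \emph{fixed} coordinate projection, so removing a low-dimensional piece of $Y$ is not obviously enough.) The paper sidesteps this by never splitting fibers: it uses Corollary~\ref{worthless-cells} to peel off $\ter(D)$ as an ambient-open piece and then handles the narrow remainder through finite-to-one coordinate projections, one per coordinate. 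Your ``second standing reduction'' has a related inaccuracy: Proposition~\ref{gencon} only makes the discontinuity locus of $f$ on an open set \emph{narrow} (of dimension $<n$); its fibers over a single projection $\Mm^n\to\Mm^{n-1}$ need not be finite, and Corollary~\ref{worthless-cells} has to distribute the set among $n$ different coordinate projections to get finite fibers.
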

When $T$ is dp-minimal, most of Theorem~\ref{cd-intro} is proven in
\cite{simonWalsberg}.  When $T$ has DFC, most of
Theorem~\ref{cd-intro} is proven in \cite{viscerality}.

On a related note, we can show that functions and correspondences are
continuous at ``most'' points, and definable sets are locally
Euclidean at ``most'' points, though ``most'' must be understood with
respect to the filter of dense sets, rather than with respect to
dimension:
\begin{theorem}[{see Section~\ref{sec:afr}}]\label{weird-intro}
  Let $T$ be visceral.  Let $D \subseteq \Mm^n$ be definable.
  \begin{enumerate}
  \item The collection of dense definable subsets of $D$ is a filter:
    if $X, Y \subseteq D$ are definable and dense in $D$, then so is
    $X \cap Y$.
  \item If $X \subseteq D$ is definable and dense, then the relative
    interior $\ter_D(X) \subseteq X$ is dense.
  \item If $X \subseteq D$ is definable and dense, then the relative
    frontier $\partial_X D = X \cap \partial D$ is the complement of a
    dense set.
  \item If $f : D \to \Mm^m$ is definable, then $f$ is continuous on a
    dense, relatively open subset of $D$.
  \item More generally, if $f : D \rightrightarrows \Mm^m$ is a
    definable correspondence, then $f$ is continuous on a dense,
    relatively open subset of $D$.
  \item $D$ is locally Euclidean on a dense, relatively open definable
    subset $D_{\mathrm{Eu}}$, in the sense that for every $p \in
    D_{\mathrm{Eu}}$, there is a definable homeomorphism between a
    neighborhood of $p$ in $D$ and an open subset of $\Mm^k$ for some
    $k$ depending on $p$.
  \end{enumerate}
\end{theorem}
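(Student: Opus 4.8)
The plan is to run everything through the cell decomposition theorem (Theorem~\ref{cd-intro}), which in the visceral case needs no extra hypotheses, together with the basic fact from Theorem~\ref{long-theorem}(\ref{lt3}),(\ref{lt7}) that a definable subset of $\Mm^d$ has empty interior exactly when its dimension is $<d$, so that inside any open subset of $\Mm^d$ the relatively-interior-empty definable sets form an ideal. The central object is the following: given a definable $D\subseteq\Mm^n$ and a cell decomposition $D=\coprod_{i=1}^N C_i$, set
\[ D_0 := \bigcup_{i=1}^N \Big( C_i \setminus \bigcup_{j \ne i} \overline{C_j} \Big), \]
the set of points near which $D$ coincides with a single cell. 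Assuming, as I will throughout (and as should be available from the earlier development), that closures of definable sets are definable, three things are immediate: $D_0$ is definable and relatively open in $D$; if $g$ is a definable function or correspondence continuous on each $C_i$, then $g$ is continuous on $D_0$, since near any $p\in D_0\cap C_i$ the restriction of $g$ to $D_0$ agrees locally with its restriction to $C_i$; and, since a $k$-cell is locally homeomorphic to open subsets of $\Mm^k$, the set $D$ is locally Euclidean at every point of $D_0$, with chart dimension equal to the dimension of the containing cell. Given this, parts (4), (5) and (6) follow (taking $D_{\mathrm{Eu}}:=D_0$ and using the appropriate clause of Theorem~\ref{cd-intro}) --- provided we show $D_0$ is dense in $D$.

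Density of $D_0$ reduces, through the identity $D\setminus D_0=\bigcup_{i\ne j}(C_i\cap\partial C_j)$, to (a) each $C_i\cap\partial C_j$ with $i\ne j$ has empty relative interior in $D$, and (b) a finite union of definable sets with empty relative interior in $D$ again has empty relative interior in $D$ --- which is exactly the filter property, part (1). Claim (a) is a short topological argument that deliberately avoids any dimension count (necessary, since by Theorem~\ref{thm-frontiers}(4) cell frontiers can be arbitrarily high-dimensional): if some open $U$ had $\varnothing\ne D\cap U\subseteq C_i\cap\partial C_j$, then, as $C_i$ and $C_j$ are disjoint pieces of $D$, we would get $C_j\cap U=\varnothing$, hence $U\cap\overline{C_j}=\varnothing$, contradicting $D\cap U\subseteq\partial C_j\subseteq\overline{C_j}$.

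It remains to prove the filter property (1), i.e. claim (b). Passing to complements, then intersecting with a relatively open witness of the failure, one reduces to showing: if $X,Y\subseteq D$ are disjoint and both dense in $D$, then $D=\varnothing$. Assuming $D\ne\varnothing$, I would pass to a nonempty relatively open definable subset of $D$ that is locally Euclidean --- concretely a nonempty relatively open cell $C'\subseteq D$; then $X\cap C'$ and $Y\cap C'$ are still disjoint and dense in $C'$, and transporting along a local chart gives two disjoint dense definable subsets of a nonempty open subset of some $\Mm^d$, whose union would then be that whole open set yet have dimension $<d$ by Theorem~\ref{long-theorem}(\ref{lt3}),(\ref{lt7}) --- a contradiction. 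With (1) established, (b) and hence density of $D_0$ follow, finishing (4)--(6). Part (2) follows from (1) by the same device: were $\ter_D(X)$ not dense, restrict to a relatively open $V$ on which it is empty; then $X\cap V$ and $V\setminus X$ both have empty relative interior in $V$ while covering $V$, contradicting (1). And part (3) is a reformulation of (2), the relative frontier being, up to the relatively nowhere-dense set $D\setminus X$, the complement in $X$ of the dense relatively open set $\ter_D(X)$.

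The main obstacle is the existence step inside the proof of (1): producing a nonempty relatively open cell (or merely a nonempty relatively open definable subset definably homeomorphic to an open subset of some $\Mm^d$) inside an arbitrary nonempty definable set. The obvious candidate --- a maximal-dimensional cell $C_i$ in a decomposition together with its "good part" $C_i\setminus\bigcup_{j\ne i}\overline{C_j}$ --- can be empty, because $C_i$ may be entirely absorbed into the (a priori large) frontiers of the other cells. I expect this to be handled by induction on $\dim D$: peel off the relatively open locally Euclidean pieces contributed by cells whose good part is nonempty, and show that what survives lies in a finite union of frontiers and either is empty or has strictly smaller dimension, so the inductive hypothesis applies; the description of $\dim$ via definable injections (Proposition~\ref{inject-dim}) should help here. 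Making this peeling precise, and tracking definability of all the sets involved, is the delicate part; everything else is bookkeeping on top of cell decomposition and Theorem~\ref{long-theorem}.
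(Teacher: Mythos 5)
Your overall architecture---cell decomposition, the ``good part'' $D_0$ consisting of points near which $D$ coincides with a single cell, a direct topological check that each $C_i\cap\partial C_j$ has empty relative interior, and the reductions of (2)--(6) to the filter property (1)---tracks the paper's Section~\ref{sec:afr} closely, and that scaffolding is fine. Two remarks on the core step (1). A cosmetic slip first: two disjoint dense definable $X,Y$ in a nonempty open $V\subseteq\Mm^d$ need not have $X\cup Y=V$; the correct contradiction is that $X$ has empty interior (being disjoint from the dense $Y$) and $V\setminus X$ has empty interior (since $X$ is dense), so the two narrow sets $X$ and $V\setminus X$ cover the broad set $V$, violating the ideal property of narrow sets.

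The fatal gap is exactly the one you flag: producing a nonempty relatively open, locally Euclidean piece inside an arbitrary nonempty definable $D$. Your proposed induction on $\dim D$ cannot close, because nothing in the dimension theory bounds $\dim(\partial C_j)$ below $\dim(C_j)$---on the contrary, Theorem~\ref{thm-frontiers}(4) exhibits visceral theories where $\dim(\partial X)-\dim(X)$ is as large as you like---so the ``surviving'' set $D\setminus D_0\subseteq\bigcup_{i\ne j}C_i\cap\partial C_j$ may have the same dimension as $D$, and the inductive hypothesis is never reached. There is also a circularity: your proof of (1) already uses a weak form of (6), which you then deduce from (1). The paper avoids both problems by introducing in Section~\ref{anti-dim} a different invariant, the \emph{frontier rank} $d(-)\in\Nn\cup\{-\infty\}$, which is unrelated to $\dim$ but satisfies $d(\partial X)+1\le d(X)$ unconditionally, respects unions via $d(X\cup Y)=\max(d(X),d(Y))$, and (the hard part, proved by an explicit recursion) is finite on each $\Mm^n$. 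With $d$ in hand, the filter property (Theorem~\ref{dense-filter}) is immediate: two disjoint dense subsets $Y\cap U$ and $Z\cap U$ would each sit in the other's frontier, giving $d(Y\cap U)<d(Z\cap U)<d(Y\cap U)$. Likewise Theorem~\ref{gencon3} is proved directly, not through (1), by the inequality $d(X)>d\bigl(\bigcup_i\partial C_i\bigr)$ forcing $X\setminus\bigcup_i\partial C_i\ne\varnothing$. The missing ingredient in your proposal is precisely this invariant; without it, the peeling argument you describe has no well-founded measure to descend on.
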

In future work, we will use cell decomposition and generic continuity
to study definable groups and fields in visceral theories.  Lord
willing, we will show that if $(K,+,\cdot)$ is a definable field in a
visceral theory, then $K$ is finite or large in the sense of Pop
\cite{Pop-little}.
\begin{remark}
  The generic continuity results do not generalize to t-minimal
  theories.  For example, consider RCF with the Sorgenfrey
  topology---the topology with half-open intervals $[a,b)$ as a basis
    of open sets.  This topology is definable and makes RCF into a
    t-minimal theory.  But the definable function $f(x) = -x$ is
    nowhere continuous.  The graph $\Gamma(f) \subseteq \Mm^2$ also
    shows that cell decomposition cannot work, at least in the form
    given above for visceral theories.
\end{remark}

\subsection{Notation and conventions} \label{uniformity-review}
We use standard model-theoretic notational conventions.  ``Definable''
means definable with parameters.  We do not consider interpretable
sets to be definable, and indeed, many of the theorems fail to
generalize to interpretable sets.  By a \emph{monster model}, we mean
a structure that is $\kappa$-saturated and $\kappa$-strongly
homogeneous for some cardinal $\kappa$ much larger than any cardinals
we care about.  \emph{Small} means ``$< \kappa$''.  We reserve the
blackboard bold $\Mm$ for monster models.

The letter $\pi$ generally denotes a coordinate projection $\Mm^n \to
\Mm^k$, not necessarily onto the first $k$ coordinates, and not
necessarily with $k=1$.  We write $\pi_i : \Mm^n \to \Mm$ for the
projection onto the $i$th coordinate, and $\pi^i : \Mm^n \to
\Mm^{n-1}$ for the projection onto the other coordinates.

We sometimes write tuples with symbols like $\ba, \bb,
\bc,\bx,\by,\bz$, and other times with unmarked letters $a,b,c,x,y,z$,
depending on whether we are emphasizing their tupleness.  In
particular, $\ba$ always means a tuple $(a_1,\ldots,a_n)$, but $a$ may
or may not mean a tuple.

The notation $a := \cdots$ or $\cdots =: a$ means ``Let $a =
\cdots$''.  We use this notation to give mathematical objects names,
in passing.

If $X$ is a set in a topological space, we write the closure and
interior as $\overline{X}$ and $\ter(X)$.  We write the boundary and
frontier as follows:
\begin{gather*}
  \bd(X) = \overline{X} \setminus \ter(X) \\
  \partial X = \overline{X} \setminus X.
\end{gather*}
There are several notions of dimension in this paper.  The naive
topological dimension of a set $X \subseteq \Mm^n$ will be written
$d_t(X)$.  If $\Mm$ has the exchange property, then we write the
acl-rank as $\rk(-)$.  We will occasionally mention dp-rank $\dpr(-)$
in passing; see \cite[Chapter~4]{NIPguide} for a definition.  The main
notion of dimension in this paper, defined in
Sections~\ref{dim-sec-a}--\ref{dim-sec-b}, will be written $\dim(-)$.
In Section~\ref{frontier-sec} there will also be a ``frontier rank''
writen $d(-)$.

See Definition~\ref{corresp-def} for the definition of correspondence.
We write a correspondence $f$ from $X$ to $Y$ as $f : X
\rightrightarrows Y$.  We write the graph and image of $f$ as
\begin{align*}
  \Gamma(f) &= \{(a,b) \in X \times Y : b \in f(a)\} \\
  \im(f) &= \bigcup_{a \in X} f(a).
\end{align*}
In the rest of this section, we review the notion of
\emph{uniformities} or (\emph{uniform structures}) from topology, and
fix the relevant notation.

Let $M$ be a set.  Let $E, D$ be binary relations on $M$, that is,
subsets of $M \times M$.  Let
\begin{gather*}
  E^{-1} = \{(y,x) \mid (x,y) \in E\} \\
  D \circ E = \{(x,z) \mid \exists y \in M ~ (x,y) \in E \text{ and } (y,z) \in D\}.
\end{gather*}
The following definition is standard in topology.
\begin{definition}
  A \emph{uniformity} or \emph{uniform structure}
  on $M$ is a collection $\Omega$ of binary relations on $M$ called
  \emph{entourages} satisfying the following axioms:
  \begin{enumerate}
  \item If $E, D \in \Omega$, then $E \cap D \in \Omega$.
  \item If $E \in \Omega$ and $E \subseteq D \subseteq M \times M$,
    then $D \in \Omega$.
  \item If $E \in \Omega$, then $E$ is reflexive.
  \item If $E \in \Omega$, then $E^{-1} \in \Omega$.
  \item If $E \in \Omega$, then there is $D \in \Omega$ such that $D
    \circ D \subseteq E$.
  \end{enumerate}
\end{definition}
We will prefer the term ``uniformity'' over ``uniform structure,'' to
avoid conflict with the model-theoretic sense of ``structure.''  A
space with a uniformity is called a \emph{uniform space}.

Let $\Omega$ be a uniformity on $M$.  For any entourage $E \in
\Omega$ and any $a \in M$, let
\begin{equation*}
  E[a] =\{b \in M : (a,b) \in E\}.
\end{equation*}
The \emph{uniform topology} on $M$ is characterized by the property
that for any $a \in M$, the family of sets $\{E[a] : E \in \Omega\}$
is a neighborhood basis of $a$.

A set $\mathcal{B} \subseteq \mathcal{P}(M \times M)$ is a \emph{basis
for a uniformity} if the upward closure
\begin{equation*}
  \Omega = \{E \subseteq M \times M \mid \exists D \in \mathcal{B} : D \subseteq E\}
\end{equation*}
is a uniformity.  Elements of $\mathcal{B}$ are called \emph{basic
entourages}.  Sets of the form $E[a]$ for $E \in \mathcal{B}$ and $a
\in M$ are called \emph{balls}.  The balls around $a$ again form a
neighborhood basis.
\begin{definition}
  Let $\Mm$ be a monster model of a complete theory $T$.  A topology
  $\tau$ on $\Mm$ is \emph{definable} if there is a definable basis of
  opens, or more precisely, there is a definable family $\{B_a\}_{a \in
    D}$ such that $\{B_a : a \in D\}$ is a basis of open sets for
  $\tau$.  Similarly, a uniformity $\Omega$ is
  \emph{definable} if there is a definable basis of entourages.
\end{definition}

\begin{remark} \label{group-uniformity-7}
  Suppose $(M,\cdot)$ is a group and $\tau$ is a group topology.  Let
  $\mathcal{B}_0$ be a basis of open neighborhoods of $1$.  If
  $\mathcal{B}$ is the collection of sets of the form
  \begin{equation*}
    \{(x,y) \in M^2 : xy^{-1} \in U\} \text{ for } U \in B_0,
  \end{equation*}
  then $\mathcal{B}$ is the basis for a uniformity $\Omega$ on
  $(M,\cdot)$.  If $\tau$ is definable, then $\Omega$ is definable.

  Consequently, if $T$ is an expansion of the theory of groups and $T$
  is t-minimal with respect to some definable group topology $\tau$,
  then $T$ is visceral, not just t-minimal.  Most examples of visceral
  theories arise this way.
\end{remark}
\begin{definition} \label{def-sep}
  A uniformity is \emph{separated} if the following equivalent conditions hold:
  \begin{itemize}
  \item The topology is $T_0$: distinct points are topologically distinguishable.
  \item The topology is $T_1$: any singleton is closed.
  \item The topology is Hausdorff.
  \end{itemize}
\end{definition}
\textbf{We will assume that all topologies are Haudsorff and all
  uniformities are separated.}
\begin{remark}
  Dolich and Goodrick \cite{viscerality} do \emph{not} assume
  Hausdorffness.  It may be possible to remove the assumption of
  Hausdorffness from the proofs in this paper, but it hardly seems
  worth the trouble, given that non-$T_0$ topologies rarely arise in
  practice.
\end{remark}

\subsection{Outline}
In Section~\ref{tmin-dim}, we develop the dimension theory for
t-minimal structures.  A certain pregeometry arises naturally, but
offers no insight into the dimension theory, so we postpone its
discussion to Appendix~\ref{strange}.  In Section~\ref{vt-section}, we
apply the t-minimal dimension theory to visceral theories, proving
generic continuity and cell decomposition, and adding final touches to
the dimension theory in the visceral case.  In
Section~\ref{frontier-sec}, we consider issues related to the
dimension of frontiers, and use this to prove
Theorem~\ref{weird-intro}.  Finally, in Section~\ref{cxsec}, we
construct the various counterexamples appearing in
Theorems~\ref{thm-main-ctx}, \ref{thm-aoa}, and \ref{thm-frontiers}.

At certain points in the text, we state Non-Theorems, to emphasize
what we are \emph{not} saying.  In Appendix~\ref{guide}, we briefly
explain the logical relation between the Non-Theorems, and show that
all of the Non-Theorems fail in the counterexamples of
Section~\ref{cxsec}.

\subsection{A comment on \cite{simonWalsberg}}
After the initial draft of this paper was written, I looked closely at
\cite{simonWalsberg} and realized two things:
\begin{itemize}
\item In the current paper, I have subconsciously repeated many of the
  arguments from \cite{simonWalsberg}.
\item There is a minor error in \cite[\S2]{simonWalsberg} concerning
  ``acl-dimension'' whose fix is complicated to describe.
\end{itemize}
Between Lemmas~2.1 and Lemmas~2.2 in \cite{simonWalsberg}, the authors
 introduce a general notion of acl-dimension for definable
sets.  They first define the dimension of a tuple $\dim(\ba/C)$ to be
the smallest length of a subtuple $\bb \subseteq \ba$ with $\ba \in \acl(C\bb)$.  This is certainly well-defined.  They
then define $\dim(A) = \max_{\ba \in A} \dim(\ba/C)$ for any
$C$-definable $A$.  Contrary to the claims in \cite{simonWalsberg},
this is \emph{not} well-defined---$\dim(A)$ depends on the choice of
the set $C$.  See Remark~\ref{bane} below for an example.

Fortunately, this does not affect the results of \cite{simonWalsberg}:
\begin{itemize}
\item First of all, we are going to prove that $\dim(A)$ \emph{is}
  well-defined in t-minimal theories such as those considered in
  \cite{simonWalsberg}.  See \S\ref{tmin-dim} below, especially
  Proposition~\ref{extend-2}.
\item More directly, one can repair the arguments in
  \cite[\S2]{simonWalsberg} as follows.  Replace every application of
  ``acl-dimension'' with ``dp-rank'' up to the proof of
  \cite[Proposition~2.4]{simonWalsberg}, showing that dp-rank and
  acl-dimension agree.  Unfortunately, there are many details to check.
\end{itemize}
In light of this situation, we will avoid citing \cite{simonWalsberg}
until we have the dimension theory in hand.  The arguments which
closely follow \cite{simonWalsberg} will be maintained, in order to
be self-contained.

\section{Dimension in t-minimal theories} \label{tmin-dim}
In this section, we develop the dimension theory for t-minimal
theories.  Section~\ref{sec:bs} reviews the machinery of \emph{broad}
and \emph{narrow} sets from \cite[Section~3.1]{prdf1a}.
Section~\ref{td:sec} applies this machinery to study the condition
``$X \subseteq \Mm^n$ has non-empty interior'', which will eventually
be equivalent to ``$\dim(X) = n$'', i.e., $X$ has maximum dimension.
Section~\ref{tech:sec} proves a key technical fact which ensures that
dimension is well-defined: if $\ba$ and $\bb$ are interalgebraic
$\acl$-independent tuples, then $\ba$ has the same length as $\bb$.
Using this, we develop dimension theory for complete types
$\dim(\ba/C)$ in Section~\ref{dim-sec-a}, and for more general
type-definable sets $\dim(X)$ in Section~\ref{dim-sec-b}.
\subsection{Broad sets} \label{sec:bs}
The following definition is from \cite[Definition~3.1]{prdf1a}, but
the idea was implicit in \cite{surprise}.
\begin{definition}
  Let $X_1, \ldots, X_n$ be infinite sets and let $R \subseteq
  \prod_{i = 1}^n X_i$ be a subset.  Then $R$ is \emph{broad} if the
  following holds: for any $k < \omega$, there are subsets $S_i
  \subseteq X_i$ with $|S_i| \ge k$ and $R \supseteq \prod_{i = 1}^n
  S_i$.  Otherwise, $R$ is \emph{narrow}.
\end{definition}
In other words, a set is broad if it contains an $n$-dimensional
$k$-by-$k$-by-$\cdots$-by-$k$ grid, for any $k$.  When $n = 1$,
``broad'' and ``narrow'' mean ``infinite'' and ``finite.''
\begin{remark}
  If we consider the multi-sorted structure $(X_1,\ldots,X_n;R)$ and
  if $(X_1',\ldots,X_n';R')$ is an elementary extension, then $R$ is
  broad in $\prod_{i = 1}^n X_i$ if and only if $R'$ is broad in
  $\prod_{i = 1}^n X'_i$.  (Broadness can be expressed as a
  conjunction of first-order sentences.)

  If $(X_1,\ldots,X_n;R)$ is $\aleph_0$-saturated, then $R$ is broad
  if and only if there are infinite subsets $S_i \subseteq X_i$ such
  that $R \supseteq \prod_{i = 1}^n S_i$.  In other words, $R$ is
  broad iff $R$ contains an
  $\omega$-by-$\omega$-by-$\cdots$-by-$\omega$ grid.
\end{remark}
\begin{fact} \label{broad-narrow}
  Narrow sets form an ideal on $\prod_{i = 1}^n X_i$.  In other words,
  \begin{enumerate}
  \item If $R' \subseteq R$ and $R$ is narrow, then $R'$ is narrow.
  \item If $R, R'$ are narrow, then $R \cup R'$ is narrow.
  \end{enumerate}
  Dually,
  \begin{enumerate}
  \item If $R \subseteq R' \subseteq \prod_{i = 1}^n X_i$ and $R$ is
    broad, then $R'$ is broad.
  \item If $R, R' \subseteq \prod_{i = 1}^n X_i$ and $R \cup R'$ is
    broad, then $R$ or $R'$ is broad.
  \end{enumerate}
\end{fact}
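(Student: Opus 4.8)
The plan is to dispose of the two ``monotonicity'' halves immediately and then concentrate on closure of the narrow ideal under binary union, which is the only substantive point; the dual statements about broad sets are just the contrapositives and follow for free.

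\emph{Monotonicity.} If $R' \subseteq R$ and $R$ is narrow, then $R'$ is narrow --- equivalently, if $R'$ is broad then $R$ is broad. This is immediate from the definition: any family of finite subsets $S_i \subseteq X_i$ with $\prod_{i=1}^n S_i \subseteq R'$ also satisfies $\prod_{i=1}^n S_i \subseteq R$, so the grids witnessing broadness of $R'$ witness broadness of $R$.

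\emph{Union.} I would prove the contrapositive: if $R$ and $R'$ are both narrow, then $R \cup R'$ is narrow. Since narrowness is preserved under enlarging the threshold $k$ and under passing to subsets, there is a single $m < \omega$ such that neither $R$ nor $R'$ contains a grid $\prod_{i=1}^n S_i$ with all $|S_i| \ge m$. I would then invoke the finite grid Ramsey theorem (a Gallai--Witt type statement, and a consequence of Hales--Jewett): for every $n, m, r$ there is $N = N(n,m,r)$ such that every $r$-colouring of a grid $\prod_{i=1}^n S_i$ with all $|S_i| \ge N$ admits a monochromatic subgrid $\prod_{i=1}^n T_i$ with $T_i \subseteq S_i$ and $|T_i| = m$. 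Taking $r = 2$, the claim is that $R \cup R'$ contains no grid with all sides $\ge N(n,m,2)$: given such a grid $\prod_{i=1}^n S_i \subseteq R \cup R'$, two-colour each point according to whether it lies in $R$ or in $R' \setminus R$; the resulting monochromatic subgrid of side $m$ lies entirely inside $R$ or entirely inside $R'$, contradicting the choice of $m$. Hence $R \cup R'$ is narrow, and the dual assertions (upward closure of broad sets, and ``$R \cup R'$ broad implies $R$ or $R'$ broad'') are the contrapositives of the two halves just proved.

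The one genuine obstacle is the combinatorial input, the finite grid Ramsey theorem. I would either cite it or prove it by induction on $n$: the base case $n = 1$ is the pigeonhole principle, and the step from $n$ to $n+1$ is an alignment argument --- colour each ``layer'' of the last coordinate, apply the $n$-dimensional case to find a large monochromatic subgrid in each layer, and then pigeonhole over the finitely many possible patterns to force these subgrids to agree across enough layers. It is worth stressing why one cannot shortcut this by passing to an $\aleph_0$-saturated model and two-colouring an \emph{infinite} grid: the infinite product Ramsey statement is false --- e.g.\ colouring $(a,b) \in \omega \times \omega$ by whether $a < b$ has no infinite monochromatic subgrid --- so the ``for all finite $k$'' formulation of broadness, together with the \emph{finitary} Ramsey theorem, is essential.
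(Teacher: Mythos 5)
Your proof is correct, and it takes a genuinely different route from the paper. Where you rely on the finite product (grid) Ramsey theorem---a single $2$-colouring of a large $n$-dimensional grid must contain a monochromatic $m\times\cdots\times m$ subgrid---the paper instead passes to a highly saturated elementary extension, takes an $\omega$-sided grid inside $R\cup R'$, and extracts an indiscernible sequence from the \emph{diagonal} $\{a_i b_i c_i\}_{i<\omega}$; indiscernibility then forces $(\alpha_i,\beta_j,\gamma_k)\in R$ (say) for all $i<j<k$, and one reads off an $m$-sided grid from disjoint blocks of the sequence. Both proofs are, at bottom, Ramsey-theoretic (indiscernible extraction is itself an application of the classical Ramsey theorem), but yours is purely finitary and self-contained once the product Ramsey theorem is granted, whereas the paper's is shorter at the cost of invoking saturation and the standard extraction lemma. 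Your warning that the \emph{infinite} product Ramsey statement is false (the colouring of $\omega\times\omega$ by $a<b$ versus $a\ge b$ has no infinite monochromatic subgrid) is a genuinely useful remark: it explains why one cannot simply pass to an $\aleph_0$-saturated model and apply an infinite pigeonhole directly to the grid, and it is precisely the obstruction that the paper's diagonal-indiscernible trick is designed to circumvent. The inductive sketch you give for the product Ramsey theorem (pigeonhole on layers, then pigeonhole over the finitely many possible subgrid positions and colours) is sound; if you wanted to avoid the combinatorial overhead, the paper's route shows you can substitute ordinary Ramsey plus an ordering trick, but there is nothing wrong with the route you chose.
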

(1) is trivial, and we sketch the proof of (2) in order to be
self-contained:
\begin{proof}[Proof sketch]
  Suppose $R \cup R'$ is broad.  Passing to an elementary extension,
  we may assume that $(X_1,\ldots,X_n;R,R')$ is highly saturated.
  Then $R \cup R'$ contains an $\omega \times \cdots \times \omega$
  grid.  Take $n = 3$ for simplicity.  Then there exist distinct $a_1,
  a_2, \ldots \in X_1$, distinct $b_1, b_2, \ldots \in X_2$, and
  distinct $c_1, c_2, \ldots \in X_3$, such that
  \begin{equation*}
    \{a_1,a_2,\ldots\} \times \{b_1,b_2,\ldots\} \times
    \{c_1,c_2,\ldots\} \subseteq R \cup R'.
  \end{equation*}
  That is, $(a_i,b_j,c_k) \in R \cup R'$ for any $i, j, k$.  Let
  $\{\alpha_i\beta_i\gamma_i\}_{i < \omega}$ be an indiscernible
  sequence extracted from $\{a_ib_ic_i\}_{i < \omega}$.  Then the
  $\alpha_i$ are distinct, the $\beta_i$ are distinct, the $\gamma_i$
  are distinct, and $(\alpha_i,\beta_j,\gamma_k) \in R \cup R'$ for
  any $i < j < k$.  Swapping $R$ and $R'$, we may assume
  $(\alpha_0,\beta_1,\gamma_2) \in R$.  Then
  $(\alpha_i,\beta_j,\gamma_k) \in R$ for any $i < j < k$.  Therefore,
  for any $n$, we have
  \begin{equation*}
    \{\alpha_0,\ldots,\alpha_{n-1}\} \times \{\beta_n,\ldots,\beta_{2n-1}\} \times \{\gamma_{2n}, \ldots, \gamma_{3n-1}\} \subseteq R,
  \end{equation*}
  and $R$ is broad.
\end{proof}
Now fix a monster model $\Mm$ with a t-minimal topology $\tau$.  Say
that $D \subseteq \Mm^n$ is broad or narrow if it is broad or narrow
as a subset of $\underbrace{\Mm \times \cdots \times \Mm}_{\text{$n$
    times}}$.  Say that a formula or small type is broad or narrow if
its set of realizations is broad or narrow.
\begin{fact} \label{bn}
  Let $X \subseteq \Mm^n$ be type-definable.
  \begin{enumerate}
  \item \label{bn1} $X \subseteq \Mm^n$ is broad iff there are sets
    $S_1, \ldots, S_n \subseteq \Mm$ of size $\aleph_0$ such that $X
    \supseteq \prod_{i=1}^n S_i$.
  \item \label{bn2} If $X$ is a small filtered intersection
    $\bigcap_{i \in I} X_i$ of definable sets, then $X$ is broad iff
    every $X_i$ is broad.  Equivalently, a partial $n$-type
    $\Sigma(\bx)$ is broad iff every finite subtype $\Sigma_0(\bx)$ is
    broad.
  \item \label{bn3} If $X$ is type-definable over a small set $C$,
    then $X$ is broad iff $\tp(\ba/C)$ is broad for some $\ba \in X$.
    Equivalently, a partial $n$-type $\Sigma(\bx)$ over $C$ is broad
    iff some completion $p \in S_n(C)$ is broad.
  \item \label{bn4} If $X$ is broad and $Y \subseteq \Mm^m$ is broad,
    then $X \times Y \subseteq \Mm^{n+m}$ is broad.
  \item \label{bn5} Let $\pi^i : \Mm^n \to \Mm^{n-1}$ be the
    coordinate projection onto the coordinates other than the $i$th
    coordinate.  If $\pi^i : X \to \Mm^{n-1}$ has finite fibers, then
    $X$ is narrow.
  \item \label{bn6} If $\tp(\ba\bb/C)$ is broad, then $\tp(\ba/C\bb)$
    is broad.
  \item \label{bn7} If $a \in \Mm^1$, then $\tp(a/C)$ is broad iff $a
    \notin \acl(C)$.
  \end{enumerate}
\end{fact}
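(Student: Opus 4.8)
The plan is to treat part~(\ref{bn1}) as the foundation and to deduce the remaining parts from it, together with saturation of $\Mm$ and one extraction of mutually indiscernible sequences. For~(\ref{bn1}), the direction ``$\Leftarrow$'' is immediate, since a product of infinite sets contains a $k$-by-$\cdots$-by-$k$ grid for every finite $k$. For ``$\Rightarrow$'', write $X = \Sigma(\Mm^n)$ for a small partial type $\Sigma(x_1,\dots,x_n)$ and consider the partial type $\Phi$ in variables $(x_i^j)_{1 \le i \le n,\, j < \omega}$ consisting of the formulas $x_i^j \ne x_i^{j'}$ (for $j \ne j'$) together with $\Sigma(x_1^{j_1},\dots,x_n^{j_n})$ for every index tuple $(j_1,\dots,j_n) \in \omega^n$. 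A finite fragment of $\Phi$ mentions only finitely many variables and finitely many formulas of $\Sigma$, and is realized by any sufficiently large finite grid inside $X$; such a grid exists because $X$ is broad. So $\Phi$ is finitely satisfiable, hence realized in $\Mm$, and a realization produces countably infinite sets $S_1,\dots,S_n$ with $\prod_{i} S_i \subseteq \Sigma(\Mm^n) = X$.

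Parts~(\ref{bn2}), (\ref{bn4}), (\ref{bn5}), (\ref{bn7}) are short. Part~(\ref{bn4}) needs only the definition: if $\prod_{i=1}^n S_i \subseteq X$ and $\prod_{j=1}^m T_j \subseteq Y$ with all factors of size $\ge k$, then $\prod_i S_i \times \prod_j T_j \subseteq X \times Y$ witnesses broadness at level $k$. For~(\ref{bn5}), if $X$ is broad then by~(\ref{bn1}) it contains $\prod_i S_i$ with each $S_i$ countably infinite, and fixing the coordinates other than $i$ at elements of the corresponding $S_\ell$ exhibits an infinite fiber of $\pi^i$, contradicting the hypothesis. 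Part~(\ref{bn7}) is the observation that ``broad'' and ``narrow'' mean ``infinite'' and ``finite'' for $1$-types, together with the classical fact that $\tp(a/C)$ has finitely many realizations iff $a \in \acl(C)$. Part~(\ref{bn2}): ``$\Rightarrow$'' is monotonicity (Fact~\ref{broad-narrow}), and for ``$\Leftarrow$'' one shows, for each $k$, that the partial type asserting the existence of a $k$-grid lying inside every $X_i$ is finitely satisfiable---a finite fragment involves only finitely many $X_i$, whose intersection (by directedness) contains some broad $X_{i_0}$---so the $k$-grid is realized in $\Mm$; the reformulation for partial types is the case where the $X_i$ are the realization sets of the finite subtypes of $\Sigma$. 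Part~(\ref{bn6}): by~(\ref{bn1}), broadness of $\tp(\ba\bb/C)$ gives a countable grid $\prod_{i=1}^n U_i \times \prod_{j=1}^m V_j$ inside $\tp(\ba\bb/C)(\Mm)$; fixing $\bar c \in \prod_j V_j$, any two points of $\prod_i U_i$ are conjugate over $C\bar c$, so $\prod_i U_i$ lies in a single type $q \in S_n(C\bar c)$, which is broad by~(\ref{bn1}); since a representative pair $(\bar u_0, \bar c)$ is conjugate to $\ba\bb$ over $C$, transporting $q$ by an automorphism of $\Mm$ fixing $C$ and using that broadness is conjugation-invariant shows $\tp(\ba/C\bb)$ is broad.

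The remaining and hardest part is~(\ref{bn3}): passing from ``$X$ is broad'' to ``some complete type over $C$ inside $X$ is broad''. The subtlety is that $X$ is the union of the sets $p(\Mm)$ for $p \in S_n(C)$, each possibly narrow, and a union of narrow sets can be broad (already $\Mm^n$ is a union of singletons), so one cannot simply read a type off. The approach: by~(\ref{bn1}), fix countably infinite $S_1,\dots,S_n$ with $\prod_i S_i \subseteq X$, enumerate each $S_i$ as a sequence $I_i$, and extract sequences $J_1,\dots,J_n$ that are mutually indiscernible over $C$ and based on $I_1,\dots,I_n$ in the usual sense: for any finitely many indices and any finitely many entries from the corresponding $J$'s, the resulting tuple has the same type over $C$ as one assembled from the corresponding $I$'s. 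Mutual indiscernibility then forces every transversal $(t_1,\dots,t_n)$ with $t_i \in J_i$ to realize one fixed complete type $p \in S_n(C)$; and such a transversal is conjugate over $C$ to a point of $\prod_i S_i \subseteq X$, so $p \vdash X$. Finally $p$ is broad by~(\ref{bn1}) applied to the product of the (countably infinite, pairwise distinct) entries of the $J_i$, which lies inside $p(\Mm)$. The ``equivalently'' clause is just the translation into completions. The main obstacle is checking that the standard extraction of mutually indiscernible sequences really supplies the ``based on'' property in the form needed to keep transversals inside $X$ (and that the $J_i$ can be taken non-constant); once that is in hand, the rest is bookkeeping.
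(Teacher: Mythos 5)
Your proposal is correct, but part~(\ref{bn3}) takes a genuinely different route from the paper. The paper's argument is short and order-theoretic: among broad partial types over $C$ extending $\Sigma(\bx)$, Zorn's lemma (via part~(\ref{bn2}), which shows an increasing union of broad partial types is broad) produces a maximal one $p$, and because narrow sets form an ideal (Fact~\ref{broad-narrow}), maximality forces $p$ to be complete---if $\phi, \neg\phi$ were both undecided, one of $p \cup \{\phi\}$, $p \cup \{\neg\phi\}$ would still be broad. Your approach instead extracts mutually $C$-indiscernible sequences $J_1,\dots,J_n$ based on enumerations of $S_1,\dots,S_n$, then observes that mutual indiscernibility forces every transversal to realize a single complete type over $C$, which lies in $X$ by the based-on property and is broad by~(\ref{bn1}). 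Both are correct; the Zorn's lemma route is shorter and needs no machinery beyond what you already prove, while your route is more constructive, directly exhibiting a grid inside the completion, at the cost of importing the extraction-of-mutually-indiscernible-sequences lemma. The concern you flag at the end is not a real obstacle: the standard extraction guarantees that any formula satisfied by a finite increasing tuple from the $J_i$ is satisfied by a corresponding increasing tuple from the $I_i$, so, taking contrapositives, any property holding for all $I$-transversals (such as satisfying some $\sigma \in \Sigma$, or pairwise distinctness within a column) persists for all $J$-transversals. The remaining parts match the paper's sketch in substance: (\ref{bn1}) by compactness/saturation, (\ref{bn2}) by finite satisfiability of grids, (\ref{bn4}), (\ref{bn5}), (\ref{bn7}) directly from (\ref{bn1}), and (\ref{bn6}) by aligning $\bb$ with the grid via an automorphism over $C$.
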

Again, we sketch the proof:
\begin{proof}[Proof sketch]
  Parts (\ref{bn1}) and (\ref{bn2}) hold by compactness or saturation.
  In Part (\ref{bn3}), if $\Sigma(\bx)$ is a broad partial type over
  $C$, then we can find a maximal broad partial type $p(\bx) \supseteq
  \Sigma(\bx)$ over $C$, by Zorn's lemma and Part (\ref{bn2}).
  Because narrow sets are an ideal, $p(\bx)$ must be a complete type
  over $C$.  Parts (\ref{bn4}) and (\ref{bn5}) follow easily from Part
  (\ref{bn1}).

  For Part (\ref{bn6}), the fact that $\tp(\ba\bb/C)$ is broad means
  that there are infinite sets $S_1,\ldots,S_n$ and $U_1,\ldots,U_m$ such
  that every $(\be,\barf) \in \prod_{i=1}^n S_i \times \prod_{i=1}^m
  U_m$ realizes $\tp(\ba\bb/C)$.  Moving by an automorphism, we may
  assume $\bb \in \prod_{i=1}^m U_m$.  For any $\be \in \prod_{i=1}^n
  S_i$, we have $(\ba,\bb) \in \prod_{i=1}^n S_i \times \prod_{i=1}^m
  U_m$, so $\be\bb \equiv_C \ba\bb$, or equivalently, $\be \models
  \tp(\ba/C\bb)$.  Then the product $\prod_{i=1}^n S_i$ shows
  $\tp(\ba/C\bb)$ is large.

  Finally, Part (\ref{bn7}) is clear.
\end{proof}

\subsection{Top dimension} \label{td:sec}
Fix a monster model $\Mm$ of a complete theory $T$, t-minimal with
respect to some fixed definable topology $\tau$.  By t-minimality,
every definable set $D \subseteq \Mm$ with empty interior is finite.
In particular, the boundary $\bd(D)$ of any definable $D \subseteq
\Mm$ is finite, because it's the union of the two definable sets
$\overline{D} \setminus D$ and $D \setminus \ter(D)$, both of which
lack interior.

\begin{definition}
  If $X \subseteq \Mm^n$ is a set and $1 \le i \le n$, then $\ter_i X$
  denotes the interior of $X$ along the $i$th coordinate axis: a tuple
  $(a_1,\ldots,a_n)$ is in $\ter_i X$ if and only if there is a
  neighborhood $U$ of $a_i$ such that
  \begin{equation*}
    \forall x \in U : (a_1,\ldots,a_{i-1},x,a_{i+1},\ldots,a_n) \in X.
  \end{equation*}
\end{definition}
\begin{lemma} \label{orangutan}
  If $D$ is broad, then $\ter_i D$ is broad.
\end{lemma}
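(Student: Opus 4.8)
The plan is to exhibit a point $\ba \in D$ whose type over a suitable base is broad, show that any such point automatically lies in $\ter_i D$, and conclude with Fact~\ref{bn}(\ref{bn3}). Permuting coordinates, we may assume $i = n$. Fix a small set $C$ over which both $D$ and the topology $\tau$ are defined. Then $\ter_n D$ is again $C$-definable: the condition ``$(a_1,\dots,a_n) \in \ter_n D$'' unwinds to ``there is a basic open $B \ni a_n$ such that $(a_1,\dots,a_{n-1},y) \in D$ for all $y \in B$,'' which is definable over $C$ using the definable basis of $\tau$.

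Since $D$ is broad, Fact~\ref{bn}(\ref{bn3}) provides some $\ba = (a_1,\dots,a_n) \in D$ with $\tp(\ba/C)$ broad. Put $\ba' = (a_1,\dots,a_{n-1})$ and let $D_{\ba'} = \{x \in \Mm : (\ba',x) \in D\}$, a $C\ba'$-definable subset of $\Mm^1$ containing $a_n$. By Fact~\ref{bn}(\ref{bn6}) applied to $\tp(\ba/C)$ (broadness is invariant under permuting coordinates), $\tp(a_n/C\ba')$ is broad, so $a_n \notin \acl(C\ba')$ by Fact~\ref{bn}(\ref{bn7}). On the other hand, $\bd(D_{\ba'})$ is finite by t-minimality (it is the union of the two definable sets $\overline{D_{\ba'}} \setminus D_{\ba'}$ and $D_{\ba'} \setminus \ter(D_{\ba'})$, both of which have empty interior), and it is $C\ba'$-definable, hence contained in $\acl(C\ba')$. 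Therefore $a_n \notin \bd(D_{\ba'})$. Since $a_n \in D_{\ba'} \subseteq \overline{D_{\ba'}}$ and $\bd(D_{\ba'}) = \overline{D_{\ba'}} \setminus \ter(D_{\ba'})$, this forces $a_n \in \ter(D_{\ba'})$, i.e.\ $\ba \in \ter_n D$.

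So $\ter_n D$ is a $C$-definable set containing a point $\ba$ with $\tp(\ba/C)$ broad, and it is broad by Fact~\ref{bn}(\ref{bn3}). There is no serious obstacle here; the one point worth isolating is the observation that in a t-minimal theory the boundary of a one-variable definable set is algebraic over any parameters defining it, which is precisely what allows a broad-generic point of $D$ to avoid the boundary of its fiber. One could instead argue by hand, building a grid inside $\ter_n D$ out of a grid inside $D$, but routing through Fact~\ref{bn} is cleaner.
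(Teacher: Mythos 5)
Your argument is correct, but it takes a different route from the paper's. The paper's proof is shorter and set-theoretic: it observes that $\pi^i : D \setminus \ter_i D \to \Mm^{n-1}$ has finite fibers (the fiber over $\ba'$ is $D_{\ba'} \setminus \ter(D_{\ba'}) \subseteq \bd(D_{\ba'})$, which is finite by t-minimality), so $D \setminus \ter_i D$ is narrow by Fact~\ref{bn}(\ref{bn5}); since narrow sets form an ideal and $D$ is broad, $\ter_i D$ must be broad. Your proof is the pointwise version of the same observation: you take a broad-generic $\ba \in D$ over parameters $C$ defining $D$, use Fact~\ref{bn}(\ref{bn6},\ref{bn7}) to get $a_n \notin \acl(C\ba')$, note that the finite boundary $\bd(D_{\ba'})$ is contained in $\acl(C\ba')$, and conclude $a_n \in \ter(D_{\ba'})$, hence $\ba \in \ter_i D$, hence $\ter_i D$ is broad by Fact~\ref{bn}(\ref{bn3}). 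Both arguments turn on t-minimality forcing the one-variable fiber boundary to be finite; the paper routes through the narrow-ideal machinery, while you route through generic-point avoidance of an algebraic set. The paper's version avoids the need to check that $\ter_i D$ is $C$-definable (which is fine, but an extra verification), and avoids invoking parts (\ref{bn6}) and (\ref{bn7}) of Fact~\ref{bn}. Your version arguably makes the role of ``algebraicity of the boundary'' more visible, which is a nice observation, but at the cost of a longer argument.
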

\begin{proof}
  Let $\pi^i : \Mm^n \to \Mm^{n-1}$ be the projection which forgets
  the $i$th coordinate.  Then $\pi^i : D \setminus \ter_i D \to
  \Mm^{n-1}$ has finite fibers by t-minimality, and so $D \setminus
  \ter_i D$ is narrow.  Then $\ter_i D$ must be broad.
\end{proof}
\begin{proposition} \label{triad}
  If $D \subseteq \Mm^n$ is definable, then the following are
  equivalent:
  \begin{enumerate}
  \item $\ter D \ne \varnothing$.
  \item $D$ is broad.
  \item $\ter_1 \ter_2 \ter_3 \cdots \ter_n D \ne \varnothing$.
  \end{enumerate}
\end{proposition}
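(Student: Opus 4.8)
The plan is to prove the three conditions equivalent by induction on $n$, freely invoking the full equivalence in arity $n-1$ while handling arity $n$. The base case $n=1$ is nothing but t-minimality: $\ter_1 D=\ter D$, and for $D\subseteq\Mm^1$ the statements ``$\ter D\neq\varnothing$'', ``$D$ infinite'', ``$D$ broad'' coincide. For $n\geq 2$ I would run the cycle $(1)\Rightarrow(2)\Rightarrow(3)\Rightarrow(1)$. The first two steps are soft. If $\ter D\neq\varnothing$ then $D$ contains a basic open box $B_1\times\cdots\times B_n$; each $B_i$ is a nonempty open set, hence infinite by t-minimality, so $D\supseteq\prod_i B_i$ is broad by Fact~\ref{bn}(\ref{bn1}). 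If $D$ is broad then Lemma~\ref{orangutan} makes $\ter_n D$ broad, a second application makes $\ter_{n-1}\ter_n D$ broad, and iterating makes $\ter_1\cdots\ter_n D$ broad, in particular nonempty. Running the same two ingredients in reverse --- $\ter_i D\subseteq D$, and broadness is upward closed by Fact~\ref{broad-narrow} --- shows conversely that $D$ is broad whenever $\ter_1\cdots\ter_n D$ is; I would keep on hand this equivalence ``$D$ broad $\iff\ter_1\cdots\ter_n D$ broad''.

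The substance is $(3)\Rightarrow(1)$. First I would record a slicing identity: for definable $X\subseteq\Mm^n$ and $x_1\in\Mm$, the slice $\{(y_2,\dots,y_n):(x_1,y_2,\dots,y_n)\in\ter_2\cdots\ter_n X\}$ equals $\ter_1\cdots\ter_{n-1}(X_{x_1})$, where $X_{x_1}\subseteq\Mm^{n-1}$ is the fiber over $x_1$ and the interiors on the right are computed in $\Mm^{n-1}$ --- this is pure bookkeeping, using that $\ter_j$ in a coordinate $j\geq 2$ commutes with fixing the first coordinate. Now take $p=(p_1,\dots,p_n)\in\ter_1\cdots\ter_n D$. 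Unravelling the outermost $\ter_1$ produces an open $B\ni p_1$ with $B\times\{(p_2,\dots,p_n)\}\subseteq\ter_2\cdots\ter_n D$, so by the slicing identity $(p_2,\dots,p_n)\in\ter_1\cdots\ter_{n-1}(D_{x_1})$ for every $x_1\in B$; hence, by the inductive hypothesis, every slice $D_{x_1}$ with $x_1\in B$ is broad. To finish, it is enough to find an open box inside $D$, and for this I would pass to the definable set $F:=\{(y_2,\dots,y_n):B\times\{(y_2,\dots,y_n)\}\subseteq\ter_2\cdots\ter_n D\}=\bigcap_{x_1\in B}(\ter_2\cdots\ter_n D)_{x_1}\subseteq\Mm^{n-1}$, which is nonempty because it contains $(p_2,\dots,p_n)$. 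If $F$ is broad, the inductive hypothesis gives an open box $B_2\times\cdots\times B_n\subseteq\ter(F)$, and then $B\times B_2\times\cdots\times B_n\subseteq\ter_2\cdots\ter_n D\subseteq D$, so $\ter D\neq\varnothing$.

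Thus everything reduces to the gluing step ``$F$ is broad'', and this is the part I expect to be the main obstacle. It is a uniformization statement: $F$ records the common part of all the slicewise interiors $(\ter_2\cdots\ter_n D)_{x_1}$, $x_1\in B$, and one must check that enough of it survives. A naive fiberwise argument cannot work, since ``all fibers over an infinite base are broad'' does not imply ``broad'' in general (a space-filling function produces a counterexample), so the argument has to use t-minimality in one variable in an essential way, together with the broad/narrow calculus of Section~\ref{sec:bs} and the equivalence ``$D$ broad $\iff\ter_1\cdots\ter_n D$ broad'' recorded above. The route I would pursue is to exploit that $\ter_1\cdots\ter_n D$, being $\ter_1$ of something, is open in its first coordinate, so its first-coordinate fibers are open subsets of $\Mm$ and are built as directed unions of sets of the form $F(B')=\bigcap_{x_1\in B'}(\ter_2\cdots\ter_n D)_{x_1}$; one then argues that narrowness of $F$ propagates to narrowness of $\ter_1\cdots\ter_n D$ itself, contradicting its being a nonempty iterated interior via one-variable t-minimality. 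Making this precise --- ruling out a nonempty but narrow iterated interior --- is where the real work of the proposition lies.
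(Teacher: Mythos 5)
Your proof of $(1)\Rightarrow(2)$, $(2)\Rightarrow(3)$, and the observation that $D$ is broad iff $\ter_1\cdots\ter_n D$ is broad are all correct and match the paper's soft steps. The gap is in $(3)\Rightarrow(1)$, exactly where you flag it, and I do not think your route to closing it works.

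The paper slices on the \emph{last} coordinate, commuting $\slice_b$ past $\ter_1,\ldots,\ter_{n-1}$ to isolate $D':=\slice_b\ter_n D$, which the inductive hypothesis makes broad. The reason this works is that $\ter_n$, being \emph{innermost}, is an honest topological condition: for each $\bc\in D'$ one gets an actual neighborhood $U_{\bc}$ of the fixed anchor $b=p_n$ with $\{\bc\}\times U_{\bc}\subseteq D$. Since $D'$ contains a \emph{small} product $\prod S_i$, saturation produces a single $U\ni b$ working for all $\bc\in\prod S_i$, and the definable set $X=\{\bc:\{\bc\}\times U\subseteq D\}$ is then broad, with induction finishing things. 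Your version peels off the \emph{outermost} $\ter_1$ and slices on the first coordinate. After doing so you know, for each $x_1\in B$, only that $(p_2,\ldots,p_n)$ lies in the $(n-1)$-fold \emph{iterated directional interior} of $D_{x_1}$. That is strictly weaker than lying in the interior: it provides no neighborhood of $(p_2,\ldots,p_n)$ inside $D_{x_1}$, and the inductive hypothesis only tells you $D_{x_1}$ has a nonempty interior \emph{somewhere}, with no control over where. There is therefore no anchored family of neighborhoods to uniformize, and there is no small index set over which to invoke saturation --- $B$ is an open set, which is large. That is precisely why $F(B)=\bigcap_{x_1\in B}(\ter_2\cdots\ter_n D)_{x_1}$ can very well fail to be broad, and why shrinking $B$ is not a move you are in a position to make before you already know $D$ contains a box.

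The fallback you sketch does not close the gap either. Writing $\ter_1\cdots\ter_n D=\bigcup_{B'}B'\times F(B')$ and hoping that narrowness of every $F(B')$ propagates to narrowness of the union is an appeal to the ``filtered union'' behaviour of dimension (Corollary~\ref{baire} in the paper), but that corollary is a downstream consequence of Proposition~\ref{triad} --- invoking it here is circular. Even setting circularity aside, the union is over the (large) family of all open sets, so the ideal property of narrow sets gives you nothing directly, and the final appeal to ``one-variable t-minimality'' to rule out a nonempty-but-narrow iterated interior is exactly the content of the proposition you are trying to prove. The fix is to slice on the last coordinate as in the paper's proof, so that the innermost $\ter_n$ hands you neighborhoods of a fixed point and saturation over a small product can do the uniformization.
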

\begin{proof}
  Proceed by induction on $n$.  When $n = 1$, conditions (1) and (3)
  are identical, and t-minimality gives (1)$\iff$(2).  Suppose $n >
  1$.
  \begin{description}
  \item[$(1)\implies(2)$:] If $\ter D \ne \varnothing$, then $D$
    contains a product $B_1 \times \cdots \times B_n$ where each $B_i$
    is a non-empty basic open set in $\Mm$.  Then $D$ is broad.
  \item[$(2)\implies(3)$:] By $n$ repeated applications of
    Lemma~\ref{orangutan}, the set $\ter_1 \ter_2 \cdots \ter_n D$ is
    broad, hence non-empty.
  \item[$(3)\implies(1)$:] If $X \subseteq \Mm^n$ and $b \in \Mm$, let
    $\slice_b X$ denote $\{\ba \in \Mm^{n-1} : (\ba,b) \in X\}$.  Note
    that $\slice_b$ commutes with $\ter_i$ for $i < n$, in the sense
    that
    \begin{equation*}
      \slice_b \ter_i X = \ter_i \slice_b X \text{ for $1 \le i < n$
        and $b \in \Mm$ and $X \subseteq \Mm^n$.}
    \end{equation*}
    Take some $(a_1,\ldots,a_{n-1},b) \in \ter_1 \ter_2 \cdots \ter_n
    D$.  Then
    \begin{align*}
      \ba = (a_1,\ldots,a_{n-1}) &\in \slice_b \ter_1 \ter_2 \cdots \ter_n D \\
      &= \ter_1 \slice_b \ter_2 \cdots \ter_n D \\
      &= \cdots \\
      &= \ter_1 \ter_2 \cdots \ter_{n-1} \slice_b \ter_n D.
    \end{align*}
    Letting $D' = \slice_b \ter_n D \subseteq \Mm^{n-1}$, we see that
    \begin{equation*}
      \ter_1 \ter_2 \cdots \ter_{n-1} D' \ne \varnothing.
    \end{equation*}
    By induction (specifically $(3)\implies(2)$), $D'$ is broad.
    Therefore $D'$ contains a product $\prod_{i=1}^{n-1} S_i$ for some
    subsets $S_1,\ldots,S_{n-1} \subseteq \Mm$ of size $\aleph_0$.
    For every $\bc \in \prod_{i=1}^{n-1} S_i$, we have
    \begin{gather*}
      \bc \in D' = \slice_b \ter_n D \\
      (\bc,b) \in \ter_n D.
    \end{gather*}
    So there is a basic neighborhood $U_{\bc} \ni b$ such that
    $\{\bc\} \times U_{\bc} \subseteq D$.  By saturation, there is a
    basic neighborhood $U \ni b$ such that $U \subseteq U_{\bc}$ for
    every $\bc$ in the small set $\prod_{i=1}^{n-1} S_i$.  Then
    \begin{equation*}
      \{\bc\} \times U \subseteq \{\bc\} \times U_{\bc} \subseteq D, \qquad \qquad
      \text{for any } \bc \in \prod_{i=1}^{n-1} S_i.
    \end{equation*}
    Let $X \subseteq \Mm^{n-1}$ be the definable set of $\bc \in
    \Mm^{n-1}$ such that $\{\bc\} \times U \subseteq D$.  Then
    \begin{equation*}
      \prod_{i=1}^{n-1} S_i \subseteq X.
    \end{equation*}
    Therefore $X$ is broad.  By induction (specifically
    $(2)\implies(1)$), $X$ has non-empty interior, containing a
    non-empty open set $V$.  For any $\bc \in V$, we have $\bc \in X$
    and so $\{\bc\} \times U \subseteq D$.  In other words, $V \times
    U \subseteq D$, and so $\ter(D) \ne \varnothing$.  \qedhere
  \end{description}
\end{proof}

\begin{corollary} \label{cor-ideal}
  The collection of definable sets in $\Mm^n$ with empty interior is
  an ideal: if $D_1, D_2$ are definable sets with empty interior, then
  $D_1 \cup D_2$ has empty interior.
\end{corollary}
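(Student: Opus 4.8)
The plan is to deduce this immediately from the equivalence ``$\ter D \ne \varnothing \iff D$ is broad'' established in Proposition~\ref{triad}, together with the fact that narrow sets form an ideal (Fact~\ref{broad-narrow}).

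Concretely: suppose $D_1, D_2 \subseteq \Mm^n$ are definable with empty interior. By Proposition~\ref{triad} (applied to each $D_j$), the hypothesis $\ter D_j = \varnothing$ is equivalent to $D_j$ being narrow. By the ideal property of narrow sets (Fact~\ref{broad-narrow}(2) in its ``narrow'' formulation: if $R, R'$ are narrow then $R \cup R'$ is narrow), the union $D_1 \cup D_2$ is narrow. Since $D_1 \cup D_2$ is definable, Proposition~\ref{triad} applies again and gives $\ter(D_1 \cup D_2) = \varnothing$. (One could also note the downward part, that a definable subset of a set with empty interior has empty interior, but this is immediate from monotonicity of the interior operator and does not even need t-minimality; the content of the corollary is the union statement.)

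There is essentially no obstacle here, since all the work was done in Proposition~\ref{triad}; the only thing to be careful about is that Fact~\ref{broad-narrow} is stated for subsets of a fixed product $\prod_{i=1}^n X_i$, so one should recall the convention fixed just before Fact~\ref{bn} that ``broad/narrow'' for $D \subseteq \Mm^n$ means broad/narrow as a subset of $\Mm \times \cdots \times \Mm$ ($n$ times), which is exactly the setting of Fact~\ref{broad-narrow}.

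\begin{proof}
  If $D_1, D_2 \subseteq \Mm^n$ are definable with empty interior, then by
  Proposition~\ref{triad} both $D_1$ and $D_2$ are narrow. By
  Fact~\ref{broad-narrow}, narrow sets form an ideal, so $D_1 \cup D_2$ is
  narrow. Applying Proposition~\ref{triad} once more to the definable set
  $D_1 \cup D_2$, we conclude $\ter(D_1 \cup D_2) = \varnothing$.
\end{proof}
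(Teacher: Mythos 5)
Your proof is correct and is exactly the paper's argument: the paper's one-line proof (``Narrow sets are an ideal'') implicitly invokes the equivalence ``empty interior $\iff$ narrow'' from Proposition~\ref{triad} together with Fact~\ref{broad-narrow}, which you have simply spelled out.
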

\begin{proof}
  Narrow sets are an ideal.
\end{proof}
\begin{corollary} \label{small-boundary-0}
  If $D \subseteq \Mm^n$ is definable, then $\bd(D)$ has empty
  interior.
\end{corollary}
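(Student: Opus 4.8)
The plan is to split $\bd(D)$ into two definable pieces, verify that each has empty interior by a purely topological argument, and then combine them using the ideal property of Corollary~\ref{cor-ideal}. Concretely, since $\ter(D) \subseteq D \subseteq \overline{D}$, we have
\begin{equation*}
  \bd(D) = \overline{D} \setminus \ter(D) = (\overline{D} \setminus D) \cup (D \setminus \ter(D)) = \partial D \cup (D \setminus \ter(D)).
\end{equation*}
Both sets on the right are definable: $D \setminus \ter(D)$ clearly is, and $\overline{D}$ is definable because $\tau$ has a definable basis of opens, so $\partial D$ is too.

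Next I would check that $D \setminus \ter(D)$ has empty interior: if $V$ were a non-empty open subset of $D \setminus \ter(D)$, then $V$ would be an open set contained in $D$, hence $V \subseteq \ter(D)$, contradicting $V \cap \ter(D) = \varnothing$. Similarly $\partial D = \overline{D} \setminus D$ has empty interior: a non-empty open $V \subseteq \partial D$ would contain a point $p \in \overline{D}$, and since $V$ is itself a neighborhood of $p$, we would need $V \cap D \neq \varnothing$, contradicting $V \subseteq \overline{D} \setminus D$. Neither of these two observations uses t-minimality — they hold in any topological space.

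Finally, apply Corollary~\ref{cor-ideal}: the collection of definable subsets of $\Mm^n$ with empty interior is an ideal, so the union of the two definable empty-interior pieces, which is exactly $\bd(D)$, again has empty interior. I do not expect any real obstacle here; this is a short corollary. The only point worth flagging is that the ideal property (ultimately Proposition~\ref{triad}) is genuinely needed for the last step: in a general topological space a union of two nowhere dense sets need not be nowhere dense, so it is the interaction between definability and t-minimality, rather than topology alone, that makes the conclusion go through.
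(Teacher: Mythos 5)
Your proof is correct and follows the same route as the paper's: decompose $\bd(D)$ into the two definable sets $D \setminus \ter(D)$ and $\overline{D} \setminus D = \partial D$, observe that each has empty interior by an elementary topological argument, and conclude via the ideal property from Corollary~\ref{cor-ideal}. The paper states this more tersely but the argument is the same.
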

\begin{proof}
  $\bd(D)$ is a union of the two definable sets $D \setminus \ter(D)$
  and $\overline{D} \setminus D$, which each have empty interior.
\end{proof}
\begin{corollary} \label{small-closure}
  If $D \subseteq \Mm^n$ is narrow (or equivalently, has empty
  interior), then $\overline{D}$ is narrow (or equivalently, has empty
  interior).
\end{corollary}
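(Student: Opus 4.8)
The plan is to reduce immediately to Corollary~\ref{small-boundary-0}, using the dictionary ``narrow $\iff$ empty interior'' supplied by Proposition~\ref{triad}. For a definable $D \subseteq \Mm^n$, Proposition~\ref{triad} gives that $D$ is broad iff $\ter(D) \ne \varnothing$, so $D$ is narrow precisely when $\ter(D) = \varnothing$; in particular the two formulations in the statement are equivalent, and the same equivalence applies to $\overline{D}$ (which is definable when $D$ is, in a monster model).

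Assume then that $D$ is narrow, i.e.\ $\ter(D) = \varnothing$. The only point to check is the set-theoretic identity $\bd(D) = \overline{D}$: by definition $\bd(D) = \overline{D} \setminus \ter(D)$, and with $\ter(D) = \varnothing$ this collapses to $\bd(D) = \overline{D}$. Now Corollary~\ref{small-boundary-0} tells us that $\bd(D)$ has empty interior, hence $\overline{D}$ has empty interior, hence $\overline{D}$ is narrow. That is the whole argument.

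There is essentially no obstacle here: the content has already been extracted in Proposition~\ref{triad} and Corollary~\ref{small-boundary-0}, and the corollary is just the observation that when $D$ has no interior its closure \emph{is} its boundary. The one thing worth a word of caution is that $\overline{D}$ must be a definable set for ``narrow'' and ``empty interior'' to be interchangeable via Proposition~\ref{triad}; this is automatic since definable closures are definable, but it is why the statement is phrased for definable (rather than arbitrary) $D$.
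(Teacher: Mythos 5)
Your proof is correct. The paper's own argument is the one-liner $\overline{D} = D \cup \bd(D)$: since $D$ is narrow by hypothesis and $\bd(D)$ has empty interior by Corollary~\ref{small-boundary-0}, the union is narrow because narrow sets form an ideal (Fact~\ref{broad-narrow}). You take a marginally different route: you observe that when $\ter(D) = \varnothing$ the identity $\bd(D) = \overline{D} \setminus \ter(D)$ collapses to the equality $\bd(D) = \overline{D}$, so $\overline{D}$ has empty interior directly from Corollary~\ref{small-boundary-0} with no need to invoke the ideal property at all. This is a genuine (if tiny) simplification — your version uses one fewer ingredient — and both proofs hinge on the same key input, namely Corollary~\ref{small-boundary-0}. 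Your closing remark about $\overline{D}$ needing to be definable for Proposition~\ref{triad} to apply is a sensible thing to note, and indeed the corollary is stated for definable $D$ for exactly this sort of reason.
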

\begin{proof}
  $\overline{D} = D \cup \bd(D)$.
\end{proof}
Recall that the naive topological dimension $d_t(X)$ of a definable
set $X$ is the maximum $k$ such that there is a coordinate projection
$\pi : \Mm^n \to \Mm^k$ such that the image $\pi(X)$ has non-empty
interior.
\begin{corollary} \label{dt-union-max}
  $d_t(X \cup Y) = \max(d_t(X),d_t(Y))$.
\end{corollary}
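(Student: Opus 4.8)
The plan is to unwind the definition of $d_t$ and reduce everything to Corollary~\ref{cor-ideal}, which says that sets with empty interior form an ideal in each $\Mm^n$. First I would establish the inequality $d_t(X \cup Y) \ge \max(d_t(X), d_t(Y))$, which is immediate: if $\pi : \Mm^n \to \Mm^k$ is a coordinate projection with $\ter(\pi(X)) \ne \varnothing$, then $\pi(X) \subseteq \pi(X \cup Y)$, so $\ter(\pi(X \cup Y)) \ne \varnothing$ as well, giving $d_t(X \cup Y) \ge k$; and symmetrically for $Y$.

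For the reverse inequality, suppose $d_t(X \cup Y) = k$, witnessed by a coordinate projection $\pi : \Mm^n \to \Mm^k$ with $\ter(\pi(X \cup Y)) \ne \varnothing$. Since $\pi(X \cup Y) = \pi(X) \cup \pi(Y)$, the set $\pi(X) \cup \pi(Y)$ has non-empty interior. By Corollary~\ref{cor-ideal} (applied in $\Mm^k$), the definable sets with empty interior form an ideal, so $\pi(X) \cup \pi(Y)$ cannot have empty interior only if at least one of $\pi(X)$, $\pi(Y)$ has non-empty interior. Hence $\ter(\pi(X)) \ne \varnothing$ or $\ter(\pi(Y)) \ne \varnothing$, so $d_t(X) \ge k$ or $d_t(Y) \ge k$, giving $\max(d_t(X), d_t(Y)) \ge k = d_t(X \cup Y)$.

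Combining the two inequalities yields $d_t(X \cup Y) = \max(d_t(X), d_t(Y))$. I do not anticipate any real obstacle here: the only subtlety is making sure that $X$ and $Y$ are taken to live in the same ambient space $\Mm^n$ (if they do not, one can pad with extra coordinates, or simply note that $d_t$ of a set in $\Mm^n$ does not change when the set is viewed inside a larger product via a section, since coordinate projections onto the relevant sub-tuples still witness the same value), and that the coordinate projection $\pi$ achieving the maximum for $X \cup Y$ is then available to test $X$ and $Y$ individually. The essential content is entirely carried by Corollary~\ref{cor-ideal}.
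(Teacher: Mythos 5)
Your proof is correct and matches the paper's argument: both directions reduce to the observation that $\pi(X \cup Y) = \pi(X) \cup \pi(Y)$, combined with Corollary~\ref{cor-ideal} (sets with empty interior form an ideal) to get the nontrivial inequality. The paper states this a bit more tersely but the substance is identical.
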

\begin{proof}
  If $\pi : \Mm^n \to \Mm^k$ is a coordinate projection, then $\pi(X
  \cup Y)$ has non-empty interior iff $\pi(X)$ or $\pi(Y)$ has
  non-empty interior.
\end{proof}
\begin{corollary} \label{notop}
  The ideal of sets with empty interior does not depend on the
  topology $\tau$.  Neither does the naive topological dimension
  $d_t(X)$ of definable sets $X$.  More precisely, if $\tau'$ is
  another t-minimal definable topology on $\Mm$, and $X \subseteq
  \Mm^n$ is definable, then,
  \begin{enumerate}
  \item $X$ has non-empty interior with respect to $\tau$ iff $X$ has
    non-empty interior with respect to $\tau'$.
  \item The naive topological dimension $d_t(X)$ with respect to
    $\tau$ equals the naive topology dimension $d_t(X)$ with respect
    to $\tau'$.
  \end{enumerate}
\end{corollary}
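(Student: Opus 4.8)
The statement to prove is Corollary~\ref{notop}: the ideal of sets with empty interior, and hence the naive topological dimension $d_t$, does not depend on the choice of t-minimal definable topology $\tau$. The key observation is that Proposition~\ref{triad} gives a characterization of ``$D \subseteq \Mm^n$ has non-empty interior'' that is purely combinatorial: $D$ has non-empty interior if and only if $D$ is broad. Broadness is a property of $D$ as a subset of $\Mm \times \cdots \times \Mm$ — it refers only to the existence of large grids $\prod S_i \subseteq D$ — and makes no reference whatsoever to the topology $\tau$. So the plan is simply to invoke Proposition~\ref{triad} for each of the two topologies and compare.

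In detail, for part (1): let $\tau$ and $\tau'$ both be t-minimal definable topologies on $\Mm$, and let $X \subseteq \Mm^n$ be definable. By Proposition~\ref{triad} applied to $\tau$, $X$ has non-empty $\tau$-interior iff $X$ is broad. By Proposition~\ref{triad} applied to $\tau'$, $X$ has non-empty $\tau'$-interior iff $X$ is broad. Since the middle condition (broadness) is the same in both cases, the two notions of ``non-empty interior'' coincide. Consequently the ideal of definable sets in $\Mm^n$ with empty interior is the same for $\tau$ and $\tau'$; it is just the ideal of narrow sets, which was defined without reference to any topology.

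For part (2): $d_t(X)$ is by definition the maximum $k$ such that $\pi(X)$ has non-empty interior for some coordinate projection $\pi : \Mm^n \to \Mm^k$. By part (1), for each fixed $\pi$ the set $\pi(X)$ has non-empty $\tau$-interior iff it has non-empty $\tau'$-interior. Taking the maximum over the finitely many coordinate projections $\pi$, the value of $d_t(X)$ with respect to $\tau$ equals the value with respect to $\tau'$.

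There is essentially no obstacle here: all the work has already been done in Proposition~\ref{triad}, whose proof is the substantive part. The only thing to check is the trivial point that t-minimality of $\tau'$ is what licenses the second application of Proposition~\ref{triad} — i.e., the hypothesis ``t-minimal definable topology'' must be carried over to $\tau'$, which is exactly what the statement of the corollary assumes. One might add a one-line remark that in particular the naive topological dimension, and the class of sets with empty interior, are invariants of the pure definable structure $(\Mm, T)$ together with the mere \emph{existence} of some t-minimal definable topology, rather than of the topology itself.
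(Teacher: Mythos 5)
Your proof is correct and is essentially the paper's own argument: both reduce ``non-empty interior'' to the topology-free notion of broadness via Proposition~\ref{triad}, and then observe that $d_t$ is expressible in terms of broadness of coordinate projections. Your write-up is just a more explicit rendering of the paper's one-paragraph proof.
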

\begin{proof}
  The ideal of narrow sets is defined without reference to the
  topology.  The naive dimension $d_t(X)$ is the largest $k$ such
  that $\pi(X)$ is broad for some coordinate projection $\pi : \Mm^n
  \to \Mm^k$.  Again, this doesn't depend on the topology.
\end{proof}
\begin{corollary} \label{worthless-cells}
  Let $C \subseteq \Mm^{\eq}$ be a set of parameters.  Let $D
  \subseteq \Mm^n$ be $C$-definable.
  \begin{enumerate}
  \item If $D$ is narrow, we can write $D$ as a disjoint union of
    $C$-definable sets \[ D = D_1 \sqcup D_2 \sqcup \cdots \sqcup
    D_n\] such that the projection $\pi^i : D_i \to \Mm^{n-1}$ has
    finite fibers.
  \item In general, we can write $D$ as a disjoint union of
    $C$-definable sets \[ D = D_0 \sqcup D_1 \sqcup \cdots \sqcup
    D_n\] such that $D_0$ is open, and the projection $\pi^i : D_i \to
    \Mm^{n-1}$ has finite fibers for $1 \le i \le n$.
  \end{enumerate}
\end{corollary}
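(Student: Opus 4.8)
The plan is to prove part (1) first and then reduce part (2) to it. For part (2), set $D_0 := \ter(D)$, which is open and $C$-definable; then $D \setminus D_0 = D \setminus \ter(D) \subseteq \bd(D)$, which has empty interior by Corollary~\ref{small-boundary-0}, hence is narrow by Proposition~\ref{triad}. Applying part (1) to the narrow $C$-definable set $D\setminus D_0$ produces $D_1,\dots,D_n$, and $D = D_0 \sqcup D_1 \sqcup \cdots \sqcup D_n$ does the job, since $\ter(D)$ is $C$-definable (the topology being definable).

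For part (1), the crucial claim is: \emph{if $D\subseteq\Mm^n$ is narrow and $C$-definable, then every $\ba = (a_1,\dots,a_n)\in D$ has some $i$ with $a_i \in \acl(C,\ba_{\ne i})$}, where $\ba_{\ne i} := (a_1,\dots,\widehat{a_i},\dots,a_n)$. Granting this, the rest is compactness. Given a coordinate $i$, a formula $\varphi(x_i,\by)$ over $C$, and $N<\omega$, let $\chi^i_{\varphi,N}(\bx)$ be the $C$-formula asserting $\varphi(x_i,\bx_{\ne i})$ and $|\varphi(\Mm,\bx_{\ne i})| \le N$ (a cardinality bound is first-order). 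The claim says $D$ is covered by the $C$-definable sets $\chi^i_{\varphi,N}(\Mm)$ as $i,\varphi,N$ vary, so by compactness finitely many $\chi^{i_1}_{\varphi_1,N_1},\dots,\chi^{i_L}_{\varphi_L,N_L}$ cover $D$. Grouping by the coordinate, put $D^{(i)} := D \cap \bigcup\{\chi^{i_\ell}_{\varphi_\ell,N_\ell}(\Mm) : i_\ell = i\}$; then $D = \bigcup_i D^{(i)}$, each $D^{(i)}$ is $C$-definable, and $\pi^i\restriction D^{(i)}$ has fibers of size at most $\sum_{i_\ell = i} N_\ell$, since on $\chi^i_{\varphi_\ell,N_\ell}(\Mm)$ the $\pi^i$-fiber over $\bb$ lies in $\varphi_\ell(\Mm,\bb)$, which has $\le N_\ell$ elements. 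Finally set $D_i := D^{(i)} \setminus (D^{(1)}\cup\cdots\cup D^{(i-1)})$ to disjointify.

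To prove the claim, note that for $\ba\in D$ the type $\tp(\ba/C)$ is narrow (its realizations lie in $D$), so it suffices to show: \emph{a narrow complete type has a coordinate algebraic over the rest}, equivalently (contrapositive) \emph{if $a_i\notin\acl(C,\ba_{\ne i})$ for all $i$, then $\tp(\ba/C)$ is broad}. The engine is a one-variable observation: if $a\in\Mm^1$ and $a\notin\acl(B)$ for small $B$, then $\{x : x\equiv_B a\}$ is a neighborhood of $a$. Indeed, for each $\varphi\in\tp(a/B)$ the set $\varphi(\Mm)\setminus\ter(\varphi(\Mm))$ is a $B$-definable subset of $\Mm^1$ with empty interior, hence finite, hence inside $\acl(B)$; so $a\in\ter(\varphi(\Mm))$ for all such $\varphi$, and by saturation some basic open neighborhood of $a$ lies in $\bigcap_\varphi\ter(\varphi(\Mm)) \subseteq \{x:x\equiv_B a\}$. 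Now prove by induction on $k\le n$ that there are infinite sets $T_1,\dots,T_k$ with $T_1\times\cdots\times T_k\times\{(a_{k+1},\dots,a_n)\} \subseteq \{\bx : \bx\equiv_C\ba\}$; the case $k=n$ says $\tp(\ba/C)$ is broad (Fact~\ref{bn}). The case $k=1$ is the observation applied to $a_1\notin\acl(C,\ba_{\ne 1})$. For the step, shrink the $T_i$ to be countable; for each $\bc=(c_1,\dots,c_k)\in T_1\times\cdots\times T_k$ pick $\sigma_\bc\in\Aut(\Mm/C)$ sending $\ba$ to $(\bc,a_{k+1},\dots,a_n)$ coordinatewise. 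Since $a_{k+1}\notin\acl(C,\ba_{\ne(k+1)})$, the observation gives that $\{x : x\equiv_{C\ba_{\ne(k+1)}} a_{k+1}\}$ is a neighborhood of $a_{k+1}$; applying the homeomorphism $\sigma_\bc$ (automorphisms preserve the definable topology) yields a basic open $U(\bc)\ni a_{k+1}$ inside $\{x : (c_1,\dots,c_k,x,a_{k+2},\dots,a_n)\equiv_C\ba\}$. As $\bc$ ranges over the (small) countable set $T_1\times\cdots\times T_k$, all $U(\bc)$ are neighborhoods of $a_{k+1}$, so by saturation there is a single basic open $T_{k+1}\ni a_{k+1}$ contained in every $U(\bc)$; then $T_1\times\cdots\times T_{k+1}\times\{(a_{k+2},\dots,a_n)\}\subseteq\{\bx : \bx\equiv_C\ba\}$.

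The main obstacle is exactly this last claim — that a narrow type has a coordinate algebraic over the rest. The naive approach (decompose $D$ greedily according to which $i$ makes $(\pi^i)^{-1}(\pi^i(\ba))\cap D$ finite, then recurse on the leftover) does not work: the leftover need not get smaller in any obvious measure, and there are narrow definable sets on which no single coordinate projection is finite-to-one, so one really needs the finer "algebraic over the rest" partition, together with the compactness step, to produce \emph{definable} pieces. The one-variable observation (orbits of non-algebraic singletons are open) is what powers the inductive box-building, by supplying at each stage neighborhoods that can be intersected down to a common one via saturation.
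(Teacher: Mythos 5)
Your proof is correct, and for part (1) it takes a genuinely different route from the paper; part (2) is identical. The paper's argument for part (1) is a short direct decomposition: set $X_i = \ter_{i+1}\cdots\ter_n D$, note that $X_0 = \ter_1\cdots\ter_n D = \varnothing$ by Proposition~\ref{triad}, and take $D_i = X_i \setminus X_{i-1} = X_i \setminus \ter_i X_i$; t-minimality applied fiberwise then gives finite fibers for $\pi^i \restriction D_i$, and the pieces are $C$-definable with no compactness needed. You instead first prove---via the orbit-neighborhood observation and the inductive box-building powered by automorphisms and saturation---that $\acl$-independent tuples have broad types, which is precisely the non-trivial half of Proposition~\ref{broad-acl}. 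In the paper that proposition is \emph{deduced from} this very corollary, so you have reversed the order of implications; your argument is not circular because you establish the claim from scratch, but it is considerably longer. Your compactness-and-disjointify finish is a standard device to turn the pointwise conclusion ``some $a_i$ is algebraic over the rest'' into a $C$-definable partition, and is exactly what the paper's $\ter_i$ chain renders unnecessary. What your route buys is a conceptually transparent picture (orbits of non-algebraic singletons are neighborhoods, so broad boxes can be grown one coordinate at a time), and a standalone proof of the hard direction of Proposition~\ref{broad-acl}; what the paper's route buys is brevity and a canonical, uniformly-definable partition. One small point worth making explicit in a write-up of your argument: the observation needs the parameters defining the topology absorbed into $B$ (or into $C$), so that $\ter(\varphi(\Mm))$ and hence $\varphi(\Mm)\setminus\ter(\varphi(\Mm))$ are genuinely $B$-definable; this is harmless since the topology is fixed and its parameters can always be added to $C$ without affecting the statement.
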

\begin{proof}
  \begin{enumerate}
  \item Since $D$ is narrow, the set $\ter_1 \ter_2 \cdots \ter_n D$
    is empty by Proposition~\ref{triad}.  For $0 \le i \le n$ let
    \begin{equation*}
      X_i = \ter_{i+1} \ter_{i+2} \cdots \ter_n D.
    \end{equation*}
    Then $X_{n-1} = \ter_n X_n \subseteq X_n$ and we have a descending
    chain of $C$-definable sets
    \begin{equation*}
      D = X_n \supseteq X_{n-1} \supseteq \cdots \supseteq X_1
      \supseteq X_0 = \varnothing.
    \end{equation*}
    Letting $D_i = X_i \setminus X_{i-1} = X_i$, we see that $D$ is a
    disjoint union $D_1 \sqcup D_2 \sqcup \cdots \sqcup D_n$.  Since
    $D_i = X_i \setminus \ter_i X_i$, the projection $\pi^i : D_i \to
    \Mm^{n-1}$ has finite fibers by t-minimality.
  \item Let $D_0 = \ter D$.  Then $D \setminus D_0$ has empty
    interior, so it is narrow by Proposition~\ref{triad}.  Use the
    previous point to decompose $D \setminus D_0$ into a disjoint
    union of $C$-definable sets with finite-fiber projections.
    \qedhere
  \end{enumerate}
\end{proof}
\begin{definition}
  If $C \subseteq \Mm^{\eq}$ is a set of parameters and $\ba \in
  \Mm^n$ is a tuple, then $\ba$ is \emph{$\acl$-independent over $C$}
  if \[a_i \notin \acl(C \cup \{a_1,\ldots,\widehat{a_i},\ldots,a_n\})
  \text{ for each $i$,}\] where the $\widehat{\text{hat}}$ indicates
  omission.
\end{definition}
If the exchange property holds (for $\acl(-)$), then this agrees with
the usual notion of independence.
\begin{proposition} \label{broad-acl}
  If $C \subseteq \Mm^{\eq}$ is a set of parameters and $\ba \in
  \Mm^n$ is a tuple, then $\tp(\ba/C)$ is broad if and only if $\ba$
  is $\acl$-independent over $C$.
\end{proposition}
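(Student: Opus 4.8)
The plan is to prove the two implications separately. For ``broad $\Rightarrow$ $\acl$-independent'' I will lean on the properties of broad types already recorded in Fact~\ref{bn}; for the converse I will argue by contradiction, producing a single $C$-definable narrow set containing $\ba$ and feeding it to Corollary~\ref{worthless-cells}. Since broadness of a subset of $\Mm^n$ is visibly unchanged by permuting the coordinates, I will always arrange for the distinguished coordinate to be the last one.

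\textit{Forward direction.} Suppose $\tp(\ba/C)$ is broad and fix an index $i$. Write $\bb$ for the tuple $(a_1,\ldots,\widehat{a_i},\ldots,a_n)$ and $c$ for $a_i$. Permuting coordinates, $\tp(c\bb/C)$ is broad, so Fact~\ref{bn}(\ref{bn6}) gives that $\tp(c/C\bb)$ is broad; as $c$ is a single element, Fact~\ref{bn}(\ref{bn7}) then yields $c \notin \acl(C\bb)$, that is, $a_i \notin \acl(C \cup \{a_j : j \neq i\})$. Since $i$ was arbitrary, $\ba$ is $\acl$-independent over $C$. (One could instead argue by hand: from $a_i \in \acl(C\cup\{a_j:j\neq i\})$, uniformize the algebraicity to get a $C$-definable $D \ni \ba$ on which the projection $\pi^i$ is finite-to-one, hence narrow by Fact~\ref{bn}(\ref{bn5}); the realizations of $\tp(\ba/C)$ lie inside $D$, so the type is narrow.)

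\textit{Reverse direction.} Assume $\ba$ is $\acl$-independent over $C$ and suppose, for contradiction, that $\tp(\ba/C)$ is narrow. The realization set of $\tp(\ba/C)$ is the (small, filtered) intersection of the $C$-definable sets $\phi(\Mm)$ with $\phi \in \tp(\ba/C)$, so by Fact~\ref{bn}(\ref{bn2}) at least one such $\phi(\Mm) =: D$ is narrow. Now invoke Corollary~\ref{worthless-cells}(1): write $D = D_1 \sqcup \cdots \sqcup D_n$ with each $D_j$ $C$-definable and $\pi^j : D_j \to \Mm^{n-1}$ having finite fibers. Choose $i$ with $\ba \in D_i$. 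Then $\{x \in \Mm : (a_1,\ldots,a_{i-1},x,a_{i+1},\ldots,a_n) \in D_i\}$ is a finite set containing $a_i$ and definable over $C \cup \{a_j : j \neq i\}$, so $a_i \in \acl(C\cup\{a_j : j\neq i\})$ --- contradicting $\acl$-independence.

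The only real obstacle is psychological: one is tempted to prove the reverse direction by inducting on $n$, peeling off a coordinate $c = a_n$ and hoping that ``$\tp(a_1\cdots a_{n-1}/C)$ broad plus $a_n \notin \acl(Ca_1\cdots a_{n-1})$'' forces $\tp(\ba/C)$ broad. This is false without the exchange property --- a failure of exchange for $(a_1,a_2)$ over $C$ is exactly a counterexample with $n=2$. Passing to a definable narrow witness and applying Corollary~\ref{worthless-cells} is what avoids this trap, since it converts non-broadness directly into an algebraicity relation with all parameters drawn from $C$ and the $a_j$. Everything else is routine bookkeeping.
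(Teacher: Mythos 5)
Your proof is correct and follows essentially the same route as the paper: the forward direction uses Fact~\ref{bn}(\ref{bn6},\ref{bn7}) plus a coordinate permutation, and the reverse direction passes from a narrow type to a narrow $C$-definable set $D \ni \ba$ and then applies Corollary~\ref{worthless-cells}(1) to extract the algebraicity that contradicts $\acl$-independence. The only difference is that you spell out the compactness step (Fact~\ref{bn}(\ref{bn2})) that the paper leaves implicit, and you add a useful remark about why a naive induction on $n$ would fail without exchange.
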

\begin{proof}
  First suppose $\tp(\ba/C)$ is broad.  By
  Fact~\ref{bn}(\ref{bn6},\ref{bn7}), $\tp(a_1/Ca_2,\ldots,a_n)$ is
  broad and $a_1 \notin \acl(Ca_2a_3 \cdots a_n)$.  By symmetry, $a_i
  \notin \acl (C a_1 a_2 \cdots \widehat{a_i} \cdots a_n)$ for any
  $i$, so $\ba$ is $\acl$-independent over $C$.


  Conversely, suppose that $\tp(\ba/C)$ is narrow.  Then $\ba \in D$
  for some narrow $C$-definable set $D \subseteq \Mm^n$.  By
  Corollary~\ref{worthless-cells}, we can write $D$ as a disjoint
  union
  \begin{equation*}
    D = D_1 \sqcup \cdots \sqcup D_n
  \end{equation*}
  where each set $D_i$ is $C$-definable, and $\pi^i : D_i \to
  \Mm^{n-1}$ has finite fibers.  Then $\ba \in D_i$ for some $i$.  The
  fact that $\pi^i : D_i \to \Mm^{n-1}$ has finite fibers implies that
  $a_i \in \acl(C \cup \{a_1,\ldots,\widehat{a_i},\ldots,a_n\})$.  The
  tuple $\ba$ fails to be $\acl$-independent over $C$.
\end{proof}
\begin{corollary} \label{extend-1}
  Let $C \subseteq C' \subseteq \Mm^{\eq}$ be small sets of
  parameters, and suppose $\ba \in \Mm^n$ is $\acl$-independent over
  $C$.  Then there is $\sigma \in \Aut(\Mm/C)$ such that $\sigma(\ba)$
  is $\acl$-independent over $C'$.
\end{corollary}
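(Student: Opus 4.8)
The plan is to reduce the statement to the characterization of $\acl$-independence via broadness in Proposition~\ref{broad-acl}, combined with the extension property for broad partial types in Fact~\ref{bn}(\ref{bn3}). Since $\ba$ is $\acl$-independent over $C$, that proposition tells us $\tp(\ba/C)$ is broad, i.e.\ its realization set is broad. What I want to produce is a broad \emph{complete} type $p \in S_n(C')$ extending $\tp(\ba/C)$: any realization $\ba'$ of $p$ will then be $\acl$-independent over $C'$ by Proposition~\ref{broad-acl} again, and will satisfy $\ba' \equiv_C \ba$, so by strong homogeneity some $\sigma \in \Aut(\Mm/C)$ carries $\ba$ to $\ba'$, which is exactly what is required.

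First I would check that $\tp(\ba/C)$, now regarded as a partial $n$-type over the larger parameter set $C'$, is still broad. For each formula $\phi(\bx) \in \tp(\ba/C)$ we have $\phi(\Mm) \supseteq \{\bc \in \Mm^n : \bc \models \tp(\ba/C)\}$, so $\phi(\Mm)$ is broad by the upward closure of broad sets in Fact~\ref{broad-narrow}, and hence so is every finite conjunction of such formulas. By Fact~\ref{bn}(\ref{bn2}), a partial type is broad iff every finite subtype is broad; therefore $\tp(\ba/C)$ is a broad partial $n$-type over $C'$.

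Next, apply Fact~\ref{bn}(\ref{bn3}) (using that $C'$ is small, so that the Zorn's lemma argument behind that fact goes through): some completion $p \in S_n(C')$ of this partial type is broad. Since $p$ is complete over $C' \supseteq C$ and extends the complete type $\tp(\ba/C)$, we get $p \restriction C = \tp(\ba/C)$. Realize $p$ by some $\ba' \in \Mm^n$, which is possible because $C'$ is small and $\Mm$ is sufficiently saturated. Then $\ba' \equiv_C \ba$, so by strong homogeneity of $\Mm$ there is $\sigma \in \Aut(\Mm/C)$ with $\sigma(\ba) = \ba'$; and since $p = \tp(\ba'/C')$ is broad, Proposition~\ref{broad-acl} shows $\sigma(\ba) = \ba'$ is $\acl$-independent over $C'$, completing the proof.

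The argument is routine once Proposition~\ref{broad-acl} and Fact~\ref{bn} are available, so I do not anticipate a real obstacle. The only point to state with a little care is that ``broad partial type over $C'$'' unwinds (via Fact~\ref{bn}(\ref{bn2})) to ``every finite subtype has broad realization set,'' and that the finite subtypes of $\tp(\ba/C)$ — and the broadness of their realization sets — do not change when we enlarge the nominal parameter set from $C$ to $C'$.
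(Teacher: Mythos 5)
Your proof is correct and follows essentially the same route as the paper's: translate $\acl$-independence into broadness of $\tp(\ba/C)$ via Proposition~\ref{broad-acl}, observe that broadness of the partial type is unaffected by enlarging the parameter set, extend to a broad complete type over $C'$ via Fact~\ref{bn}(\ref{bn3}), and transfer back by homogeneity. You simply spell out the intermediate verifications (finite subtypes, saturation, homogeneity) that the paper's two-line proof leaves implicit.
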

\begin{proof}
  The analogous fact holds for broad complete types.  Specifically, if
  $p \in S_n(C)$ is a broad complete type, then $p$ is a broad partial
  type over $C'$, so $p$ can be extended to a broad complete type $p'
  \in S_n(C)$.
\end{proof}
\begin{remark} \label{bane}
  Corollary~\ref{extend-1} really does use t-minimality---there are
  non-t-minimal theories in which it fails.  For example, let $(V,E)$
  be a countable infinitely branching tree.  That is, $(V,E)$ is a
  countable, connected, acyclic graph in which every vertex has degree
  $\aleph_0$.  ($E$ is the adjacency relation on $V$, regarded as a
  subset of $V^2$.)  Let $R(x,y,z)$ be the ternary relation expressing
  that $y$ is on the path from $x$ to $z$.

  Consider the structure $(V,E,R)$.  Fix two adjacent vertices $a,b
  \in V$.  Using automorphisms, one sees that $(a,b)$ is
  $\acl$-independent.  Let $f : V \setminus \{a\} \to V$ be the
  $\{a\}$-definable function moving each point $x$ one step closer to $a$:
  \begin{gather*}
    f(x) \mathrel{E} x \\
    R(x,f(x),a).
  \end{gather*}
  Then $f$ is $a$-definable.  Note that
  \begin{equation*}
    E = \{(x,f(x)) : x \ne a\} \cup \{(f(x),x) : x \ne a\}.
  \end{equation*}
  It follows that $\tp(a,b)$ has no $\acl$-independent extension to an
  $\acl$-independent type $p \in S_2(a)$, since $p(x,y)$ would
  necessarily include one of the formulas $x = f(y)$ or $y = f(x)$.

  This example shows that the ``acl-dimension'' of
  \cite[\S2]{simonWalsberg} is ill-defined.  The definable set $E =
  \{(x,y) \in V^2 : x \mathrel{E} y\}$ has acl-dimension 2 as a set
  defined over $\varnothing$, but acl-dimension 1 as a set defined
  over any larger set of parameters.
\end{remark}

We will also need the following variant of Corollary~\ref{extend-1}.
\begin{lemma} \label{indep-1}
  Let $\ba, \bb$ be two tuples, each of which is $\acl$-independent
  over a small set $C \subseteq \Mm^\eq$.  Then there is $\sigma \in
  \Aut(\Mm/C)$ such $(\sigma(\ba),\bb)$ is $\acl$-independent over
  $C$.
\end{lemma}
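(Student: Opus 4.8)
The plan is to reduce the statement to the fact about broad types that was used in Corollary~\ref{extend-1}, by finding a single automorphism that simultaneously makes $\ba$ independent over $C\bb$ and leaves $\bb$ fixed. Concretely: since $\bb$ is $\acl$-independent over $C$, Proposition~\ref{broad-acl} says $\tp(\bb/C)$ is broad. Now consider $\tp(\ba/C\bb)$. This need not be broad, but by Proposition~\ref{broad-acl} again, $\ba$ is $\acl$-independent over $C$, so $\tp(\ba/C)$ is broad. The key point to establish is that $\tp(\ba/C)$ has a completion to a broad type over $C\bb$; for that I would run the same Zorn's-lemma argument as in Corollary~\ref{extend-1}, using Fact~\ref{bn}(\ref{bn2}): $\tp(\ba/C)$ is a broad partial type over $C\bb$, hence extends to a maximal broad partial type over $C\bb$, which is complete because narrow sets form an ideal (Fact~\ref{broad-narrow}). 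So there is $\ba'$ with $\ba' \equiv_C \ba$ and $\tp(\ba'/C\bb)$ broad, hence (Proposition~\ref{broad-acl}) $\ba'$ is $\acl$-independent over $C\bb$. Pick $\sigma \in \Aut(\Mm/C)$ with $\sigma(\ba) = \ba'$.

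It remains to check that $(\sigma(\ba),\bb)$ is $\acl$-independent over $C$. Write $\ba' = \sigma(\ba) = (a'_1,\ldots,a'_n)$ and $\bb = (b_1,\ldots,b_m)$. For the coordinates coming from $\ba'$: since $\ba'$ is $\acl$-independent over $C\bb$, we have $a'_i \notin \acl(C\bb \cup \{a'_1,\ldots,\widehat{a'_i},\ldots,a'_n\})$, which is exactly the required condition for the $i$th coordinate of the concatenated tuple. For the coordinates coming from $\bb$: I must show $b_j \notin \acl(C \cup \{\ba'\} \cup \{b_1,\ldots,\widehat{b_j},\ldots,b_m\})$. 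Suppose not; then together with $a'_i \notin \acl(C\bb\, a'_1\cdots\widehat{a'_i}\cdots a'_n)$ this should contradict broadness of $\tp(\ba'\bb/C)$. The cleanest route is to observe that $\tp(\ba'\bb/C)$ is broad: indeed $\tp(\bb/C)$ is broad, so it contains a product of $\aleph_0$-sized sets, and over any realization of $\tp(\bb/C)$ the type $\tp(\ba'/C\bb)$ is broad, so it too contains such a product; combining (after an automorphism) gives a product $\prod S_i \times \prod U_j$ inside $\tp(\ba'\bb/C)$, so it is broad. Then Proposition~\ref{broad-acl} applied to the tuple $\ba'\bb$ over $C$ gives $\acl$-independence of $\ba'\bb$ over $C$ directly.

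Actually this last observation subsumes the coordinate-by-coordinate check, so the streamlined proof is: get $\ba' \equiv_C \ba$ with $\tp(\ba'/C\bb)$ broad (via the Zorn argument above), note $\tp(\ba'\bb/C)$ is broad because it contains a product of infinite sets built from the products witnessing broadness of $\tp(\bb/C)$ and of $\tp(\ba'/C\bb)$, and conclude by Proposition~\ref{broad-acl} that $\ba'\bb$ is $\acl$-independent over $C$; take $\sigma$ with $\sigma(\ba) = \ba'$. The main obstacle is the bookkeeping in showing $\tp(\ba'\bb/C)$ is broad: one must move the product witnessing broadness of $\tp(\ba'/C\bb)$ by an automorphism fixing $C\bb$ so that it sits correctly relative to a fixed realization of $\bb$ inside the product for $\tp(\bb/C)$ — this is the same maneuver as in the proof of Fact~\ref{bn}(\ref{bn6}), applied in reverse, and is essentially a compactness/saturation argument.
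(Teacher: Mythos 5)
There is a genuine gap, and it is located exactly at the point the student flags as ``bookkeeping.'' The student's strategy is: extend $\tp(\ba/C)$ to a broad complete type over $C\bb$, realized by $\ba'$, and then argue that $\tp(\ba'\bb/C)$ is broad on the grounds that $\tp(\bb/C)$ is broad and $\tp(\ba'/C\bb)$ is broad. But that implication is the \emph{converse} of Fact~\ref{bn}(\ref{bn6}), and it is false in general. Translated via Proposition~\ref{broad-acl}, it reads: if $\bb$ is $\acl$-independent over $C$ and $\ba'$ is $\acl$-independent over $C\bb$, then $(\ba',\bb)$ is $\acl$-independent over $C$. This is precisely what the exchange property buys, and it can fail when exchange fails. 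Concretely, if $b_0 \in \acl(Ca_0)\setminus\acl(C)$ but $a_0 \notin \acl(Cb_0)$ (a failure of exchange), then with $\bb = (b_0)$ and $\ba' = (a_0)$ one has $\tp(\bb/C)$ broad and $\tp(\ba'/C\bb)$ broad, yet $\tp(\ba'\bb/C)$ is narrow because $b_0 \in \acl(C\ba')$. So the Zorn step over $C\bb$ can legitimately land on a bad completion, and no automorphism maneuver rescues it: the grids witnessing broadness of $\tp(\ba'/C\bc)$ for different $\bc$ realizing $\tp(\bb/C)$ need not fit into a single common grid.

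The paper's proof orders the steps differently to avoid exactly this pitfall. It forms the set $X\times Y$ of realizations of $\tp(\ba/C)\times\tp(\bb/C)$, which is broad by Fact~\ref{bn}(\ref{bn4}), extends directly to a broad complete type over $C$ in the joint variables $(\bx,\by)$ by Fact~\ref{bn}(\ref{bn3}), and only \emph{then} moves by an automorphism to put the $\by$-coordinate equal to $\bb$. The joint broadness is secured before specializing $\bb$, rather than after. Your approach takes a broad completion of $\tp(\ba/C)$ over $C\bb$ first and then tries to show joint broadness post hoc; that second step requires exchange and is not available in the general t-minimal setting this lemma is aimed at.
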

\begin{proof}
  By Proposition~\ref{broad-acl}, $\tp(\ba/C)$ and $\tp(\bb/C)$ are broad.
  Let $X$ and $Y$ be the set of realizations of $\tp(\ba/C)$ and
  $\tp(\bb/C)$.  Then $X$ and $Y$ are broad.  Their product $X \times
  Y$ is broad and type-definable over $C$, so there is $(\ba',\bb')
  \in X \times Y$ such that $\tp(\ba',\bb'/C)$ is broad.  Since $\bb'
  \equiv_C \bb$, we can move $(\ba',\bb')$ by an automorphism and
  arrange $\bb' = \bb$.  Then $(\ba',\bb)$ is $\acl$-independent over
  $C$.
\end{proof}

\subsection{Dimension is well-defined} \label{tech:sec}
The following proposition, whose proof is technical, is the key to
developing dimension theory.  Specifically, it will show that the
$\dim(\ba/C)$ in Section~\ref{dim-sec-a} is well-defined.
\begin{proposition} \label{tech-prop}
  For each $n \ge 1$, the following properties hold.
\begin{description}
\item[$(\mathbf{A}_n)$:] There is no definable relation $R \subseteq
  \Mm^n \times \Mm^{n-1}$ such that the projections of $R$ onto
  $\Mm^n$ and $\Mm^{n-1}$ have finite fibers, and the projection of
  $R$ onto $\Mm^n$ has broad image.
\item[$(\mathbf{B}_n)$:] If $C \subseteq \Mm^\eq$ is a set of parameters
  and $\ba \in \Mm^n$ and $\tp(\ba/C)$ is broad, then $\ba$ is not
  interalgebraic over $C$ with any tuple in $\Mm^{n-1}$.
\item[$(\mathbf{C}_n)$:] If $C \subseteq \Mm^\eq$ is a set of parameters
  and $\ba \in \Mm^n$ and $\tp(\ba/C)$ is broad and $\ba$ is
  interalgebraic over $C$ with a tuple $\bb \in \Mm^n$, then
  $\tp(\bb/C)$ is broad.
\end{description}
\end{proposition}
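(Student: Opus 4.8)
The plan is to prove all three statements simultaneously by a single induction on $n$, exploiting the obvious implications between them. First I would record the easy logical relations: $(\mathbf{A}_n)$ implies $(\mathbf{B}_n)$ almost immediately, since if $\ba$ were interalgebraic over $C$ with $\bb \in \Mm^{n-1}$, then the graph of the correspondence between $\ba$ and $\bb$ (cut out over $C$) would be a definable relation $R \subseteq \Mm^n \times \Mm^{n-1}$ with finite fibers both ways, whose projection to $\Mm^n$ contains the broad set of realizations of $\tp(\ba/C)$, contradicting $(\mathbf{A}_n)$. (One has to be slightly careful that interalgebraicity over $C \subseteq \Mm^\eq$ gives a genuine definable relation over real parameters, but this is routine: replace $C$ by a finite real tuple witnessing the relevant algebraic formulas, or work with the relation ``$\bx$ and $\by$ satisfy $\phi(\bx,\by)$'' for the appropriate $\phi$ with parameters.) Likewise $(\mathbf{B}_n)$ implies $(\mathbf{C}_n)$: if $\ba$ with $\tp(\ba/C)$ broad is interalgebraic over $C$ with $\bb \in \Mm^n$ but $\tp(\bb/C)$ is narrow, then $\bb$ lies in a narrow $C$-definable set, which by Corollary~\ref{worthless-cells} splits into pieces on which some coordinate projection $\pi^i$ has finite fibers, so $\bb$ is interalgebraic over $C$ with a tuple in $\Mm^{n-1}$; composing interalgebraicities, $\ba$ is interalgebraic over $C$ with a tuple in $\Mm^{n-1}$, contradicting $(\mathbf{B}_n)$.

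So the whole content is $(\mathbf{A}_n)$, and the induction hypothesis I want to use is $(\mathbf{A}_m)$, $(\mathbf{B}_m)$, $(\mathbf{C}_m)$ for all $m < n$. The base case $n = 1$ is clear: a relation $R \subseteq \Mm^1 \times \Mm^0$ is just a subset of $\Mm$, its ``fibers over $\Mm^0$'' being $R$ itself, so finiteness of that fiber means $R$ is finite, hence narrow, so its projection to $\Mm^1$ is not broad. For the inductive step, suppose toward a contradiction that $R \subseteq \Mm^n \times \Mm^{n-1}$ has finite fibers in both directions and broad projection $X := \pi_{\Mm^n}(R)$. Pick $\ba \in X$ with $\tp(\ba/\varnothing)$ broad (Fact~\ref{bn}(\ref{bn3})), and pick $\bb \in \Mm^{n-1}$ with $(\ba,\bb) \in R$; then $\ba$ and $\bb$ are interalgebraic over $\varnothing$. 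Now I would use broadness of $\tp(\ba)$ together with Fact~\ref{bn}(\ref{bn6},\ref{bn7}) to peel off a coordinate: $\tp(a_n / a_1 \cdots a_{n-1})$ is broad, so $a_n \notin \acl(a_1 \cdots a_{n-1})$, and $\tp(a_1 \cdots a_{n-1})$ is broad. Since $\bb \in \acl(\ba)$, write $C' = \acl(a_1 \cdots a_{n-1})$; the point is that $\bb$, being algebraic over $\ba = (a_1\cdots a_{n-1}, a_n)$, is "mostly controlled" by the last coordinate $a_n$ over $C'$, while $a_1 \cdots a_{n-1}$ is a broad $(n-1)$-tuple interalgebraic with... — and here is where the real work lies.

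The hard part will be organizing the descent so that the induction hypothesis at level $n-1$ genuinely applies. The natural strategy: from $R$ with its two finite-fiber projections, fix a generic base and consider the "fiber" situation over the first $n-1$ coordinates of $\ba$, i.e. slice. Concretely, I expect to argue that after adding $a_1, \dots, a_{n-1}$ (or their algebraic closure) to the parameters, $a_n$ becomes interalgebraic with some tuple of length $\le n-1$ coming from the coordinates of $\bb$ — because $\bb \in \acl(\ba)$ and $\ba \in \acl(\bb)$ force a dimension-count mismatch unless one of $\bb$'s coordinates is redundant. More precisely, $\bb$ has $n-1$ coordinates and is interalgebraic with the $n$-tuple $\ba$ over $\varnothing$; applying $(\mathbf{C}_{n-1})$-type reasoning in reverse, or rather applying $(\mathbf{B}_n)$ — wait, that is what we are proving. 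So instead I would localize: work over $C' \supseteq a_1 \cdots a_{n-1}$ chosen so that $\tp(a_n/C')$ is still broad (possible by Corollary~\ref{extend-1} / Fact~\ref{bn}, after moving by an automorphism), and over $C'$ the element $a_n \in \Mm^1$ is interalgebraic with $\bb \in \Mm^{n-1}$; but $\tp(a_n/C')$ broad means $a_n \notin \acl(C')$, whereas — the tension is that $\bb$ is too long. I would then invoke $(\mathbf{C}_{n-1})$ applied to $\bb$ over a suitable base to conclude $\tp(\bb/\text{base})$ is broad, hence $\bb$ is $\acl$-independent of length $n-1$ over that base, and then $a_n$ together with $n-2$ of the $b_i$'s would be interalgebraic with all of $\bb$, contradicting $(\mathbf{A}_{n-1})$. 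The bookkeeping of which parameters to absorb, and ensuring broadness survives each absorption (this is exactly what Fact~\ref{bn}(\ref{bn6}) and Corollary~\ref{extend-1} are for), is the delicate part; everything else is formal.
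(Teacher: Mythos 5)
Your preliminary reductions are correct and match the paper: you establish $(\mathbf{A}_n)\implies(\mathbf{B}_n)\implies(\mathbf{C}_n)$, handle $n=1$, and correctly observe that the inductive content lies entirely in $(\mathbf{C}_{n-1})\implies(\mathbf{A}_n)$. You also correctly see that applying $(\mathbf{C}_{n-1})$ over $C a_i$ (for each $i$) yields that $\tp(\bb/Ca_i)$ is broad; this step actually appears verbatim inside the paper's proof.

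But the rest of the inductive step has a genuine gap, not a bookkeeping issue. The ``contradiction'' you propose is that $a_n$ together with $n-2$ of the $b_i$'s is interalgebraic with all of $\bb$; since both sides are $(n-1)$-tuples this violates neither $(\mathbf{A}_{n-1})$ nor $(\mathbf{B}_{n-1})$, which need a length discrepancy. The more natural version of your plan --- absorb $a_1,\ldots,a_{n-1}$ into the base so that the $1$-tuple $a_n$ is interalgebraic with the $(n-1)$-tuple $\bb$ --- also fails: over $C' = \acl(Ca_1\cdots a_{n-1})$, broadness of $\tp(\bb/C')$ is exactly what you cannot guarantee. You know $\tp(\bb/Ca_i)$ is broad for each single $a_i$, but adding all of $a_1,\ldots,a_{n-1}$ at once can destroy broadness of $\bb$ (only $a_n \notin \acl(C')$ survives, which is much weaker than $\bb$ being broad over $C'$ once $n-1>1$). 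Without exchange there is no purely $\acl$-theoretic path out of this; the paper instead uses the uniform topology in an essential way. It defines ``special triples'' $(a,b,U)$, shows (using precisely your broadness facts) that the set of special points is broad hence contains a non-empty open set, and then runs a pigeonhole/counting argument on coordinate hyperplanes and finite link-degrees to derive a contradiction. That topological counting step is the core of the proof and cannot be replaced by parameter bookkeeping; your sketch has no substitute for it.
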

We will prove these statements jointly by induction on $n$, using a
series of lemmas.
\begin{remark}
  $(\mathbf{B}_1)$ holds, because a transcendental element $a \notin
  \acl(C)$ cannot be interalgebraic over $C$ with the empty tuple.
  Similarly, $(\mathbf{C}_1)$ holds, because a transcendental element $a
  \notin \acl(C)$ cannot be interalgebraic over $C$ with an algebraic
  element $b \in \acl(C)$.
\end{remark}
There are several other easy implications:
\begin{lemma}
  $(\mathbf{A}_n) \iff (\mathbf{B}_n)$ for any $n$.
\end{lemma}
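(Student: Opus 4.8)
The plan is to prove the equivalence by unwinding the two statements into one another, using Fact~\ref{bn}(\ref{bn3}) to translate between broadness of a $C$-definable set and broadness of a complete type over $C$. Throughout we may assume $C$ is small: in $(\mathbf{A}_n)$ the witnessing relation $R$ involves only finitely many parameters, and in $(\mathbf{B}_n)$ the interalgebraicity witnesses and the formula exhibiting that $\tp(\ba/C)$ is broad involve only finitely many parameters from $C$, while broadness of $\tp(\ba/C)$ is inherited by $\tp(\ba/C_0)$ for $C_0 \subseteq C$ since broad sets are upward closed (Fact~\ref{broad-narrow}).

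For the direction $(\mathbf{A}_n) \implies (\mathbf{B}_n)$ I would argue contrapositively. Suppose $(\mathbf{B}_n)$ fails, witnessed by $\ba \in \Mm^n$ with $\tp(\ba/C)$ broad and by $\bb \in \Mm^{n-1}$ interalgebraic with $\ba$ over $C$. Pick a $C$-formula $\psi(\bx,\by)$ witnessing $\ba \in \acl(C\bb)$, so that $\models \psi(\ba,\bb)$ and $\psi(\Mm,\bb)$ is finite, and a $C$-formula $\chi(\bx,\by)$ witnessing $\bb \in \acl(C\ba)$, and set $\theta = \psi \wedge \chi$. Choose $k$ large enough to bound both $|\theta(\Mm,\bb)|$ and $|\theta(\ba,\Mm)|$, and let $R$ be the $C$-definable set of pairs $(\bx,\by)$ satisfying $\theta(\bx,\by)$, $|\theta(\Mm,\by)| \le k$, and $|\theta(\bx,\Mm)| \le k$; the fiber-size bounds are expressible with counting quantifiers, so $R$ is definable. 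Then $(\ba,\bb) \in R$; for every $(\bx_0,\by_0) \in R$ the fiber of the projection $p : R \to \Mm^n$ over $\bx_0$ is contained in $\theta(\bx_0,\Mm)$ and the fiber of $q : R \to \Mm^{n-1}$ over $\by_0$ is contained in $\theta(\Mm,\by_0)$, so both projections are finite-to-one \emph{everywhere}; and $p(R)$ is a $C$-definable set containing $\ba$, hence broad by Fact~\ref{bn}(\ref{bn3}). So $R$ witnesses the failure of $(\mathbf{A}_n)$.

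For $(\mathbf{B}_n) \implies (\mathbf{A}_n)$, again take the contrapositive: let $R \subseteq \Mm^n \times \Mm^{n-1}$ be a $C$-definable witness to the failure of $(\mathbf{A}_n)$, with projections $p : R \to \Mm^n$ and $q : R \to \Mm^{n-1}$ finite-to-one and $p(R)$ broad. By Fact~\ref{bn}(\ref{bn3}) there is $\ba \in p(R)$ with $\tp(\ba/C)$ broad; pick $\bb \in \Mm^{n-1}$ with $(\ba,\bb) \in R$. The fiber $q^{-1}(\bb)$ is a finite $C\bb$-definable set containing $\ba$, so $\ba \in \acl(C\bb)$, and the fiber $p^{-1}(\ba)$ is a finite $C\ba$-definable set containing $\bb$, so $\bb \in \acl(C\ba)$. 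Thus $\ba$ is interalgebraic over $C$ with the tuple $\bb \in \Mm^{n-1}$, and $(\mathbf{B}_n)$ fails. The only genuinely technical point in either direction is arranging that $R$ has finite fibers exactly, rather than merely on a large set, which the counting-quantifier definition above handles; everything else is bookkeeping.
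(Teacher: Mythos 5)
Your proof is correct and follows essentially the same route as the paper: both directions are done by contraposition, extracting a witnessing relation $R$ from the interalgebraicity in one direction and recovering interalgebraic tuples $\ba,\bb$ from $R$ via a broad-type element in the other. Your extra care with counting quantifiers to force finite fibers of $R$ everywhere (rather than just at $(\ba,\bb)$) is a detail the paper leaves implicit, but it is the same argument.
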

\begin{proof}
  If $(\mathbf{A}_n)$ fails, take a counterexample $R \subseteq \Mm^n
  \times \Mm^{n-1}$.  Let $C$ be a small set defining $R$.  Let $X
  \subseteq \Mm^n$ be the image of $R$ in $\Mm^n$.  Then $X$ is
  $C$-definable and broad, so there is $\ba \in X$ such that
  $\tp(\ba/C)$ is broad.  Take $\bb \in \Mm^{n-1}$ such that
  $(\ba,\bb) \in R$.  The fact that the projections $R \to \Mm^n$ and
  $R \to \Mm^{n-1}$ have finite fibers imply that $\ba$ and $\bb$ are
  interalgebraic over $C$, contradicting statement $(\mathbf{B}_n)$.

  If $(\mathbf{B}_n)$ fails, take a counterexample $(\ba,\bb,C)$ so that
  $\ba \in \Mm^n$, $\bb \in \Mm^{n-1}$, $\tp(\ba/C)$ is broad, and
  $\acl(C \ba) = \acl(C \bb)$.  Then there is a $C$-definable relation
  $R \subseteq \Mm^n \times \Mm^{n-1}$ witnessing the
  interalgebraicity, in the sense that $(\ba,\bb) \in R$ and the
  projections of $R$ onto $\Mm^n$ and $\Mm^{n-1}$ have finite fibers.
  If $X$ denotes the projection of $R$ onto $\Mm^n$, then $X$ is broad
  because $X$ is $C$-definable and contains the point $\ba$ with
  $\tp(\ba/C)$ being broad.
\end{proof}
\begin{lemma}
  $(\mathbf{B}_n) \implies (\mathbf{C}_n)$ for any $n$.
\end{lemma}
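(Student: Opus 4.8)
To prove $(\mathbf{B}_n) \implies (\mathbf{C}_n)$, I would argue by contradiction. Suppose $\tp(\ba/C)$ is broad, the tuples $\ba$ and $\bb$ both lie in $\Mm^n$ and are interalgebraic over $C$, but $\tp(\bb/C)$ is \emph{narrow}; I will derive a contradiction with $(\mathbf{B}_n)$ by exhibiting a shorter tuple interalgebraic with $\ba$ over $C$.

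Since $\tp(\bb/C)$ is narrow, Fact~\ref{bn}(\ref{bn2}) gives a narrow $C$-definable set $D \subseteq \Mm^n$ with $\bb \in D$ (a narrow partial type must have a narrow finite subtype). By Corollary~\ref{worthless-cells}(1), write $D = D_1 \sqcup \cdots \sqcup D_n$ with each $D_i$ $C$-definable and each projection $\pi^i : D_i \to \Mm^{n-1}$ finite-to-one. Fix $i$ with $\bb \in D_i$; after permuting the coordinates of $\bb$ we may assume $i = n$. Finiteness of the fibers of $\pi^n$ on $D_n$ then forces $b_n \in \acl\big(C \cup \{b_1,\ldots,b_{n-1}\}\big)$.

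Let $\bb' := (b_1,\ldots,b_{n-1}) \in \Mm^{n-1}$. Since $\bb'$ is a subtuple of $\bb$ and the remaining coordinate $b_n$ is algebraic over $C\bb'$, we have $\acl(C\bb) = \acl(C\bb')$, i.e.\ $\bb$ and $\bb'$ are interalgebraic over $C$. Composing this with the interalgebraicity of $\ba$ and $\bb$ over $C$, we conclude that $\ba$ is interalgebraic over $C$ with the tuple $\bb' \in \Mm^{n-1}$. As $\tp(\ba/C)$ is broad, this contradicts $(\mathbf{B}_n)$, so in fact $\tp(\bb/C)$ must be broad.

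There is no real obstacle here: the implication is a short consequence of the decomposition in Corollary~\ref{worthless-cells}. The only points to watch are that Corollary~\ref{worthless-cells}(1) genuinely requires nothing beyond narrowness of $D$ (so we are not circularly invoking the dimension theory still under construction), and that we appeal only to $(\mathbf{B}_n)$ itself --- not to $(\mathbf{A}_m)$ or $(\mathbf{C}_m)$ for any $m$ --- so the simultaneous induction on $n$ is undisturbed.
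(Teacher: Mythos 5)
Your proof is correct and takes essentially the same route as the paper: argue by contradiction, show that narrowness of $\tp(\bb/C)$ forces some coordinate $b_i$ to be algebraic over $C$ and the remaining coordinates, and then contradict $(\mathbf{B}_n)$ via interalgebraicity of $\ba$ with the shortened tuple. The only difference is cosmetic --- the paper invokes Proposition~\ref{broad-acl} as a black box where you unfold it back into the underlying appeal to Corollary~\ref{worthless-cells}.
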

\begin{proof}
  If $(\mathbf{C}_n)$ fails, then there are $\ba, \bb \in \Mm^n$ such
  that $\acl(C \ba) = \acl(C \bb)$, but $\tp(\ba/C)$ is broad and
  $\tp(\bb/C)$ is narrow.  By Proposition~\ref{broad-acl}, $\bb$ is not
  $\acl$-independent over $C$.  Without loss of generality, $b_n \in
  \acl(C b_1 \cdots b_{n-1})$.  Then $\ba$ is interalgebraic with
  $(b_1,\ldots,b_{n-1})$, contradicting $(\mathbf{B}_n)$.
\end{proof}
The last implication is much less trivial:
\begin{lemma}
  $(\mathbf{C}_{n-1}) \implies (\mathbf{A}_n)$ for any $n > 1$.
\end{lemma}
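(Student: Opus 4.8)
The plan is to prove $(\mathbf{C}_{n-1}) \implies (\mathbf{A}_n)$ by contradiction: suppose $R \subseteq \Mm^n \times \Mm^{n-1}$ is a definable relation, defined over a small set $C$, such that both projections have finite fibers but the image $X = \pi_{\Mm^n}(R)$ is broad. Since $X$ is $C$-definable and broad, Fact~\ref{bn}(\ref{bn3}) gives a point $\ba = (a_1,\ldots,a_n) \in X$ with $\tp(\ba/C)$ broad; pick $\bb \in \Mm^{n-1}$ with $(\ba,\bb) \in R$, so $\ba$ and $\bb$ are interalgebraic over $C$ (the finite-fiber conditions on $R$ give algebraicity in both directions). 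By Proposition~\ref{broad-acl}, $\ba$ is $\acl$-independent over $C$. The goal is to derive a contradiction from the fact that an $\acl$-independent $n$-tuple is interalgebraic with an $(n-1)$-tuple; roughly, one of the coordinates $a_i$ should be "absorbed" into $\bb$ in a way that eventually violates $(\mathbf{C}_{n-1})$.

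First I would set $C' = C b_1 \cdots b_{n-1} = C\bb$ and work over $C'$. Since $\ba \in \acl(C\bb) = \acl(C')$, every $a_i$ is algebraic over $C'$. I want to peel off the last coordinate: consider $a_n$ and the tuple $\ba^n = (a_1,\ldots,a_{n-1}) \in \Mm^{n-1}$. The idea is to produce an $(n-1)$-tuple that is interalgebraic over some base with an $\acl$-independent $(n-1)$-tuple that is nonetheless narrow, contradicting $(\mathbf{C}_{n-1})$. More precisely, I would try the following. Let $C_0 = C a_n$. Over $C_0$, the tuple $\ba^n$ is still broad as a type — this needs Fact~\ref{bn}(\ref{bn6}) applied to $\tp(\ba/C)$ broad, after reordering so $a_n$ is last, giving $\tp(a_1\cdots a_{n-1}/Ca_n)$ broad — and by Proposition~\ref{broad-acl} it is $\acl$-independent over $C_0$. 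Now $\bb$ has length $n-1$, and I'd like to say $\ba^n$ is interalgebraic with $\bb$ over $C_0$: indeed $\ba^n \in \acl(C\bb) \subseteq \acl(C_0\bb)$, and conversely $\bb \in \acl(C\ba) = \acl(C_0 \ba^n)$ since $a_n \in C_0$. So over $C_0$, the broad $\acl$-independent $(n-1)$-tuple $\ba^n$ is interalgebraic with the $(n-1)$-tuple $\bb$; by $(\mathbf{C}_{n-1})$, $\tp(\bb/C_0)$ is broad, hence $\bb$ is $\acl$-independent over $C_0 = Ca_n$.

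Now I have extra information: $\bb$ is an $\acl$-independent $(n-1)$-tuple over $Ca_n$, and $\ba^n$ is interalgebraic with $\bb$ over $Ca_n$, and $\tp(\ba^n\bb/Ca_n)$ is broad; also $a_n \in \acl(C\bb)$ but — crucially — $a_n \notin \acl(C\ba^n)$ since $\ba$ is $\acl$-independent over $C$. The hard part, which is where the real work lies, is to turn this configuration into an outright contradiction. The strategy I'd pursue: consider $\tp(a_n \bb / C)$; it is broad (it is a subtuple-permutation issue — one must check $\tp(\ba^n a_n \bb/C)$ behaves well, using that $\ba^n\bb$ is interalgebraic with $\ba$ over $C$ which has broad type, plus $(\mathbf{C}_n)$ applied to deduce $\tp(\ba^n\bb/C)$... wait, $\ba^n\bb$ has length $2n-1 > n$, so $(\mathbf{C}_n)$ does not directly apply). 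I expect the actual argument must instead iterate the peeling: the cleanest route is probably to not fix $\ba$ at the start but to choose $\ba$ and $\bb$ with $\tp(\ba\bb/C)$ as broad as possible (a maximal broad type, via Zorn and Fact~\ref{bn}(\ref{bn2})), then argue that the finite-fiber relation $R$ forces a dependence contradicting maximality — specifically that $a_n \in \acl(Ca_1\cdots a_{n-1} b_1 \cdots b_{n-1})$ combined with $b_j \in \acl(C\ba)$ lets one trade $a_n$ for nothing, shrinking the tuple. Making this trade precise — showing it genuinely contradicts broadness rather than merely rearranging it — is the main obstacle, and it is exactly the point where $(\mathbf{C}_{n-1})$ (controlling what happens to $(n-1)$-tuples under interalgebraicity) must be invoked, likely more than once and over varying base sets. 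I would also keep in mind the counterexample in the remark after Corollary~\ref{extend-1} as a sanity check on which steps genuinely use t-minimality.
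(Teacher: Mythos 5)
You correctly reproduce the opening move and even the key application of $(\mathbf{C}_{n-1})$: you show that $\tp(\bb/Ca_n)$ (and by symmetry $\tp(\bb/Ca_i)$ for each $i$) is broad, which is exactly the conclusion $(\ast)$ in the paper's argument. But you stop there, and you honestly flag it: ``Making this trade precise \ldots{} is the main obstacle.'' That obstacle is real, and the reason your remaining strategy (choose $\ba\bb$ of maximal broad type, then ``trade $a_n$ for nothing'') cannot work as stated is that the configuration you arrive at---an $\acl$-independent $(n-1)$-tuple $\bb$ over $Ca_n$, interalgebraic with $\ba^n$ over $Ca_n$, with $a_n \in \acl(C\bb)$ but $a_n \notin \acl(C\ba^n)$---is not contradictory by $\acl$-bookkeeping alone. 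Without exchange you cannot ``shrink'' such a tuple, and there is no purely combinatorial way to see that the lengths $n$ and $n-1$ clash. Some input from the topology is unavoidable here, and your proposal never invokes it.

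What the paper does with the information $\tp(\bb/Ca_i)$ broad is geometric, not algebraic: it converts broadness of $\tp(\bb/Ca_i)$ into the statement that $b$ lies in the \emph{interior} of the set $G_i$ of points linked to the coordinate hyperplane $H(a,i)$ (using Proposition~\ref{triad} and the fact that $G_i \setminus \ter G_i$ is $Ca_i$-definable and narrow). This produces a ``special triple'' $(a,b,U)$: an open neighborhood $U$ of $b$ every point of which is linked to \emph{all} $n$ coordinate hyperplanes through $a$. The set of special $b$'s is then broad, hence has non-empty interior, and the contradiction comes from a pigeonhole count: if you nest $N = k^{n+1}+1$ special triples $(a_j,b_j,U_j)$ with distinct $b_j$ and pick $b \in U_N$, then $b$ is linked to all $H(a_j,i)$, but since $b$ is linked to at most $k$ points, the projections $\pi_i(a_j)$ take at most $k$ values for each $i$, so there are at most $k^n$ distinct $a_j$, hence at most $k^{n+1} < N$ distinct $b_j$---contradiction. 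So the missing idea is precisely the passage from ``broad over $Ca_i$'' to ``interior of a definable set of links,'' followed by a counting argument over a nested family of neighborhoods. Your setup feeds correctly into this, but the proposal as written has a genuine gap: it has no mechanism to turn the interalgebraicity-with-length-mismatch into a contradiction, and the route you sketch (maximal broad types, Zorn) does not supply one.
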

\begin{proof}
  Suppose $(\mathbf{A}_n)$ fails, witnessed by a definable relation $R
  \subseteq \Mm^n \times \Mm^{n-1}$.  Let $k < \infty$ be a bound on
  the size of the fibers of the projections $R \to \Mm^n$ and $R \to
  \Mm^{n-1}$.  Let $\dom(R) \subseteq \Mm^n$ be the projection of $R$
  onto $\Mm^n$, which we assume is broad.

  Say that a point $a \in \Mm^n$ is \emph{linked} to a point $b \in
  \Mm^{n-1}$ if $a \mathrel{R} b$, that is, $(a,b) \in R$.  Every
  point in $\Mm^n$ is linked to at most $k$ points in $\Mm^{n-1}$, and
  every point in $\Mm^{n-1}$ is linked to at most $k$ points in
  $\Mm^n$.

  If $a = (a_1,\ldots,a_n) \in \Mm^n$ and $1 \le i \le n$, then let
  $H(a,i)$ be the $i$th coordinate hyperplane through $a$:
  \begin{equation*}
    H(a,i) = \{(x_1,\ldots,x_n) \in \Mm^n : x_i = a_i\}.
  \end{equation*}
  Say that $b$ is \emph{linked to} $H(a,i)$ if $b$ is linked to some
  $a' \in H(a,i)$.

  A \emph{special triple} is a triple $(a,b,U)$ where $a \in \Mm^n$,
  $b \in \Mm^{n-1}$, $a$ is linked to $b$, $U$ is an open neighborhood
  of $b$, and every point $b' \in U$ is linked to all the coordinate
  hyperplanes $H(a,1),\ldots,H(a,n)$.  Note that if $(a,b,U)$ is
  special, and $U'$ is a smaller open neighborhood of $b$, then
  $(a,b,U')$ is special.  Say that $b \in \Mm^{n-1}$ is a
  \emph{special point} if it is part of a special triple $(a,b,U)$.
  The set of special points is definable.
  \begin{claim}
    The set of special points is broad in $\Mm^{n-1}$.
  \end{claim}
  \begin{claimproof}
    Take a small set of parameters $C \subseteq \Mm^\eq$ defining $R$.
    Then $\dom(R)$ and the set of special points are both
    $C$-definable.  It suffices to find one special point $b$ such
    that $\tp(b/C)$ is broad.  Since $\dom(R)$ is broad, we can find
    some $a = (a_1,\ldots,a_n) \in \dom(R)$ such that $\tp(a/C)$ is
    broad.  Take any $b = (b_1,\ldots,b_{n-1}) \in \Mm^{n-1}$ linked
    to $a$.  Note that $a$ and $b$ are interalgebraic over $C$.  The
    fact that $\tp(a/C)$ is broad implies by Fact~\ref{bn}(\ref{bn6})
    that
    \begin{equation*}
      \tp(\pi^i(a)/Ca_i) \text{ is broad,
        for any } 1 \le i \le n,
    \end{equation*}
    where $\pi^i(a) = (a_1,\ldots,\widehat{a_i},\ldots,a_n)$ as
    before.  Note that $\pi^i(a)$, $a$, and $b$ are interalgebraic
    over $Ca_i$.  By statement $(\mathbf{C}_{n-1})$, the fact that
    $\tp(\pi^i(a)/Ca_i)$ is broad implies that
    \begin{equation*}
      \tp(b/Ca_i) \text{ is broad, for any } 1 \le i \le n.  \tag{$\ast$}
    \end{equation*}
    A fortiori, $\tp(b/C)$ is broad.  It remains to show that $b$ is a
    special point.  For each $i$, let $G_i$ be the set of points $b'
    \in \Mm^{n-1}$ which are linked to the coordinate hyperplane
    $H(a,i)$.  Note the following:
    \begin{itemize}
    \item $b \in G_i$, since $b$ is linked to $a \in H(a,i)$.
    \item The set $G_i$ is $Ca_i$-definable, since $R$ is
      $C$-definable and $H(a,i)$ is $a_i$-definable.
    \item The set $G_i \setminus \ter G_i$ is also $C a_i$-definable.
    \item The set $G_i \setminus \ter G_i$ has empty interior, so it
      is narrow by Proposition~\ref{triad}.
    \item $b \notin G_i \setminus \ter G_i$ by the previous two points
      and ($\ast$).
    \end{itemize}
    By the first and last points, we see that $b \in \ter G_i$, for
    any $i$.  Let $U = \bigcap_{i=1}^n \ter G_i$.  Then $(a,b,U)$ is a
    special triple, so $b$ is a special point.
  \end{claimproof}
  By the Claim and Proposition~\ref{triad}, the set of special points has
  non-empty interior.  Take a non-empty open set $U_0$ contained in
  the set of special points.  Let $N = k^{n+1} + 1$.  Recursively build a
  sequence of special triples $(a_j,b_j,U_j)$ for $j = 1, \ldots, N$
  with
  \begin{gather*}
    U_0 \supseteq U_1 \supseteq \cdots \supseteq U_N \\ b_j \ne b_\ell
    \text{ for } j \ne \ell
  \end{gather*}
  as follows:
  \begin{itemize}
  \item Take $b_j$ to be any point in $U_{j-1} \setminus
    \{b_1,\ldots,b_{j-1}\} \subseteq U_0$.  Such a $b_j$ exists
    because there are no isolated points.  The point $b_j$ is a
    special point, by choice of $U_0$.
  \item Take a special triple $(a_j,b_j,V)$ witnessing that $b_j$ is a
    special point.  Let $U_j = V \cap U_{j-1}$.  Then $(a_j,b_j,U_j)$
    is a special triple and $U_j \subseteq U_{j-1}$.
  \end{itemize}
  Fix a point $b \in U_N$.  For each $i \le n$ and each $j \le N$, we
  have $b \in U_j$, and so $b$ is linked to $H(a_j,i)$.  Since $b$ is
  linked to at most $k$ points, the set of hyperplanes
  \begin{equation*}
    \{H(a_1,i),H(a_2,i),\ldots,H(a_N,i)\}
  \end{equation*}
  can have size at most $k$---there must be a lot of repetition.
  Equivalently, if $\pi_i : \Mm^n \to \Mm$ is the $i$th coordinate
  projection, then
  \begin{equation*}
    |\{\pi_i(a_1),\ldots,\pi_i(a_N)\}| \le k.
  \end{equation*}
  On the other hand, $\{a_1,\ldots,a_N\} \subseteq \prod_{i=1}^n
  \{\pi_i(a_1),\ldots,\pi_i(a_N)\}$, and so
  \begin{equation*}
    |\{a_1,\ldots,a_N\}| \le k^n.
  \end{equation*}
  Every $a_j$ is linked to at most $k$ points in $\Mm^{n-1}$.  By
  definition of special triples, $a_j$ is linked to $b_j$.  So then
  \begin{equation*}
    |\{b_1,\ldots,b_N\}| \le k \cdot |\{a_1,\ldots,a_N\}| \le k^{n+1}
    < N,
  \end{equation*}
  contradicting the fact that the $b_i$ are distinct.
\end{proof}
Combining all the lemmas, we see that the properties $(\mathbf{A}_n)$,
$(\mathbf{B}_n)$, and $(\mathbf{C}_n)$ hold for all $n$.  This completes the proof of Proposition~\ref{tech-prop}.

Mainly, we are interested in the following consequence:
\begin{proposition} \label{rank-equality}
  Let $C \subseteq \Mm^\eq$ be a set of parameters and let $\ba, \bb$
  be finite tuples in $\Mm$, which are interalgebraic over $C$.  If
  $\ba$ and $\bb$ are both $\acl$-independent over $C$, then they have
  the same length.
\end{proposition}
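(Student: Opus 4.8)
The plan is to read this off directly from property $(\mathbf{B}_n)$ of Proposition~\ref{tech-prop}, combined with the broadness criterion of Proposition~\ref{broad-acl}; once those are in hand there is essentially nothing left to do. Write $n = |\ba|$ and $m = |\bb|$, and assume without loss of generality that $n \le m$. The goal is $n = m$, so I would suppose for contradiction that $n < m$, noting that then $m \ge 1$.

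The key step: since $\bb$ is $\acl$-independent over $C$, Proposition~\ref{broad-acl} gives that $\tp(\bb/C)$ is broad, so by $(\mathbf{B}_m)$ the tuple $\bb$ is not interalgebraic over $C$ with any tuple in $\Mm^{m-1}$. But $\ba$ has length $n \le m-1$ and $\acl(C\ba) = \acl(C\bb)$, so $\ba$ — suitably padded — gives exactly such a tuple, a contradiction. Concretely, if $n \ge 1$ I would take $\ba'$ to be $\ba$ with $m-1-n$ extra coordinates appended, each equal to $a_1$; then $\acl(C\ba') = \acl(C\ba) = \acl(C\bb)$, so $\ba' \in \Mm^{m-1}$ witnesses the forbidden interalgebraicity. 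The degenerate case $n = 0$ is handled separately: there $\ba$ is empty, so $\acl(C\bb) = \acl(C\ba) = \acl(C)$, whence $b_1 \in \acl(C) \subseteq \acl(C \cup \{b_2,\ldots,b_m\})$, directly contradicting the $\acl$-independence of $\bb$ over $C$. In either case we conclude $n = m$.

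The main obstacle is not really in this proof at all: it is Proposition~\ref{tech-prop} itself (in particular the implication $(\mathbf{C}_{n-1}) \Rightarrow (\mathbf{A}_n)$), which is where the genuine combinatorial work lives. Given that input, the only thing to watch in the present argument is the bookkeeping that converts "$\ba$ is shorter than $\bb$" into "$\ba$ is, up to interalgebraicity over $C$, a tuple in $\Mm^{m-1}$" — i.e.\ the padding, together with the $n = 0$ corner case.
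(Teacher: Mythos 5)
Your proof is correct and is essentially the paper's argument: both reduce to padding the shorter tuple with repeated copies of one of its coordinates and then invoking $(\mathbf{B}_k)$ via the broadness criterion of Proposition~\ref{broad-acl}. The only cosmetic differences are which tuple you call "shorter," which coordinate you repeat, and the fact that you explicitly dispatch the degenerate $n=0$ case, which the paper leaves implicit.
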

\begin{proof}
  Otherwise, say, $\ba \in \Mm^n$ and $\bb \in \Mm^m$ for $m < n$.
  Then $\ba$ is interalgebraic with the $n-1$-tuple
  \begin{equation*}
    (b_1,b_2,\ldots,b_{m-2},b_{m-1},
    \underbrace{b_m,b_m,b_m,\ldots,b_m}_{\text{$n-m$ times}}),
  \end{equation*}
  contradicting property $(\mathbf{B}_n)$.
\end{proof}

\subsection{Dimension of complete types} \label{dim-sec-a}
\begin{definition}
  Let $\ba \in \Mm^n$ be a finite tuple and $C \subseteq \Mm^\eq$ be a
  set of parameters.  An \emph{acl-basis} of $\ba$ over $C$ is a
  subtuple $\bb \subseteq \ba$ such that $\bb$ is $\acl$-independent
  and $\ba \in \acl(C\bb)$.
\end{definition}
\begin{remark} \phantomsection \label{acl-bases}
  \begin{enumerate}
  \item Any two $\acl$-bases have the same length, by
    Proposition~\ref{rank-equality}.
  \item The $\acl$-bases of $\ba$ are exactly the minimal subtuples
    $\bb \subseteq \ba$ such that $\ba \in \acl(C\bb)$.  Therefore, at
    least one $\acl$-basis exists.
  \item If $\bb$ is an $\acl$-basis of $\ba$ then $\bb$ is a maximal
    $\acl$-independent subtuple of $\ba$.
  \end{enumerate}
\end{remark}
\begin{nontheorem} \label{ddag}
  If $\bb$ is a maximal $\acl$-independent subtuple of $\ba$, then
  $\bb$ is an $\acl$-basis of $\ba$.
\end{nontheorem} 
\begin{definition} \label{dim-def-0}
  If $\ba$ is a finite tuple and $C \subseteq \Mm^\eq$, then the
  \emph{dimension} $\dim(\ba/C)$ is the length of any tuple $\bb$ such
  that $\acl(C \ba) = \acl(C \bb)$ and $\bb$ is $\acl$-independent
  over $C$.
\end{definition}
At least one such $\bb$ exists, namely, any $\acl$-basis of $\ba$ over
$C$.  The choice of $\bb$ is irrelevant by
Proposition~\ref{rank-equality}.  Note that we do not require $\bb$ to be
a subtuple of $\ba$ in Definition~\ref{dim-def-0}.
\begin{remark}
  As in Corollary~\ref{notop}, the definition of $\dim(\ba/C)$ is
  independent of the topology $\tau$.
\end{remark}
\begin{proposition} \label{dimprops}
  Let $\ba, \bb$ be finite tuples in $\Mm$, and let $C \subseteq
  \Mm^\eq$ be a set of parameters.
  \begin{enumerate}
  \item \label{dp1} If $\ba$ and $\bb$ are interalgebraic over $C$, then
    $\dim(\ba/C) = \dim(\bb/C)$.
  \item \label{dp2} $\dim(\ba/C) = 0 \iff \ba \in \acl(C)$.
  \item \label{dp3} If $\ba$ is an $n$-tuple, then $\dim(\ba/C) \le n$, and
    \begin{equation*}
      \dim(\ba/C) = n \iff (\ba \text{ is $\acl$-independent over } C)
      \iff (\tp(\ba/C) \text{ is broad.}).
    \end{equation*}
  \item \label{dp4} If $C \subseteq C'$, or more generally, if $\acl(C) \subseteq
    \acl(C')$, then $\dim(\ba/C) \ge \dim(\ba/C')$.
  \item \label{dp5} $\dim(\ba,\bb/C) \le \dim(\ba/C\bb) + \dim(\bb/C)$.
  \end{enumerate}
\end{proposition}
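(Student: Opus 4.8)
The plan is to derive everything from parts~(\ref{dp1}) and~(\ref{dp3}), which themselves rest on the well-definedness of $\dim$ (Proposition~\ref{rank-equality}) and on the broadness criterion (Proposition~\ref{broad-acl}). Throughout, I use Definition~\ref{dim-def-0}: $\dim(\ba/C)$ is the common length of \emph{any} $\acl$-independent tuple $\bc$ with $\acl(C\bc) = \acl(C\ba)$, and such $\bc$ exist (e.g.\ $\acl$-bases) and all have the same length by Remark~\ref{acl-bases}(1).

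Part~(\ref{dp1}) will be immediate: if $\acl(C\ba) = \acl(C\bb)$, then a single $\acl$-independent tuple $\bc$ simultaneously computes $\dim(\ba/C)$ and $\dim(\bb/C)$. For part~(\ref{dp3}), I would first note that an $\acl$-basis $\bb \subseteq \ba$ has length $\le n$, so $\dim(\ba/C) \le n$; the equivalence ``$\ba$ is $\acl$-independent over $C$ $\iff$ $\tp(\ba/C)$ is broad'' is exactly Proposition~\ref{broad-acl}; if $\ba$ is $\acl$-independent then $\ba$ is its own $\acl$-basis, so $\dim(\ba/C) = n$; and if $\ba$ is not $\acl$-independent, I pick $i$ with $a_i \in \acl(C \cup \{a_1,\ldots,\widehat{a_i},\ldots,a_n\})$, observe that $\ba$ is then interalgebraic over $C$ with the $(n-1)$-tuple $\pi^i(\ba)$, and conclude from part~(\ref{dp1}) together with the bound already shown that $\dim(\ba/C) = \dim(\pi^i(\ba)/C) \le n-1$. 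Part~(\ref{dp2}) is then a one-liner: $\ba \in \acl(C)$ iff $\acl(C\ba) = \acl(C)$ iff the empty tuple is an $\acl$-independent witness for $\dim(\ba/C)$, iff $\dim(\ba/C) = 0$.

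Parts~(\ref{dp4}) and~(\ref{dp5}) I would handle by exhibiting convenient interalgebraic tuples and invoking~(\ref{dp1}) and the length bound in~(\ref{dp3}). For~(\ref{dp4}), let $\bc \subseteq \ba$ be an $\acl$-basis of $\ba$ over $C$; from $\acl(C) \subseteq \acl(C')$ we get $\ba \in \acl(C\bc) \subseteq \acl(C'\bc)$, so $\ba$ and $\bc$ are interalgebraic over $C'$, whence $\dim(\ba/C') = \dim(\bc/C') \le \length(\bc) = \dim(\ba/C)$. For~(\ref{dp5}), choose an $\acl$-basis $\bb'$ of $\bb$ over $C$ and an $\acl$-basis $\ba'$ of $\ba$ over $C\bb$; then the concatenation $\ba'\bb'$ is a subtuple of $\ba\bb$, and conversely $\bb \in \acl(C\bb') \subseteq \acl(C\ba'\bb')$ and hence $\ba \in \acl(C\bb\ba') \subseteq \acl(C\ba'\bb')$, so $\ba\bb$ and $\ba'\bb'$ are interalgebraic over $C$; applying~(\ref{dp1}) and~(\ref{dp3}) gives $\dim(\ba\bb/C) = \dim(\ba'\bb'/C) \le \length(\ba') + \length(\bb') = \dim(\ba/C\bb) + \dim(\bb/C)$.

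I do not expect a serious obstacle here: all the real weight has already been carried by Proposition~\ref{rank-equality} and Proposition~\ref{broad-acl}. The only step needing even a small idea is the backward direction of the first equivalence in~(\ref{dp3})---replacing a non-$\acl$-independent $\ba$ by the shorter interalgebraic tuple $\pi^i(\ba)$; everything else is bookkeeping with $\acl$ and the definition of $\dim$.
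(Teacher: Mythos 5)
Your proposal is correct and follows essentially the same route as the paper: each part is derived from the definition of $\dim$ via the $\acl$-basis, Remark~\ref{acl-bases}, Proposition~\ref{rank-equality}, and Proposition~\ref{broad-acl}, with parts~(\ref{dp4}) and~(\ref{dp5}) both obtained by exhibiting the same interalgebraic subtuple and invoking~(\ref{dp1}) plus the length bound. The only cosmetic difference is in the last step of~(\ref{dp3}), where the paper argues directly that a non-$\acl$-independent tuple has a proper subtuple as $\acl$-basis, while you detour through $\pi^i(\ba)$; both are fine.
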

\begin{proof}
  \begin{enumerate}
  \item Clear---if $\be$ is interalgebraic with $\ba$ then it is
    interalgebraic with $\bb$, so both $\dim(\ba/C)$ and $\dim(\bb/C)$
    equal the length of $\be$.
  \item Clear---$\ba$ is interalgebraic with the empty tuple iff $\ba
    \in \acl(C)$.
  \item $\dim(\ba/C)$ is the length of an $\acl$-basis $\bb \subseteq
    \ba$, so $\dim(\ba/C) \le n$.  If $\ba$ is $\acl$-independent, we
    can take $\bb = \ba$, so $\dim(\ba/C) = n$.  If $\ba$ isn't
    $\acl$-independent, then $\bb$ must be a proper subtuple, so
    $\dim(\ba/C) < n$.
  \item Let $\bb$ be an $\acl$-basis of $\ba$ over $C$, and let $n$ be
    the length of $\bb$, so that $n = \dim(\ba/C)$.  Since $\bb$ and
    $\ba$ are interalgebraic over $C$, they are interalgebraic over
    $C'$ and then $\dim(\ba/C') = \dim(\bb/C') \le n$.
  \item Let $\be$ be an $\acl$-basis of $\ba$ over $C\bb$, and let
    $\barf$ be an $\acl$-basis of $\bb$ over $C$.  Then $(\ba,\bb) \in
    \acl(C\be\barf)$, so $(\ba,\bb)$ is interalgebraic over $C$ with
    $(\be,\barf)$ and
    \begin{equation*}
      \dim(\ba,\bb/C) = \dim(\be,\barf/C) \le |\be| + |\barf| =
      \dim(\ba/C\bb) + \dim(\bb/C).  \qedhere
    \end{equation*}
  \end{enumerate}
\end{proof}
Note that we are \emph{not} claiming the following:
\begin{nontheorem} \phantomsection \label{nt-kappa}
  \begin{enumerate}
  \item If $\acl(C\ba) \subseteq \acl(C\bb)$, then $\dim(\ba/C) \le
    \dim(\bb/C)$.
  \item If $\ba$ is a subtuple of $\bb$, then $\dim(\ba/C) \le \dim(\bb/C)$.
  \item $\dim(\ba/C) \le \dim(\ba/C\bb) + \dim(\bb/C)$.
  \end{enumerate}
\end{nontheorem}
\begin{proposition} \label{extend-2}
  Let $\ba$ be a finite tuple, and let $C \subseteq C' \subseteq
  \Mm^\eq$ be small sets of parameters.  Then there is an automorphism
  $\sigma \in \Aut(\Mm/C)$ such that $\dim(\sigma(\ba)/C') =
  \dim(\sigma(\ba)/C) = \dim(\ba/C)$.
\end{proposition}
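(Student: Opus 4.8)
The plan is to reduce the statement to Corollary~\ref{extend-1}.  First observe that for \emph{any} $\sigma \in \Aut(\Mm/C)$ we automatically have $\dim(\sigma(\ba)/C) = \dim(\ba/C)$: the quantity $\dim(-/C)$ is visibly invariant under $C$-automorphisms, since if $\bb'$ witnesses $\dim(\ba/C)$ (i.e.\ $\bb'$ is $\acl$-independent over $C$ with $\acl(C\bb') = \acl(C\ba)$) then $\sigma(\bb')$ witnesses $\dim(\sigma(\ba)/C)$ and has the same length.  Moreover, Proposition~\ref{dimprops}(\ref{dp4}) gives $\dim(\sigma(\ba)/C') \le \dim(\sigma(\ba)/C) = \dim(\ba/C)$ for every such $\sigma$.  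So the entire content of the proposition is to exhibit one $\sigma \in \Aut(\Mm/C)$ for which no dimension is lost in passing from $C$ to $C'$.

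To do this, I would fix an $\acl$-basis $\bb \subseteq \ba$ of $\ba$ over $C$, which exists by Remark~\ref{acl-bases}: $\bb$ is $\acl$-independent over $C$, $\ba \in \acl(C\bb)$, and $|\bb| = \dim(\ba/C)$.  It is important here that $\bb$ be taken to be a genuine subtuple of $\ba$, so that for any automorphism $\sigma$ the image $\sigma(\bb)$ is the corresponding subtuple of $\sigma(\ba)$.  Now apply Corollary~\ref{extend-1} to the tuple $\bb$ and the inclusion $C \subseteq C'$: this yields $\sigma \in \Aut(\Mm/C)$ such that $\sigma(\bb)$ is $\acl$-independent over $C'$.

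It remains to see that this $\sigma$ works, i.e.\ that $\dim(\sigma(\ba)/C') = \dim(\ba/C)$.  Applying $\sigma$ to the inclusion $\ba \in \acl(C\bb)$ and using that $\sigma$ fixes $C$ pointwise gives $\sigma(\ba) \in \acl(C\sigma(\bb)) \subseteq \acl(C'\sigma(\bb))$; conversely $\sigma(\bb) \subseteq \sigma(\ba)$ gives $\sigma(\bb) \in \acl(C'\sigma(\ba))$.  Hence $\sigma(\ba)$ and $\sigma(\bb)$ are interalgebraic over $C'$, and Proposition~\ref{dimprops}(\ref{dp1}) together with Proposition~\ref{dimprops}(\ref{dp3}) gives
\begin{equation*}
  \dim(\sigma(\ba)/C') = \dim(\sigma(\bb)/C') = |\sigma(\bb)| = |\bb| = \dim(\ba/C),
\end{equation*}
as desired.

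I expect no serious obstacle: the real work is done inside Corollary~\ref{extend-1} (and therefore, ultimately, inside Proposition~\ref{broad-acl} and the fact that broad complete types extend to broad complete types over larger parameter sets, which is where t-minimality enters).  The only places that need care are bookkeeping: one must keep $\bb$ literally a subtuple of $\ba$ so that the argument about $\sigma(\bb)$ goes through, and one must resist the temptation to replace ``$\acl$-basis'' by ``maximal $\acl$-independent subtuple'', which would be appealing to the false Non-Theorem~\ref{ddag}.
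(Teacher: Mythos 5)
Your proof is correct and follows essentially the same route as the paper: fix an $\acl$-basis $\bb \subseteq \ba$ over $C$, move $\bb$ by Corollary~\ref{extend-1} to be $\acl$-independent over $C'$, and observe that $\sigma(\bb)$ then serves as an $\acl$-basis of $\sigma(\ba)$ over $C'$. Your extra remarks---that $\dim(\sigma(\ba)/C)=\dim(\ba/C)$ holds automatically by automorphism-invariance, that $\bb$ must be taken as a literal subtuple, and that ``maximal $\acl$-independent subtuple'' cannot be substituted for ``$\acl$-basis'' (Non-Theorem~\ref{ddag})---are all accurate and merely spell out details the paper leaves implicit.
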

\begin{proof}
  Let $\bb \subseteq \ba$ be an $\acl$-basis of $\ba$ over $C$.  By
  Corollary~\ref{extend-1}, there is $\sigma \in \Aut(\Mm/C)$ such
  that $\sigma(\bb)$ is $\acl$-independent over $C'$.  Then
  $\sigma(\ba)$ and $\sigma(\bb)$ are interalgebraic over $C$ and over
  the bigger set $C'$, so $\sigma(\bb)$ is an $\acl$-basis of
  $\sigma(\ba)$ over $C'$, and $\dim(\sigma(\ba)/C') = |\bb| =
  \dim(\ba/C)$.
\end{proof}
\begin{lemma} \label{indep-2}
  Let $\ba, \bb$ be two tuples and $C \subseteq \Mm^\eq$ be a small
  set of parameters.  Then there is $\sigma \in \Aut(\Mm/C)$ such that
  \begin{equation*}
    \dim(\sigma(\ba),\bb/C) = \dim(\ba/C) + \dim(\bb/C).
  \end{equation*}
\end{lemma}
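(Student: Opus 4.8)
The plan is to reduce to Lemma~\ref{indep-1} by first passing to $\acl$-bases. I would choose an $\acl$-basis $\ba' \subseteq \ba$ of $\ba$ over $C$ and an $\acl$-basis $\bb' \subseteq \bb$ of $\bb$ over $C$. Then $\ba'$ and $\bb'$ are each $\acl$-independent over $C$, with $|\ba'| = \dim(\ba/C)$ and $|\bb'| = \dim(\bb/C)$, and moreover $\ba \in \acl(C\ba')$ and $\bb \in \acl(C\bb')$.

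Next I would apply Lemma~\ref{indep-1} to the two $\acl$-independent tuples $\ba'$ and $\bb'$, obtaining $\sigma \in \Aut(\Mm/C)$ such that $(\sigma(\ba'),\bb')$ is $\acl$-independent over $C$. This $\sigma$ is the automorphism we want. The key point is then that $(\sigma(\ba),\bb)$ is interalgebraic over $C$ with $(\sigma(\ba'),\bb')$: applying $\sigma$ (which fixes $C$ pointwise) to $\ba \in \acl(C\ba')$ gives $\sigma(\ba) \in \acl(C\sigma(\ba'))$, and combined with $\bb \in \acl(C\bb')$ we get $\acl(C\sigma(\ba)\bb) \subseteq \acl(C\sigma(\ba')\bb')$; the reverse inclusion is immediate since $\sigma(\ba')$ is a subtuple of $\sigma(\ba)$ and $\bb'$ of $\bb$.

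Finally I would compute: by Proposition~\ref{dimprops}(\ref{dp1}), $\dim(\sigma(\ba),\bb/C) = \dim(\sigma(\ba'),\bb'/C)$, and since $(\sigma(\ba'),\bb')$ is $\acl$-independent over $C$ of length $|\ba'|+|\bb'|$, Proposition~\ref{dimprops}(\ref{dp3}) gives $\dim(\sigma(\ba'),\bb'/C) = |\ba'| + |\bb'| = \dim(\ba/C) + \dim(\bb/C)$, as required.

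There is essentially no obstacle once Lemma~\ref{indep-1} is available; the only point requiring care is that dimension must be manipulated through interalgebraicity (Proposition~\ref{dimprops}(\ref{dp1})), not through subtuple monotonicity, since Non-Theorem~\ref{nt-kappa} warns that the latter can fail. That subtlety does not bite here precisely because we replace $\ba,\bb$ by an $\acl$-independent tuple of exactly the right length before invoking Lemma~\ref{indep-1}.
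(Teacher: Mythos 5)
Your proof is correct and takes essentially the same route as the paper: choose $\acl$-bases of $\ba$ and $\bb$ over $C$, use Lemma~\ref{indep-1} to move one of them so the concatenated basis is $\acl$-independent, observe that this concatenation is then an $\acl$-basis of the moved pair, and read off the dimension. Your version spells out the interalgebraicity step a bit more explicitly than the paper does, but there is no substantive difference.
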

\begin{proof}
  Let $\be$ and $\barf$ be $\acl$-bases of $\ba$ and $\bb$,
  respectively, over $C$.  By Lemma~\ref{indep-1}, we may move $\ba$
  and $\be$ by an automorphism over $C$ and arrange for $(\be,\barf)$
  to $\acl$-independent.  Then $(\be,\barf)$ is an $\acl$-basis for
  $(\ba,\bb)$ over $C$, so
  \begin{equation*}
    \dim(\ba,\bb/C) = |\be| + |\barf| = \dim(\ba/C) + \dim(\bb/C).
    \qedhere
  \end{equation*}
\end{proof}

\subsection{Dimension of definable sets} \label{dim-sec-b}
\begin{definition}
  Let $X \subseteq \Mm^n$ be type-definable over a small set $C
  \subseteq \Mm^\eq$.  The \emph{dimension} of $X$, written $\dim(X)$,
  is defined to be
  \begin{equation*}
    \dim(X) = \max \{\dim(\ba/C) : \ba \in X\},
  \end{equation*}
  or $-\infty$ if $X = \varnothing$.
\end{definition}
\begin{proposition}
  $\dim(X)$ is well-defined, independent of the choice of $C$.
\end{proposition}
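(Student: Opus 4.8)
The plan is to show that the quantity $\max\{\dim(\ba/C) : \ba \in X\}$ is unchanged if we replace $C$ by another small set $C'$ over which $X$ is type-definable. It suffices to handle the case where $C \subseteq C'$, since any two such parameter sets $C_1, C_2$ are contained in a common small set $C_1 \cup C_2$ over which $X$ is still type-definable, so comparing each to the union gives the general statement.

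So assume $C \subseteq C'$. First I would show $\dim(X)$ computed over $C'$ is $\le \dim(X)$ computed over $C$: for any $\ba \in X$ we have $\dim(\ba/C') \le \dim(\ba/C)$ by Proposition~\ref{dimprops}(\ref{dp4}), so taking the max over $\ba \in X$ gives the inequality. The reverse inequality is the substantive direction. Given $\ba \in X$ realizing the maximum of $\dim(-/C)$, I would apply Proposition~\ref{extend-2} to find $\sigma \in \Aut(\Mm/C)$ with $\dim(\sigma(\ba)/C') = \dim(\ba/C)$. The key point is that $X$ is $C$-definable (type-definable over $C$) and $\sigma$ fixes $C$ pointwise, so $\sigma(X) = X$ and hence $\sigma(\ba) \in X$. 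Therefore $\sigma(\ba)$ witnesses that the max of $\dim(-/C')$ over $X$ is at least $\dim(\ba/C)$, which was the max of $\dim(-/C)$ over $X$. Combining the two inequalities gives equality.

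The main obstacle—really the only nontrivial ingredient—is already packaged into Proposition~\ref{extend-2}, which rests on the broad-type extension machinery and ultimately on the fact (Proposition~\ref{rank-equality}) that dimension is well-defined for types. Once that is in hand, the argument for definable sets is a short automorphism-invariance argument. One should also remark that the empty-set convention ($\dim(\varnothing) = -\infty$) is trivially independent of $C$, so that edge case needs no separate treatment. I would write the proof as follows:

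\begin{proof}
  First reduce to the case $C \subseteq C'$: if $X$ is type-definable over two small sets $C_1$ and $C_2$, then it is type-definable over $C_1 \cup C_2$, and it suffices to show the dimension computed over $C_i$ agrees with the dimension computed over $C_1 \cup C_2$ for $i = 1, 2$.

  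So suppose $X$ is type-definable over small sets $C \subseteq C'$. If $X = \varnothing$ both computations yield $-\infty$, so assume $X \ne \varnothing$. For any $\ba \in X$, Proposition~\ref{dimprops}(\ref{dp4}) gives $\dim(\ba/C') \le \dim(\ba/C)$, so
  \begin{equation*}
    \max\{\dim(\ba/C') : \ba \in X\} \le \max\{\dim(\ba/C) : \ba \in X\}.
  \end{equation*}
  Conversely, pick $\ba \in X$ with $\dim(\ba/C) = \max\{\dim(\bc/C) : \bc \in X\}$. By Proposition~\ref{extend-2}, there is $\sigma \in \Aut(\Mm/C)$ with $\dim(\sigma(\ba)/C') = \dim(\ba/C)$. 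Since $X$ is type-definable over $C$ and $\sigma$ fixes $C$ pointwise, $\sigma(X) = X$, so $\sigma(\ba) \in X$. Hence
  \begin{equation*}
    \max\{\dim(\bc/C') : \bc \in X\} \ge \dim(\sigma(\ba)/C') = \dim(\ba/C) = \max\{\dim(\bc/C) : \bc \in X\}.
  \end{equation*}
  The two inequalities give equality, so $\dim(X)$ does not depend on the choice of $C$.
\end{proof}
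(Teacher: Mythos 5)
Your proof is correct and follows essentially the same route as the paper: reduce to the case $C \subseteq C'$ via $C_1 \cup C_2$, get the easy inequality from Proposition~\ref{dimprops}(\ref{dp4}), and get the reverse inequality by applying Proposition~\ref{extend-2} to a maximizer $\ba$ and noting that the resulting $\sigma$ fixes $X$ setwise. The only cosmetic difference is that you spell out explicitly that $\sigma(X) = X$ and handle the $X = \varnothing$ case separately, both of which the paper leaves implicit.
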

\begin{proof}
  Write the dimension as $\dim_C(X)$ to make the dependence on $C$
  explicit.  We must show
  \begin{equation*}
    \dim_C(X) = \dim_{C'}(X) \tag{$\ast$}
  \end{equation*}
  if $C, C' \subseteq \Mm^\eq$ are two small sets over which $X$ is
  type-definable.  First suppose $C \subseteq C'$.  By
  Proposition~\ref{dimprops}(\ref{dp4}),
  \begin{gather*}
    \dim(\ba/C) \ge \dim(\ba/C') \text{ for every $\ba \in X$} \\
    \text{and so } \dim_C(X) \ge \dim_{C'}(X).
  \end{gather*}
  Take $\ba \in X$ maximizing $\dim(\ba/C)$, so that $\dim(\ba/C) =
  \dim_C(X)$.  By Proposition~\ref{extend-2}, we may move $\ba$ by an
  automorphism over $C$ and arrange for $\dim(\ba/C') = \dim(\ba/C)$.
  Then \[\dim_{C'}(X) \ge \dim(\ba/C') = \dim(\ba/C) = \dim_C(X).\]
  This proves ($\ast$) in the case where $C \subseteq C'$.  The
  general case then follows:
  \begin{equation*}
    \dim_C(X) = \dim_{C \cup C'}(X) = \dim_{C'}(X).  \qedhere
  \end{equation*}
\end{proof}
Again, the definition of dimension is independent of the topology
$\tau$ which witnesses t-minimality.
\begin{theorem} \label{dimension-theorem}
  Let $X$ and $Y$ be type-definable sets.
  \begin{enumerate}
  \item \label{dt1} If $X \subseteq Y$, then $\dim(X) \le \dim(Y)$.
  \item \label{dt2} $\dim(X \cup Y) = \max(\dim(X),\dim(Y))$.
  \item \label{dt3} $\dim(X) \le 0$ iff $X$ is finite.
  \item \label{dt4} If $X \subseteq \Mm^n$, then $\dim(X) \le n$, with equality if
    and only if $X$ is broad.
  \item \label{dt5} If $X \subseteq \Mm^n$ is definable, then $\dim(X) = n$ iff
    $X$ has non-empty interior.
  \item \label{dt6} $\dim(\Mm^n) = n$.  More generally, $\dim(U) = n$ for any
    non-empty open definable set $U \subseteq \Mm^n$.
  \item \label{dt7} If $f : X \to Y$ is a type-definable bijection, or a
    type-definable surjection with finite fibers, then $\dim(X) =
    \dim(Y)$.
  \item \label{dt8} Let $f : X \to Y$ be a type-definable function and $k$ be an
    integer such that $\dim(f^{-1}(b)) \le k$ for every $b \in Y$
    (every fiber has dimension at most $k$).  Then $\dim(X) \le k +
    \dim(Y)$.
  \item $\dim(X \times Y) = \dim(X) + \dim(Y)$.
  \end{enumerate}
\end{theorem}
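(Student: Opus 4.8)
The plan is to deduce all nine items by unwinding the definition $\dim(X)=\max\{\dim(\ba/C):\ba\in X\}$ and feeding in the facts already proved about $\dim(\ba/C)$ (Proposition~\ref{dimprops}), about broad sets (Fact~\ref{bn}, Proposition~\ref{triad}), and about rearranging tuples by automorphisms (Lemma~\ref{indep-2}). At the outset I would fix one small set $C\subseteq\Mm^\eq$ over which all the sets, function graphs, domains and codomains in play are type-definable; this is legitimate since $\dim$ does not depend on the choice of $C$.

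Items (1)--(6) are short. For (1), $X\subseteq Y$ yields $\{\dim(\ba/C):\ba\in X\}\subseteq\{\dim(\ba/C):\ba\in Y\}$, so the maxima compare (with $\max\varnothing=-\infty$). For (2), one inequality is (1), and a point of $X\cup Y$ attaining the maximum lies in $X$ or in $Y$. For (3): the implication $\Leftarrow$ holds because a finite type-definable set is contained in $\acl(C)$, so by Proposition~\ref{dimprops}(\ref{dp2}) every point has dimension $0$; the implication $\Rightarrow$ holds because, by saturation, an infinite type-definable set over small $C$ has more than $|\acl(C)|$ elements, hence contains some $\ba\notin\acl(C)$ with $\dim(\ba/C)\ge 1$. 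Items (4) and (5) follow by combining Proposition~\ref{dimprops}(\ref{dp3}) (for $\ba\in\Mm^n$, $\dim(\ba/C)=n$ iff $\tp(\ba/C)$ is broad), Fact~\ref{bn}(\ref{bn3}) ($X$ is broad iff $\tp(\ba/C)$ is broad for some $\ba\in X$), and, for (5), Proposition~\ref{triad} (a definable subset of $\Mm^n$ is broad iff it has nonempty interior). Item (6) is the case of (5) with $X$ open.

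For (7), given a type-definable surjection $f:X\to Y$ with finite fibers, each $\ba\in X$ and $\bb=f(\ba)$ satisfy $\bb\in\dcl(C\ba)$ (the graph is type-definable over $C$) and $\ba\in\acl(C\bb)$ (finite fibers), so $\ba,\bb$ are interalgebraic over $C$ and $\dim(\ba/C)=\dim(\bb/C)$ by Proposition~\ref{dimprops}(\ref{dp1}); surjectivity then forces $\dim(X)=\dim(Y)$. For (8), choose $\ba\in X$ with $\dim(\ba/C)=\dim(X)$ and put $\bb=f(\ba)$; since $\ba$ lies in $f^{-1}(\bb)$, which is type-definable over $C\bb$ with dimension $\le k$, we get $\dim(\ba/C\bb)\le k$, so Proposition~\ref{dimprops}(\ref{dp5}) gives $\dim(\ba,\bb/C)\le k+\dim(\bb/C)\le k+\dim(Y)$, while $\bb\in\dcl(C\ba)$ makes $\dim(\ba,\bb/C)=\dim(\ba/C)=\dim(X)$. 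For (9), the inequality $\le$ is (8) applied to the projection $X\times Y\to Y$, whose fiber over $\bb$ is in type-definable bijection with $X$ (by (7)), hence has dimension $\dim(X)$; for $\ge$, take $\ba\in X$ and $\bb\in Y$ realizing $\dim(X)$ and $\dim(Y)$ and apply Lemma~\ref{indep-2} to get $\sigma\in\Aut(\Mm/C)$ with $\dim(\sigma(\ba),\bb/C)=\dim(\sigma(\ba)/C)+\dim(\bb/C)=\dim(X)+\dim(Y)$, noting $(\sigma(\ba),\bb)\in X\times Y$.

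None of these steps is deep given the earlier results; the only one drawing on real content is (9)$\ge$, which rests on Lemma~\ref{indep-2} (hence on broadness of products of broad type-definable sets). I therefore expect the only real difficulty to be organizational: keeping a single parameter set $C$ throughout, remembering that items (5) and (6) genuinely require $X$ to be definable rather than merely type-definable, and being slightly careful in (3) that infinite type-definable sets over a small set are large.
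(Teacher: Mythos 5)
Your proof matches the paper's argument item by item: the same reduction to $\dim(\ba/C)$ via a single parameter set, the same appeal to Proposition~\ref{dimprops}, Fact~\ref{bn}, and Proposition~\ref{triad} for items (1)--(6), the same interalgebraicity argument for (7), the same use of Proposition~\ref{dimprops}(\ref{dp1},\ref{dp5}) for (8), and the same combination of (8) with Lemma~\ref{indep-2} for (9). You spell out a few details the paper leaves tacit (the cardinality estimate in (3), the $\dcl/\acl$ bookkeeping in (7)), but the route is identical.
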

\begin{proof}
  Take $C \subseteq \Mm^\eq$ a small set over which everything is
  type-definable.  The proofs of (1) and (2) are clear.
  \begin{enumerate}
    \setcounter{enumi}{2}
  \item $\dim(X) > 0$ iff there is $\ba \in X$ such that $\dim(\ba/C)
    > 0$, or equivalently, $\ba \notin \acl(C)$.  This holds iff $X$
    is infinite.
  \item Since $\dim(X) = \max \{\dim(\ba/C) : \ba \in X\}$,
    Proposition~\ref{dimprops}(\ref{dp3}) shows that $\dim(X) \le n$.
    Moreover, equality holds if and only if $\tp(\ba/C)$ is broad for
    some $\ba \in X$.  This holds if and only if $X$ itself is broad.
  \item This follows by the previous point and Proposition~\ref{triad}.
  \item This follows by the previous point.
  \item Every element of $X$ is interalgebraic over $C$ with an
    element of $Y$, and vice versa, so
    \begin{equation*}
      \max \{\dim(\ba/C) : \ba \in X\} = \max \{\dim(\bb/C) : \bb \in Y\}
    \end{equation*}
    by Proposition~\ref{dimprops}(\ref{dp1}).
  \item Take $\ba \in X$ maximizing $\dim(\ba/C)$, so that
    $\dim(\ba/C) = \dim(X)$.  Let $\bb = f(\ba)$.  Note that $\bb \in
    \dcl(C \ba) \subseteq \acl(C \ba)$, so that $\ba$ is
    interalgebraic with $(\ba,\bb)$.  By Proposition~\ref{dimprops} parts
    (\ref{dp1}) and (\ref{dp5}),
    \begin{equation*}
      \dim(X) = \dim(\ba/C) = \dim(\ba,\bb/C) \le \dim(\ba/C\bb) +
      \dim(\bb/C).
    \end{equation*}
    Now $\ba$ belongs to the fiber $f^{-1}(\bb)$ which is type-definable
    over $C \bb$, and so
    \begin{equation*}
      \dim(\ba/C\bb) \le \dim(f^{-1}(\bb)) \le k.
    \end{equation*}
    Similarly, $\bb$ belongs to the set $Y$, which is type-definable
    over $C$, so
    \begin{equation*}
      \dim(\bb/C) \le \dim(Y).
    \end{equation*}
    Putting everything together,
    \begin{equation*}
      \dim(X) \le \dim(\ba/C\bb) + \dim(\bb/C) \le k + \dim(Y).
    \end{equation*}
  \item Let $k = \dim(X)$.  Consider the projection $X \times Y \to
    Y$.  Each fiber is in type-definable bijection with $X$, so it has
    dimension $k$.  By the previous point,
    \begin{equation*}
      \dim(X \times Y) \le k + \dim(Y) = \dim(X) + \dim(Y).
    \end{equation*}
    We need the equality in the reverse direction.  Take some $\ba \in
    X$ and $\bb \in Y$ with $\dim(\ba/C) = \dim(X)$ and $\dim(\bb/C) =
    \dim(Y)$.  By Lemma~\ref{indep-2}, we may move $\ba$ by an
    automorphism over $C$ and arrange
    \begin{equation*}
      \dim(\ba,\bb/C) = \dim(\ba/C) + \dim(\bb/C) = \dim(X) + \dim(Y).
    \end{equation*}
    Since $(\ba,\bb)$ is in the set $X \times Y$, which is
    type-definable over $C$, we get
    \begin{equation*}
      \dim(X \times Y) \ge \dim(\ba,\bb/C) = \dim(X) + \dim(Y).
      \qedhere
    \end{equation*}
  \end{enumerate}
\end{proof}
We are not saying the following, however:
\begin{nontheorem} \label{surj-nt}
  Let $X, Y$ be type-definable sets.  If $f : X \to Y$ is a definable
  surjection, then $\dim(X) \ge \dim(Y)$.
\end{nontheorem}
\begin{example}
  If $f : G \to H$ is a definable homomorphism of definable groups,
  then
  \begin{equation*}
    \dim(\ker(f)) \le \dim(G) \le \dim(\ker(f)) + \dim(\im(f)) \le
    \dim(\ker(f)) + \dim(H),
  \end{equation*}
  but it can happen that $\dim(G) < \dim(\im(f))$.
\end{example}
\begin{definition} \label{nikp}
  If $k \in \{0,1,\ldots,n\}$, then a set $X \subseteq \Mm^n$ has a
  \emph{near-injective $k$-projection} if there is some coordinate
  projection $\pi : \Mm^n \to \Mm^k$ such that $X \to \Mm^k$ has
  finite fibers.
\end{definition}
If $X$ is type-definable and $X$ has a near-injective $k$-projection,
then $\dim(X) = \dim(\pi(X)) \le k$ by Theorem~\ref{dimension-theorem}
parts (\ref{dt4}) and (\ref{dt7}).
\begin{proposition} \label{explicit}
  Let $X \subseteq \Mm^n$ be type-definable over a small set $C
  \subseteq \Mm^\eq$ and let $k \in \{0,1,\ldots,n\}$.  The
  following are equivalent:
  \begin{enumerate}
  \item $\dim(X) \le k$.
  \item $X \subseteq \bigcup_{i=1}^N D_i$ for some $C$-definable sets
    $D_1,\ldots,D_N$ with near-injective $k$-projections.
  \item $X \subseteq \bigcup_{i=1}^N D_i$ for some definable sets
    $D_1,\ldots,D_N$ with near-injective $k$-projections.
  \end{enumerate}
\end{proposition}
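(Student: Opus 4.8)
The plan is to establish the cycle $(2)\Rightarrow(3)\Rightarrow(1)\Rightarrow(2)$. The implication $(2)\Rightarrow(3)$ is immediate, since $C$-definable sets are definable. For $(3)\Rightarrow(1)$, I would quote the observation recorded just after Definition~\ref{nikp}: a definable (hence type-definable) set with a near-injective $k$-projection has dimension $\le k$. So each $D_i$ satisfies $\dim(D_i)\le k$, and Theorem~\ref{dimension-theorem}(\ref{dt1}),(\ref{dt2}) give
\[
  \dim(X)\le\dim(D_1\cup\cdots\cup D_N)=\max_{1\le i\le N}\dim(D_i)\le k.
\]

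The real content is $(1)\Rightarrow(2)$, and I would first prove a pointwise version: for each $\ba\in X$ there is a single $C$-definable set $D\subseteq\Mm^n$ with a near-injective $k$-projection such that $\ba\in D$. Since $\dim(X)\le k$ we have $\dim(\ba/C)\le k$, so an $\acl$-basis $\bb\subseteq\ba$ of $\ba$ over $C$ has length $m=\dim(\ba/C)\le k\le n$. Enlarging the set of coordinates of $\bb$ to a $k$-element set $S\subseteq\{1,\dots,n\}$ and letting $\pi:\Mm^n\to\Mm^k$ be the coordinate projection onto $S$, we still have $\ba\in\acl(C\bb)\subseteq\acl(C\,\pi(\ba))$. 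Choose a formula $\psi(\bx,\by)$ over $C$, with $|\bx|=n$ and $|\by|=k$, such that $\models\psi(\ba,\pi(\ba))$ and $\psi(\Mm,\pi(\ba))$ is finite, of size $N_0$ say, and put
\[
  D=\{\bx\in\Mm^n:\psi(\bx,\pi(\bx))\ \text{and}\ |\psi(\Mm,\pi(\bx))|\le N_0\}.
\]
Then $D$ is $C$-definable, since ``$|\psi(\Mm,\by)|\le N_0$'' is first-order in $\by$; it contains $\ba$; and for any $\bc\in\Mm^k$ the $\pi$-fiber of $D$ over $\bc$ is either empty or contained in the finite set $\psi(\Mm,\bc)$, so has size $\le N_0$. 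Thus $\pi\restriction D$ has finite fibers, i.e.\ $D$ has a near-injective $k$-projection.

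To pass from this pointwise cover to a finite one, I would argue by saturation. Let $\mathcal D$ be the family of all $C$-definable subsets of $\Mm^n$ with a near-injective $k$-projection; it is small. Fix a partial type $\Sigma(\bx)$ over $C$ with $X=\Sigma(\Mm)$. By the pointwise statement, every realization of $\Sigma$ lies in some member of $\mathcal D$, so the small partial type $\Sigma(\bx)\cup\{\neg(\bx\in D):D\in\mathcal D\}$ has no realization in $\Mm$, hence is not finitely satisfiable; a finite inconsistent subset produces finitely many $D_1,\dots,D_N\in\mathcal D$ together with a consequence $\sigma(\bx)$ of $\Sigma$ such that $\sigma(\Mm)\subseteq D_1\cup\cdots\cup D_N$, and since $X\subseteq\sigma(\Mm)$ this is the cover required by $(2)$. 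The one delicate step in the whole argument is the pointwise construction — in particular, inserting the uniform bound $|\psi(\Mm,\pi(\bx))|\le N_0$ so as to convert the algebraicity of $\ba$ over $\pi(\ba)$ into an honest definable set whose $\pi$-fibers are uniformly bounded; the remainder is routine manipulation of $\acl$-bases and a standard compactness argument.
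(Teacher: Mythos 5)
Your proof is correct and takes essentially the same route as the paper's: both reduce $(1)\Rightarrow(2)$ to the pointwise claim that each $\ba\in X$ lies in a $C$-definable set with a near-injective $k$-projection built from a formula witnessing $\ba\in\acl(C\bb)$ for an $\acl$-basis $\bb\subseteq\ba$, and both then pass to a finite subcover by saturation over the small family of $C$-definable candidates. The only difference is cosmetic: the paper asserts in one stroke the existence of a formula $\phi\in\tp(\ba/C)$ whose fibers over the first $k'$ coordinates are \emph{all} finite, whereas you build this explicitly by first taking a formula $\psi$ whose fiber over $\pi(\ba)$ is finite of size $N_0$ and then cutting by the first-order condition $|\psi(\Mm,\pi(\bx))|\le N_0$; you also spell out the harmless enlargement of the $\acl$-basis's coordinate set from size $k'=\dim(\ba/C)$ up to size $k$, which the paper leaves implicit.
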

This gives another topology-free characterization of dimension.
\begin{proof}
  The implication (2)$\implies$(3) is trivial.  The implication
  (3)$\implies$(1) holds by the remarks above, with
  Theorem~\ref{dimension-theorem}(\ref{dt1},\ref{dt2}) to see that
  $\dim(X) \le \max_{1 \le i \le n} \dim(D_i) \le k$.  It remains to
  prove (1)$\implies$(2).

  Suppose $\dim(X) \le k$.  Fix any $\ba \in X$.  Let $\bb$ be an
  $\acl$-basis for $\ba$ over $C$.  Then
  \begin{equation*}
    k' := |\bb| = \dim(\ba/C) \le \dim(X) \le k.
  \end{equation*}
  Permuting coordinates, we may assume that $\bb$ is the first $k'$
  coordinates of $\ba$.  There is a formula $\phi(x_1,\ldots,x_n) \in
  \tp(\ba/C)$ witnessing that $\ba \in \acl(C\bb)$, in the sense that
  for any $\bb' \in \Mm^{k'}$, the set $\phi(\bb',\Mm^{n-k'})$ is
  finite.  Take $D_\ba$ to be the set defined by $\phi$.  Then $\ba
  \in D_\ba$, and $D_\ba$ has is a $C$-definable set with a
  near-injective $k$-projection.

  Finally, letting $\ba$ vary, we get $X \subseteq \bigcup_{\ba \in X}
  D_\ba$.  The union is small (since all the $D_\ba$ are
  $C$-definable), so by saturation there is a finite subcover, proving
  (2).
\end{proof}
\begin{corollary} \label{filtered-isect}
  Let $X = \bigcap_{i \in I} X_i$ be a small filtered intersection of
  type-definable sets.  Then $\dim(X) = \min_{i \in I} \dim(X_i)$.
\end{corollary}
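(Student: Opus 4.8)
The plan is to prove the two inequalities $\dim(X) \le \min_{i \in I}\dim(X_i)$ and $\dim(X) \ge \min_{i \in I}\dim(X_i)$ separately. The first is immediate: since $X \subseteq X_i$ for every $i$, Theorem~\ref{dimension-theorem}(\ref{dt1}) gives $\dim(X) \le \dim(X_i)$, and because $\dim$ takes values in the well-ordered set $\{-\infty\}\cup\Nn$, the infimum over $i$ is attained, so $\dim(X) \le \min_{i \in I}\dim(X_i)$.

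For the reverse inequality I would establish the auxiliary statement: \emph{if $\dim(X)\le m$ for some $m \in \{-\infty\}\cup\Nn$, then $\dim(X_i)\le m$ for some $i \in I$}; applying this with $m = \dim(X)$ yields $\min_{i \in I}\dim(X_i)\le\dim(X)$. When $m = -\infty$, we have $X = \varnothing$, and since $X$ is a small filtered intersection of type-definable sets, saturation forces some finite subintersection to be empty; because the index family is downward directed, some single $X_{i}$ is contained in that finite subintersection and hence empty, so $\dim(X_i) = -\infty$. When $m \in \Nn$, I would apply Proposition~\ref{explicit} to cover $X \subseteq \bigcup_{j=1}^{N} D_j$ by finitely many \emph{definable} sets $D_j$ having near-injective $m$-projections. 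Then $X$ is disjoint from the definable set $\Mm^n \setminus \bigcup_{j} D_j$, so by saturation a finite subintersection $X_{i_1}\cap\cdots\cap X_{i_\ell}$ is already disjoint from it; by directedness some $X_{i^\ast}$ lies inside $X_{i_1}\cap\cdots\cap X_{i_\ell}\subseteq\bigcup_j D_j$, and Proposition~\ref{explicit} then gives $\dim(X_{i^\ast})\le m$, as desired.

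The only substantive ingredient is Proposition~\ref{explicit}, which converts the purely type-theoretic hypothesis ``$\dim(X)\le m$'' into a covering of $X$ by finitely many \emph{definable} sets of bounded dimension --- exactly the form to which compactness/saturation applies. Everything else is routine: passing from ``a finite subintersection is empty/small'' to ``a single member is empty/small'' uses only that a filtered intersection has a downward directed index family, and the case $m = -\infty$ must be handled in parallel solely because Proposition~\ref{explicit} is stated for $k \in \{0,1,\ldots,n\}$. I do not anticipate any serious obstacle.
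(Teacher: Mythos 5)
Your proof is correct and follows essentially the same route as the paper's: monotonicity gives the easy inequality, and Proposition~\ref{explicit} converts $\dim(X) \le m$ into a finite cover of $X$ by definable sets of bounded dimension, to which saturation plus directedness of the index family applies. The only difference is that you spell out the $m = -\infty$ case and the ``finite subintersection then single member'' step a bit more explicitly; the paper compresses both.
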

\begin{proof}
  If $k = \dim(X)$, then it is clear by
  Theorem~\ref{dimension-theorem}(\ref{dt1}) that $k \le \min_{i \in
    I} \dim(X_i)$.  It remains to find some $i \in I$ with $\dim(X_i)
  \le k$.  By Proposition~\ref{explicit} applied to $X$, there are
  definable sets $D_1,\ldots,D_N$ with near-injective $k$-projections
  such that $X \subseteq \bigcup_{j = 1}^N D_j$.  Then
  \begin{equation*}
    \bigcap_{i \in I} X_i \subseteq \bigcup_{j = 1}^N D_j.
  \end{equation*}
  Since the intersection and union are small, and the intersection on
  the left is filtered, saturation gives an $i \in I$ such that
  \begin{equation*}
    X_i \subseteq \bigcup_{j = 1}^N D_j
  \end{equation*}
  and then $\dim(X_i) \le k$ by Proposition~\ref{explicit}.
\end{proof}
\begin{corollary} \label{baire}
  Let $X = \bigcup_{a \in Y} X_a$ be a filtered union, where
  $\{X_a\}_{a \in Y}$ is a definable family.  Then $\dim(X) = \max_{a
    \in Y} \dim(X_a)$.
\end{corollary}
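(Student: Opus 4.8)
\emph{The easy inequality.} Since $X_a \subseteq X$ for each $a$, Theorem~\ref{dimension-theorem}(\ref{dt1}) gives $\dim(X_a) \le \dim(X)$, and as $\dim(-)$ is bounded above by the ambient dimension the quantity $d := \max_{a \in Y}\dim(X_a)$ is a well-defined element of $\{-\infty\}\cup\Nn$, attained at some $a$; thus $\dim(X) \ge d$. It remains to prove $\dim(X) \le d$, which I will do by induction on $n$, where $X \subseteq \Mm^n$ (note $X$ is definable, being a coordinate projection of the definable total space $\{(a,\bar x) : a \in Y,\ \bar x \in X_a\}$). The base case $n \le 1$ is the engine below with $S = \varnothing$. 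So fix $n$, assume the result for all smaller ambient dimensions, and suppose toward a contradiction that $\dim(X) = e > d$.

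\emph{Reduction to the full-dimensional case.} Fix a small $C$ over which $X$, the family $\{X_a\}_{a\in Y}$, and the filteredness are defined. Choose $\ba \in X$ with $\dim(\ba/C) = e$ and an $\acl$-basis $\bb \subseteq \ba$ with $|\bb| = e$; a formula $\phi \in \tp(\ba/C)$ witnesses $\ba \in \acl(C\bb)$, so $D := X \cap \phi(\Mm)$ contains $\ba$ and has a near-injective $e$-projection $\pi$ onto the coordinates of $\bb$. Hence $\dim(D) = e$, and $D = \bigcup_{a\in Y}(D\cap X_a)$ is again a filtered definable union with $\dim(D\cap X_a) \le \dim(X_a) \le d$. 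Applying $\pi$ (finite fibres) turns this into $\pi(D) = \bigcup_{a}\pi(D\cap X_a)$, a filtered definable union in $\Mm^e$ with $\dim\pi(D\cap X_a) \le d < e = \dim\pi(D)$, and $\pi(D)$ has non-empty interior by Theorem~\ref{dimension-theorem}(\ref{dt4},\ref{dt5},\ref{dt7}). If $e < n$ this contradicts the inductive hypothesis $C(e)$ directly; so we may assume $e = n$, i.e.\ $X \subseteq \Mm^n$ has non-empty interior and every $X_a$ is narrow, and we must derive a contradiction. The engine I will use repeatedly is: \emph{if $\{Z_a\}_{a\in Y}$ is a filtered definable family and each $Z_a$ admits a coordinate projection onto a fixed block $S$ with fibres of size $\le m$ (the uniform bound $m$ coming from compactness in the family), then $\bigcup_a Z_a$ admits such a projection onto $S$} --- indeed each fibre of $\bigcup_a Z_a$ over a point of $\Mm^S$ is a filtered union of sets of size $\le m$, hence itself of size $\le m$ --- and therefore $\dim\bigl(\bigcup_a Z_a\bigr) \le |S|$.

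\emph{Decomposition.} For each narrow $X_a$ set $X_i(a) := \ter_{i+1}\ter_{i+2}\cdots\ter_n X_a$ for $0 \le i \le n$, as in the proof of Corollary~\ref{worthless-cells}; these are monotone in $a$ (interior is monotone) and satisfy $\ter_1\cdots\ter_i X_i(a) = \ter_1\cdots\ter_n X_a = \varnothing$, so by Proposition~\ref{triad} every slice of $X_i(a)$ over the last $n-i$ coordinates is narrow in $\Mm^i$. Writing $X = \bigcup_{i=1}^n Q_i$ with $Q_i := \bigcup_{a}\bigl(X_i(a)\setminus X_{i-1}(a)\bigr) \subseteq \tilde Q_i := \bigcup_a X_i(a)$, I bound $\tilde Q_i$ for $i < n$ by slicing over the last $n-i$ coordinates: the slices of $\tilde Q_i$ are filtered definable unions of narrow subsets of $\Mm^i$, hence of dimension $\le i-1$ by the inductive hypothesis, so $\dim \tilde Q_i \le (i-1)+(n-i) = n-1$ by Theorem~\ref{dimension-theorem}(\ref{dt8}); thus $Q_i$ is narrow for each $i < n$, and by Corollary~\ref{cor-ideal} the set $Q_1 \cup \cdots \cup Q_{n-1}$ has empty interior. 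Since $X$ has non-empty interior, $Q_n = \bigcup_a\bigl(X_a \setminus \ter_n X_a\bigr)$ must have non-empty interior.

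\emph{The main obstacle.} Everything above rests on the sets $X_i(a)$ being monotone in $a$, but the top piece $X_a \setminus \ter_n X_a$ is a \emph{difference} of monotone sets, so $\{X_a\setminus\ter_n X_a\}_a$ is not filtered and the engine does not apply: although $X_a\setminus\ter_n X_a$ has finite $\pi^n$-fibres for each fixed $a$ (its $n$-th slices lie in $\partial J_a(\bar x')$, finite by t-minimality), passing to the union over $Y$ may produce infinite fibres. Ruling out ``$Q_n$ has non-empty interior'' is the crux. I expect to handle it by a counting argument in the spirit of the proof of Proposition~\ref{tech-prop}: a generic point $\bar p$ of a supposed open box inside $Q_n$ has $\tp(\bar p/C)$ broad, giving a large grid $\prod_j S_j$ of realizations each lying in some $X_{a(\bar s)}\setminus\ter_n X_{a(\bar s)}$; using the filtered property to combine finitely many of the $X_{a(\bar s)}$ into a single $X_{a^*}$ and then extracting an indiscernible ordering of the grid so that the ``boundary-point'' condition on the last coordinate is preserved for $X_{a^*}$, one counts the boundary points of $n$-th slices of $X_{a^*}$ against the uniform bound coming from compactness, forcing the grid to be small --- a contradiction. (A reduction applying Proposition~\ref{explicit} directly to $Q_n$, viewed as a union of near-injective $(n-1)$-projecting sets onto a common block, may give an alternative route.) This last step is the only part I do not see how to carry out by the routine combination of the preceding results.
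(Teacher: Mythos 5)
Your proof is incomplete, and you flag the gap yourself: the top piece $Q_n = \bigcup_a\bigl(X_a\setminus\ter_n X_a\bigr)$ is a union of \emph{differences} of monotone sets, so the family is not filtered and your engine does not apply; the indiscernible/counting argument you sketch is plausible but nowhere near carried out. This is a genuine missing step, and the elaborate induction-on-$n$ scaffolding you built makes the problem harder than it needs to be.

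The paper's proof avoids all of this in one stroke, using no induction on $n$. Let $k = \dim(X)$. By Proposition~\ref{explicit}, $X \subseteq \bigcup_{i=1}^N D_i$ with each $D_i$ having a near-injective $k$-projection; some $D := D_i \cap X$ has $\dim(D) = k$, and the corresponding projection $\pi : D \to \Mm^k$ has finite fibers. Then $\pi(D)$ has dimension $k$, hence is broad, hence contains a countable grid $P = \prod_{i=1}^k S_i$. Lift each $p \in P$ to some $\tilde p \in D$, giving a \emph{small} set $\tilde P \subseteq D \subseteq \bigcup_a X_a$. Since the union is filtered and $\tilde P$ is small, saturation produces a single $a \in Y$ with $\tilde P \subseteq X_a$. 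Then $\pi(X_a \cap D) \supseteq P$ is broad, and since $\pi$ has finite fibers on $X_a \cap D$, we get $\dim(X_a) \ge \dim(X_a \cap D) = k$. The missing idea in your proposal is precisely this lifting-and-saturation move: instead of controlling the union by decomposing each $X_a$ (where non-monotonicity bites), one exhibits a small witness to the dimension of $X$ and uses filteredness plus saturation to squeeze it into a single $X_a$. Once you have this, your ``reduction to the full-dimensional case'' and the $\ter_i$ decomposition become unnecessary.
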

Unlike the previous corollary, the sets $X$ and $X_a$ are definable,
not just type-definable, and the family $\{X_a\}_{a \in Y}$ is large
rather than small.
\begin{proof}
  Let $k = \dim(X)$.  Similar to the previous proof, we only need to
  find some $a \in Y$ with $\dim(X_a) \ge k$.  By
  Proposition~\ref{explicit}, we have $X \subseteq \bigcup_{i = 1}^N D_i$
  where each set $D_i$ is a definable set with a near-injective
  $k$-projection.  Then $X = \bigcup_{i=1}^N (D_i \cap X)$, so there
  is some $i$ such that $\dim(D_i \cap X) = \dim(X) = k$.  Let $D =
  D_i \cap X$ and let $\pi : \Mm^n \to \Mm^k$ be the coordinate
  projection such that $\pi : D_i \to \Mm^k$ has finite fibers.  Then
  $\pi : D \to \Mm^k$ has finite fibers.  Since $\dim(D) = k$, the
  image $\pi(D)$ must have dimension $k$ by
  Theorem~\ref{dimension-theorem}(\ref{dt7}).  Then $\pi(D)$ is a
  broad subset of $\Mm^k$ by
  Theorem~\ref{dimension-theorem}(\ref{dt4}).  Therefore it contains a
  product $P = \prod_{i=1}^k S_i$ for some sets $S_i \subseteq \Mm$ of
  size $\aleph_0$.  For each $p \in P$, choose some $\tilde{p} \in D$ with
  $\pi(\tilde{p}) = p$, and let $\tilde{P}$ be the small set
  $\{\tilde{p} : p \in P\}$.  Then \[\tilde{P} \subseteq D \subseteq X
  = \bigcup_{a \in Y} X_a.\] Since the union is filtered, saturation
  gives some $a \in Y$ such that $\tilde{P} \subseteq X_a$.  Then
  $\tilde{P} \subseteq X_a \cap D$ and $P \subseteq \pi(X_a \cap D)$,
  showing that $\pi(X_a \cap D)$ is broad.  The projection $\pi : X_a
  \cap D \to \pi(X_a \cap D)$ has finite fibers, so $X_a \cap D$ has
  dimension $k$.  Thus $\dim(X_a) \ge k$.
\end{proof}
\begin{definition}
  The dimension of an $\mathcal{L}(\Mm)$-formula
  $\phi(x_1,\ldots,x_n)$ or a small partial type
  $\Phi(x_1,\ldots,x_n)$ is the dimension of the definable or
  type-definable set $X \subseteq \Mm^n$ defined by $\phi(\bx)$ or
  $\Phi(\bx)$.
\end{definition}
\begin{remark} \label{complete-dimension}
  The dimension of the complete type $\tp(\ba/C)$ is $\dim(\ba/C)$.
  Indeed, the corresponding type-definable set is $X = \{\bb \in \Mm^n
  : \bb \equiv_C \ba\}$.  If $\bb \in X$, then $\dim(\bb/C) =
  \dim(\ba/C)$ by automorphism invariance of $\dim(-/C)$.  Since $X$
  is type-definable over $C$,
  \begin{equation*}
    \dim \tp(\ba/C) = \dim X = \max \{\dim(\bb/C) : \bb \in X\} =
    \dim(\ba/C).
  \end{equation*}
\end{remark}
\begin{proposition}
  Let $\Phi$ be a partial type over a small set of parameters $C$.
  \begin{enumerate}
  \item $\dim(\Phi)$ is the maximum of $\dim(p)$ as $p$ ranges over
    completions of $\Phi$ (in $S_n(C)$).
  \item $\dim(\Phi)$ is the minimum of $\dim(\Phi_0)$ as $\Phi_0$
    ranges over finite subtypes $\Phi_0 \subseteq \Phi$.
  \end{enumerate}
\end{proposition}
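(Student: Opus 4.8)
The plan is to reduce both parts to facts already established about the type-definable set $X \subseteq \Mm^n$ defined by $\Phi$, using the definition $\dim(\Phi) = \dim(X) = \max\{\dim(\ba/C) : \ba \in X\}$.

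For part (1), I would note that the completions $p \in S_n(C)$ of $\Phi$ are exactly the types $\tp(\ba/C)$ as $\ba$ ranges over realizations of $\Phi$, i.e.\ over $X$: each such completion is a small type over $C$ and hence realized in $\Mm$, and its realization lies in $X$; conversely $\tp(\ba/C) \supseteq \Phi$ for every $\ba \in X$. Combined with Remark~\ref{complete-dimension}, which gives $\dim(p) = \dim(\ba/C)$ whenever $\ba \models p$, this yields
\[
  \dim(\Phi) = \max_{\ba \in X} \dim(\ba/C) = \max_{\ba \in X} \dim(\tp(\ba/C)) = \max\{\dim(p) : p \in S_n(C),\ \Phi \subseteq p\},
\]
which is the assertion. (If $\Phi$ is inconsistent both sides are $-\infty$.) This part is pure bookkeeping and poses no difficulty.

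For part (2), I would observe that $X = \bigcap_{\Phi_0} \Phi_0(\Mm)$ is a small filtered intersection of definable sets, the index running over the finite subtypes $\Phi_0 \subseteq \Phi$: the family $\{\Phi_0(\Mm)\}$ is filtered since $(\Phi_0 \cup \Phi_1)(\Mm) = \Phi_0(\Mm) \cap \Phi_1(\Mm)$, and it is small because $C$ is small, so there are only small-many $\mathcal{L}(C)$-formulas and hence small-many finite subtypes. Corollary~\ref{filtered-isect} then gives $\dim(X) = \min_{\Phi_0} \dim(\Phi_0(\Mm))$, which is exactly the claim. If one prefers to argue by hand: monotonicity (Theorem~\ref{dimension-theorem}(\ref{dt1})) gives $\dim(\Phi) \le \dim(\Phi_0)$ for every finite $\Phi_0 \subseteq \Phi$; for the reverse, set $k = \dim(\Phi)$, use Proposition~\ref{explicit} to cover $X$ by finitely many $C$-definable sets $D_1,\ldots,D_N$ (defined by $C$-formulas $\psi_1,\ldots,\psi_N$) with near-injective $k$-projections, apply compactness to the inconsistent type $\Phi(\bx) \cup \{\neg\psi_i(\bx) : i \le N\}$ to extract a finite $\Phi_0 \subseteq \Phi$ with $\Phi_0(\Mm) \subseteq \bigcup_i D_i$, and conclude $\dim(\Phi_0) \le k$ by Theorem~\ref{dimension-theorem}(\ref{dt1},\ref{dt2}).

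I expect no real obstacle: once the dimension theory for type-definable sets is in place — specifically Remark~\ref{complete-dimension} and Corollary~\ref{filtered-isect} (or Proposition~\ref{explicit} plus compactness) — both statements are essentially immediate. The only point needing a moment's care is smallness of the index family in part (2), which follows from smallness of $C$.
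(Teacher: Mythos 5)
Your proof is correct and follows the same route the paper takes: part (1) is Remark~\ref{complete-dimension} plus the identification of completions of $\Phi$ with $\tp(\ba/C)$ for $\ba \in X$, and part (2) is a direct application of Corollary~\ref{filtered-isect} to the small filtered intersection over finite subtypes. The by-hand alternative you sketch for part (2) is also fine, but it simply re-proves Corollary~\ref{filtered-isect} in this special case, so nothing new is gained.
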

\begin{proof}
  \begin{enumerate}
  \item This is clear in light of Remark~\ref{complete-dimension}.  In more detail, note that
    \begin{equation*}
      \{ p \in S_n(C) : p \supseteq \Phi\} = \{\tp(\ba/C) : \ba \text{
        satisfies } \Phi\}.
    \end{equation*}
    By Remark~\ref{complete-dimension},
    \begin{equation*}
      \max \{ \dim(p) : p \in S_n(C), ~ p \supseteq \Phi\} = \max
      \{\dim(\ba/C) : \ba \text{ satisfies } \Phi\} =: \dim(\Phi).
    \end{equation*}
  \item This is essentially a rephrasing of
    Corollary~\ref{filtered-isect}.  \qedhere
  \end{enumerate}
\end{proof}
\begin{definition} \label{pakc}
  A \emph{weak $k$-cell} is a definable set $D \subseteq \Mm^n$
  such that there is a coordinate projection $\pi : \Mm^n \to \Mm^k$
  with the following properties:
  \begin{itemize}
  \item The image $\pi(D) \subseteq \Mm^k$ is a non-empty open subset
    of $\Mm^k$.
  \item The projection $D \to \pi(D)$ has finite fibers.
  \end{itemize}
  A \emph{very weak $k$-cell} is defined in the same way,
  replacing ``non-empty open subset'' with ``broad set'' in the first
  bullet point.
\end{definition}
\begin{remark} \phantomsection \label{pathetic-rem}
  \begin{enumerate}
  \item \label{pr1} If $D$ is a weak $k$-cell, or a very weak $k$-cell,
    then $D$ has dimension $k$, by
    Theorem~\ref{dimension-theorem}(\ref{dt4},\ref{dt6},\ref{dt7}).
  \item \label{pr2} If $D$ has a near-injective $k$-projection $\pi : D \to
    \Mm^k$, then one of the following holds:
    \begin{itemize}
    \item The image $\pi(D)$ is broad, and $D$ is a very weak
      $k$-cell.
    \item The image $\pi(D)$ is narrow, and $\dim(D) = \dim(\pi(D)) <
      k$ by Theorem~\ref{dimension-theorem}(\ref{dt4},\ref{dt7}).
    \end{itemize}
  \item \label{pr3} If $D$ is a very weak $k$-cell with projection
    $\pi : D \to \Mm^k$, then we can write $D = D' \sqcup D''$, where
    $D'$ is a weak $k$-cell and $\dim(D'') < k$.  To see this,
    decompose the broad set $X := \pi(D)$ as $X' \cup X''$, where $X'
    = \ter(X)$ and $X'' = X \setminus X'$.  The set $X''$ has empty
    interior so it is narrow (Proposition~\ref{triad}).  Let $D'$ and
    $D''$ be the preimages of $X'$ and $X''$ in $D$.
  \end{enumerate}
\end{remark}
\begin{lemma} \label{pathetic-lem}
  Let $D \subseteq \Mm^n$ be a non-empty definable set with $\dim(D) =
  k$.
  \begin{enumerate}
  \item We can write $D$ as a finite union $\bigcup_{i = 1}^N D_i$
    where each $D_i$ has a near-injective $k$-projection.
  \item \label{pl2} We can write $D$ as a finite \emph{disjoint} union
    $\coprod_{i=1}^N D_i$ where each $D_i$ has a near-injective
    $k$-projection.
  \item We can write $D$ as a finite disjoint union $D_0 \sqcup
    \coprod_{i=1}^N D_i$, where $\dim(D_0) < k$ and $D_i$ is a
    very weak $k$-cell for $i > 0$.
  \item We can write $D$ as a finite disjoint union $D_0 \sqcup
    \coprod_{i=1}^N D_i$, where $\dim(D_0) < k$ and $D_i$ is a
    weak $k$-cell for $i > 0$.
  \end{enumerate}
\end{lemma}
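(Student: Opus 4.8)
The plan is to obtain the four decompositions in sequence, with essentially all the real content supplied by Proposition~\ref{explicit} and Remark~\ref{pathetic-rem}. For (1), apply Proposition~\ref{explicit} to $D$ (noting $\dim(D) = k$, so in particular $\dim(D) \le k$) to get definable sets $E_1,\ldots,E_N$ with near-injective $k$-projections covering $D$, and replace each $E_i$ by $D_i := E_i \cap D$; since a subset of a set with a near-injective $k$-projection again has a near-injective $k$-projection (finite fibers pass to subsets), this yields $D = \bigcup_{i=1}^N D_i$ with each $D_i$ of the required form. For (2), disjointify in the usual way, replacing $D_i$ by $D_i \setminus \bigcup_{j<i} D_j$ and discarding any empty pieces; this preserves the near-injective $k$-projection property by the same observation, so $D = \coprod_{i=1}^N D_i$.

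For (3), fix for each $i$ a coordinate projection $\pi_i$ witnessing the near-injective $k$-projection of $D_i$. By Remark~\ref{pathetic-rem}(\ref{pr2}), either $\pi_i(D_i)$ is broad, in which case $D_i$ is a very weak $k$-cell, or $\pi_i(D_i)$ is narrow, in which case $\dim(D_i) < k$. Let $D_0$ be the union of the $D_i$ of the second kind; then $\dim(D_0) < k$ by Theorem~\ref{dimension-theorem}(\ref{dt2}), and relabelling the $D_i$ of the first kind gives $D = D_0 \sqcup \coprod_{i=1}^N D_i$ with each $D_i$ a very weak $k$-cell. (At least one such cell must occur, since otherwise $\dim(D) = \max_i \dim(D_i) < k$.) For (4), apply Remark~\ref{pathetic-rem}(\ref{pr3}) to each very weak $k$-cell $D_i$, writing $D_i = D_i' \sqcup D_i''$ with $D_i'$ a weak $k$-cell and $\dim(D_i'') < k$; then absorb all the pieces $D_i''$ into the low-dimensional part, so that the new $D_0 \cup \bigcup_i D_i''$ still has dimension $< k$ by Theorem~\ref{dimension-theorem}(\ref{dt2}), and $D = D_0 \sqcup \coprod_i D_i'$ as desired.

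I do not expect a genuine obstacle here: the statement is essentially bookkeeping on top of the already-established Proposition~\ref{explicit}, the dichotomy of Remark~\ref{pathetic-rem}(\ref{pr2}), and the refinement of Remark~\ref{pathetic-rem}(\ref{pr3}). The two points that require a moment's care are the repeated use of the fact that a subset of a set with a near-injective $k$-projection still has one (invoked in both (1) and (2) to keep the pieces inside $D$ and pairwise disjoint), and the finite additivity of the property ``dimension $< k$'', used to control the size of $D_0$ at each stage.
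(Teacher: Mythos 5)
Your proof is correct and follows essentially the same route as the paper's: Proposition~\ref{explicit} plus intersection and disjointification for (1) and (2), then the dichotomy of Remark~\ref{pathetic-rem}(\ref{pr2}) for (3) and the refinement of Remark~\ref{pathetic-rem}(\ref{pr3}) for (4), absorbing low-dimensional pieces into $D_0$ via finite additivity of dimension.
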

\begin{proof}
  Proposition~\ref{explicit} gives $D \subseteq \bigcup_{i=1}^N D_i$,
  where each $D_i$ has a near-injective $k$-projection.  Replacing
  each $D_i$ with $D_i \cap D$ gives part (1).  (If $X$ has a
  near-injective $k$-projection, so does any subset.)  We can arrange
  for the $D_i$ to be pairwise disjoint by replacing $D_i$ with $D_i
  \setminus (D_1 \cup D_2 \cup \cdots \cup D_{i-1})$.  This gives (2).

  By Remark~\ref{pathetic-rem}(\ref{pr2}), some of the $D_i$'s are
  very weak $k$-cells, and the rest have dimension $< k$.
  Collecting the low-dimensional $D_i$'s into a new set $D_0$, we get
  part (3).

  By Remark~\ref{pathetic-rem}(\ref{pr3}), we can split each $D_i$
  into a weak $k$-cell $D'_i$ and a low-dimensional set $D''_i$.
  Replacing $D_i$ with $D'_i$ and throwing $D''_i$ into $D_0$
  gives part (4).
\end{proof}
\begin{proposition}[Weak cell decomposition]
  \label{pathetic-cells}
  Let $D$ be a definable set.  Then $D$ is a finite disjoint union of
  weak cells.
\end{proposition}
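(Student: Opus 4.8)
The plan is a routine induction on $\dim(D)$, with all the real content packaged into part (4) of Lemma~\ref{pathetic-lem}.

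The base case is $D = \varnothing$ (equivalently $\dim(D) = -\infty$), where $D$ is the empty disjoint union of weak cells and there is nothing to prove. For the inductive step, suppose $D \ne \varnothing$ and set $k = \dim(D) \in \Nn$. Lemma~\ref{pathetic-lem}(4) lets us write $D = D_0 \sqcup \coprod_{i=1}^N D_i$ where each $D_i$ with $i \ge 1$ is a weak $k$-cell and $\dim(D_0) < k$. In particular $\dim(D_0) < \dim(D)$, so by the inductive hypothesis $D_0$ is a finite disjoint union of weak cells (when $k = 0$ this is automatic, since then $D_0 = \varnothing$). Concatenating that decomposition with $D_1, \ldots, D_N$ exhibits $D$ itself as a finite disjoint union of weak cells.

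There is no genuine obstacle here. The only subtlety worth flagging is the convention that a weak $k$-cell is nonempty — its image under the distinguished coordinate projection is required to be a \emph{nonempty} open subset of $\Mm^k$ — so the empty set must be handled as an empty union rather than shoehorned into a degenerate cell. With that understood, the proposition follows mechanically from Lemma~\ref{pathetic-lem}(4).
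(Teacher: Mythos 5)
Your proof is correct and follows the same route as the paper's: the paper also obtains this immediately from Lemma~\ref{pathetic-lem}(4) together with induction on $\dim(D)$. The extra remark about treating $D = \varnothing$ as an empty union is a harmless clarification of the same argument.
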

\begin{proof}
  This is immediate by Lemma~\ref{pathetic-lem}(4) and induction on
  $\dim(D)$.
\end{proof}
The weak cell decomposition is not very useful, but it at least
gives us the following:
\begin{theorem} \label{new-def-dim}
  Dimension is definable in families: if $\{X_a\}_{a \in Y}$ is a
  definable family of definable sets, then each of the sets
  \begin{equation*}
    Y_k = \{a \in Y : \dim(X_a) = k\}
  \end{equation*}
  is definable.
\end{theorem}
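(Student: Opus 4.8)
The plan is to reduce to the case $k \in \Nn$ (the value $-\infty$ is then recovered by complementation, since $\dim(X_a) \le n$ for all $a$, so $\{a \in Y : X_a = \varnothing\} = Y \setminus \bigcup_{k=0}^n Y_k$), and then to induct on the ambient dimension $n$, writing $X_a = \phi(\Mm^n,a)$ for a fixed formula $\phi$ over a small set $B$. The base case $n = 0$ is immediate: $X_a$ is either empty or all of $\Mm^0$, and $\dim(X_a)$ is determined by the definable condition ``$X_a \ne \varnothing$''.

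For the inductive step, I would invoke the decomposition produced in the proof of Corollary~\ref{worthless-cells}(2), observing that it is \emph{uniform in families}: the pieces $(X_a)_0, \ldots, (X_a)_n$ with $X_a = (X_a)_0 \sqcup \cdots \sqcup (X_a)_n$, $(X_a)_0 = \ter(X_a)$, and $\pi^i \restriction (X_a)_i$ finite-fibered for $i \ge 1$, are all obtained from $X_a$ by finitely many applications of the operations $\ter(-)$, $\ter_i(-)$, and Boolean combinations --- each first-order because $\tau$ is a definable topology --- so they form $B$-definable families. By Theorem~\ref{dimension-theorem}(\ref{dt2}), $\dim(X_a) = \max_{0 \le i \le n} \dim((X_a)_i)$. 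The piece $(X_a)_0$ is open, hence by Theorem~\ref{dimension-theorem}(\ref{dt6}) has dimension $n$ if non-empty and $-\infty$ otherwise, and ``$\ter(X_a) \ne \varnothing$'' is a definable condition on $a$ (again using that $\tau$ is definable). For $i \ge 1$, the finite-fibered surjection $\pi^i : (X_a)_i \to \pi^i((X_a)_i) \subseteq \Mm^{n-1}$ gives $\dim((X_a)_i) = \dim(\pi^i((X_a)_i))$ by Theorem~\ref{dimension-theorem}(\ref{dt7}); and $\{\pi^i((X_a)_i)\}_{a \in Y}$ is a $B$-definable family of subsets of $\Mm^{n-1}$, so by the induction hypothesis $a \mapsto \dim(\pi^i((X_a)_i))$ is definable. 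Putting this together: $\dim(X_a)$ equals $n$ when $\ter(X_a) \ne \varnothing$, and equals $\max_{1 \le i \le n}\dim(\pi^i((X_a)_i))$ (a number always $< n$) otherwise. Hence each $Y_k$ is cut out by a Boolean combination of definable conditions, and the induction goes through.

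I expect the only genuine subtlety to be checking that the Corollary~\ref{worthless-cells} decomposition can be performed uniformly --- i.e.\ that feeding the fixed formula $\phi(\bx,a)$ through the recipe ``take the interior, then strip off finite-fibered layers by iterated $\ter_i$'' yields definable families in the parameter $a$; the rest is bookkeeping with Theorem~\ref{dimension-theorem}. It is also worth stressing why I would induct on $n$ rather than attempt to express ``$\dim(X_a) \ge k$'' directly in terms of coordinate projections: the tempting guess ``$\dim(X_a) \ge k$ iff some coordinate projection $\Mm^n \to \Mm^k$ has broad image on $X_a$'' computes the naive topological dimension $d_t(X_a)$, not $\dim(X_a)$, and these differ precisely because the surjection inequality of Non-Theorem~\ref{surj-nt} can fail. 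The decomposition avoids this trap because it only ever projects along \emph{finite-fibered} maps, where Theorem~\ref{dimension-theorem}(\ref{dt7}) is available.
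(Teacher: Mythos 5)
Your proof is correct, and it takes a genuinely different route from the paper's. The paper proves this via Proposition~\ref{pathetic-cells}: it observes that the family of weak $k$-cells is ind-definable, deduces that the condition ``$D$ has a weak cell decomposition with a top piece of dimension $k$'' is ind-definable, and concludes that each $Y_k$ is simultaneously $\vee$-definable and type-definable (the complement being the union of the other $Y_j$'s), hence definable. You instead run an induction on the ambient dimension $n$, observing that the decomposition of Corollary~\ref{worthless-cells}(2) --- which writes $X_a = \ter(X_a) \sqcup D_1 \sqcup \cdots \sqcup D_n$ with $\pi^i \restriction D_i$ finite-fibered --- is built from $\ter$, $\ter_i$, and Boolean operations, all of which are first-order uniformly in $a$ because $\tau$ has a definable basis. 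Combined with Theorem~\ref{dimension-theorem}(\ref{dt2},\ref{dt6},\ref{dt7}), this expresses $\dim(X_a)$ as $n$ if $\ter(X_a) \ne \varnothing$ and otherwise as $\max_i \dim(\pi^i(D_i))$, reducing to the inductive hypothesis on $\Mm^{n-1}$. Your argument is more elementary and direct (no appeal to $\vee$-definability, and it in principle produces an explicit formula), and you correctly flag why it is essential to project only along finite-fibered maps rather than arbitrary coordinate projections (the latter compute $d_t$, not $\dim$). The paper's route, on the other hand, records the useful byproduct that the family of $k$-dimensional definable sets is ind-definable, and reuses the weak cell machinery already in place. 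Both proofs are sound; the one small point worth saying explicitly in your writeup is that the fibers of $\pi^i \restriction D_i$ are not just finite but uniformly finite (each slice of $D_i$ along coordinate $i$ is $S \setminus \ter(S)$ for a definable $S \subseteq \Mm$, so t-minimality plus compactness gives a uniform bound), which is what justifies calling $\pi^i$ a definable surjection with finite fibers in the sense of Theorem~\ref{dimension-theorem}(\ref{dt7}) as $a$ varies.
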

\begin{proof}
  For each $k$, the family $\mathcal{F}_k$ of weak $k$-cells is ind-definable,
  i.e., a small union of definable families.  Then the family
  \begin{equation*}
    \mathcal{F}_{\le k} = \{C : \text{$C$ is a weak cell with }
    \dim(C) \le k\} = \bigcup_{k' = 1}^k \mathcal{F}_{k'}
  \end{equation*}
  is also ind-definable.  By Proposition~\ref{pathetic-cells}, a definable
  set $D$ has dimension $k$ if and only if $D$ is a disjoint union
  $C_1 \sqcup \cdots \sqcup C_N$ with $C_1 \in \mathcal{F}_k$ and
  $C_2,\ldots,C_N \in \mathcal{F}_{\le k}$.  It follows that the
  family of $k$-dimensional definable sets is ind-definable.
  Therefore, each of the sets $Y_k$ is $\vee$-definable.  The
  complement of $Y_k$ is the $\vee$-definable set $Y_{-\infty} \cup
  Y_0 \cup Y_1 \cup \cdots \cup \widehat{Y_k} \cup \cdots$, so $Y_k$
  is both $\vee$-definable and type-definable, hence definable.
\end{proof}
Because of the definability of dimension, the dimension theory works
equally well for definable sets over models $M$ other than the
monster.  If $D \subseteq M^n$ is definable, define $\dim(D)$ to be
$\dim(D(\Mm))$ for any monster model $\Mm$ extending $M$.
\begin{theorem} \label{small-dimension-theorem}
  Let $M$ be a model of $T$.  Let $X, Y$ be definable sets.
  \begin{enumerate}
  \item \label{sdt1} If $X \subseteq Y$, then $\dim(X) \subseteq \dim(Y)$.
  \item \label{sdt2} $\dim(X \cup Y) = \max(\dim(X),\dim(Y))$.
  \item \label{sdt3} $\dim(X) \le 0$ iff $X$ is finite.
  \item \label{sdt4} If $X \subseteq M^n$, then $\dim(X) \le n$, with equality if
    and only if $X$ has non-empty interior.  In particular,
    $\dim(M^n) = n$.
  \item \label{sdt5} If $f : X \to Y$ is a definable bijection, or a
    definable surjection with finite fibers, then $\dim(X) =
    \dim(Y)$.
  \item \label{sdt6} Let $f : X \to Y$ be a definable function and $k$ be an
    integer such that $\dim(f^{-1}(b)) \le k$ for every $b \in Y$
    (every fiber has dimension at most $k$).  Then $\dim(X) \le k +
    \dim(Y)$.
  \item \label{sdt7} $\dim(X \times Y) = \dim(X) + \dim(Y)$.
  \item \label{sdt8} $\dim(X) \le k$ iff $X$ is covered by finitely many definable
    sets with near-injective $k$-projections.
  \item \label{sdt9} Let $X = \bigcup_{a \in Y} X_a$ be a filtered union, where
    $\{X_a\}_{a \in Y}$ is a definable family.  Then $\dim(X) =
    \max_{a \in Y} \dim(X_a)$.
  \item \label{sdt10} Dimension is definable in families: if
    $\{X_a\}_{a \in Y}$ is a definable family of definable sets, then
    each of the sets
    \begin{equation*}
      Y_k = \{a \in Y : \dim(X_a) = k\}
    \end{equation*}
    is definable.
  \end{enumerate}
\end{theorem}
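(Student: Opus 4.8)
The plan is to reduce everything to the corresponding facts about the monster, using the definability of dimension. Fix a monster model $\Mm$ of $T$ with saturation degree larger than $|M|$, and arrange that $M \preceq \Mm$; then for an $M$-definable set $D = \phi(M,b)$, its dimension $\dim(D)$ is $\dim(\phi(\Mm,b))$ by definition. This is well-posed: $\dim(\phi(\Mm,b))$ equals $\max\{\dim(p) : p \in S_n(b),\ \phi(x,b) \in p\}$, and $\dim(p)$ is an invariant of the type $p$ alone --- it is the length of an $\acl$-basis, and the $\acl$-relations among the coordinates of a realization of $p$ are determined by $p$ --- so the value depends only on $T$ and $\tp(b/\varnothing)$, not on the choice of $\Mm$.

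The engine of the argument is the following consequence of Theorem~\ref{new-def-dim}: for each formula $\phi(x,y)$ with $|x| = n$ and each $k$, there is a formula $\theta_{\phi,k}(y)$ with $\theta_{\phi,k}(\Mm) = \{b : \dim(\phi(\Mm,b)) = k\}$, and hence, since $\dim(\phi(M,b)) := \dim(\phi(\Mm,b))$ and $M \preceq \Mm$, also $M \models \theta_{\phi,k}(b) \iff \dim(\phi(M,b)) = k$ for all $b \in M$. In other words, every statement of the form ``$\dim(\cdots) = k$'' or ``$\dim(\cdots) \le k$'' about $M$-definable sets, with the defining parameters allowed to vary, is a first-order condition on those parameters, true in $M$ iff true in $\Mm$. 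The only other point to watch is that ``has finite fibers'' can be upgraded to ``has fibers of size $\le N$'' for a fixed $N$: a definable family of finite sets has uniformly bounded size by compactness, so ``$f$ is a surjection with finite fibers'' and ``$\pi$ restricted to $D$ has finite fibers'' (a near-injective $k$-projection) are also first-order conditions on parameters, and similarly having non-empty interior is first-order because the topology is definable.

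Granting this, each item follows by the scheme: write $X,Y,\ldots$ as $\phi(M,b),\psi(M,c),\ldots$; note that the hypothesis is a first-order condition on $(b,c,\ldots)$ and hence holds in $\Mm$; apply the corresponding monster-level statement there; and note that the conclusion is again a first-order condition on $(b,c,\ldots)$, so it descends to $M$. Concretely: (\ref{sdt1})--(\ref{sdt5}) and (\ref{sdt7}) are immediate from Theorem~\ref{dimension-theorem}, once one observes that containment of $M$-definable sets, finiteness, having non-empty interior, and being a bijection or a bounded-to-one surjection all pass up to $\Mm$. For (\ref{sdt6}), the hypothesis ``$\dim(f^{-1}(b)) \le k$ for all $b \in Y$'' says $Y(M) \subseteq \bigcup_{j \le k}\theta_{j}(M)$ for suitable formulas $\theta_j$; this transfers to $\Mm$, so every fiber of $f : X(\Mm) \to Y(\Mm)$ has dimension $\le k$, and Theorem~\ref{dimension-theorem}(\ref{dt8}) gives $\dim(X) \le k + \dim(Y)$. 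Item (\ref{sdt8}) is the transfer, in both directions, of Proposition~\ref{explicit} applied with the small set $C = M$, using the uniform bound on fiber sizes. Item (\ref{sdt9}) is the transfer of Corollary~\ref{baire}: filteredness of $X(\Mm) = \bigcup_{a \in Y(\Mm)} X_a(\Mm)$ is inherited from $M$, and the set of values $\{\dim(X_a) : a \in Y(M)\}$ coincides with $\{\dim(X_a(\Mm)) : a \in Y(\Mm)\}$ by the displayed fact above. Finally (\ref{sdt10}) is that displayed fact restricted to the index set $Y$.

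The only genuine obstacle is justifying that the relevant hypotheses and conclusions really are first-order; this is exactly where the definability of dimension (Theorem~\ref{new-def-dim}) and the uniform-finiteness consequence of compactness are indispensable. Once those are in place, the rest is bookkeeping.
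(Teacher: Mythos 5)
Your proposal is correct and follows essentially the same approach as the paper: transfer the monster-model results (Theorem~\ref{dimension-theorem}, Proposition~\ref{explicit}, Corollary~\ref{baire}, Theorem~\ref{new-def-dim}) down to $M$ via $M \preceq \Mm$, using definability of dimension (Theorem~\ref{new-def-dim}) and uniform finiteness to make the relevant hypotheses and conclusions first-order. The paper's own proof is a brief paragraph stating exactly these ingredients, which your writeup spells out in more detail.
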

Part (\ref{sdt8}) comes from Proposition~\ref{explicit}(2) with $C = M$.
The other parts come directly from the corresponding facts in $\Mm$ in
Theorems~\ref{dimension-theorem}, Corollary~\ref{baire}, and
Theorem~\ref{new-def-dim}.  Part (\ref{sdt5}) uses uniform finiteness of
t-minimal theories to ensure that the finiteness of the fibers is
preserved when passing from $M$ to $\Mm$.  Similarly, parts
(\ref{sdt6}) and (\ref{sdt9}) use definability of dimension to
preserve the bounds on dimensions in families, when passing from $M$
to $\Mm$.

Here is an example application:
\begin{corollary} \label{perfect-field-cor}
  Let $(K,+,\cdot)$ be a definable field in a t-minimal theory.  Then
  $K$ is perfect.
\end{corollary}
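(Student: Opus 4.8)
The plan is to dispose of the characteristic-zero case trivially and, in characteristic $p > 0$, to prove surjectivity of the Frobenius endomorphism using only the product formula (Theorem~\ref{small-dimension-theorem}(\ref{sdt7})), the invariance of $\dim$ under definable bijections (Theorem~\ref{small-dimension-theorem}(\ref{sdt5})), and monotonicity (Theorem~\ref{small-dimension-theorem}(\ref{sdt1})). If $\characteristic(K) = 0$ there is nothing to prove, and if $K$ is finite it is perfect, so we may assume $\characteristic(K) = p > 0$ and $K$ infinite; then $d := \dim(K) \ge 1$ by Theorem~\ref{small-dimension-theorem}(\ref{sdt3}).

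First I would observe that the Frobenius map $\phi : K \to K$, $\phi(x) = x^p$, is definable (it is given by a term) and injective (since $K$ is a field). Hence its image $K^p := \phi(K)$ is a definable subfield of $K$, and $\phi$ is a definable bijection $K \to K^p$, so $\dim(K^p) = d$ by Theorem~\ref{small-dimension-theorem}(\ref{sdt5}).

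Next, suppose toward a contradiction that $K^p \neq K$, and pick $a \in K \setminus K^p$. Since $a^p \in K^p$ but $a \notin K^p$, the minimal polynomial of $a$ over $K^p$ is $X^p - a^p$, so $\{1, a, \dots, a^{p-1}\}$ is a $K^p$-basis of the intermediate field $L := K^p(a) \subseteq K$. The map $(K^p)^p \to L$ sending $(c_0, \dots, c_{p-1})$ to $\sum_{i=0}^{p-1} c_i a^i$ is then a definable bijection, so by Theorem~\ref{small-dimension-theorem}(\ref{sdt5}) and (\ref{sdt7}),
\begin{equation*}
  \dim(L) = \dim\bigl((K^p)^p\bigr) = p \cdot \dim(K^p) = p\,d .
\end{equation*}
But $L \subseteq K$ forces $\dim(L) \le d$ by Theorem~\ref{small-dimension-theorem}(\ref{sdt1}), hence $p\,d \le d$, contradicting $p \ge 2$ and $d \ge 1$. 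Therefore $K^p = K$, i.e., $K$ is perfect.

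I expect no serious obstacle here; the only point worth flagging is that one should not try to bound the full degree $[K : K^p]$, which may well be infinite, but instead pass to the single simple subextension $K^p(a)$, for which the dimension computation above is immediate. (Note that this argument goes through verbatim for $\dim(-)$ even though domains and images of arbitrary definable functions need not have equal dimension in the t-minimal setting, because the functions used are all bijections.)
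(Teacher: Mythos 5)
Your proof is correct and follows essentially the same strategy as the paper: exploit additivity of dimension under products, invariance under definable bijection, and monotonicity under inclusion to force $\dim(K) = 0$ from the assumption $K^p \ne K$. The only stylistic difference is that the paper avoids the minimal-polynomial computation by using the definable injection $K^p \times K^p \to K$, $(x,y) \mapsto x + by$, which only needs $\{1,b\}$ to be $K^p$-linearly independent (immediate from $b \notin K^p$) and yields $\dim(K) \ge 2\dim(K)$; you instead build the full $p$-dimensional subextension $K^p(a)$ and get $p\,\dim(K) \le \dim(K)$, which requires checking that $X^p - a^p$ is the minimal polynomial. Both are sound, and your side remark about not trying to bound $[K:K^p]$ globally is a reasonable caution, though the paper's two-term trick sidesteps the issue more cheaply.
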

\begin{proof}
  Otherwise, $K$ is an infinite field with characteristic $p > 0$, and
  the set of $p$th powers $K^p = \{x^p : x \in K\}$ is a proper
  subfield.  Take $b \in K \setminus K^p$.  The definable injection
  \begin{gather*}
    K^p \times K^p \to K \\
    (x,y) \mapsto x + by
  \end{gather*}
  shows that $\dim(K) \ge \dim(K^p \times K^p) = 2 \dim(K^p)$.  But
  the definable bijection
  \begin{gather*}
    K \to K^p \\
    x \mapsto x^p
  \end{gather*}
  shows $\dim(K^p) = \dim(K)$.  Thus $\dim(K) \ge 2 \dim(K^p) = 2
  \dim(K)$, and $\dim(K) = 0$.  Then $K$ is finite, a contradiction.
\end{proof}

\subsection{Comparison to other dimension theories}
Continue to work in a monster model $\Mm$ of a t-minimal theory $T$.
\begin{theorem} \label{comparison}
  Let $X$ be a type-definable set in $\Mm^n$.
  \begin{enumerate}
  \item If $T$ has the exchange property, then $\dim(X)$ agrees with
    $\rk(X)$, the $\acl$-dimension of $X$.
  \item If $T$ is dp-minimal, then $\dim(X)$ agrees with $\dpr(X)$,
    the dp-rank of $X$.
  \item In general, $\dim(X) \le d_t(X)$, where $d_t(X)$ is the naive
    topological dimension of $X$.
  \end{enumerate}
\end{theorem}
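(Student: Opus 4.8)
Part (1) and part (3) are quick; part (2) carries the weight. The plan for part (1) is to note that under the exchange property the pregeometry $A \mapsto \acl(CA)\cap\Mm^1$ makes ``$\acl$-independent tuple over $C$'' coincide with matroid-independence: $a_i\notin\acl(Ca_1\cdots a_{i-1})$ for all $i$ iff $a_i\notin\acl(C\cup\{a_j:j\ne i\})$ for all $i$. Hence an $\acl$-basis of $\ba$ over $C$ is exactly a maximal independent subtuple (so Non-Theorem~\ref{ddag} becomes a theorem), and $\dim(\ba/C)$ --- the common length of all $\acl$-bases --- is literally the $\acl$-rank $\rk(\ba/C)$. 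Taking the maximum over $\ba\in X$ gives $\dim(X)=\rk(X)$.

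For part (3), I would argue directly that $d_t(X)\ge\dim(X)$. Suppose $\dim(X)=k$ and pick $\ba\in X$ with $\dim(\ba/C)=k$; by Remark~\ref{acl-bases}(2) there is an $\acl$-basis of $\ba$ over $C$ which is an honest subtuple of $\ba$, and after a coordinate permutation we may take it to be $(a_1,\dots,a_k)$. By Proposition~\ref{broad-acl}, $\tp(a_1,\dots,a_k/C)$ is broad, so if $\pi:\Mm^n\to\Mm^k$ is the projection to the first $k$ coordinates then $\pi(X)$ is a $C$-(type-)definable set containing $\pi(\ba)=(a_1,\dots,a_k)$, whose type over $C$ is broad; by Fact~\ref{bn}(\ref{bn3}), $\pi(X)$ is broad, i.e.\ has non-empty interior (Proposition~\ref{triad}). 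Thus $d_t(X)\ge k$. The reverse inequality is false in general --- it is precisely what NSFF supplies, see Theorem~\ref{long-theorem}(\ref{lt20}).

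For part (2), I would first reduce to complete types: since $\dpr(X)=\max\{\dpr(\ba/C):\ba\in X\}$ and $\dim(X)=\max\{\dim(\ba/C):\ba\in X\}$, it suffices to show $\dpr(\ba/C)=\dim(\ba/C)$ for every finite tuple $\ba$ and small $C$. Let $k=\dim(\ba/C)$ and let $\bb\subseteq\ba$ be an $\acl$-basis, so $\ba$ and $\bb$ are interalgebraic over $C$; since dp-rank is unaffected by adding or deleting algebraic elements, $\dpr(\ba/C)=\dpr(\bb/C)$. As $\bb$ is a $k$-tuple of singletons, subadditivity of dp-rank together with dp-minimality gives $\dpr(\bb/C)\le k$. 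For the lower bound, $\bb$ is $\acl$-independent over $C$, so $\tp(\bb/C)$ is broad (Proposition~\ref{broad-acl}), and Fact~\ref{bn}(\ref{bn1}) supplies infinite sets $S_1,\dots,S_k\subseteq\Mm^1$ with $\prod_{i=1}^k S_i$ consisting entirely of realizations of $\tp(\bb/C)$. Treating this grid as an array and using the standard extraction of mutually indiscernible sequences locally based on a given array, I would produce sequences $I_1,\dots,I_k$, mutually indiscernible over $C$, such that any tuple obtained by selecting one element from each $I_i$ realizes $\tp(\bb/C)$; moving by an automorphism over $C$, arrange that $\bb$ is the tuple of leading terms of $I_1,\dots,I_k$. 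Each $I_i$ is non-constant (as $b_i\notin\acl(C)$) and singles out $b_i$, so $I_i$ is not indiscernible over $C\bb$; hence $I_1,\dots,I_k$ witness $\dpr(\bb/C)\ge k$. Combining the bounds, $\dpr(\ba/C)=\dpr(\bb/C)=k=\dim(\ba/C)$. Note this does not use viscerality, only t-minimality and dp-minimality.

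The hard part will be the lower bound in part (2): passing from a broad grid of realizations to mutually indiscernible sequences that simultaneously stay mutually indiscernible over $C$ and keep a prescribed ``diagonal'' realizing $\tp(\bb/C)$. This is the only place dp-minimality enters essentially --- it is what forces a single coordinate to contribute exactly $1$ to the dp-rank --- and one must take care that the array-extraction preserves enough grid structure; everything else is routine manipulation of the definitions of dp-rank, $\acl$-rank, and $\dim$.
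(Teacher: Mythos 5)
Your proposal is correct; parts (1) and (2) follow essentially the paper's route, while part (3) takes a genuinely different one. For part (3) the paper reduces to weak $k$-cells via Proposition~\ref{pathetic-cells} and reads off a near-injective projection onto an open set; you instead pick $\ba \in X$ with $\dim(\ba/C) = \dim(X) = k$, take a subtuple $\acl$-basis $\bb \subseteq \ba$, and note that the coordinate projection of $X$ onto those $k$ coordinates contains $\bb$, whose type over $C$ is broad, so the image is broad. Your route is more elementary (no cell decomposition) and is the natural one for type-definable $X$, since it only ever uses broadness. One caution: you then cite Proposition~\ref{triad} to pass from ``$\pi(X)$ broad'' to ``$\pi(X)$ has non-empty interior,'' but that proposition is stated for \emph{definable} sets; for type-definable $X$ the right reading of $d_t$ is via broadness (cf.\ Corollary~\ref{notop}). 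The paper's proof of (3) has the same implicit restriction to definable $X$.

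For part (2) you reach the same reduction as the paper (replace $\ba$ by an $\acl$-independent $\bb$ of length $k$ interalgebraic with it, note dp-rank is interalgebraicity-invariant, and get $\dpr(\bb/C) \le k$ from sub-additivity plus dp-minimality). For the lower bound the paper avoids indiscernible-sequence extraction altogether: the $k$-dimensional grid $\prod_i S_i$ inside the realizations of $\tp(\bb/C)$ \emph{is} an ict-pattern of depth $k$ via the formulas $x_i = a_{i,j}$, so $\dpr(\bb/C) \ge k$ immediately. Your plan---extract mutually indiscernible rows from the grid and arrange $\bb$ as the leading diagonal---also works: the property ``every grid-selection realizes $\tp(\bb/C)$'' is a type-condition, hence preserved under extraction of a mutually indiscernible array based on the original, and each extracted row remains injective. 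But the ict-pattern observation is the slicker move, bypassing the array-extraction lemma and the diagonal-placement step (which is in any case unnecessary, since $\dpr(\bb/C)$ depends only on the complete type, not on the particular realization).
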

\begin{proof}
  \begin{enumerate}
  \item If the exchange property holds, then an $\acl$-basis of $\ba$
    over $C$ is just a basis of $\ba$ in the pregeometry $\acl_C(-)$,
    and our $\dim(\ba/C)$ is the usual $\acl$-dimension.  Then
    $\dim(X)$ is the usual $\acl$-dimension.
  \item Fix a small set $C \subseteq \Mm^\eq$ over which $X$ is
    type-definable.  By well-known properties of dp-rank, $\dpr(X) = \max
    \{\dpr(\ba/C) : \ba \in X\}$.  Therefore, it suffices to show that
    $\dpr(\ba/C) = \dim(\ba/C)$.  Take an $\acl$-independent tuple
    $\bb$ interalgebraic over $C$ with $\ba$.  Then
    \begin{align*}
      \dim(\ba/C) &= \dim(\bb/C) = |\bb| \\
      \dpr(\ba/C) &= \dpr(\bb/C) = |\bb|.
    \end{align*}
    The final equality holds because broad type-definable sets $X
    \subseteq \Mm^n$ have dp-rank $n$.  Indeed, if $X \supseteq
    \prod_{i=1}^n \{a_{i,0},a_{i,1}, a_{i,2}, \ldots\}$, then the
    array of formulas $\{x_i = a_{i,j}\}_{1 \le i \le n, ~ j <
      \omega}$ is an ict-pattern of depth $n$ in $X$, showing $\dpr(X)
    = n$.  On the other hand, $\dpr(X) \le \dpr(\Mm^n)$ by the
    sub-additivity of dp-rank \cite{dp-add}.
  \item Both $\dim(-)$ and $d_t(-)$ respect unions
    (Corollary~\ref{dt-union-max} and
    Theorem~\ref{dimension-theorem}(\ref{dt2})), so by
    Proposition~\ref{pathetic-cells} we may assume $X$ is a weak
    $k$-cell.  Then $X$ has a finite-to-one coordinate projection onto
    a non-empty open set $U \subseteq \Mm^k$, so $k = \dim(X) \le
    d_t(X)$.  \qedhere
  \end{enumerate}
\end{proof}
Part (2) is related to Simon's work on dp-rank in dp-minimal theories
\cite{surprise}.  In fact, Proposition~3.4 in \cite{surprise}
essentially says that $\dpr(\ba/C) \ge r$ iff $\tp(\bb/C)$ is broad
for some subtuple $\bb \subseteq \ba$ of length $r$, without assuming
the presence of any definable topology.

\section{Visceral theories} \label{vt-section}
Fix a monster model $\Mm$ of a visceral theory $T$.  Fix a definable
basis of entourages $\mathcal{B}$.  We can assume that every $E \in
\mathcal{B}$ is open and symmetric, by replacing $\mathcal{B}$ with
the basis
\begin{equation*}
  \{\ter(E \cap E^{-1}) : E \in \mathcal{B}\}.
\end{equation*}
\subsection{The Hammer Lemma}
The following Lemma is a hammer, and everything is a nail:
\begin{lemma}[Hammer Lemma]
  Let $D \subseteq \Mm^n$ be a non-empty definable set.  Let $\bowtie$
  be a definable relation between $D$ and $\mathcal{B}$ with the
  following properties:
  \begin{itemize}
  \item For every $a \in D$, there is some $E \in \mathcal{B}$ such
    that $a \bowtie E$.
  \item If $a \bowtie E$ and $E' \subseteq E$, then $a \bowtie E'$.
  \end{itemize}
  Say that $a$ is \emph{compatible with} $E$ if $a \bowtie E$.  Then
  the following things hold:
  \begin{enumerate}
  \item If $D$ is infinite, there is an entourage $E \in \mathcal{B}$
    and an infinite definable subset $D' \subseteq D$ such that every $a
    \in D'$ is compatible with $E$.
  \item If $D$ is non-empty open, then there is an entourage $E
    \in \mathcal{B}$ and a non-empty open definable subset $D' \subseteq D$
    such that every $a \in D'$ is compatible with $E$.
  \item In general, there is an entourage $E \in \mathcal{B}$ and a
    definable subset $D' \subseteq D$ with $\dim(D') = \dim(D)$ such
    that that every $a \in D'$ is compatible with $E$.
  \end{enumerate}
\end{lemma}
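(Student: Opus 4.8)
The plan is to reduce all three parts to the filtered-union dimension formula, Corollary~\ref{baire}. For each basic entourage $E \in \mathcal{B}$, set
\[
  D_E = \{a \in D : a \bowtie E\}.
\]
Since $\bowtie$ is definable and $\mathcal{B}$ is a definable family, $\{D_E\}_{E \in \mathcal{B}}$ is a definable family of definable subsets of $D$, and for each fixed $E$ the set $D_E$ is definable (with whatever parameter names $E$). The first bullet of the hypothesis says $D = \bigcup_{E \in \mathcal{B}} D_E$, and the second bullet says precisely that $E' \subseteq E$ implies $D_E \subseteq D_{E'}$.

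Next I would observe that this union is filtered. Given $E_1, E_2 \in \mathcal{B}$, the intersection $E_1 \cap E_2$ is an entourage of the uniformity generated by $\mathcal{B}$, so some basic entourage $E_3 \in \mathcal{B}$ satisfies $E_3 \subseteq E_1 \cap E_2$; by the second bullet, $D_{E_1} \cup D_{E_2} \subseteq D_{E_3}$. Thus $D = \bigcup_{E \in \mathcal{B}} D_E$ is a filtered union of a definable family, and Corollary~\ref{baire} yields
\[
  \dim(D) = \max_{E \in \mathcal{B}} \dim(D_E),
\]
with the maximum attained. Choosing $E$ with $\dim(D_E) = \dim(D)$ and setting $D' = D_E$ proves part (3): $D'$ is definable, $\dim(D') = \dim(D)$, and every $a \in D'$ is compatible with $E$ by construction.

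Parts (1) and (2) then follow from part (3) together with Theorem~\ref{dimension-theorem}. If $D$ is infinite, then $\dim(D) \ge 1$ by Theorem~\ref{dimension-theorem}(\ref{dt3}), so the $D'$ produced in part (3) satisfies $\dim(D') \ge 1$ and is infinite. If $D$ is non-empty open, then $\dim(D) = n$ by Theorem~\ref{dimension-theorem}(\ref{dt5}), so the $D'$ of part (3) has $\dim(D') = n$; applying Theorem~\ref{dimension-theorem}(\ref{dt5}) once more, $\ter(D') \ne \varnothing$, and $\ter(D')$ is a non-empty open definable subset of $D' \subseteq D$ all of whose points are compatible with $E$, since compatibility with a fixed $E$ is inherited by subsets of $D_E$.

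I do not expect a genuine obstacle. The whole content is the recognition that the two hypotheses on $\bowtie$ are exactly what is needed to make $\{D_E\}_{E}$ a filtered definable family covering $D$, after which Corollary~\ref{baire} does all the work. The only step requiring slight care is part (2), where one passes from a definable subset of full dimension to an actual open set by taking its interior; this is legitimate precisely because membership in $D_E$ is preserved under passing to subsets.
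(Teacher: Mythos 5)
Your proof of part (3) is essentially identical to the paper's: define $D_E = \{a \in D : a \bowtie E\}$, observe that the two hypotheses on $\bowtie$ make $\{D_E\}_{E \in \mathcal{B}}$ a filtered definable family covering $D$, and apply Corollary~\ref{baire}. Your filteredness argument (passing to some $E_3 \in \mathcal{B}$ with $E_3 \subseteq E_1 \cap E_2$ and using the downward-closure hypothesis to get $D_{E_1} \cup D_{E_2} \subseteq D_{E_3}$) is correct and is exactly what the paper is implicitly invoking. Where you genuinely diverge is in parts (1) and (2): the paper proves each directly via a short saturation argument (pick a small set $S$ of size $\aleph_0$, or a small grid $\prod_i S_i$, take for each $a$ a compatible $E_a$, intersect them by saturation, and let $D'$ be the resulting superset, passing to its interior in case (2)). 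You instead derive (1) and (2) from (3) together with Theorem~\ref{dimension-theorem} parts (\ref{dt3}) and (\ref{dt5}), taking $\ter(D')$ at the end for case (2). Both routes are correct; yours is more uniform and saves a little writing, while the paper's is more elementary for parts (1) and (2) since it sidesteps Corollary~\ref{baire}, which itself rests on the weak cell decomposition and Proposition~\ref{explicit}. Either way the conclusion and the essential mechanism (saturation over a small family of compatible entourages) is the same.
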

\begin{proof}
  \begin{enumerate}
  \item Take $S \subseteq D$ with $|S| = \aleph_0$.  For each $a \in
    S$, let $E_a$ be a basic entourage such that $a \bowtie E_a$.  By
    saturation, there is a basic entourage $E \subseteq \bigcap_{a \in
      S} E_a$.  Then $a \bowtie E$ for every $a \in S$.  Let $D' = \{a
    \in D : a \bowtie E\}$.  Then $D'$ is infinite because $S
    \subseteq D'$.
  \item By Proposition~\ref{triad}, $D$ is broad, so it contains a product
    $P = \prod_{i=1}^n S_i$ where each $S_i$ is a set of size
    $\aleph_0$.  For each $a \in P$, let $E_a$ be a basic entourage
    such that $a \bowtie E_a$.  By saturation, there is a basic
    entourage $E \subseteq \bigcap_{a \in P} E_a$.  Then $a \bowtie E$
    for every $a \in P$.  Let $D'' = \{a \in D : a \bowtie E\}$.  Then
    $D''$ is broad because $D'' \supseteq P = \prod_{i=1}^n S_i$.
    Take $D' = \ter(D'')$.
  \item For each entourage $E$, let $D_E = \{a \in D : a \bowtie E\}$.
    The assumptions on $\bowtie$ say that $D$ is a filtered union
    $\bigcup_{E \in \mathcal{B}} D_E$.  By Corollary~\ref{baire},
    there is some $E \in \mathcal{B}$ such that $\dim(D_E) = \dim(D)$.
    Take $D' = D_E$. \qedhere
  \end{enumerate}
\end{proof}
Here is an example application:
\begin{lemma} \label{almost-perfect}
  If $D \subseteq \Mm^n$ is definable, then $D$ has only finitely many
  isolated points.
\end{lemma}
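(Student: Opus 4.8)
The plan is to reduce, via the Hammer Lemma, to a purely combinatorial statement about ``uniformly separated'' definable sets, and then to prove that statement by induction on the ambient dimension $n$.

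\textbf{Reduction.} Let $I \subseteq D$ be the (definable) set of isolated points of $D$; it suffices to show $I$ is finite. For a basic entourage $E$ and $a = (a_1,\dots,a_n)$, write $E^{(n)}[a] := E[a_1]\times\cdots\times E[a_n]$ and $E^{(n)} := \{(a,b) : (a_i,b_i)\in E \text{ for all } i\}$. Since the basic opens of $\Mm^n$ are products of balls, a point $a\in D$ is isolated iff $E^{(n)}[a]\cap D = \{a\}$ for some $E\in\mathcal B$. Hence the definable relation $a \bowtie E \iff E^{(n)}[a]\cap D = \{a\}$ is defined on all of $I$ and closed under shrinking $E$. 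If $I$ were infinite, the Hammer Lemma (part (1)) would give a single $E\in\mathcal B$ and an infinite definable $I'\subseteq I$ with $E^{(n)}[a]\cap D = \{a\}$ for every $a\in I'$; then any two distinct points $a,a'$ of $I'\subseteq D$ satisfy $a'\notin E^{(n)}[a]$, i.e.\ $(a,a')\notin E^{(n)}$. So everything reduces to showing: for each $n\ge 1$ and each basic entourage $E$, a definable $S\subseteq\Mm^n$ in which distinct points are never $E^{(n)}$-related is finite.

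\textbf{The combinatorial claim, by induction on $n$.} For $n=1$: if $S$ were infinite, t-minimality gives $\ter(S)\ne\varnothing$, so $S$ contains a basic ball $E'[c]$; shrinking $E'$ so that $E'\circ E'\subseteq E$ (possible since the $E\in\mathcal B$ may be taken symmetric and open, and $\mathcal B$ is a basis of a uniformity), this ball is open and non-empty, hence infinite, hence contains distinct $t,t'$ with $(t,t')\in E'\circ E'\subseteq E$, contradicting $E$-separation. For $n>1$, suppose $S$ is infinite and $E^{(n)}$-separated. Each fibre $S_c = \{\bar a\in\Mm^{n-1} : (c,\bar a)\in S\}$ is $E^{(n-1)}$-separated (two distinct elements differ in some coordinate $\ne$ the first, which must be $E$-far), hence finite by induction; so $\pi_1(S)$ is infinite and has an interior point $v_0$. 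Pick a symmetric basic $E_1$ with $E_1\circ E_1\subseteq E$ and $E_1[v_0]\subseteq\pi_1(S)$, and set $X := S\cap\pi_1^{-1}(E_1[v_0])$. Then $\pi_1(X)=E_1[v_0]$ is infinite; and $\pi^1(X)\subseteq\Mm^{n-1}$ is infinite, because each fibre $\{t : (t,\bar a)\in X\}$ is an $E$-separated definable subset of $\Mm$, hence finite by the base case. Finally $\pi^1(X)$ is $E^{(n-1)}$-separated: for distinct $\pi^1(a),\pi^1(a')$ we have $a\ne a'$ and $a_1,a_1'\in E_1[v_0]$, so $(a_1,a_1')\in E_1\circ E_1\subseteq E$; thus the coordinate witnessing $(a,a')\notin E^{(n)}$ lies among the last $n-1$ coordinates, which is exactly $(\pi^1(a),\pi^1(a'))\notin E^{(n-1)}$. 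This contradicts the induction hypothesis, finishing the claim and the lemma.

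The only genuinely non-formal step is the inductive step above, and its engine is the observation that passing to the preimage of a small ball $E_1[v_0]$ forces all pairs to be $E$-close in the first coordinate, so the separation ``migrates'' into the remaining coordinates where the induction hypothesis applies. The routine points to be careful about are: that $\mathcal B$ may be assumed to consist of open symmetric entourages (already set up at the start of Section~\ref{vt-section}), so the $\circ$-manipulations are legitimate; that the projections, the ball $E_1[v_0]$, and the relation $\bowtie$ are honestly definable; and the small bookkeeping lemma identifying ``$\pi^1(X)$ is $E^{(n-1)}$-separated'' with ``the bad coordinate of any witnessing pair avoids the first slot.''
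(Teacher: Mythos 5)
The proposal is correct and takes essentially the same approach as the paper. Both use the Hammer Lemma to extract a single entourage $E$ and an infinite definable $E$-separated set, and then use the same ``shrink the projection of a coordinate to a small $E'$-ball with $E'\circ E'\subseteq E$'' trick powered by t-minimality; the paper iterates a one-coordinate Claim $n$ times while you package the same mechanism as an induction on $n$, but the engine is identical.
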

Lemma~\ref{almost-perfect} fails for t-minimal theories.  For example, in RCF
with the Sorgenfrey topology, the set $\{(x,y) : x+y = 0\} \subseteq
\Mm^2$ is infinite and discrete.
\begin{proof}
  Let $X$ be the set of isolated points in $D$.  Suppose for the sake
  of contradiction that $X$ is infinite.  If $\ba \in X$ and $E \in
  \mathcal{B}$, let $\ba \bowtie E$ mean that $E$ isolates $\ba$ in
  $X$, in the sense that $E[\ba] \cap X = \{\ba\}$, where $E[\ba]$
  denotes $\prod_{i=1}^n E[a_i]$.  By discreteness of $X$, every $\ba
  \in X$ is compatible with at least one $E \in \mathcal{B}$.  By the
  Hammer Lemma, there is an infinite definable subset $X_0 \subseteq
  X$ and an entourage $E$ compatible with every point in $X_0$.
  \underline{Fix $E$}.
  \begin{claim}
    Let $Y \subseteq \Mm^n$ be an infinite definable set and $\pi :
    \Mm^n \to \Mm$ be a coordinate projection.  Then there is an
    infinite definable subset $Y' \subseteq Y$ such that for any $a, b
    \in Y'$, we have $\pi(a) \mathrel{E} \pi(b)$.
  \end{claim}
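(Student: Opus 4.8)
The plan is to argue directly, without invoking the Hammer Lemma again, by splitting on whether the image $\pi(Y) \subseteq \Mm$ is finite or infinite. If $\pi(Y)$ is finite, then since $Y$ is infinite some fiber $Y \cap \pi^{-1}(c)$ must be infinite; I would take $Y' = Y \cap \pi^{-1}(c)$, and then any two points of $Y'$ share the image $c$, so $\pi(a) \mathrel{E} \pi(b)$ by reflexivity of $E$.

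The substantive case is $\pi(Y)$ infinite. Here the idea is to produce a single definable set $V \subseteq \Mm$ that is ``small'' in the sense $V \times V \subseteq E$ and that meets $\pi(Y)$ in an infinite set, and then put $Y' = \{a \in Y : \pi(a) \in V\}$. To construct $V$: first choose $D \in \mathcal{B}$ with $D \circ D \subseteq E$ (an entourage axiom), recalling that $D$ is open and symmetric; by t-minimality $\ter(\pi(Y)) \ne \varnothing$, so pick $c \in \ter(\pi(Y))$ and set $V = D[c]$, an open neighborhood of $c$. The standard uniformity ``triangle inequality'' then gives $V \times V \subseteq E$: for $d, d' \in V$ we have $(c,d), (c,d') \in D$, hence $(d,c) \in D$ by symmetry, and therefore $(d,d') \in D \circ D \subseteq E$.

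Finally I would check that $Y'$ is infinite: $V \cap \ter(\pi(Y))$ is a non-empty open subset of $\Mm$ (it contains $c$), hence infinite since $\Mm$ has no isolated points, so $\pi(Y') = V \cap \pi(Y)$ is infinite and therefore so is $Y'$; and for $a, b \in Y'$ we get $(\pi(a),\pi(b)) \in V \times V \subseteq E$, as required. The only real content is the triangle inequality $D \circ D \subseteq E$ together with t-minimality (and the absence of isolated points) to keep $Y'$ infinite; I do not anticipate a genuine obstacle.
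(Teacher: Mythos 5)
Your proposal is correct and follows essentially the same argument as the paper: reduce to the case where $\pi(Y)$ is infinite (otherwise take an infinite fiber), pick $c$ in the interior of $\pi(Y)$ and an entourage $D$ with $D \circ D \subseteq E$, and set $Y' = \{a \in Y : \pi(a) \in D[c]\}$. The only cosmetic difference is that you split on whether $\pi(Y)$ is finite while the paper splits on whether some fiber is infinite; these reduce to the same two cases.
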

  \begin{claimproof}
    If some fiber of $Y \to \pi(Y)$ is infinite, then we can take $Y'$
    to be this fiber.  Otherwise, the set $\pi(Y)$ is infinite.  Take
    a point $c$ in the interior of $\pi(Y)$.  Let $D$ be a basic
    entourage with $D \circ D \subseteq E$.  Then $D[c] \cap \pi(Y)$
    is a neighborhood of $c$, so it is infinite.  Let $Y' = \{a \in Y
    : \pi(a) \in D[c]\}$.  Then $Y'$ is infinite.  If $a, b \in Y'$,
    then $\pi(a), \pi(b) \in D[c]$, and so $\pi(a) \in E[\pi(b)]$ by
    choice of $D$.
  \end{claimproof}
  Starting with $X_0$ and applying the claim to each coordinate projection, we get a chain of infinite definable sets
  \begin{equation*}
    X_0 \supseteq X_1 \supseteq \cdots \supseteq X_n
  \end{equation*}
  such that $\pi_i(a) \mathrel{E} \pi_i(b)$ for $a,b \in X_i$.  Taking
  distinct $a,b \in X_n$, we get $a \in E[b]$, contradicting the
  choice of $E$.
\end{proof}

\subsection{Generic continuity of correspondences}
\begin{lemma} \label{continuity-1}
  Let $D \subseteq \Mm$ be infinite and definable and $f : D \rightrightarrows
  \Mm^m$ be a definable $k$-correspondence.
  \begin{enumerate}
  \item There is some infinite definable $D' \subseteq D$ such that
    $f$ is continuous on $D'$.
  \item In fact, $f$ is continuous on a cofinite subset of $D$.
  \end{enumerate}
\end{lemma}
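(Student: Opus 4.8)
The plan is to deduce (1) from (2), reduce (2) to a statement about a single entourage, and then attack the genuinely topological core. First, since the topology has a definable basis of opens, the closure operator is definable: for any definable family $\{Z_t\}$ with $Z_t \subseteq \Mm^\ell$, the family $\{\overline{Z_t}\}$ is definable, as ``$x \in \overline{Z_t}$'' unwinds to the first-order condition ``every basic open neighbourhood of $x$ meets $Z_t$.'' Hence the frontier operator $\partial(-)$ is definable, and by t-minimality and compactness $\partial Z$ is finite, uniformly, for $Z \subseteq \Mm^1$. Fix a symmetric open basic entourage $E$ of $\Mm^m$, let ``$E$-close'' mean $E$-closeness in the Hausdorff sense between the size-$k$ sets $f(a),f(b)$, and put $W_E(a)=\{b\in D : f(b)$ is not $E$-close to $f(a)\}$. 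Reflexivity of $E$ forces $a\notin W_E(a)$, and one checks that $f$ fails to be ``$E$-continuous'' at $a$ (every $E$-neighbourhood of $a$ contains a $b$ with $f(b)$ not $E$-close to $f(a)$) exactly when $a\in\overline{W_E(a)}$, i.e.\ when $a\in\partial W_E(a)$. So $B_E:=\{a\in D: f$ is not $E$-continuous at $a\}$ is definable, uniformly in a parameter for $E$, and is finite as soon as it has empty interior.

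Shrinking $E$ makes ``$E$-close'' stricter and hence enlarges $B_E$, so $\{B_E\}_E$ is an upward-directed definable family. If each $B_E$ is finite, uniform finiteness of t-minimal theories gives a single $N$ with $|B_E|\le N$ for all $E$, and an upward-directed union of sets of size $\le N$ has size $\le N$. A $k$-correspondence is continuous at $a$ precisely when it is $E$-continuous at $a$ for all $E$: once $E$ is small enough the $k$ points of $f(a)$ are $E$-separated, and then $E$-closeness of $f(b)$ to $f(a)$ forces $f(b)$ to have one point near each of them, which is the local-branch condition of Definition~\ref{corresp-def}. So the discontinuity locus $D_{\mathrm{bad}}$ of $f$ equals $\bigcup_E B_E$, whence $|D_{\mathrm{bad}}|\le N$; this is (2). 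Part (1) follows: $D\setminus D_{\mathrm{bad}}$ is cofinite in $D$ hence infinite, and $D':=\ter(D\setminus D_{\mathrm{bad}})$ is a nonempty definable subset of $D$ (an infinite definable subset of $\Mm^1$ has nonempty interior) on which $f$ is continuous, since for $a\in D'$ a small ball about $a$ lies in $D'\subseteq D$, so continuity of $f\restriction D$ at $a$ (valid, as $a\notin D_{\mathrm{bad}}$) coincides with that of $f\restriction D'$ at $a$.

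It remains to prove each $B_E$ is finite, i.e.\ (t-minimality) has empty interior. Suppose not: $B_E$ contains a nonempty open $V_0\subseteq\Mm^1$, so $f\restriction V_0$ is ``nowhere $E$-continuous.'' If some fibre $\{a:\bar q\in f(a)\}$ met $V_0$ in an infinite set, that set would contain a ball on which $\bar q$ is a constant branch of $f$; peeling it off and inducting on $k$ produces an open set on which $f$ is continuous, contradicting nowhere-$E$-continuity. So assume $f\restriction V_0$ is finite-to-one. Then for each $a\in V_0$ the set $W_E(a)$ is infinite (it has $a$ as a limit point, the space being $T_1$), so $\{f(b):b\in W_E(a)\}$ is an infinite definable subset of $\Mm^m$, which by Lemma~\ref{almost-perfect} is not discrete; thus near every point of $V_0$ the values of $f$ cluster at distance $\ge E$ from $f(a)$.

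Turning this ubiquitous clustering into a contradiction is the main obstacle. One cannot argue through $\dim\partial\Gamma(f)<\dim\Gamma(f)$, since the frontier inequality genuinely fails in some visceral theories (Theorem~\ref{thm-frontiers}(4)), so the clustering need not be confined to a small subset of $V_0$. Instead I would argue combinatorially: apply the Hammer Lemma to the relation on $V_0\times\mathcal{B}$ recording which domain-side entourages ``localise'' the clustering just produced, extracting a single entourage $E_0$ of $\Mm$ together with a configuration of points of $V_0$ that are pairwise $E_0$-close in the domain while their $f$-values are pairwise $E$-separated and lie in a fixed ball of $\Mm^m$; an infinite such $f$-image would be an infinite discrete definable set, contradicting Lemma~\ref{almost-perfect}. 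The delicate point---where the real work lies---is to pass from the ad hoc witnesses, which a priori involve non-canonical choices, to a genuinely definable infinite configuration with the separation intact: one extracts a long indiscernible sequence inside $V_0$, reads off from indiscernibility whether consecutive $f$-values are always $E$-close or always $E$-far, and in each case uses t-minimality and Lemma~\ref{almost-perfect} to manufacture the required definable witness.
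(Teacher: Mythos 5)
Your plan inverts the paper's structure: you aim to prove~(2) directly and deduce~(1), whereas the paper proves~(1) by induction on $k$ (citing \cite[Proposition~3.12]{viscerality} for the base case $k=1$, and using the Hammer Lemma together with Lemma~\ref{almost-perfect} to split off one branch in the inductive step), and then deduces~(2) from~(1) by a short restriction argument. The scaffolding around your plan is sound: the equivalence of continuity with $E$-continuity for all small $E$ (via $E$-separation of the $k$ branches), the definability of $B_E$ via the definable closure operator, the monotonicity of $B_E$ as $E$ shrinks, and the appeal to uniform finiteness are all correct. But the whole argument rests on the claim that each $B_E$ has empty interior, and you explicitly leave that as a gap, offering only a sketch of a Hammer-Lemma-plus-indiscernibles argument. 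This is not a finishing detail: it is the topological core of the lemma and essentially subsumes the content of Dolich and Goodrick's Proposition~3.12 for $k=1$, plus the extra work of handling a $k$-correspondence directly. The paper avoids re-proving this precisely by citing the base case and reducing $k$ to $k-1$.

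There is also a specific worry in the final sketch. You want to produce an infinite definable subset of $\Mm^m$ whose points are pairwise $E$-separated and then invoke Lemma~\ref{almost-perfect}. But the candidate points arise from choosing one $b\in W_E(a)$ near each of many $a\in V_0$, and there is no reason the union of the chosen $f(b)$'s is definable, nor that the $E$-separation survives those choices. Extracting an indiscernible sequence gives a countable configuration whose consecutive $f$-values are uniformly $E$-far or $E$-close, not a definable set, and converting this into a definable witness that contradicts Lemma~\ref{almost-perfect} is exactly where the difficulty lies; your sketch only gestures at it. (A minor side issue: $E$ is an entourage on $\Mm^m$, so ``$E$-neighbourhood of $a$'' for $a\in\Mm^1$ is not meaningful; you presumably intend ``every neighbourhood of $a$''.) The peel-off step for an infinite fibre with induction on $k$ is plausible, but after reducing to the finite-to-one case you have not given a workable route to the contradiction, so the proof as written is incomplete.
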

\begin{proof}
  \begin{enumerate}
  \item By induction on $k$.  The base case $k=1$ is
    \cite[Proposition~3.12]{viscerality}.  Suppose $k > 1$.  For $a \in D$ and $E
    \in \mathcal{B}$, say $a \bowtie E$ if $E$ strongly separates the
    points of $f(a)$:
    \begin{equation*}
      E[\bx] \cap E[\by] = \varnothing \text{ for distinct } \bx, \by \in
      f(a).
    \end{equation*}
    Since the topology is Hausdorff, every $a \in D$ is compatible
    with at least one $E$.  By the Hammer Lemma, there is some $E$ and
    a ball $B \subseteq D$ such that every $a \in B$ is compatible
    with $E$.  That is, $E$ strongly separates the points of $f(a)$
    for any $a \in B$.  The set of isolated points in the graph
    $\Gamma(f)$ is finite by Lemma~\ref{almost-perfect}, so we can fix
    some $a \in B$ and $\bb \in f(a)$ such that $(a,\bb)$ is
    non-isolated in $\Gamma(f)$.  Then the set
    \begin{equation*}
      X = \Gamma(f) \cap (B \times E[\bb]) = \{(x,y) : x \in B, ~ y
      \in f(x) \cap E[\bb]\}
    \end{equation*}
    is infinite.  Let $D' \subseteq B$ the projection of $X$ onto the
    first coordinate.  The set $D'$ is infinite.  If $x \in D'$, then
    there is at least one $\by \in f(x) \cap E[\bb]$.  If there are
    two points $\by, \by'$ in $f(x) \cap E[\bb]$, then $\bb \in E[\by]
    \cap E[\by']$, contradicting the fact that $E$ strongly separates
    the points of $f(x)$ because $x \in D' \subseteq B$.

    Therefore, for every $x \in D'$ there is a unique $\by$ in $f(x)
    \cap E[\bb]$.  Let $g(x)$ be this unique $y$, and let $h(x) = f(x)
    \setminus \{g(x)\}$.  Then $g$ is a definable function and $h$ is
    a definable $(k-1)$-correspondence on $D'$.  By induction, $g$ and
    $h$ are continuous on an infinite definable subset $D'' \subseteq
    D'$.  Then $f$ is continuous on $D''$ too.
  \item Let $D_{\mathrm{bad}}$ be the set of points $a \in D$ such
    that $f$ is discontinuous at $a$.  If $D_{\mathrm{bad}}$ is
    infinite, then $f \restriction D_{\mathrm{bad}}$ is a
    counterexample to part (1). \qedhere
  \end{enumerate}
\end{proof}
\begin{proposition} \label{gencon}
  Let $D \subseteq \Mm^n$ be a non-empty open set and $f : D \rightrightarrows
  \Mm^m$ be a definable $k$-correspondence.
  \begin{enumerate}
  \item For each $i$, there is a smaller non-empty open set $D'
    \subseteq D$ (depending on $i$) such that $f(x_1,\ldots,x_n)$ is
    continuous in the variable $x_i$ on $D'$.
  \item There is a smaller non-empty open set $D' \subseteq D$ such
    that $f$ is continuous in each variable separately on $D'$.
  \item There is a point $\ba \in D$ such that $f$ is continuous at $\ba$.
  \item We can write $f$ as $D' \sqcup D''$ where $D'$ is open and
    dense in $D$, $\dim(D'') < n$, and $f$ is continuous on $D'$.
  \end{enumerate}
\end{proposition}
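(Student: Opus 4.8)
The plan is to treat the four parts in order, deducing (4) from (3) and deducing (3) from a uniform strengthening of Lemma~\ref{continuity-1} obtained through the Hammer Lemma; parts (1) and (2) are quick applications of the dimension theory. For part (1), fix $i$ and let $B \subseteq D$ be the definable set of $\ba$ at which the $i$th slice correspondence $t \mapsto f(a_1,\dots,t,\dots,a_n)$ is discontinuous on its domain slice $\{t : (a_1,\dots,t,\dots,a_n) \in D\}$; since $D$ is open, every nonempty such slice is open, hence infinite. By Lemma~\ref{continuity-1}(2) each nonempty slice of $B$ is finite, so $\pi^i \colon B \to \Mm^{n-1}$ has finite fibers, whence $B$ is narrow by Fact~\ref{bn}(\ref{bn5}). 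Then $\dim(D \setminus B) = n$ by Theorem~\ref{dimension-theorem}(\ref{dt2},\ref{dt4}), so $D' := \ter(D \setminus B)$ is a nonempty open subset of $D$ disjoint from $B$ by Theorem~\ref{dimension-theorem}(\ref{dt5}), and $f$ is continuous in $x_i$ on $D'$, since continuity in one variable at a point is inherited when the domain slice is shrunk to an open neighborhood. Part (2) follows by iterating part (1): apply it in direction $1$ to $D$, then in direction $2$ to the resulting open set, and so on; each step passes to a nonempty open subset, and continuity in the directions already handled survives, again because it is inherited by open subsets.

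Part (3) is the crux. Using part (2), shrink $D$ so that $f$ is continuous in each variable separately on $D$; a continuity point found in the shrunken open set is a continuity point of the original $f$. The key lemma to establish is a uniform one-variable statement: for every $E \in \mathcal{B}$, every coordinate $i$, and every nonempty open $V \subseteq D$, there is a nonempty open $V' \subseteq V$ and an $E_i \in \mathcal{B}$ such that for all $\ba \in V'$, replacing the $i$th coordinate of $\ba$ by any $t \in E_i[a_i]$ keeps the point in $V'$ and sends its $f$-value into the $E$-neighborhood of $f(\ba)$, matched branch by branch. This is the Hammer Lemma (open case) applied to the relation ``$\ba \bowtie E_i$'' just described, which is definable, downward closed in $E_i$, and satisfiable at each $\ba$ by separate continuity together with openness of $V$. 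Iterating over $i = 1,\dots,n$ and intersecting (inside the uniformity) the resulting entourages, one obtains for each $E$ and each nonempty open $V \subseteq D$ a nonempty open $W \subseteq V$ on which $f$ is jointly continuous with modulus $E \circ E \circ \cdots \circ E$ ($n$ times): to move from $\ba \in W$ to a nearby point in a small box contained in $W$, change one coordinate at a time, accumulating at most $n$ compositions of $E$. Since $\mathcal{B}$ is a basis for a uniformity, the $n$-fold composites $E \circ \cdots \circ E$ range over a cofinal family of entourages, so for every $F \in \mathcal{B}$ the set $G_F := \{\ba \in D : f \text{ is } F\text{-continuous at } \ba\}$ meets every nonempty open subset of $D$ in a nonempty open set; hence $G_F$ is dense in $D$, the set $D \setminus G_F$ has empty interior, and $\dim(D \setminus G_F) < n$. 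The discontinuity locus of $f$ in $D$ equals $\bigcup_{F \in \mathcal{B}}(D \setminus G_F)$, an upward-filtered definable family (for $F_1, F_2 \in \mathcal{B}$ one has $(D \setminus G_{F_1}) \cup (D \setminus G_{F_2}) \subseteq D \setminus G_{F_3}$ whenever $F_3 \in \mathcal{B}$ and $F_3 \subseteq F_1 \cap F_2$), so Corollary~\ref{baire} bounds its dimension by $\max_F \dim(D \setminus G_F) < n = \dim(D)$; therefore $f$ is continuous at some point of $D$. I expect the main obstacle to be the bookkeeping in the two uniform steps — in particular, carrying the $k$ branches of the correspondence consistently through the one-variable Hammer argument and through the box argument; the outer skeleton (Hammer Lemma, then Corollary~\ref{baire}) is robust.

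Finally, part (4) is a short deduction from part (3). Let $Z \subseteq D$ be the definable set of discontinuity points of $f$. If $Z$ had nonempty interior $V$, then $f \restriction V$ would be a correspondence on a nonempty open set with no continuity point, contradicting part (3) applied to $V$ (for $\ba \in V$, continuity of $f \restriction V$ at $\ba$ is the same as continuity of $f$ at $\ba$, since $V$ is open). So $Z$ has empty interior, hence is narrow, and $\overline Z$ is narrow by Corollary~\ref{small-closure}. Put $D'' := D \cap \overline Z$ and $D' := D \setminus \overline Z$; then $D = D' \sqcup D''$, with $\dim(D'') < n$, with $D'$ open, with $D'$ dense in $D$ (no nonempty open subset of $D$ can lie inside the interior-free set $\overline Z$), and with $f$ continuous on $D'$ since $D' \subseteq D \setminus Z$ is open. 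This yields the required decomposition and completes the proof.
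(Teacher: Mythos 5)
Your proof is correct and structurally parallel to the paper's, but the crux, part (3), is reached by a genuinely different route. The paper argues by contradiction: it assumes $f$ is nowhere continuous, applies the Hammer Lemma to a relation $\bowtie$ witnessing a uniform $E_1$-failure of continuity, applies it again to a relation $\propto$ giving a uniform modulus for the separate continuity, and then derives a contradiction by tracking $f$-values one coordinate at a time across a small box. You argue positively instead: the one-coordinate Hammer step, iterated over $i=1,\dots,n$ and followed by intersecting the resulting entourages, shows that for each entourage $E$, every nonempty open $V \subseteq D$ contains a small box on which $f$ has modulus $E^{\circ n}$; so each $G_F$ is dense, each $D \setminus G_F$ is narrow, and Corollary~\ref{baire} (applied to the definable filtered union $\bigcup_F (D \setminus G_F)$) bounds the dimension of the discontinuity locus strictly below $n$. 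This avoids the explicit contradiction and essentially delivers the dimension estimate of part (4) en route, at the cost of invoking Corollary~\ref{baire}, which the paper's part (3) does not need. One small formulation issue: your key lemma asks for the perturbed point to stay in the \emph{output} open set $V'$, which the Hammer Lemma cannot supply directly (it only gives that the perturbed point stays in the input set $V$, or in $D$); this is harmless and is repaired precisely by the later step in which you pass to a small box inside $V'$, exactly as the paper shrinks $D''$ to a box of $E_3$-small balls. Your part (1) is also a minor variant: you get narrowness of the per-coordinate discontinuity set directly from finiteness of its slices (Lemma~\ref{continuity-1}(2)) and Fact~\ref{bn}(\ref{bn5}), whereas the paper contradicts Lemma~\ref{continuity-1} by restricting to a line inside a hypothetical bad box; both are sound. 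Parts (2) and (4) match the paper.
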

\begin{proof}
  \begin{enumerate}
  \item Without loss of generality, $i = 1$.  Let $D_{\mathrm{bad}}$ be the set
    of points $a \in D$ such that $f$ is not continuous in the
    variable $x_1$.  If $D_{\mathrm{bad}}$ is broad, it contains a box $B = B_1
    \times \cdots \times B_n$.  Fixing some points $(a_2,\ldots,a_n)
    \in B_2 \times \cdots \times B_n$, there is a definable
    $k$-correspondence
    \begin{gather*}
      g : B_1 \to \Mm^m \\
      g(x) = f(x,a_2,\ldots,a_n),
    \end{gather*}
    and $g$ is nowhere continuous on $B_1$, contradicting
    Lemma~\ref{continuity-1}.  Therefore $D_{\mathrm{bad}}$ is narrow, and the
    complement $D \setminus D_{\mathrm{bad}}$ is broad.  Take $D'$ to be a ball
    in $D \setminus D_{\mathrm{bad}}$.
  \item Apply part (1) to each variable in turn, building a descending
    sequence of non-empty open sets $D = D_0 \supseteq D_1 \supseteq
    \cdots \supseteq D_n$ such that $f \restriction D_i$ is continuous
    in the variable $i$.  Take $D' = D_n$.
  \item By part (2), we may first shrink $D$ and arrange that $f$ is
    continuous in each variable separately on $D$.  Suppose for the
    sake of contradiction that $f$ is nowhere continuous.  If $a \in
    D$ and $E \in \mathcal{B}$, let $a \bowtie E$ mean that $E$
    witnesses a failure of continuity:
    \begin{quote}
      For any $E_\delta$ there is $a' \in E_\delta[a]$ such that
      $f(a') \centernot{\mathrel{E}} f(a)$.
    \end{quote}
    (Here, the notation $f(x) \mathrel{E} f(x')$ means that we can write
    \begin{gather*}
      f(x) = \{y_1,\ldots,y_k\} \\
      f(x') = \{y_1',\ldots,y_k'\}
    \end{gather*}
    in such a way that $y_i \mathrel{E} y'_i$ for each $1 \le i \le
    k$.)
    
    Since $f$ is nowhere continuous, every $a \in D$ is compatible
    with at least one $E$.  Then the Hammer Lemma gives a ball $D'
    \subseteq D$ and an entourage $E_1$ such that every $a \in D'$ is
    $\bowtie$-compatible with $E_1$.  Let $E_2$ be an entourage that
    is so small that
    \begin{equation*}
      \underbrace{E_2 \circ E_2 \circ \cdots \circ E_2}_{\text{$n$
          times}} \subseteq E_1.  \tag{$\ast$}
    \end{equation*}
    If $a \in D'$ and $E \in \mathcal{B}$, let $a \propto E$ mean the
    following:
    \begin{quote}
      If $a' \in E[a]$ and $a'$ differs from $a$ in only one
      coordinate, then $f(a) \mathrel{E_2} f(a')$.
    \end{quote}
    Because $f$ is continuous in each variable separately, for every
    $a \in D'$ there is $E \in \mathcal{B}$ such that $a \propto E$.
    The Hammer Lemma gives a smaller ball $D'' \subseteq D'$ and an
    entourage $E_3$ such that $a \propto E_3$ for every $a \in D''$.
    Shrinking $D''$ further, we may assume that $D''$ is a box
    $\prod_{i=1}^n B_i$ where each $B_i$ is an $E_3$-small ball ($x,y
    \in B_i$ implies $x \mathrel{E_3} y$).

    Take any $\ba = (a_1,\ldots,a_n) \in D''$.  Because $\ba \in D''
    \subseteq D'$, we have $a \bowtie E_1$.  Therefore, there is $\bb
    = (b_1,\ldots,b_n) \in D''$ such that $f(\ba) \centernot
    {\mathrel{E_1}} f(\bb)$.  Because we arranged for $D''$ to be a
    box $D'' = \prod_{i=1}^n B_i$, each of the points
    \begin{equation*}
      c_i = (b_1,b_2,\ldots,b_i,a_{i+1},a_{i+2},\ldots,a_n) \text{ for
      } 0 \le i \le n
    \end{equation*}
    is in $D''$.  Thus $c_i \propto E_3$ holds.  Moreover, $a_i, b_i \in
    B_i$, and $B_i$ is $E_3$-small, so $a_i \mathrel{E_3} b_i$.  By
    definition of the relation $\propto$, it follows that $f(c_i)
    \mathrel{E_2} f(c_{i+1})$.  Looking at the chain of
    relations \[f(\ba) = f(c_0) \mathrel{E_2} f(c_1) \mathrel{E_2}
    f(c_2) \mathrel{E_2} \cdots \mathrel{E_2} f(c_n) = f(\bb),\] we
    conclude that $f(\ba) \mathrel{E_1} f(\bb)$, contradicting the
    choice of $\bb$.
  \item Let $D_{\mathrm{bad}}$ be the set of points $a \in D$ such that $f$ is
    discontinuous at $a$.  By the previous point, $D_{\mathrm{bad}}$ is narrow.
    By Corollary~\ref{small-closure}, the closure $\overline{D_{\mathrm{bad}}}$ is
    narrow.  Take $D'' = D \cap \overline{D_{\mathrm{bad}}}$ and $D' = D \setminus
    D'' = D \setminus \overline{D_{\mathrm{bad}}}$.  All the properties are clear,
    except perhaps the density of $D'$ in $D$.  Density can be proven
    as follows: given $b \in D$, take a small ball $B \ni b$ with $B
    \subseteq D$.  Then $\dim(B) = n > \dim(D'')$, so $B \not
    \subseteq D''$ and $B$ intersects $D'$.  \qedhere
  \end{enumerate}
\end{proof}

\subsection{Cell decomposition}
\begin{definition}
  A \emph{$k$-cell} is a definable set of the form
  $\sigma(\Gamma(f))$, where $U \subseteq \Mm^k$ is a non-empty open
  set, $f : U \rightrightarrows \Mm^{n-k}$ is a continuous $m$-correspondence for
  some $m \ge 1$, and $\sigma$ is a coordinate permutation.
\end{definition}
Any $k$-cell is a weak $k$-cell (Definition~\ref{pakc}).  In
particular, any $k$-cell has dimension $k$.
\begin{lemma} \label{just-now}
  Let $D \subseteq \Mm^n$ be a definable set with a near-injective
  $k$-projection (Definition~\ref{nikp}).  Then we can write $D$ as a
  disjoint union $D = D_0 \sqcup C_1 \sqcup \cdots \sqcup C_N$ where
  $\dim(D_0) < k$ and each $C_i$ is a $k$-cell.
\end{lemma}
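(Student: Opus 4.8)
The plan is to reduce to the case of a weak $k$-cell with a constant fiber size, and then apply generic continuity (Proposition~\ref{gencon}(4)) to produce the cells. After applying a coordinate permutation we may assume the near-injective projection $\pi : \Mm^n \to \Mm^k$ is onto the first $k$ coordinates; since the class of $k$-cells and the value of $\dim$ are both invariant under coordinate permutations, nothing is lost and a permutation is re-applied at the end. By Remark~\ref{pathetic-rem}(\ref{pr2}), either $\pi(D)$ is narrow, in which case $\dim(D) = \dim(\pi(D)) < k$ and we are done with $D_0 = D$ and $N = 0$, or $\pi(D)$ is broad and $D$ is a very weak $k$-cell. In the latter case Remark~\ref{pathetic-rem}(\ref{pr3}) writes $D = D' \sqcup D''$ with $D'$ a weak $k$-cell and $\dim(D'') < k$; absorbing $D''$ into $D_0$, we may assume $D$ itself is a weak $k$-cell, so $U := \pi(D)$ is a non-empty open subset of $\Mm^k$ and $\pi : D \to U$ has finite fibers.

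Next I would stratify $U$ by fiber size. By uniform finiteness (a compactness argument: if $|D_a|$ were unbounded over $a \in U$, a monster model would contain $a \in U$ with $D_a$ infinite), there is $\ell < \omega$ with $|D_a| \le \ell$ for all $a \in U$, where $D_a = \{y \in \Mm^{n-k} : (a,y) \in D\}$. For $1 \le m \le \ell$ put $U_m = \{a \in U : |D_a| = m\}$, a definable set, and let $V_m = \ter(U_m)$. The $V_m$ are open and pairwise disjoint, and since the $U_m$ partition $U$ we have $U \setminus \bigcup_m V_m = \bigcup_m (U_m \setminus \ter(U_m))$, a finite union of sets with empty interior, hence narrow by Proposition~\ref{triad} and Corollary~\ref{cor-ideal}. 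Its preimage under $\pi$ meets $D$ in a set with finite fibers over a narrow set, so of dimension $< k$ by Theorem~\ref{dimension-theorem}(\ref{dt1},\ref{dt4},\ref{dt7}), and is absorbed into $D_0$. It therefore suffices to decompose $D \cap \pi^{-1}(V_m)$ for each $m$ with $V_m \ne \varnothing$.

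Fix such an $m$. The assignment $f : V_m \rightrightarrows \Mm^{n-k}$, $f(a) = D_a$, is a definable $m$-correspondence on the non-empty open set $V_m$, with graph $\Gamma(f) = D \cap \pi^{-1}(V_m)$. Applying Proposition~\ref{gencon}(4) to $f$ gives $V_m = W' \sqcup W''$ with $W'$ open in $\Mm^k$ and dense in $V_m$ (hence non-empty), $\dim(W'') < k$, and $f$ continuous on $W'$. Then $\Gamma(f \restriction W')$ is, by definition, a $k$-cell, while $\Gamma(f \restriction W'') = D \cap \pi^{-1}(W'')$ has finite fibers over the $<k$-dimensional set $W''$ and so has dimension $< k$; absorb it into $D_0$. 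Taking the cells $C_i$ to be the sets $\Gamma(f \restriction W')$ over the finitely many relevant $m$, and letting $D_0$ be the union of the boundedly many discarded pieces — still of dimension $< k$ by Theorem~\ref{dimension-theorem}(\ref{dt2}) — then re-applying the coordinate permutation yields the claimed decomposition.

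The routine but essential bookkeeping is that every discarded piece genuinely has dimension $< k$: each arises by pulling back a narrow (equivalently, $<k$-dimensional) subset of $\Mm^k$ along the finite-to-one map $\pi$, so Theorem~\ref{dimension-theorem}(\ref{dt7}) together with monotonicity of $\dim$ controls it. The one point that requires care before invoking Proposition~\ref{gencon} is that ``correspondence'' in this paper means a \emph{constant} fiber size, which is exactly why stratifying by $|D_a|$ must come first; that stratification, rather than anything in the generic-continuity input, is the main thing to get right.
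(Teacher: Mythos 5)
Your proof is correct and follows essentially the paper's route: stratify $\pi(D)$ by fiber size, view each stratum as the graph of a definable $m$-correspondence, and apply Proposition~\ref{gencon}(4) to peel off a dense open piece where the correspondence is continuous, absorbing the narrow leftovers (and the finitely many fibers over them) into $D_0$. The two small differences are cosmetic: you explicitly pass to $V_m = \ter(U_m)$ so that the open-domain hypothesis of Proposition~\ref{gencon}(4) is literally met (the paper applies it to $X_m$ directly, implicitly discarding $X_m \setminus \ter(X_m)$), and your preliminary reduction to a weak $k$-cell via Remark~\ref{pathetic-rem} is harmless but unnecessary since the case $\pi(D)$ narrow is handled by the same bookkeeping anyway.
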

\begin{proof}
  Let $\pi : D \to \Mm^k$ be the near-injective coordinate projection.
  Permuting coordinates, we may assume $\pi$ is the projection onto
  the first $k$ coordinates.  Let $X = \pi(D)$.  For each $m \ge 1$,
  let $X_m$ be the set of points $b \in X$ such that $\pi^{-1}(b) \cap
  D$ has size $m$.  By uniform finiteness, almost every $X_m$ is
  empty.  Let $D_m$ be the preimage of $X_m$ in $D$.  Then $D$ is the
  disjoint union of the sets $D_m$, and $\pi : D_m \to X_m$ is onto
  with every fiber of size $m$.  Thus $D_m$ is the graph of an
  $m$-correspondence $f_m : X_m \to \Mm^{n-k}$.  By generic continuity
  of correspondences (Proposition~\ref{gencon}(4)), we can write $X_m$ as
  $U_m \sqcup X'_m$ where $U_m$ is open (possibly empty), $X'_m$ is
  narrow, and $f_m$ is continuous on $U_m$.  Let
  \begin{equation*}
    \{C_1,\ldots,C_N\} = \{\Gamma(f_m \restriction U_m) : m \ge 1, ~
    U_m \ne \varnothing\}.
  \end{equation*}
  Then each $C_i$ is a $k$-cell.  Note that
  \begin{equation*}
    D \setminus \bigcup_{i = 1}^N C_i = \bigcup_m \Gamma(f_m \restriction X'_m), \tag{$\ast$}
  \end{equation*}
  because $D$ is the union of the graphs of the $f_m$'s, and
  $\dom(f_m)$ is the disjoint union of $U_m$ and $X'_m$.  The right
  hand side of ($\ast$) has dimension less than $k$ because
  \begin{equation*}
    \dim(\Gamma(f_m \restriction X'_m)) = \dim(X'_m) < k
  \end{equation*}
  by Theorem~\ref{dimension-theorem}(\ref{dt4},\ref{dt7}).  Therefore
  we can take $D_0 = D \setminus \bigcup_{i=1}^N C_i$.
\end{proof}
\begin{theorem}[Cell decomposition I] \label{cd1}
  If $D \subseteq \Mm^n$ is definable, then $D$ is a disjoint union of
  cells.
\end{theorem}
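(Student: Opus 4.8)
The plan is to prove this by induction on $\dim(D)$, reducing everything to the two lemmas just established. If $D = \varnothing$ then $\dim(D) = -\infty$ and $D$ is the empty union of cells, so the base case is trivial. For the inductive step, suppose $D$ is non-empty and set $k = \dim(D)$.

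First I would apply Lemma~\ref{pathetic-lem}(\ref{pl2}) to write $D$ as a finite disjoint union $D = \coprod_{i=1}^N D_i$ in which each $D_i$ has a near-injective $k$-projection. Then I would feed each piece $D_i$ into Lemma~\ref{just-now}, obtaining a further disjoint decomposition $D_i = E_i \sqcup C_{i,1} \sqcup \cdots \sqcup C_{i,M_i}$ where each $C_{i,j}$ is a $k$-cell and $\dim(E_i) < k$. Collecting the remainders, put $E = \coprod_{i=1}^N E_i$; by Theorem~\ref{dimension-theorem}(\ref{dt2}) we have $\dim(E) = \max_i \dim(E_i) < k$, so the induction hypothesis applies and writes $E$ as a finite disjoint union of cells. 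Splicing this decomposition of $E$ together with all the $C_{i,j}$'s exhibits $D$ as a finite disjoint union of cells. Everything in sight is pairwise disjoint: the $D_i$ are disjoint, the pieces inside each $D_i$ are disjoint, and the cells produced from $E$ are subsets of $E$, hence disjoint from every $C_{i,j}$.

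At this point the argument is pure bookkeeping, so I do not expect a genuine obstacle here: the real content has already been paid for, in the generic continuity of correspondences (Proposition~\ref{gencon}) and its packaging in Lemma~\ref{just-now}. The two things worth keeping an eye on are well-foundedness and the bottom of the induction. Well-foundedness holds because each remainder $E$ has dimension strictly below $k$ (or is empty, of dimension $-\infty$), and $\dim$ takes values in the well-ordered set $\{-\infty\} \cup \Nn$. The case $k = 0$, where $D$ is finite, is not special: Lemma~\ref{just-now} applied with $k = 0$ already expresses a finite set as a disjoint union of $0$-cells with empty remainder, so the same inductive scheme covers it without a separate base case.
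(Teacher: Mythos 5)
Your proof is correct and is essentially the same argument as the paper's: induct on $\dim(D)$, decompose into pieces with near-injective $k$-projections via Lemma~\ref{pathetic-lem}, extract $k$-cells from each via Lemma~\ref{just-now}, and apply the inductive hypothesis to the low-dimensional remainder. The extra remarks on well-foundedness and the $k=0$ case are harmless and accurate.
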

\begin{proof}
  Proceed by induction on $k = \dim(D)$.  By Lemma~\ref{pathetic-lem},
  we can write $D$ as a disjoint union $\coprod_{i=1}^N D_i$, where
  each $D_i$ has a near-injective $k$-projection.  By
  Lemma~\ref{just-now}, each $D_i$ can be written as a disjoint union
  of finitely many $k$-cells and a set of lower dimension.  Therefore,
  the same is true of $D$: we can write $D$ as $D_0 \sqcup C_1 \sqcup
  \cdots \sqcup C_N$, where each $C_i$ is a $k$-cell and $\dim(D_0) <
  \dim(D)$.  By induction, we can further decompose $D_0$ into cells.
\end{proof}
There is also a generic continuity version of cell decomposition,
Theorem~\ref{cd2} below.  Note that unlike the usual proof of cell
decomposition, we are not proving the two theorems together by a joint
induction.
\begin{definition}
  The \emph{local dimension} of a definable set $D$ at a point $p \in
  D$, written $\dim_p D$, is $\dim U \cap D$ for all sufficiently
  small neighborhoods $U \ni p$.
\end{definition}
Note that $\dim_p(D) \le \dim(D)$.
\begin{proposition} \label{local-dim}
  If $\dim(D) = k$, there is $p \in D$ such that $\dim_p(D) = k$.
\end{proposition}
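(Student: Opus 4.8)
The plan is to reduce to a single cell via cell decomposition. By Theorem~\ref{cd1} we may write $D = \coprod_{i=1}^N C_i$ with each $C_i$ a cell. Since $\dim(D) = \max_i \dim(C_i)$ by Theorem~\ref{dimension-theorem}(\ref{dt2}), and every $\ell$-cell has dimension $\ell$, some $C_i$ is a $k$-cell; fix such a $C := C_i$. For any point $p$ and any neighborhood $U \ni p$ we have $U \cap C \subseteq U \cap D \subseteq D$, so $\dim(U \cap C) \le \dim(U \cap D) \le \dim(D) = k$ by Theorem~\ref{dimension-theorem}(\ref{dt1}). Hence it is enough to find $p \in C$ with $\dim(U \cap C) = k$ for every neighborhood $U \ni p$: this forces $\dim_p(C) = k$, and then also $\dim_p(D) = k$ with $p \in C \subseteq D$.

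To find such a $p$, I would unwind the definition of a $k$-cell. Coordinate permutations are homeomorphisms of $\Mm^n$, so without loss of generality $C = \Gamma(f)$ where $W \subseteq \Mm^k$ is non-empty open and $f : W \rightrightarrows \Mm^{n-k}$ is a continuous $m$-correspondence. Choose any $p = (a,b) \in C$, so $a \in W$ and $b \in f(a)$. By the first clause in the definition of a continuous correspondence, there are an open $V \ni a$ with $V \subseteq W$ and continuous functions $g_1,\ldots,g_m : V \to \Mm^{n-k}$ with $f(a') = \{g_1(a'),\ldots,g_m(a')\}$ for all $a' \in V$; reindex so that $g := g_1$ satisfies $g(a) = b$.

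Now let $U \ni p$ be an arbitrary neighborhood. Pick a basic open box $U_1 \times U_2$ with $(a,b) \in U_1 \times U_2 \subseteq U$ and $U_1 \subseteq W$. By continuity of $g$ at $a$, since $g(a) = b \in U_2$, there is an open $V'$ with $a \in V' \subseteq V \cap U_1$ and $g(V') \subseteq U_2$. Then $\Gamma(g \restriction V') \subseteq V' \times g(V') \subseteq U_1 \times U_2 \subseteq U$, and $\Gamma(g \restriction V') \subseteq \Gamma(f) = C$ since $g(a') \in f(a')$; thus $\Gamma(g \restriction V') \subseteq U \cap C$. Projection onto the first $k$ coordinates is a definable bijection $\Gamma(g \restriction V') \to V'$, and $V'$ is a non-empty open subset of $\Mm^k$, so $\dim\bigl(\Gamma(g \restriction V')\bigr) = \dim(V') = k$ by Theorem~\ref{dimension-theorem}(\ref{dt6},\ref{dt7}). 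Hence $\dim(U \cap C) \ge k$, and combined with the inequality above, $\dim(U \cap C) = k$. Since $U$ was arbitrary this gives $\dim_p(C) = k$, and therefore $\dim_p(D) = k$, as required.

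There is no real obstacle here: the single point requiring care is that the graph of the local section $g$ can be squeezed inside an arbitrarily small neighborhood of $p$, which is exactly what continuity of $g$ at $a$ supplies once we pass to a basic open box contained in $U$. All the remaining steps are routine applications of the already-established properties of $\dim$.
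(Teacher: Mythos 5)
Your argument follows the paper's own sketch exactly: decompose $D$ into cells, pick a $k$-cell $C$, take any $p \in C$, and deduce $\dim_p(C) = k$ from the fact that $C$ is locally the graph of a continuous function. You have filled in the local-graph step in welcome detail, and the bounding inequality $\dim(U\cap C)\le \dim(U\cap D)\le \dim(D)=k$ and the final sandwich are handled correctly.

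One step needs a small repair. You apply Theorem~\ref{dimension-theorem}(\ref{dt6},\ref{dt7}) to the set $\Gamma(g\restriction V')$, but $g$ is one of the $m$ local continuous branches supplied by the definition of a continuous correspondence, and that definition does \emph{not} require the branches $g_1,\dots,g_m$ to be definable. Since $\dim(-)$ is only defined for (type-)definable sets, the statements $\dim(\Gamma(g\restriction V'))=\dim(V')$ and the subsequent monotonicity step are not licensed as written. The fix is immediate using what you have already established: since $\Gamma(g\restriction V')\subseteq U\cap C$, the coordinate projection $\pi$ of the \emph{definable} set $U\cap C$ onto the first $k$ coordinates has image containing the non-empty open set $V'$, so $\pi(U\cap C)$ is broad and has dimension $k$ by Theorem~\ref{dimension-theorem}(\ref{dt4}). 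The projection $U\cap C\to\pi(U\cap C)$ has fibers of size at most $m$ (a subset of a fiber of $\Gamma(f)\to W$), so Theorem~\ref{dimension-theorem}(\ref{dt7}) gives $\dim(U\cap C)=k$ directly, avoiding any reference to the possibly non-definable graph $\Gamma(g\restriction V')$. With that substitution the proof is complete.
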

\begin{proof}
  Write $D$ as a disjoint union of cells $D = \coprod_{i=1}^N C_i$.
  Some $C_i$ is a $k$-cell.  Take any $p \in C_i$.  Then $\dim_p(C_i)
  = k$ because $C_i$ looks locally like the graph of a continuous
  function.  Finally,
  \begin{equation*}
    k = \dim_p(C_i) \le \dim_p(D) \le \dim(D) = k.   \qedhere
  \end{equation*}
\end{proof}
\begin{lemma} \label{gorilla}
  Let $C$ be a $k$-cell, $f : C \rightrightarrows \Mm^m$ be a definable
  correspondence, and let $C_{\mathrm{bad}}$ be the set of $a \in C$ such that
  $f$ is discontinuous at $a$.  Then $\dim(C_{\mathrm{bad}}) < k = \dim(C)$.
\end{lemma}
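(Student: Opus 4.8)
The plan is to transport the problem from the abstract $k$-cell $C$ to an honest open subset of $\Mm^k$, where generic continuity of correspondences (Proposition~\ref{gencon}) applies directly. Note first that $C_{\mathrm{bad}} \subseteq C$, so $\dim(C_{\mathrm{bad}}) \le \dim(C) = k$ by Theorem~\ref{dimension-theorem}(\ref{dt1}); it therefore suffices to rule out $\dim(C_{\mathrm{bad}}) = k$. When $k = 0$ the cell $C$ is finite, hence discrete since the topology is Hausdorff, so $f$ is continuous at every point and $C_{\mathrm{bad}} = \varnothing$; thus we may assume $k \ge 1$.

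Step 1: a $k$-cell is locally a definable homeomorphic copy of an open subset of $\Mm^k$. Writing $C = \sigma(\Gamma(g))$ with $\sigma$ a coordinate permutation, $U \subseteq \Mm^k$ open, and $g : U \rightrightarrows \Mm^{n-k}$ a continuous $m'$-correspondence, I would show that for any $p \in C$, writing $\sigma^{-1}(p) = (a,b)$ with $b \in g(a)$, there is a definable open neighborhood $V \ni a$ and a definable continuous function $g_1 : V \to \Mm^{n-k}$ with $g_1(a) = b$ and $\Gamma(g_1)$ relatively open in $\Gamma(g)$. Concretely: choose an entourage strongly separating the finitely many points of $g(a)$; by continuity of $g$ this entourage still separates the points of $g(x)$ for $x$ near $a$, so each point of $g(a)$ is ``tracked'' by a definable continuous branch on a neighborhood of $a$, and the graphs of these branches are disjoint closed (hence, their union being relatively open, relatively open) pieces of $\Gamma(g)$. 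Then $U_p := \sigma(\Gamma(g_1))$ is a definable relatively open neighborhood of $p$ in $C$, and $h : V \to U_p$, $h(x) = \sigma(x, g_1(x))$, is a definable homeomorphism.

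Step 2: suppose for contradiction that $\dim(C_{\mathrm{bad}}) = k$. By Proposition~\ref{local-dim} there is a point $p \in C_{\mathrm{bad}}$ with $\dim_p(C_{\mathrm{bad}}) = k$. Take the chart $h : V \to U_p$ of Step~1 around $p$, shrinking $V$ (to a small ball around $a$) so that in addition $\dim(C_{\mathrm{bad}} \cap U_p) = k$. Then $Z := h^{-1}(C_{\mathrm{bad}} \cap U_p) \subseteq \Mm^k$ is definable with $\dim(Z) = k$ by Theorem~\ref{dimension-theorem}(\ref{dt7}), hence $Z$ has non-empty interior by Theorem~\ref{dimension-theorem}(\ref{dt5}); pick a non-empty open $W \subseteq Z$. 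Since $h$ is a homeomorphism of $V$ onto the relatively open subset $U_p$ of $C$ and continuity of a correspondence at a point is a local condition, the definable correspondence $f \circ h : V \rightrightarrows \Mm^m$ is discontinuous at exactly the points of $Z$; as $W$ is open, $f \circ h \restriction W$ is discontinuous at every point of $W$. This contradicts Proposition~\ref{gencon}(3), which produces a point of continuity for any definable correspondence on a non-empty open subset of $\Mm^k$.

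The main obstacle is Step~1 together with the ``no global chart'' issue it reflects: $C$ need not be homeomorphic to an open subset of $\Mm^k$ as a whole (the correspondence $g$ may fail to split into $m'$ global continuous functions because of monodromy), so one cannot apply generic continuity once and be done; working locally at a single well-chosen point, supplied by Proposition~\ref{local-dim}, is what makes the argument go through. The subsidiary points needing care are (i) that the local branches of $g$ can be taken \emph{definable}, so that dimension theory applies to $Z$ — this is the entourage argument of Step~1 — and (ii) that continuity of a correspondence at a point is genuinely local, homeomorphism-invariant, and unaffected by restricting the domain to the open set $W$; both are immediate from the local form of the definition of a continuous correspondence, but deserve an explicit line.
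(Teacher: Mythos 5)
Your proof is correct and follows the same route as the paper's: suppose for contradiction that $\dim(C_{\mathrm{bad}}) = k$, use Proposition~\ref{local-dim} to find a point of full local dimension, transfer along a definable local chart to an open subset of $\Mm^k$, and contradict Proposition~\ref{gencon}. The paper simply asserts that a $k$-cell is definably locally homeomorphic to $\Mm^k$; your Step~1 supplies the (correct, and genuinely needed if one is being careful about definability of the local branches) entourage argument for that assertion, but the overall strategy is the same.
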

\begin{proof}
  Otherwise, Proposition~\ref{local-dim} gives $a \in C_{\mathrm{bad}} \subseteq C$
  such that the local dimension $\dim_a(C_{\mathrm{bad}})$ equals $k$.  The
  cell $C$ is definably locally homeomorphic to $\Mm^k$.  Therefore,
  there is a definable homeomorphism between a neighborhood of $a$ in
  $C$ and an open set $U$ in $\Mm^k$.  Transferring $f$ along this
  homeomorphism, we get a definable correspondence $U \rightrightarrows \Mm^m$
  which is discontinuous on a broad set, contradicting generic
  continuity of correspondences on open sets (Proposition~\ref{gencon}).
\end{proof}
\begin{lemma} \label{chimp}
  Let $f : D \rightrightarrows \Mm^m$ be a definable correspondence and $k =
  \dim(D)$.  Then we can write $D$ as a disjoint union $D_0 \sqcup
  \coprod_{i=1}^N D_i$ where $\dim(D_0) < k$ and $f \restriction D_i$
  is continuous for $i > 0$.
\end{lemma}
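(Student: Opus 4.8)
The plan is to refine the plain cell decomposition of Theorem~\ref{cd1} using the genericity estimate of Lemma~\ref{gorilla}, with no need for a joint induction (indeed we do not ask $f$ to be continuous on $D_0$, so Theorem~\ref{cd1} can simply be invoked). First I would apply Theorem~\ref{cd1} to write $D$ as a finite disjoint union of cells $D = \coprod_{j=1}^{M} C_j$. Since $\dim(D) = k$ and $C_j \subseteq D$, Theorem~\ref{dimension-theorem}(\ref{dt1}) gives $\dim(C_j) \le k$, so each $C_j$ is either a $k$-cell or a cell of dimension $< k$. Reindexing, assume $C_1, \ldots, C_N$ are the $k$-cells and $C_{N+1}, \ldots, C_M$ the lower-dimensional ones; here $N \ge 1$ provided $D \ne \varnothing$, since a set of dimension $k$ must have a $k$-cell in its cell decomposition (the case $D = \varnothing$ being trivial).

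For each $j \le N$, let $C_j^{\mathrm{bad}} \subseteq C_j$ be the set of points at which $f \restriction C_j$ is discontinuous. This set is definable, since continuity of a definable correspondence at a point is expressible by a first-order formula over the definable family $\mathcal{B}$ of entourages. By Lemma~\ref{gorilla}, $\dim(C_j^{\mathrm{bad}}) < k$. Put $D_j := C_j \setminus C_j^{\mathrm{bad}}$. Then $f \restriction D_j$ is a continuous correspondence: if $a \in D_j$, then $f \restriction C_j$ is continuous at $a$ with respect to the subspace topology on $C_j$, and passing to the smaller subspace $D_j \subseteq C_j$ only shrinks the source neighborhoods one must consider, so $f \restriction D_j$ is continuous at $a$ as well.

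Finally I would collect the leftovers into $D_0 := C_{N+1} \cup \cdots \cup C_M \cup C_1^{\mathrm{bad}} \cup \cdots \cup C_N^{\mathrm{bad}}$. This is a finite union of definable sets of dimension $< k$, so $\dim(D_0) < k$ by Theorem~\ref{dimension-theorem}(\ref{dt2}). The cells $C_j$ are pairwise disjoint, and each $k$-cell $C_j$ (for $j \le N$) is partitioned as $D_j \sqcup C_j^{\mathrm{bad}}$, so $D = D_0 \sqcup D_1 \sqcup \cdots \sqcup D_N$ is the required decomposition, with $f \restriction D_j$ continuous for $1 \le j \le N$. There is no serious obstacle here: the content lies entirely in Theorem~\ref{cd1} and Lemma~\ref{gorilla}, and the only points needing a word of care are the definability of $C_j^{\mathrm{bad}}$ and the remark that continuity of $f \restriction C_j$ at a point survives restriction to the subspace $D_j$, both routine once the local definition of continuity of a correspondence is unwound.
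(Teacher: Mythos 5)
Your proposal is correct and follows the paper's own proof exactly: apply Theorem~\ref{cd1} to decompose $D$ into cells, collect the cells of dimension $< k$ into $D_0$, and for each $k$-cell $C_j$ use Lemma~\ref{gorilla} to split off the low-dimensional bad set $C_j^{\mathrm{bad}}$. The only difference is that you spell out the routine points (definability of $C_j^{\mathrm{bad}}$, continuity surviving restriction to a subspace) which the paper leaves implicit.
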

\begin{proof}
  By cell decomposition (Theorem~\ref{cd1}), we can partition $D$ into
  cells, and then throw away the cells of dimension $< k$.  Then we
  reduce to the case where $D$ is a $k$-cell, which is handled by
  Lemma~\ref{gorilla}.
\end{proof}
\begin{lemma} \label{bonobo}
  Let $f : D \rightrightarrows \Mm^m$ be a definable correspondence.  Then we
  can write $D$ as a disjoint union $\coprod_{i=1}^N D_i$ where $f
  \restriction D_i$ is continuous for each $i$.
\end{lemma}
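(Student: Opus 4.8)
The plan is to prove Lemma~\ref{bonobo} by induction on $\dim(D)$, with Lemma~\ref{chimp} doing all the real work. The base case is $D = \varnothing$ (equivalently $\dim(D) = -\infty$), where the empty partition $N = 0$ works vacuously; alternatively one can note that any finite $D$ is partitioned into singletons, on each of which a correspondence is trivially continuous.

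For the inductive step, suppose $\dim(D) = k \ge 0$ and the lemma holds for all definable sets of dimension $< k$. First I would apply Lemma~\ref{chimp} to $f$, obtaining a disjoint decomposition $D = D_0 \sqcup \coprod_{i=1}^N D_i$ with $\dim(D_0) < k$ and with $f \restriction D_i$ continuous for each $i \ge 1$. Since $\dim(D_0) < k = \dim(D)$, the induction hypothesis applies to the restricted correspondence $f \restriction D_0 : D_0 \rightrightarrows \Mm^m$, yielding a finite disjoint decomposition $D_0 = \coprod_{j=1}^{N'} D'_j$ with $f \restriction D'_j$ continuous for each $j$. Concatenating the two decompositions gives $D = \coprod_{i=1}^N D_i \,\sqcup\, \coprod_{j=1}^{N'} D'_j$, a finite disjoint union on each piece of which $f$ is continuous, as desired. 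The recursion terminates because $\dim(-)$ takes values in $\{-\infty\} \cup \Nn$ and strictly decreases at each step (once $D$ is nonempty and finite, $\dim(D) = 0$ forces $\dim(D_0) < 0$, i.e.\ $D_0 = \varnothing$, so the recursion halts).

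I do not expect any obstacle here: this is a routine "peel off the top dimension and recurse" argument, and the substantive content — the interplay of cell decomposition (Theorem~\ref{cd1}), generic continuity of correspondences on open sets (Proposition~\ref{gencon}), and the dimension drop — has already been packaged into Lemma~\ref{chimp} (via Lemma~\ref{gorilla}). The one point worth stating explicitly is simply that the dimension of the leftover set $D_0$ is strictly smaller, which is exactly what Lemma~\ref{chimp} guarantees and is what makes the induction legitimate.
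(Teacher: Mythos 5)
Your proof is correct and is exactly the argument the paper intends: the paper's proof is the one-liner ``By induction on $\dim(D)$ using Lemma~\ref{chimp},'' which you have simply spelled out in full (base case, inductive step via the dimension drop on $D_0$, termination). No discrepancy.
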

\begin{proof}
  By induction on $\dim(D)$ using Lemma~\ref{chimp}.
\end{proof}
\begin{theorem}[Cell decomposition II] \label{cd2}
  Let $D \subseteq \Mm^n$ be definable.
  \begin{enumerate}
  \item Let $f : D \to \Mm^m$ be a definable function.  Then we can
    write $D$ as a finite disjoint union of cells on which $f$ is
    continuous.
  \item More generally, let $f : D \rightrightarrows \Mm^m$ be a definable
    $k$-correspondence.  Then we can write $D$ as a finite disjoint union of
    cells on which $f$ is continuous.
  \end{enumerate}
\end{theorem}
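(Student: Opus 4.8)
The plan is to reduce both parts to the combination of Cell Decomposition~I (Theorem~\ref{cd1}) and the generic continuity input already packaged in Lemma~\ref{bonobo}; no fresh induction is needed at this level, since the induction on $\dim(D)$ has been absorbed into Lemma~\ref{bonobo}. Part (1) is the special case of part (2) obtained by viewing the function $f : D \to \Mm^m$ as the $1$-correspondence $a \mapsto \{f(a)\}$, so it suffices to prove (2).

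For (2), I would first apply Lemma~\ref{bonobo} to write $D = \coprod_{j=1}^{M} D_j$ as a finite disjoint union of definable sets such that $f \restriction D_j$ is continuous for each $j$. Then I would apply Cell Decomposition~I (Theorem~\ref{cd1}) separately to each $D_j \subseteq \Mm^n$, obtaining a finite partition $D_j = \coprod_i C_{j,i}$ into cells. Collecting these gives a finite disjoint union $D = \coprod_{j,i} C_{j,i}$ of cells, and it only remains to see that $f$ restricts to a continuous correspondence on each $C_{j,i}$.

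This last point is the only step deserving an explicit sentence: the restriction of a continuous correspondence to a definable subspace is again continuous. Concretely, since $C_{j,i} \subseteq D_j$ and $f \restriction D_j$ is continuous, given $p \in C_{j,i}$ we may choose a neighborhood $U$ of $p$ in $D_j$ together with continuous $g_1,\ldots,g_k : U \to \Mm^m$ such that $f(a) = \{g_1(a),\ldots,g_k(a)\}$ for all $a \in U$; then $U \cap C_{j,i}$ is a neighborhood of $p$ in $C_{j,i}$ and the functions $g_\ell \restriction (U \cap C_{j,i})$ witness the continuity of $f \restriction C_{j,i}$ at $p$ in the sense of the first clause of the definition of continuous correspondence.

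I do not anticipate a genuine obstacle here: all the substantive work lives in Theorem~\ref{cd1} and in Lemma~\ref{bonobo} (which in turn rests on Lemma~\ref{gorilla} and generic continuity of correspondences on open sets, Proposition~\ref{gencon}). If one wanted instead to decompose into cells \emph{first} and then extract continuity, one would be forced into an inductive argument handling the discontinuity locus $C_{\mathrm{bad}}$ of dimension $< k$ inside each $k$-cell (via Lemma~\ref{gorilla}); ordering the two decompositions as continuity-first makes the argument essentially immediate, which is the approach I would adopt.
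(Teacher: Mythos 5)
Your argument is correct and is essentially identical to the paper's proof: reduce (1) to (2) by treating a function as a $1$-correspondence, use Lemma~\ref{bonobo} to split $D$ into finitely many pieces where $f$ is continuous, and then refine each piece by Theorem~\ref{cd1}. The only difference is that you spell out the (easy) fact that continuity of a correspondence is preserved under restriction, which the paper leaves implicit.
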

\begin{proof}
  Functions are the same thing as 1-correspondences, so we only
  consider correspondences.  Use Lemma~\ref{bonobo} to partition $D$
  into finitely many pieces on which $f$ is continuous, and then use
  Cell Decomposition (Theorem~\ref{cd1}) to further partition these
  pieces into cells.
\end{proof}

\subsection{More dimension theory} \label{sec:mdt}
For visceral theories, we get a simpler description of dimension than
in t-minimal theories:
\begin{proposition} \label{inject-dim}
  If $D \subseteq \Mm^n$ is definable and $k \ge 0$, then the
  following are equivalent:
  \begin{enumerate}
  \item $\dim(D) \ge k$.
  \item There is a definable injection $f : B \hookrightarrow D$ where
    $B \subseteq \Mm^k$ is a ball.
  \end{enumerate}
\end{proposition}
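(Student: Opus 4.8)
The plan is to prove the two implications separately; $(2)\Rightarrow(1)$ is immediate from the dimension axioms, while $(1)\Rightarrow(2)$ uses cell decomposition together with the local structure of continuous correspondences. For $(2)\Rightarrow(1)$: if $f\colon B\hookrightarrow D$ is a definable injection with $B\subseteq\Mm^k$ a ball, then $B$ is a non-empty open subset of $\Mm^k$, so $\dim(B)=k$ by Theorem~\ref{dimension-theorem}(\ref{dt6}); since $f$ is a definable bijection onto its image, $\dim(f(B))=\dim(B)=k$ by Theorem~\ref{dimension-theorem}(\ref{dt7}), and hence $\dim(D)\ge\dim(f(B))=k$ by monotonicity (Theorem~\ref{dimension-theorem}(\ref{dt1})).

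For $(1)\Rightarrow(2)$ the case $k=0$ is trivial (a ball in $\Mm^0$ is a point, and $\dim(D)\ge 0$ just says $D\ne\varnothing$), so assume $k\ge 1$ and set $\ell=\dim(D)\ge k$. First I would invoke Cell Decomposition (Theorem~\ref{cd1}) to write $D=\coprod_{i=1}^N C_i$ with each $C_i$ a cell; since $\dim(D)=\max_i\dim(C_i)$ by Theorem~\ref{dimension-theorem}(\ref{dt2}) and every $j$-cell has dimension $j$, some $C_i$ is an $\ell$-cell, and it suffices to inject a ball of $\Mm^k$ into that cell. Thus we may assume $D=\sigma(\Gamma(f))$ for a non-empty open $W\subseteq\Mm^\ell$, a continuous $m$-correspondence $f\colon W\rightrightarrows\Mm^{n-\ell}$ with $m\ge 1$, and a coordinate permutation $\sigma$. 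Since $\sigma$ is a definable bijection of $\Mm^n$, it is enough to find a definable injection from a ball in $\Mm^\ell$ into $\Gamma(f)$; and if $\ell>k$, fixing the last $\ell-k$ coordinates of a ball in $\Mm^\ell$ to lie at its center gives a coordinate slice in definable bijection with a ball in $\Mm^k$, so we can afterwards shrink down to dimension $k$.

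To produce the injection into $\Gamma(f)$, fix $p\in W$; by continuity of the correspondence there are an open $V\ni p$ inside $W$ and continuous functions $g_1,\dots,g_m\colon V\to\Mm^{n-\ell}$ with $f(a)=\{g_1(a),\dots,g_m(a)\}$ for all $a\in V$, the values $g_1(a),\dots,g_m(a)$ being pairwise distinct. Since $g_1(p),\dots,g_m(p)$ are distinct and the topology is Hausdorff, I can choose a basic entourage $E$ with $E[g_i(p)]\cap E[g_j(p)]=\varnothing$ whenever $i\ne j$, and then, using continuity of each $g_i$ at $p$, shrink $V$ to a ball $B^*\ni p$ on which $g_i(a)\in E[g_i(p)]$ for every $i$. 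Then $f(a)\cap E[g_1(p)]=\{g_1(a)\}$ for each $a\in B^*$, since $g_j(a)\in E[g_j(p)]$ lies outside $E[g_1(p)]$ for $j\ne 1$; hence $g_1\restriction B^*$ is a \emph{definable} function, its graph being $\{(a,y):a\in B^*,\ y\in f(a)\cap E[g_1(p)]\}$, and $a\mapsto(a,g_1(a))$ is a definable injection $B^*\hookrightarrow\Gamma(f)$. Composing with $\sigma$ and shrinking as indicated finishes the proof. The one genuinely non-formal step is this last one---extracting a single continuous branch of the correspondence as a definable single-valued function on a ball---and it is precisely here that Hausdorffness is used; everything else is bookkeeping with cell decomposition and the dimension axioms.
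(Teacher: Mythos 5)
Your proof is correct and follows essentially the same route as the paper: $(2)\Rightarrow(1)$ by the basic dimension axioms, and $(1)\Rightarrow(2)$ by cell decomposition, injecting a ball into an $\ell$-cell with $\ell\ge k$, and then slicing coordinates to shrink $\ell$ down to $k$. The only difference is that the paper compresses your final paragraph into the assertion that a cell ``is definably locally homeomorphic to $\Mm^{k_i}$,'' whereas you spell out the branch-extraction argument (Hausdorff separation of $g_1(p),\dots,g_m(p)$ by an entourage, then $f(a)\cap E[g_1(p)]$ as the definable graph of the branch) that justifies that assertion.
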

\begin{proof}
  The implication (2)$\implies$(1) is clear by
  Theorem~\ref{dimension-theorem}.  For the implication
  (1)$\implies$(2), suppose $\dim(D) \ge k$ and take a cell
  decomposition $D = \coprod_{i=1}^N C_i$, where $C_i$ is a
  $k_i$-cell.  By Theorem~\ref{dimension-theorem}, there is some $i$
  such that $k_i \ge k$.  Then $C_i$ is definably locally homeomorphic
  to $\Mm^{k_i}$, so there is a ball $B' \subseteq \Mm^{k_i}$ with a
  definable injection into $C_i$.  It is easy to find a ball $B
  \subseteq \Mm^k$ with a definable injection $B \hookrightarrow B'$.
  The composition $B \hookrightarrow B' \hookrightarrow D$ gives (2).
\end{proof}
For the rest of the section, \textbf{assume $T$ has no space-filling
  functions}.  We first need to upgrade this to a statement about
``space-filling correspondences'':
\begin{lemma}[Assuming NSFF] \label{strong-nsff}
  Let $R \subseteq \Mm^n \times \Mm^m$ be a definable relation whose
  projection onto $\Mm^n$ has finite fibers and whose projection onto
  $\Mm^m$ has broad image.  Then $n \ge m$.
\end{lemma}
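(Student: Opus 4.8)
The plan is to prove the contrapositive form by induction on $n$: assuming the statement for all smaller source dimensions, I would show that if $R\subseteq\Mm^n\times\Mm^m$ has finite $\pi_1$-fibers and broad $\pi_2(R)$, then $n\ge m$. First I would reduce to a clean normal form. Intersecting $R$ with $\Mm^n\times B_0$ for a ball $B_0$ inside the (broad, hence by Theorem~\ref{dimension-theorem}(\ref{dt4},\ref{dt5}) interior-nonempty) set $\pi_2(R)$, splitting $R$ according to the size of its $\pi_1$-fibers (uniformly bounded by uniform finiteness) and using that narrow sets form an ideal (Fact~\ref{broad-narrow}), and applying Cell Decomposition~II (Theorem~\ref{cd2}) to the resulting correspondence, I reduce to the case where $R=\Gamma(f)$ for a \emph{continuous} $s$-correspondence $f$ defined on a cell $C$ with $\dim(C)\le n$ and with $\im(f)$ containing a ball. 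Cells of dimension $<n$ are disposed of immediately by the inductive hypothesis: pushing $\Gamma(f\restriction C)$ forward along a near-injective $k$-projection of $C$ gives a relation in $\Mm^k\times\Mm^m$ with finite $\pi_1$-fibers and broad image, so $k\ge m$ by induction and thus $n>k\ge m$. Hence I may assume $\dim(C)=n$, so that $C$ is an $n$-cell in $\Mm^n$, i.e. a nonempty open set $U\subseteq\Mm^n$; relabelling, the goal becomes: there is no continuous $s$-correspondence $f:U\rightrightarrows\Mm^m$ on a nonempty open $U\subseteq\Mm^n$ with $\im(f)$ of nonempty interior when $n<m$.

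The core of the argument exploits that a continuous correspondence is \emph{locally} a union of graphs of continuous functions. Suppose $n<m$. For each $a\in U$ there is a neighbourhood $W\ni a$ on which $f=\{g_1,\dots,g_s\}$ for continuous definable functions $g_l:W\to\Mm^m$; since $W$ is an open subset of $\Mm^n$ and $n<m$, NSFF forbids any $g_l$ from being space-filling, so each $g_l(W)$ has empty interior, and therefore so does $f(W)=\bigcup_l g_l(W)$ by Corollary~\ref{cor-ideal}. Thus every point of $U$ has a neighbourhood with empty-interior $f$-image, while $\im(f)=f(U)$ has dimension $m$. To reconcile this I would use the definable Baire property (Corollary~\ref{baire}): for each basic entourage $E$ put $U_E=\{a\in U: E[a]\subseteq U \text{ and } \dim f(E[a])<m\}$. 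By the previous paragraph $\{U_E\}_{E\in\mathcal B}$ is a definable, filtered family covering $U$, hence $\{f(U_E)\}_{E\in\mathcal B}$ is definable, filtered, and covers $f(U)$, so by Corollary~\ref{baire} there is $E^*$ with $\dim f(U_{E^*})=m$. Now $\dim U_{E^*}\le n$; if $\dim U_{E^*}<n$ then $\Gamma(f\restriction U_{E^*})$ has finite $\pi_1$-fibers over a set of dimension $<n$ with broad image, contradicting the inductive hypothesis, so $\dim U_{E^*}=n$. Passing to the interior $\ter(U_{E^*})$ (nonempty since $\dim U_{E^*}=n$; the discarded boundary part has dimension $<n$ and its $f$-image is not broad by induction), I recover the same configuration with the additional feature that $f(E^*[a])$ has empty interior for \emph{every} $a$ in the new open domain.

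The main obstacle — and what I expect to be the heart of the proof — is the final step: upgrading ``$f$ has empty-interior image on every $E^*$-ball contained in $U$'' to ``$f(U)$ has empty interior'', contradicting $\dim f(U)=m$. The difficulty is that the natural expression $f(U)=\bigcup\{f(W): W\text{ a ball of size}\le E^*,\ W\subseteq U\}$ exhibits a full-dimensional set as the union of a \emph{definable family} of sets of dimension $<m$, which is not in itself absurd (a definable family of points covers $\Mm^n$). To force a genuine contradiction one must either arrange a \emph{filtered} such cover or combine only boundedly many pieces. The way I would handle this is by a second induction, carried out simultaneously (say lexicographically on $(n,m)$): projecting $\Mm^m\to\Mm^{m-1}$ turns $f$ into a finite-valued correspondence $f':U\rightrightarrows\Mm^{m-1}$ whose graph again has finite $\pi_1$-fibers and whose image is still broad, so the inductive hypothesis in $m$ applies unless $n=m-1$; and in the borderline case $n=m-1$ the branches $g_l$ have image of dimension $\le n$ (empty interior in $\Mm^m$ now \emph{means} dimension $\le m-1=n$), which is exactly what is needed to feed the Baire/covering argument back in with the target dimension no longer an obstruction. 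Once this borderline case yields $\dim f(U)\le n$, combining with $\dim f(U)=m$ gives $n\ge m$; unwinding the reductions gives the Lemma, and the same conclusion transfers to models other than the monster via Theorem~\ref{small-dimension-theorem}.
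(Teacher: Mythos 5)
Your proposal does not close the argument, and the gap you flag yourself is a genuine one, not a routine cleanup step. The issue is exactly where you say: from ``$f(E^*[a])$ has empty interior for every $a$'' you cannot get ``$f(U)$ has empty interior'', because $f(U)$ is a non-filtered, non-small union of the sets $f(E^*[a])$. The Baire argument via the filtered family $\{U_E\}_E$ only locates an entourage $E^*$ with $\dim f(U_{E^*}) = m$; after replacing $U$ by $\ter(U_{E^*})$ you are in \emph{exactly} the situation you started in, so no progress has been made. The ``second induction on $(n,m)$'' also does not help: projecting $\Mm^m \to \Mm^{m-1}$ and invoking the inductive hypothesis for $m-1$ only gives $n \ge m-1$, and in the borderline case $n = m-1$ your parenthetical observation that ``empty interior in $\Mm^m$ \emph{means} dimension $\le m-1$'' just restates the target; it is not a step toward it. The sentence ``Once this borderline case yields $\dim f(U) \le n$\dots'' asserts the conclusion rather than deriving it.

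The paper sidesteps the local-to-global difficulty entirely, and its route is worth internalizing because it is both shorter and avoids cell decomposition and continuity. Fix a bound $k$ on the fiber size of $R \to \Mm^n$. For $a \in \dom(R)$ say $a \propto E$ if $E$ separates the finitely many points of $R(a)$; this holds for all sufficiently small $E$. For $b \in \im(R)$ say $b \bowtie E$ if some $a$ with $(a,b) \in R$ satisfies $a \propto E$; again this holds for all sufficiently small $E$. Since $\im(R)$ is broad, the Hammer Lemma produces a single entourage $E$ and a ball $B \subseteq \im(R)$ with $b \bowtie E$ for every $b \in B$. Now take an $E$\emph{-small} ball $B' \subseteq B$ and set $R_2 = \{(a,b) \in R : b \in B',\ a \propto E\}$. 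The point: if $(a,b)$ and $(a,b')$ are both in $R_2$, then $b, b' \in B'$ forces $b \mathrel{E} b'$, and $a \propto E$ then forces $b = b'$, so $R_2$ is the graph of a genuine \emph{function} from a subset of $\Mm^n$ onto the broad set $B'$. NSFF (for functions, which is the hypothesis as literally stated) now gives $n \ge m$. In other words, the paper shrinks a ball in the \emph{image} direction to make the multivalued relation single-valued; your proposal instead shrinks in the \emph{domain} direction and then struggles to reassemble the images, which is precisely the hard direction. If you want to salvage a proof along your lines, the missing ingredient is something that plays the role of a section of the finite-to-one projection without assuming DFC --- and the separating-entourage trick \emph{is} that missing ingredient, so you might as well apply it at the outset as the paper does.
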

\begin{proof}
  Let $\dom(R)$ and $\im(R)$ be the projections of $R$ onto $\Mm^n$
  and $\Mm^m$.  For $a \in \dom(R)$, let $R(a)$ be the finite
  non-empty set $\{b \in \Mm^m : a \mathrel{R} b\}$.  Let $a \propto
  E$ mean that $E$ separates the points of $R(a) := \{b \in
  \Mm^m : a \mathrel{R} b\}$, in the sense that
  \begin{equation*}
    (b, b' \in R(a) \text{ and } b \mathrel{E} b') \implies b = b'.
  \end{equation*}
  For any $a \in \dom(R)$, we have $a \propto E$ for all sufficiently
  small $E$.

  If $b \in \im(R)$, let $b \bowtie E$ mean that there is $a \in
  \dom(R)$ such that $a \mathrel{R} b$ and $a \propto E$.  For any
  $b \in \im(R)$, we have $b \bowtie E$ for all sufficiently small
  $E$.  Since $\im(R)$ is broad, the Hammer Lemma gives a ball $B
  \subseteq \im(R)$ and an entourage $E$ such that $b \bowtie E$ for
  every $b \in B$.  Let
  \begin{equation*}
    R_1 = \{(a,b) \in R : b \in B \text{ and } a \propto E\}.
  \end{equation*}
  If $b \in B$ then $b \bowtie E$ so there is $a \in \dom(R)$ such
  that $a \mathrel{R} b$ and $a \propto E$.  Then $(a,b) \in R_1$ and
  $b \in \im(R_1)$.  This shows that $\im(R_1) = B$.
  
  Let $B' \subseteq B$ be an $E$-small ball, in the sense that $x,y
  \in B' \implies x \mathrel{E} y$.  Let
  \begin{align*}
    R_2 &= \{(a,b) \in R_1 : b \in B'\} \\ & = \{(a,b) \in R : b \in B'
    \text{ and } a \propto E\}.
  \end{align*}
  Then $\im(R_2)$ is the broad set $B'$.  If $(a,b)$ and $(a,b')$ are
  both in $R_2$ then $b,b' \in B'$, so $b \mathrel{E} b'$, and $a
  \propto E$, so $b = b'$.  Then $R_2$ is the graph of a function from
  $\dom(R_2) \subseteq \Mm^n$ onto the broad set $\im(R_2) = B'
  \subseteq \Mm^m$.  By NSFF, $n \ge m$.
\end{proof}
\begin{proposition}[Assuming NSFF]
  Let $\ba, \bb$ be finite tuples and $C$ be a small set of
  parameters.
  \begin{enumerate}
  \item If $\bb \in \acl(C\ba)$, then $\dim(\bb/C) \le \dim(\ba/C)$.
  \item In particular, if $\bb$ is a subtuple of $\ba$, then
    $\dim(\bb/C) \le \dim(\ba/C)$.
  \end{enumerate}
\end{proposition}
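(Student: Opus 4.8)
The plan is to reduce both parts to a statement about $\acl$-independent tuples and then apply the ``strong NSFF'' Lemma~\ref{strong-nsff}. Part (2) follows from part (1) immediately, since a subtuple $\bb$ of $\ba$ lies in $\dcl(C\ba) \subseteq \acl(C\ba)$; so the work is all in part (1).

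To prove (1), first I would pass to $\acl$-bases. Let $\be \in \Mm^n$ be an $\acl$-basis of $\ba$ over $C$ and $\barf \in \Mm^m$ an $\acl$-basis of $\bb$ over $C$, so that $\dim(\ba/C) = |\be| = n$ and $\dim(\bb/C) = |\barf| = m$, both $\be$ and $\barf$ are $\acl$-independent over $C$, $\acl(C\be) = \acl(C\ba)$, and $\acl(C\barf) = \acl(C\bb)$. From $\bb \in \acl(C\ba)$ we get $\acl(C\bb) \subseteq \acl(C\ba)$, hence $\barf \in \acl(C\be)$. So it suffices to show: if $\be \in \Mm^n$ and $\barf \in \Mm^m$ are each $\acl$-independent over $C$ and $\barf \in \acl(C\be)$, then $m \le n$.

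Next I would set up the relation to feed into Lemma~\ref{strong-nsff}. Choose an $\mathcal{L}(C)$-formula $\psi(\by,\bx)$ (with $|\by| = n$, $|\bx| = m$) such that $\Mm \models \psi(\be,\barf)$ and $\psi(\be,\Mm^m)$ is finite, say of size $k$; then put
\begin{equation*}
  R = \{(\by,\bx) \in \Mm^n \times \Mm^m : \psi(\by,\bx) \text{ and } \exists^{\le k}\, \bx'\, \psi(\by,\bx')\}.
\end{equation*}
This $R$ is $C$-definable and contains $(\be,\barf)$. Its projection onto $\Mm^n$ has fibers of size $\le k$ (a fiber over $\by_0$ is empty if $\psi(\by_0,\Mm^m)$ is too big, and otherwise a subset of $\psi(\by_0,\Mm^m)$), so it has finite fibers. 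Its projection $\im(R) \subseteq \Mm^m$ is $C$-definable and contains $\barf$; since $\barf$ is $\acl$-independent over $C$, $\tp(\barf/C)$ is broad by Proposition~\ref{broad-acl}, so $\im(R)$ is broad (Fact~\ref{bn}(\ref{bn3})). Lemma~\ref{strong-nsff} then gives $n \ge m$, i.e. $\dim(\ba/C) = n \ge m = \dim(\bb/C)$, as desired.

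There is no serious obstacle here, since Lemma~\ref{strong-nsff} does the real work; the only point requiring a little care is arranging that the chosen relation is genuinely finite-to-one (not merely finite-to-one over the particular tuple $\be$), which is why I cut $R$ down using the uniform bound $\exists^{\le k} \bx'\, \psi(\by,\bx')$ — a condition that is first-order and hence definable with no extra hypotheses. I would also double-check that the variable ordering matches the statement of Lemma~\ref{strong-nsff} (the factor with finite fibers must be the $\be$-side, whose length is the larger quantity $n$), so that the conclusion comes out in the direction we want.
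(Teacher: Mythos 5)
Your proof is correct and follows essentially the same route as the paper: pass to $\acl$-bases, observe that the algebraicity condition is preserved, build a $C$-definable relation with finite fibers over the $\ba$-side whose image contains the broad type $\tp(\barf/C)$, and invoke Lemma~\ref{strong-nsff}. The paper packages the witnessing relation as a $C$-definable correspondence $g$ rather than doing the explicit $\exists^{\le k}$-cutdown of a formula, but this is just a notational difference; the underlying argument is identical.
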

\begin{proof}
  \begin{enumerate}
  \item Replacing $\ba$ and $\bb$ with $\acl$-bases over $C$, we may
    assume that $\ba$ and $\bb$ are $\acl$-independent.  Then
    $\tp(\ba/C)$ and $\tp(\bb/C)$ are broad.  Let $n = |\ba| =
    \dim(\ba/C)$ and $m = |\bb| = \dim(\bb/C)$.  The fact that $\bb
    \in \acl(C\ba)$ means that $\bb = g(\ba)$ for some $C$-definable
    correspondence $g : D \rightrightarrows \Mm^m$, with $D \subseteq \Mm^n$.
    Then $\bb$ is in the $C$-definable set $\im(g)$, so $\im(g)$ must
    be broad.  By Lemma~\ref{strong-nsff} (applied to $R =
    \Gamma(g)$), we must have $m \le n$, i.e., $\dim(\bb/C) \le
    \dim(\ba/C)$.
  \item Clear, by part (1).  \qedhere
  \end{enumerate}
\end{proof}
\begin{theorem}[Assuming NSFF] \label{surjections}
  If $f : X \to Y$ is a type-definable surjection of type-definable
  sets, then $\dim(X) \ge \dim(Y)$.
\end{theorem}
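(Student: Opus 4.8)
The plan is to deduce this directly from the preceding Proposition (assuming NSFF), which bounds $\dim(\bb/C)$ by $\dim(\ba/C)$ whenever $\bb \in \acl(C\ba)$; the one extra ingredient is that the value of a type-definable function lies in the definable closure of its argument together with the defining parameters. Concretely, I would fix a small set $C \subseteq \Mm^\eq$ over which $X$, $Y$, and the graph $\Gamma(f) \subseteq X \times Y$ are all type-definable. If $Y = \varnothing$ there is nothing to prove, so assume $Y \ne \varnothing$ and choose $\bb \in Y$ with $\dim(\bb/C) = \dim(Y)$; such a $\bb$ exists since $\dim(Y)$ is by definition the maximum of $\dim(\bb/C)$ over $\bb \in Y$. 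Using surjectivity of $f$, pick $\ba \in X$ with $f(\ba) = \bb$.

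The key observation is that $\bb \in \dcl(C\ba)$: any $\sigma \in \Aut(\Mm/C\ba)$ fixes the small partial type over $C$ cutting out $\Gamma(f)$, hence fixes $\Gamma(f)$ setwise, and since $\sigma$ also fixes $\ba$ it must fix the unique $\bb$ with $(\ba,\bb) \in \Gamma(f)$. Thus $\bb \in \dcl(C\ba) \subseteq \acl(C\ba)$, so the preceding Proposition yields $\dim(\bb/C) \le \dim(\ba/C)$. Since $\ba \in X$ and $X$ is type-definable over $C$, we also have $\dim(\ba/C) \le \dim(X)$, and chaining the inequalities gives $\dim(Y) = \dim(\bb/C) \le \dim(\ba/C) \le \dim(X)$, as desired.

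The substantive content --- that NSFF forces algebraicity to respect dimension --- has already been carried out in Lemma~\ref{strong-nsff} and the preceding Proposition, so the only mild subtlety remaining is the step $\bb \in \dcl(C\ba)$, which for a merely type-definable (rather than definable) function must be justified through automorphism-invariance of the type-definable graph rather than by substitution into a defining formula.
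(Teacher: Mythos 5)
Your argument is essentially identical to the paper's: choose $\bb \in Y$ realizing the maximum dimension over a defining parameter set $C$, pull back to $\ba \in f^{-1}(\bb)$, note $\bb \in \dcl(C\ba) \subseteq \acl(C\ba)$, and invoke the preceding Proposition (the NSFF-based monotonicity of $\dim(-/C)$ under algebraicity). Your extra paragraph justifying $\bb \in \dcl(C\ba)$ via automorphism-invariance of the type-definable graph is a correct elaboration of a step the paper leaves implicit.
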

\begin{proof}
  Take a small set $C \subseteq \Mm^\eq$ over which $f, X, Y$ are
  type-definable.  Take $\bb \in Y$ with $\dim(\bb/C) = \dim(Y)$.
  Take $\ba \in f^{-1}(\bb)$.  Then $\bb \in \dcl(C\ba)$, so
  \begin{equation*}
    \dim(X) \ge \dim(\ba/C) \ge \dim(\bb/C) = \dim(Y).  \qedhere
  \end{equation*}
\end{proof}
\begin{theorem}[Assuming NSFF] \label{naive}
  If $X \subseteq \Mm^n$ is definable, then $\dim(X) = d_t(X)$.
\end{theorem}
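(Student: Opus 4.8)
The plan is to combine two facts that are already in hand and let NSFF enter through Theorem~\ref{surjections}. First, the inequality $\dim(X) \le d_t(X)$ holds in any t-minimal theory by Theorem~\ref{comparison}(3), so the whole task reduces to proving $\dim(X) \ge d_t(X)$. This is exactly the direction that requires NSFF: as noted after Theorem~\ref{long-theorem}, a failure of NSFF already produces a definable bijection with $d_t(X) \ne d_t(Y)$, so no topology-free formula for $d_t$ can exist without it, let alone the equality with $\dim$.

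For the reverse inequality I would set $k = d_t(X)$ and, by the definition of the naive topological dimension, fix a coordinate projection $\pi : \Mm^n \to \Mm^k$ such that $\pi(X) \subseteq \Mm^k$ has non-empty interior. Restricting $\pi$ to $X$ gives a definable surjection $\pi \restriction X : X \to \pi(X)$, so Theorem~\ref{surjections} (this is where NSFF is genuinely used, via Lemma~\ref{strong-nsff}) yields $\dim(X) \ge \dim(\pi(X))$. Since $\pi(X)$ is a definable subset of $\Mm^k$ with non-empty interior, Theorem~\ref{dimension-theorem}(\ref{dt5}) gives $\dim(\pi(X)) = k$. Hence $\dim(X) \ge k = d_t(X)$, and together with the first paragraph this gives $\dim(X) = d_t(X)$.

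I expect no real obstacle: the entire content of the theorem has been pushed onto Theorem~\ref{surjections}, whose proof in turn rests on the ``strong NSFF'' statement Lemma~\ref{strong-nsff} and the Hammer Lemma. The only points requiring a little care are that $\pi \restriction X$ is literally a definable surjection onto its image $\pi(X)$ (so that Theorem~\ref{surjections}, stated for type-definable surjections, applies directly), and that the computation $\dim(\pi(X)) = k$ needs only that $\pi(X)$ has non-empty interior, not that it is open — which is precisely what Theorem~\ref{dimension-theorem}(\ref{dt5}) provides.
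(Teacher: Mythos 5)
Your proof is correct and takes essentially the same approach as the paper: one inequality comes from Theorem~\ref{comparison}(3), and the other from applying Theorem~\ref{surjections} (where NSFF enters) to the coordinate projection $X \to \pi(X)$ together with Theorem~\ref{dimension-theorem}(\ref{dt5}). Your writeup is just slightly more explicit than the paper's about restricting $\pi$ to its image to get a literal surjection.
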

\begin{proof}
  If there is a definable surjection $X \to \Mm^k$ whose image has
  non-empty interior, then $\dim(X) \ge k$ by
  Theorem~\ref{surjections}.  This shows $\dim(X) \ge d_t(X)$.  The
  reverse inequality was Theorem~\ref{comparison}(3).
\end{proof}
Therefore, all the theorems on dimension hold for naive topological
dimension.
\begin{remark}
  The invariance of naive topological dimension under definable
  bijections \emph{cannot} hold unless NSFF holds.  Indeed, if $f : X
  \to \Mm^n$ is a space-filling function with $X \subseteq \Mm^m$ for
  some $m < n$, then
  \begin{itemize}
  \item The set $X$ has naive topological dimension at most $m$, as $X
    \subseteq \Mm^m$.
  \item The graph $\Gamma(f)$ has naive topological dimension at least
    $n$, because of the projection onto $\Mm^n$.
  \end{itemize}
  So $d_t(X) \le m < n \le d_t(\Gamma(f))$, in spite of the
  definable bijection between $X$ and $\Gamma(f)$.

  Similarly, Theorem~\ref{surjections} cannot hold unless NSFF holds
  (obviously).  So, for a visceral theory $T$, the following are
  equivalent:
  \begin{enumerate}
  \item There are no space-filling functions.
  \item $\dim(X) \ge \dim(Y)$ for any definable surjection $f : X \to
    Y$, where dimension is defined as in the current paper.
  \item Naive topological dimension is preserved under definable
    bijections.
  \end{enumerate}
\end{remark}

\section{Frontiers, generic continuity, and local Euclideanity} \label{frontier-sec}
As in Section~\ref{vt-section}, assume that $\Mm$ is a monster model
of a visceral theory $T$.
\subsection{Dimension of frontiers} \label{dof}
We now turn our attention to the dimension of the frontier $\partial X
= \overline{X} \setminus X$, for $X$ a definable set.  If $X \subseteq
\Mm^n$ is definable then $\partial X$ has empty interior, so
$\dim(\partial X) < n$ (by
Theorem~\ref{dimension-theorem}(\ref{dt5})).  Otherwise, not much can
be said, without further assumptions.
\begin{theorem} \label{another-NSFF}
  Suppose $T$ has NSFF.
  If $X \subseteq \Mm^n$ is definable, then $\dim(\partial X) \le
  \dim(X)$ and so $\dim(\overline{X}) = \dim(X)$.
\end{theorem}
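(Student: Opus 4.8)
The plan is to reduce the statement about $\dim$ to the analogous statement about the naive topological dimension $d_t$, which behaves well under closures. Since $T$ has NSFF, Theorem~\ref{naive} tells us that $\dim(Z) = d_t(Z)$ for every definable set $Z$; so it is enough to show $d_t(\partial X) \le d_t(X)$, after which the equality $\dim(\overline{X}) = \dim(X)$ follows from $\overline{X} = X \cup \partial X$ together with Theorem~\ref{dimension-theorem}(\ref{dt2}).

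To prove $d_t(\partial X) \le d_t(X)$, set $j = d_t(\partial X)$ (we may assume $\partial X \ne \varnothing$, as otherwise there is nothing to prove) and fix a coordinate projection $\pi : \Mm^n \to \Mm^j$ with $\pi(\partial X)$ having non-empty interior. A coordinate projection is continuous for the product topology on $\Mm^n$, so $\pi(\overline{X}) \subseteq \overline{\pi(X)}$, and since $\partial X \subseteq \overline{X}$ this yields $\pi(\partial X) \subseteq \overline{\pi(X)}$. Hence $\overline{\pi(X)}$ has non-empty interior; but then $\pi(X)$ itself has non-empty interior, by Corollary~\ref{small-closure} (a set with empty interior, equivalently a narrow set, has narrow closure). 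By the definition of $d_t$ this gives $d_t(X) \ge j = d_t(\partial X)$, as wanted.

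I do not expect a serious obstacle here: the core inequality $d_t(\partial X) \le d_t(X)$ is a soft consequence of continuity of coordinate projections together with Corollary~\ref{small-closure}, and uses neither viscerality nor NSFF. The only essential use of NSFF is the identification $\dim = d_t$ coming from Theorem~\ref{naive}; without it the two dimensions diverge, and (as the examples of Section~\ref{cxsec} will show) the frontier inequality for $\dim$ genuinely fails. A cell-decomposition argument --- reduce to $X$ a $k$-cell $\Gamma(f)$, observe that over the open base $U$ the continuity of the correspondence $f$ forces $\overline{\Gamma(f)}$ to agree with $\Gamma(f)$, so that $\partial\Gamma(f) \subseteq \partial U \times \Mm^{n-k}$ --- is tempting, but controlling the fibres of $\partial\Gamma(f)$ over $\partial U$ appears to require essentially the same NSFF input in a clumsier form, so the $d_t$ route is preferable.
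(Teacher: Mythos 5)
Your proof is correct and follows essentially the same route as the paper's: use NSFF via Theorem~\ref{naive} to reduce $\dim$ to $d_t$, then combine continuity of the coordinate projection with Corollary~\ref{small-closure}. The paper phrases the argument as a contradiction starting from $\dim(X) < \dim(\overline{X})$ and uses closedness of $\pi^{-1}(\overline{\pi(X)})$, while you argue directly via $\pi(\overline{X}) \subseteq \overline{\pi(X)}$; these are the same topological fact, and the two presentations are interchangeable.
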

\begin{proof}
  Otherwise $\dim(X) < \dim(\overline{X}) =: k$.  By
  Theorem~\ref{naive} there is a coordinate projection $\pi : \Mm^n
  \to \Mm^k$ such that $\pi(\overline{X})$ is broad, but $\pi(X)$ is
  narrow (since $d_t(X) < k$).  Let $C$ be the closure
  $\overline{\pi(X)}$; it is narrow by Corollary~\ref{small-closure}.
  Then $\pi^{-1}(C)$ is a closed set containing $X$, so $\pi^{-1}(C)
  \supseteq \overline{X}$ and $C \supseteq \pi(\overline{X})$.  But
  $\pi(\overline{X})$ is broad and $C$ is narrow, a contradiction.
\end{proof}
Recall that the exchange property implies NSFF.
\begin{theorem} \label{usual-frontier}
  If $T$ has the exchange property and $X \subseteq \Mm^n$ is
  definable, then $\dim(\partial X) < \dim(X)$.
\end{theorem}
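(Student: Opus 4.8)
The plan is to argue by induction on the ambient dimension $n$. The point of the exchange property — which is what distinguishes this from the non-strict bound $\dim(\partial X)\le\dim(X)$ — is that, by Theorem~\ref{comparison}(1), it makes $\dim$ equal to the $\acl$-rank, and hence additive: $\dim(\ba\bb/C)=\dim(\bb/C\ba)+\dim(\ba/C)$ for all $\ba,\bb,C$. Fix a small set $C_0$ defining $X$, assume $\partial X\neq\varnothing$, and pick $p\in\partial X$ with $\dim(p/C_0)=\dim(\partial X)=:d$. Since $\partial X$ has empty interior (Corollary~\ref{small-boundary-0}) we have $d<n$, so the inductive hypothesis is available for definable subsets of $\Mm^d$. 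Choose an $\acl$-basis $\bb\subseteq p$ of $p$ over $C_0$ (it has length $d$), and after permuting coordinates write $p=(\bb,e)$ with $e\in\acl(C_0\bb)$ and $\bb$ the first $d$ coordinates; let $\pi\colon\Mm^n\to\Mm^d$ be the projection onto those coordinates. Then $\bb=\pi(p)\in\pi(\overline X)\subseteq\overline{\pi(X)}$, and by the inductive hypothesis $\dim(\overline{\pi(X)})=\dim(\pi(X))$, which is $\le\dim(X)$ because NSFF holds (Theorem~\ref{surjections}). Hence $d=\dim(\bb/C_0)\le\dim(X)$, and it remains to exclude $d=\dim(X)=:k$.

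So suppose $d=k$. The same inequalities then force $\dim(\pi(X))=k$ and $\bb\in\pi(X)$: if $\bb\notin\pi(X)$ then $\bb\in\overline{\pi(X)}\setminus\pi(X)=\partial(\pi(X))$, which has dimension $<k$ by the inductive hypothesis, contradicting $\dim(\bb/C_0)=k$. Thus $\bb$ is a generic point of the $C_0$-definable set $\pi(X)$: it lies in no $C_0$-definable subset of $\pi(X)$ of dimension $<k$.

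The idea now is that near $\bb$ the set $X$ is simply the graph of a continuous finite-valued correspondence over $\pi(X)$. Indeed: the fibres of $X\to\pi(X)$ are generically finite — here additivity of $\dim$ is used, since $\dim X=\dim\pi(X)$ would otherwise give a fibre of positive dimension over a generic point of $\pi(X)$, forcing $\dim X\ge k+1$; the fibre cardinality is generically locally constant by uniform finiteness together with Corollary~\ref{cor-ideal}; and the resulting finite-valued correspondence is generically continuous by Proposition~\ref{gencon}(4). Each of these failure loci is $C_0$-definable of dimension $<k$, so the generic point $\bb$ avoids all of them. Choose an open neighbourhood $G_0\ni\bb$ on which $X\cap(G_0\times\Mm^{n-k})$ is the graph of a continuous $m_0$-correspondence $G_0\rightrightarrows\Mm^{n-k}$; such a graph is closed in $G_0\times\Mm^{n-k}$, being locally a finite union of graphs of continuous functions into a Hausdorff space. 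Since $p\in\overline X$ lies in the open set $G_0\times\Mm^{n-k}$, it lies in $\overline{X\cap(G_0\times\Mm^{n-k})}=X\cap(G_0\times\Mm^{n-k})\subseteq X$, contradicting $p\in\partial X$. Therefore $d<k$, as desired.

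I expect the third step — pinning down the local structure of $X$ at the generic point $\bb$ of $\pi(X)$ — to be the main obstacle, since it requires carefully combining uniform finiteness, definability of dimension (Theorem~\ref{new-def-dim}), and generic continuity, while keeping all the relevant bad loci $C_0$-definable so that $\bb$'s genericity can be invoked. It is also exactly where the exchange property is essential, both through additivity of $\dim$ and through feeding the \emph{strict} inductive hypothesis into the bound on $\bb$; without exchange the argument degrades to the non-strict inequality $\dim(\partial X)\le\dim(X)$ of Theorem~\ref{thm-frontiers}(2).
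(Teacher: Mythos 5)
Your proof is correct, but it takes a genuinely different (and somewhat longer) route than the paper's.

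The paper's proof works directly with $\pi(\partial X)$: setting $k=\dim(\partial X)=d_t(\partial X)$, it chooses a coordinate projection $\pi:\Mm^n\to\Mm^k$ making $\pi(\partial X)$ broad and hence containing a ball $B$. It then uses strict additivity of $\dim$ (from exchange) to show the infinite-fiber locus over $B$ is narrow, shrinks $B$ until the fiber cardinality is constant, invokes Proposition~\ref{gencon}(4) to shrink further and make the correspondence continuous, and concludes that $\pi^{-1}(B)\cap X$ is relatively closed in the open set $\pi^{-1}(B)$, so $\pi^{-1}(B)\cap\partial X=\varnothing$, contradicting $B\subseteq\pi(\partial X)$. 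Crucially, no induction on $n$ is needed, because generic-ball shrinking in $\Mm^k$ replaces the appeals you make to the inductive hypothesis.

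Your argument instead fixes a single generic point $p\in\partial X$, projects onto its $\acl$-basis coordinates $\bb$, and studies $\pi(X)$ (via $\overline{\pi(X)}$) rather than $\pi(\partial X)$. You need the inductive hypothesis in two places that the paper's formulation bypasses: to deduce $\dim(\overline{\pi(X)})=\dim(\pi(X))$ and, more essentially, to deduce $\dim(\partial(\pi(X)))<k$ so that the generic $\bb$ lands in $\pi(X)$ rather than on its frontier. From there you localize as the paper does — generic finiteness of fibers via additivity, constant fiber cardinality via uniform finiteness (with $\bb$ avoiding the $C_0$-definable boundary $\bd(B_{m_0})$), continuity via Proposition~\ref{gencon}(4) applied to the $C_0$-definable open set $\ter(B_{m_0})$ — and conclude from closedness of the continuous correspondence graph that $p\in X$. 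One small point of care that is implicit in your writeup: to land $\bb$ in the interior of the "good" locus you must use that the \emph{closures} of the various narrow bad loci are also $C_0$-definable and narrow (Corollary~\ref{small-closure}), not merely the loci themselves; and the correspondence's discontinuity locus must be taken relative to the $C_0$-definable set $\ter(B_{m_0})$ rather than a non-definable ball around $\bb$. These are patchable and you gesture at them, so there is no real gap. Overall, your proof is sound; the paper's generic-ball version buys you the elimination of the induction and a shorter argument.
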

This strengthens \cite[Corollary~3.35]{viscerality}, dropping the
assumption of definable finite choice.
\begin{proof}
  We follow the usual argument.
  By Theorem~\ref{comparison},
  $\dim(-)$ agrees with $\acl$-dimension.  Consequently, $\dim(-)$ is
  strictly additive: if $f : X \to Y$ is a definable function whose
  every fiber has dimension $k$, then $\dim(X) = k + \dim(Y)$.

  Let $k = \dim(\partial X) = d_t(\partial X)$.  Suppose for the sake
  of contradiction that $\dim(X) \le k$.  Take a coordinate projection
  $\pi : \Mm^n \to \Mm^k$ such that $\pi(\partial X)$ is broad,
  containing a ball $B$.  Permuting coordinates, we may assume $\pi$
  is the projection onto the first $k$ coordinates.  The set
  \begin{equation*}
    D = \{b \in  B : \pi^{-1}(b) \cap X \text{ is infinite}\}
  \end{equation*}
  must have dimension less than $k$, or else $X$ would have dimension
  at least $k+1$ by the additivity of dimension.  Then $D$ and
  $\overline{D}$ are narrow.  Shrinking $B$, we may assume $B \cap D =
  \varnothing$, and so $\pi^{-1}(b) \cap X$ is finite for any $b \in
  B$.  Let $B_i = \{b \in B : |\pi^{-1}(b) \cap X| = i\}$.  Then $B$
  is the union of the $B_i$'s, so one is broad.  Shrinking further, we
  may assume $B = B_i$ for some fixed $i$.  Then $\pi^{-1}(B) \cap X$
  is the graph of an $i$-correspondence $f : B \rightrightarrows
  \Mm^{n-k}$.  By generic continuity of correspondences
  (Proposition~\ref{gencon}(4)), we may shrink $B$ further and arrange
  that $f$ is continuous on $B$.  Then $\pi^{-1}(B) \cap X$ is the
  graph of a continuous $i$-correspondence on $B$, so $\pi^{-1}(B)
  \cap X$ is relatively closed in the open set $\pi^{-1}(B)$.  Then
  $\pi^{-1}(B) \cap \partial X = \varnothing$, contradicting the fact
  that $B \subseteq \pi(\partial X)$.
\end{proof}
Recall that dp-minimality implies NSFF
\cite[Proposition~3.31]{viscerality}.  Simon and Walsberg show the
following:
\begin{fact}[{\cite[Proposition~4.3]{simonWalsberg}}] \label{sw-fact}
  If $T$ is dp-minimal and $X \subseteq \Mm^n$ is definable, then
  $\dim(\partial X) < \dim(X)$.
\end{fact}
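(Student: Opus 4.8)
The plan is to run the proof of Theorem~\ref{usual-frontier} essentially verbatim, with dp-minimality in place of the exchange property. Inspecting that proof, the exchange property is used in exactly one spot: to conclude (via Theorem~\ref{comparison}) that $\dim(-)$ is \emph{strictly additive}, i.e.\ that a definable $f : X \to Y$ all of whose fibers have dimension $k$ satisfies $\dim(X) = k + \dim(Y)$. (The inequality $\le$ is Theorem~\ref{dimension-theorem}(\ref{dt8}); what is actually used downstream is only the super-additive direction, and in fact only the special case: if every fiber of $f : X \to Y$ is infinite then $\dim(X) > \dim(Y)$.) So the first and only nontrivial step is to recover this additivity under dp-minimality.

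By Theorem~\ref{comparison}(2), under dp-minimality $\dim(-)$ coincides with dp-rank, so strict additivity of $\dim$ is the statement $\dpr(\ba\bb/C) = \dpr(\ba/C\bb) + \dpr(\bb/C)$: the $\le$ direction is sub-additivity of dp-rank \cite{dp-add}, and the $\ge$ direction is super-additivity of dp-rank, which is known to hold in dp-minimal theories (this is, in effect, the engine behind \cite[Proposition~4.3]{simonWalsberg} itself). If one prefers to stay inside the framework of this paper, super-additivity of $\dim$ can instead be obtained directly: using Proposition~\ref{inject-dim} reduce to the case $Y = B$ a ball in $\Mm^m$, then decompose $X$ fiberwise into cells, made uniform in $b \in B$ via Theorem~\ref{new-def-dim} and compactness, and extract from the ``generic'' cell a subset of $X$ of dimension $\ge k + m$; the uniformity in the base parameter is the delicate point.

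Given additivity, the argument is as in Theorem~\ref{usual-frontier}. Set $k = \dim(\partial X)$; by Theorem~\ref{comparison}(3) we have $k \le d_t(\partial X)$, so there is a coordinate projection $\pi : \Mm^n \to \Mm^k$ with $\pi(\partial X)$ broad, hence containing a ball $B$, and after permuting coordinates we may take $\pi$ to be the projection onto the first $k$ coordinates. Suppose toward a contradiction $\dim(X) \le k$. The set $D = \{b \in B : \pi^{-1}(b) \cap X \text{ is infinite}\}$ then has $\dim(D) < k$, since otherwise super-additivity applied to $\pi^{-1}(D) \cap X \to D$ gives $\dim(X) \ge k + 1$. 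By Corollary~\ref{small-closure} the closure $\overline{D}$ is also narrow, so after shrinking $B$ we may assume $B \cap D = \varnothing$; then every $\pi^{-1}(b) \cap X$ with $b \in B$ is finite, of uniformly bounded size, and partitioning $B$ by the fiber size $|\pi^{-1}(b) \cap X|$ and shrinking once more we may assume this size is a constant $i$ on $B$. Thus $\pi^{-1}(B) \cap X$ is the graph of an $i$-correspondence $g : B \rightrightarrows \Mm^{n-k}$ (or is empty, if $i = 0$). By generic continuity of correspondences on open sets (Proposition~\ref{gencon}(4)), shrink $B$ again so that $g$ is continuous on $B$. A continuous correspondence on an open set has relatively closed graph, so $\pi^{-1}(B) \cap X$ is closed in the open set $\pi^{-1}(B)$; hence $\pi^{-1}(B) \cap \overline{X} = \pi^{-1}(B) \cap X$ and $\pi^{-1}(B) \cap \partial X = \varnothing$. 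But $B \subseteq \pi(\partial X)$ forces $\pi^{-1}(B) \cap \partial X \ne \varnothing$, a contradiction. Hence $\dim(X) > k = \dim(\partial X)$, and in particular $\dim(\overline{X}) = \dim(X)$.

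The main obstacle is the first step: establishing additivity (equivalently, just super-additivity) of $\dim$ under dp-minimality. Everything after that is a mechanical re-run of the visceral toolkit — cell decomposition, generic continuity, and the closure/narrowness lemmas — exactly as in the exchange-property case, and indeed it yields the sharper inequality $\dim(\partial X) < \dim(X)$ rather than just $\le$.
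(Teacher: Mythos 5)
The paper does not prove this statement — it is labeled a Fact and imported verbatim from \cite[Proposition~4.3]{simonWalsberg}, so there is no ``paper's own proof'' to compare against. Your proposal is a reconstruction, and the structure is sound: you correctly isolate the single place where the proof of Theorem~\ref{usual-frontier} actually uses the exchange property, namely to obtain (super-)additivity of $\dim$, and since $\dim = \dpr$ under dp-minimality (Theorem~\ref{comparison}(2)) the needed input is super-additivity of dp-rank for dp-minimal $T$: $\dpr(\ba\bb/C) \ge \dpr(\ba/C\bb) + \dpr(\bb/C)$. Given that, the rest of your argument is a line-for-line re-run of Theorem~\ref{usual-frontier}: project onto $\Mm^k$ for $k = \dim(\partial X)$, use super-additivity and Corollary~\ref{small-closure} to shrink $B$ away from the locus of infinite fibers, reduce to a continuous $i$-correspondence over a ball via Proposition~\ref{gencon}(4), and note that a continuous correspondence over an open set has relatively closed graph so no frontier point projects into $B$. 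Your treatment of the degenerate $i = 0$ case is also correct (an open $\pi^{-1}(B)$ that misses $X$ misses $\overline{X}$). So the argument goes through \emph{if} super-additivity of dp-rank is supplied.

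That step is genuinely nontrivial and needs a real citation rather than the phrase ``known to hold.'' It is, as you say, the ingredient behind Simon and Walsberg's own proof, but you should pin it to a precise statement in the literature. Your alternative ``self-contained'' sketch (uniform cell decomposition over the base via Theorem~\ref{new-def-dim} and compactness) is not a proof as written, and I do not think it can be made one cheaply: the paper flags the relevant inequality as Non-Theorem~\ref{nt-kappa}(3) in the general t-minimal setting, so some genuinely dp-minimal-specific input is unavoidable, and the ``uniformity in the base parameter'' you flag is exactly where a naive approach would break. Stick with the citation.
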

These facts can be applied to show that finite-to-finite
correspondences are local homeomorphisms, at most points:
\begin{proposition} \label{vincent-prop}
  Suppose $T$ is dp-minimal or has the exchange property.  Let $X_1,
  X_2$ be definable sets.  Let $R \subseteq X_1 \times X_2$ be a
  definable set projecting surjectively onto $X_1$ and $X_2$, such
  that both projections have finite fibers.  Then there is a definable
  set $R_0 \subseteq R$ such that $\dim(R \setminus R_0) < \dim(R)$,
  and for any $(a,b) \in R_0$, the set $R$ looks locally like the
  graph of a homeomorphism between a neighborhood of $a$ in $X_1$ and
  a neighborhood of $b$ in $X_2$.  The set $R_0$ can be chosen to be
  definable over the same parameters which define $R$.
\end{proposition}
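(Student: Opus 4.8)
The plan is to let $R_0$ consist of those $(a,b)\in R$ for which there are basic balls $U\ni a$ in $X_1$ and $V\ni b$ in $X_2$ and a homeomorphism $h\colon U\to V$ with $R\cap(U\times V)=\Gamma(h)$. Since the uniformity and its definable basis $\mathcal{B}$ are definable and $X_1,X_2$ are the projections of $R$, this is a first-order condition on $(a,b)$ over the parameters defining $R$: one quantifies over $U,V$ in the definable family of balls and uses that ``$p_1$ restricts to a homeomorphism of $R\cap(U\times V)$ onto $U$'', and likewise for $p_2$, are first-order, since continuity of a definable map and of its inverse is expressible with the quantifiers $\forall E\in\mathcal{B}\,\exists D\in\mathcal{B}$. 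On $R_0$ the set $R$ visibly looks locally like the graph of the homeomorphism $h$, so everything reduces to the dimension bound $\dim(R\setminus R_0)<\dim R$. Note $\dim R=\dim X_1=\dim X_2=:k$ by finiteness of the fibers of the two projections (Theorem~\ref{dimension-theorem}(\ref{dt7})).

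To bound $\dim(R\setminus R_0)$ I would exhibit a definable $R_0^-\subseteq R_0$ with $\dim(R\setminus R_0^-)<k$. Realize $R$ as the graph of an $m$-correspondence $\phi_1\colon X_1\rightrightarrows X_2$, $\phi_1(a)=\{b:(a,b)\in R\}$, after partitioning $X_1$ into the (finitely many, by uniform finiteness) loci where $p_1$ has constant fiber size, and symmetrically get $\phi_2\colon X_2\rightrightarrows X_1$. Applying Theorem~\ref{cd2} to each piece and taking the union, decompose $X_1$ into cells on which $\phi_1$ is continuous, and $X_2$ likewise for $\phi_2$; cells of dimension $<k$, and their $p_i$-preimages in $R$, have dimension $<k$ and may be discarded. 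If $C$ is a $k$-cell of $X_1$ in the decomposition, then $C$ is disjoint from $X_1\setminus C$, so $C\cap\overline{X_1\setminus C}\subseteq\partial(X_1\setminus C)$, whence $\dim\bigl(C\cap\overline{X_1\setminus C}\bigr)<k$ by the frontier inequality---available since $T$ is dp-minimal (Fact~\ref{sw-fact}) or has the exchange property (Theorem~\ref{usual-frontier}). Therefore, outside a subset of $X_1$ of dimension $<k$, each point $a$ lies in a $k$-cell $C$ of the decomposition and has an open neighborhood in $X_1$ contained in $C$, definably homeomorphic to an open subset of $\Mm^k$, on which $\phi_1$ is continuous; call this the good locus in $X_1$, and define the good locus in $X_2$ symmetrically. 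Let $R_0^-$ be the set of $(a,b)\in R$ with $a$ and $b$ in the respective good loci. Then $\dim(R\setminus R_0^-)<k$ by finiteness of fibers.

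It remains to check $R_0^-\subseteq R_0$. Fix $(a,b)\in R_0^-$. There is an open $U\ni a$ in $X_1$, definably homeomorphic to an open subset of $\Mm^k$, on which $\phi_1$ is given by finitely many continuous sheets; let $g\colon U\to X_2$ be the sheet through $(a,b)$, so $g(a)=b$. Choose an open $V_0\ni b$ in $X_2$ whose closure misses the values at $a$ of the other sheets; shrinking $U$ by continuity, $R\cap(U\times V_0)=\{(a',g(a')):a'\in U\}$. Symmetrically, from the $X_2$ side there is an open $V\ni b$, a continuous sheet $g'\colon V\to X_1$ with $g'(b)=a$, and an open $U_0\ni a$ with $R\cap(U_0\times V)=\{(g'(b'),b'):b'\in V\}$. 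Shrink to open sets $U^*\subseteq U\cap U_0$ and $V^*\subseteq V\cap V_0$ with $a\in U^*$, $b\in V^*$, $g(U^*)\subseteq V^*$ and $g'(V^*)\subseteq U^*$. Then $R\cap(U^*\times V^*)$ equals both $\{(a',g(a')):a'\in U^*\}$ and $\{(g'(b'),b'):b'\in V^*\}$; comparing these descriptions, $p_1$ maps $R\cap(U^*\times V^*)$ bijectively onto $U^*$, $p_2$ maps it bijectively onto $V^*$, and $g\colon U^*\to V^*$ is a bijection with continuous inverse $g'|_{V^*}$, hence a homeomorphism, with $R\cap(U^*\times V^*)=\Gamma(g)$. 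So $(a,b)\in R_0$, and $\dim(R\setminus R_0)\le\dim(R\setminus R_0^-)<k=\dim R$. The crux is that one cannot pass from ``continuous finite-to-one'' to ``local homeomorphism'' by an invariance-of-domain argument in a general visceral theory; the remedy is to use generic continuity of $R$ as a correspondence \emph{in both directions} and glue the two local continuous sheets, the frontier inequality serving only to upgrade ``neighborhood in a cell'' to ``neighborhood in $X_1$'' (resp. $X_2$).
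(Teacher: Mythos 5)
Your proof is correct and takes essentially the same approach as the paper's: both cell-decompose $R$ as a correspondence in each direction (via Theorem~\ref{cd2}), use the frontier inequality to discard the low-dimensional union of cell frontiers, and pass from continuity of the two local sheets to a local homeomorphism on the resulting good locus. The one step you leave implicit---simultaneously arranging $g(U^*)\subseteq V^*$ and $g'(V^*)\subseteq U^*$, which would regress if done by alternating shrinkage---does go through: set $W=R\cap\bigl((U\cap U_0)\times(V\cap V_0)\bigr)$ and note that your two graph descriptions of $W$ give $\pi_1(W)=(U\cap U_0)\cap g^{-1}(V\cap V_0)$ and $\pi_2(W)=(V\cap V_0)\cap (g')^{-1}(U\cap U_0)$, both open, with $\pi_1,\pi_2$ restricting to homeomorphisms of $W$ onto them, so $U^*=\pi_1(W)$, $V^*=\pi_2(W)$ work.
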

\begin{proof}
  For $i = 1, 2$, let $R_i$ be the set of points $p \in R$ such that
  the $i$th projection $\pi_i : R \to X_i$ is a local homeomorphism at
  $p$.  It suffices to show that $\dim(R \setminus R_i) < \dim(R)$ for
  both $i$, because then we can take $R_0 = R_1 \cap R_2$.  By
  symmetry, we only need to consider $i=1$.

  For $a \in X$, let $g(a)$ be the number of $b \in Y$ such that
  $(a,b) \in R$.  By cell decomposition, we can partition $X$ into
  cells $X = \coprod_{i=1}^k C_i$ such that $g$ is constant on each
  cell.  Then $R$ is the graph of a correspondence, above each cell
  $C_i$.  Refining the cell decomposition, we can assume that $R$ is
  the graph of a continuous correspondence $h_i : C_i
  \rightrightarrows Y$, above each cell $C_i$.  Let $X_0 =
  \bigcup_{i=1}^k \partial C_i \subseteq X$, and let $X_1 = X
  \setminus X_0$.  Then \[\dim(X_0) = \max_{1 \le i \le k}
  \dim(\partial C_i) < \max_{1 \le i \le k} \dim(C_i) = \dim(X),
  \tag{$\ast$} \] where the strict inequality comes from
  Theorem~\ref{usual-frontier} or Fact~\ref{sw-fact}.  Then
  \begin{equation*}
    R_1 \supseteq \{(a,b) \in R : a \in X_1\}. \tag{$\dag$}
  \end{equation*}
  Indeed, if $(a,b) \in R$ and $a \in X_1$, then $a$ belongs to a
  unique cell $C_i$, and $a$ is not in the frontier of any cell, so
  $a$ is not in the closure of any cell other than $C_i$.  Locally
  around $(a,b)$, the set $R$ looks like the graph of the continuous
  correspondence $h_i : C_i \rightrightarrows Y$.  In particular, the
  projection $\pi_1 : R \to X$ is a local homeomorphism around
  $(a,b)$, so $(a,b) \in R_1$, proving ($\dag$).

  Finally, ($\dag$) implies that $\dim(R \setminus R_1) < \dim(R)$ by
  dimension theory.  Specifically, $R \setminus R_1$ has a
  finite-to-one surjection onto $X \setminus X_1 = X_0$, so $\dim(R
  \setminus R_1) = \dim(X_0) < \dim(X)$ by ($\ast$) and
  Theorem~\ref{dimension-theorem}(\ref{dt7}).
\end{proof}

\subsection{Frontier rank $d(-)$} \label{anti-dim}
From one point of view, the cell decomposition theorems
(Theorems~\ref{cd1}, \ref{cd2}) give a satisfactory description of
definable sets in visceral theories.  But on the other hand, these
results are unable to answer rather fundamental questions such as the
following:
\begin{enumerate}
\item Let $f : D \to \Mm^n$ be a definable function.  Is $f$ anywhere
  continuous?  That is, is there \emph{any} $a \in D$ such that $f$ is
  continuous at $a$?
\item Let $D \subseteq \Mm^n$ be definable.  Is $D$ locally Euclidean
  anywhere?  That is, is there a non-empty relatively open $U
  \subseteq D$ such that $U$ is definably homeomorphic to an open set
  in some $\Mm^k$?
\end{enumerate}
Cell decomposition lets us partition $D$ into cells $\coprod_{i=1}^N
C_i$.  Definable functions on cells are generically continuous
(Lemma~\ref{gorilla}), and cells are locally Euclidean.  But this
doesn't help answer the questions above unless we can find a point $p
\in D$ with the property that $p$ belongs to the closure
$\overline{C_i}$ of \emph{only one} cell.  The sort of configuration
we need to rule out is a cell decomposition $\coprod_{i=1}^N C_i$
where each $C_i$ is covered by the frontiers of the other cells.
\begin{example}
  For $n \ge 1$, we will see an example of a visceral theory
  containing a 1-dimensional set $D$ with $\dim(\partial D) = n$.  For
  $n=1$, we can even take the theory to have NSFF.  If $X = D \sqcup
  \partial D$ and $f : X \to \{0,1\}$ is the characteristic function
  \begin{equation*}
    f(x) = 
    \begin{cases}
      1 & x \in D \\
      0 & x \in \partial D
    \end{cases}
  \end{equation*}
  then $f$ is discontinuous on the set $\partial D$.  This shows that
  we cannot expect definable functions to be continuous off a set of
  low dimension.  In other words, Lemma~\ref{gorilla} does not
  generalize from cells to more general definable sets.
\end{example}
We will give positive answers to the two questions above, but it will
require more machinery.
\begin{definition}
  Let $X \subseteq \Mm^m$ be definable.  Then the \emph{frontier rank}
  $d(X)$ is the maximum $n$ such that there is a sequence of non-empty
  definable sets $Y_0, Y_1, \ldots, Y_n \subseteq \Mm^m$ with $Y_0
  \subseteq X$ and $Y_{i+1} \subseteq \partial Y_i$ for $0 \le i < n$.
  We set $d(X) = -\infty$ if no such $n$ exists, and $d(X) = +\infty$
  if there is no bound on $n$.
\end{definition}
Then $d(X)$ is characterized inductively as follows:
\begin{enumerate}
\item $d(X) \ge 0$ iff $X$ is non-empty.
\item $d(X) \ge n+1$ iff there is a definable subset $Y \subseteq X$
  with $d(\partial Y) \ge n$.
\end{enumerate}
We don't need to require $Y$ to be non-empty in (2) since $d(\partial
Y) \ge n$ implies $Y \ne \varnothing$.
\begin{remark}
  Taking $Y=X$ in (2) we see that if $d(\partial X) \ge n$ then $d(X)
  \ge n+1$.  In other words $d(\partial X) + 1 \le d(X)$.
\end{remark}
\begin{example} \label{d-example}
  If $X$ is finite and non-empty, then every definable subset $Y
  \subseteq X$ is closed ($\partial Y = \varnothing$), so $d(X) = 0$.
  If $Y \subseteq \Mm$ is definable, then $\partial Y$ is finite by
  t-minimality, so $d(\partial Y) \le 0$.  This shows that $d(\Mm) \le
  1$.  In fact, $d(\Mm) = 1$ because if $Y = \Mm \setminus \{a\}$ then $d(\partial Y) = d(\{a\}) = 0$.
\end{example}
\begin{lemma} \phantomsection \label{dlem0}
  \begin{enumerate}
  \item $d(\varnothing) = -\infty$.
  \item If $X_1 \subseteq X_2$ then $d(X_1) \le d(X_2)$.
  \item \label{unions} $d(X_1 \cup X_2) = \max(d(X_1),d(X_2))$.
  \end{enumerate}
\end{lemma}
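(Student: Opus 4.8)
The plan is to handle the three parts in order; each follows quickly from elementary properties of the topological closure together with the inductive characterization of $d(-)$ recorded above. Part~(1) is immediate, since a witness to $d(\varnothing) \ge 0$ would be a non-empty definable subset of $\varnothing$, which cannot exist. For part~(2), if $X_1 \subseteq X_2$ then any chain $Y_0, Y_1, \ldots, Y_n$ of non-empty definable sets with $Y_0 \subseteq X_1$ and $Y_{i+1} \subseteq \partial Y_i$ also has $Y_0 \subseteq X_2$, so the very same chain witnesses $d(X_2) \ge n$; hence $d(X_1) \le d(X_2)$, with the $\pm\infty$ cases covered by the same remark.

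For part~(3), the inequality $d(X_1 \cup X_2) \ge \max(d(X_1),d(X_2))$ is immediate from part~(2), so the work is in the reverse inequality. I would package this as: for every $n \in \Nn$, if $d(X_1 \cup X_2) \ge n$ then $d(X_1) \ge n$ or $d(X_2) \ge n$; granting this, $\max(d(X_1),d(X_2)) \ge d(X_1 \cup X_2)$ follows, the cases $d(X_1 \cup X_2) = \pm\infty$ being routine. The statement is proved by induction on $n$ using the inductive characterization of $d$. The case $n = 0$ merely says that $X_1 \cup X_2$ is non-empty iff $X_1$ or $X_2$ is. For the inductive step, assuming the claim for $n$ and given $d(X_1 \cup X_2) \ge n+1$, choose a definable $Y \subseteq X_1 \cup X_2$ with $d(\partial Y) \ge n$ and put $Y_i = Y \cap X_i$, so that $Y = Y_1 \cup Y_2$ and $Y_i \subseteq X_i$. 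The load-bearing observation is the inclusion $\partial Y \subseteq \partial Y_1 \cup \partial Y_2$: since $\overline{Y} = \overline{Y_1} \cup \overline{Y_2}$, every point of $\overline{Y} \setminus Y$ lies in some $\overline{Y_i} \setminus Y_i = \partial Y_i$. Combining this with part~(2) gives $d(\partial Y_1 \cup \partial Y_2) \ge d(\partial Y) \ge n$, so the inductive hypothesis applied to the pair $(\partial Y_1, \partial Y_2)$ yields $d(\partial Y_i) \ge n$ for some $i$; since $Y_i \subseteq X_i$ is definable, the inductive characterization then gives $d(X_i) \ge n+1$.

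There is no real obstacle here. The only points that require attention are the bookkeeping of the $-\infty$ and $+\infty$ values, and setting up the induction in part~(3) over \emph{all} pairs of definable subsets of $\Mm^m$ at once, so that the already-proved part~(2) and the inductive hypothesis can both be legitimately invoked inside the step, applied to the new pair $(\partial Y_1, \partial Y_2)$ rather than to $(X_1,X_2)$.
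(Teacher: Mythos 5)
Your proof is correct and is essentially the same as the paper's: the paper phrases the induction contrapositively, proving $(d(X_1)<n$ and $d(X_2)<n)\implies d(X_1\cup X_2)<n$, but the key step in both is the inclusion $\partial Y\subseteq \partial(Y\cap X_1)\cup\partial(Y\cap X_2)$ for $Y\subseteq X_1\cup X_2$, followed by an appeal to the inductive hypothesis on the pair of frontiers. No substantive difference.
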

\begin{proof}
  The first two points are clear, and imply $d(X_1 \cup X_2) \ge
  \max(d(X_1),d(X_2))$.  It remains to show that
  \begin{equation*}
    (d(X_1) < n \text{ and } d(X_2) < n) \implies d(X_1 \cup X_2) < n
    \tag{$\ast_n$}
  \end{equation*}
  for each $n \ge 0$.  We prove this by induction on $n$.  The base
  case $n = 0$ says that a union of two empty sets is empty, which is
  true.  Suppose $n > 0$.  Suppose $d(X_1) < n$ and $d(X_2) < n$.  Let $Y
  \subseteq X_1 \cup X_2$ be definable.  We must show $d(\partial Y) < n-1$.
  Note that $Y = (Y \cap X_1) \cup (Y \cap X_2)$, so
  \begin{gather*}
    \partial Y \subseteq \partial (Y \cap X_1) \cup \partial (Y \cap X_2).
  \end{gather*}
  Since $Y \cap X_i$ is a definable subset of $X_i$ and $d(X_i) <
  n$, we have $\partial(Y \cap X_i) < n-1$.  By induction
  ($\ast_{n-1}$), we have
  \begin{equation*}
    d(\partial Y) \le d(\partial (Y \cap X_1) \cup \partial (Y \cap X_2))
    < n-1. \qedhere
  \end{equation*}
\end{proof}
Say that a definable set $C \subseteq \Mm^m$ is \emph{locally
$n$-Euclidean} if around any point $a \in C$, there is a definable
local homeomorphism to an open set in $\Mm^m$.  For example, $n$-cells
are locally $n$-Euclidean.
\begin{lemma} \label{dlem}
  Suppose $d(\Mm^n) \le a$.  Let $C \subseteq \Mm^m$ be definable,
  locally closed, and locally $n$-Euclidean.
  \begin{enumerate}
  \item \label{dl1} If $C$ is closed, then $d(C) \le a$.
  \item \label{dl2} Otherwise, $d(C) \le a + d(\partial C) + 1$.
  \end{enumerate}
\end{lemma}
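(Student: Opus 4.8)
The plan is to reduce both parts to a single bound on the \emph{relative} frontier rank of $C$ inside itself, obtained by localizing to Euclidean charts, together with a decomposition of frontiers of subsets of $C$. For $Y \subseteq C$, write $\partial_C Y = \overline{Y}^{C} \setminus Y$ for the frontier of $Y$ computed in the subspace topology on $C$; this is definable, since closures of definable sets are definable. Let $d_C(-)$ denote frontier rank computed in the space $C$, i.e.\ the same definition as $d(-)$ but with all closures and frontiers taken relative to $C$. Two elementary facts drive the argument. \emph{(i) Relative frontier is local:} if $V \subseteq C$ is relatively open, then $\overline{Y}^{C} \cap V = \overline{Y \cap V}^{V}$ for any $Y \subseteq C$ (because $V$ is open in $C$), and consequently $\partial_C Y \cap V = \partial_V(Y \cap V)$. \emph{(ii) Frontier decomposition:} for $Y \subseteq C$ we have $\overline{Y} \subseteq \overline{C} = C \sqcup \partial C$, so $\partial Y = \partial_C Y \sqcup (\partial Y \cap \partial C)$, with $\partial_C Y \subseteq C$ and $\partial Y \cap \partial C \subseteq \partial C$; hence, by Lemma~\ref{dlem0}(\ref{unions}), $d(\partial Y) \le \max(d(\partial_C Y), d(\partial C))$.

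The key step is the inequality $d_C(C) \le d(\Mm^n) \le a$, which uses only that $C$ is locally $n$-Euclidean. Suppose $d_C(C) \ge r$, and fix nonempty definable $Y_0, \dots, Y_r \subseteq C$ with $Y_{i+1} \subseteq \partial_C Y_i$. Pick $p \in Y_r$; since $Y_r \subseteq \overline{Y_{r-1}}^{C} \subseteq \cdots \subseteq \overline{Y_0}^{C}$, we have $p \in \overline{Y_i}^{C}$ for every $i$. Choose a definable relatively open neighborhood $V \ni p$ in $C$ and a definable homeomorphism $\phi : V \to W$ onto an open $W \subseteq \Mm^n$, and set $Z_i = \phi(Y_i \cap V)$. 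By fact (i), $Y_{i+1} \cap V \subseteq \partial_C Y_i \cap V = \partial_V(Y_i \cap V)$, and applying $\phi$ gives $Z_{i+1} \subseteq \partial_W Z_i \subseteq \partial Z_i$. Also $Z_r \ni \phi(p)$ is nonempty, $Z_0 \ne \varnothing$ since $V$ is a neighborhood of $p$ in $C$ and $p \in \overline{Y_0}^{C}$, and then every $Z_i$ is nonempty by downward induction. Thus $d(\Mm^n) \ge r$, and letting $r$ vary we obtain $d_C(C) \le d(\Mm^n) \le a$.

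Part (1) is now immediate: if $C$ is closed, then $\partial C = \varnothing$ and $\partial_C Y = \partial Y$ for every $Y \subseteq C$, so the sequences witnessing $d(C)$ and those witnessing $d_C(C)$ coincide, whence $d(C) = d_C(C) \le a$. For part (2), $C$ is not closed, so $\partial C \ne \varnothing$ and $b := d(\partial C) \ge 0$; I claim $d(X) \le d_C(X) + b + 1$ for every definable $X \subseteq C$, which applied to $X = C$ gives $d(C) \le d_C(C) + b + 1 \le a + b + 1$. This is proved by induction on $d_C(X)$, the case $X = \varnothing$ being trivial. If $d(X) \ge N$ with $N \ge 1$, choose a definable $Y \subseteq X$ with $d(\partial Y) \ge N - 1$. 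By fact (ii), $N - 1 \le \max(d(\partial_C Y), b)$. Now $\partial_C Y$ is a definable subset of $C$ with $d_C(\partial_C Y) < d_C(X)$ — because prepending $Y$ to a sequence witnessing $d_C(\partial_C Y)$ produces a sequence witnessing $d_C(X)$ one step longer — so the inductive hypothesis yields $d(\partial_C Y) \le d_C(\partial_C Y) + b + 1 \le d_C(X) + b$. Hence $N - 1 \le \max(d_C(X) + b, b) = d_C(X) + b$, i.e.\ $N \le d_C(X) + b + 1$, as required.

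The main obstacle is the relative frontier rank bound $d_C(C) \le a$: the point is to notice that the relative frontier operation is genuinely local (fact (i)), so that a chain witnessing $d_C(C)$ can be pulled into a single Euclidean chart and transported to $\Mm^n$ by a definable homeomorphism. A secondary subtlety, which forces the induction in part (2) to range over \emph{all} definable subsets of $C$ rather than just the relatively closed ones, is that $\partial_C Y$ need not be relatively closed in $C$.
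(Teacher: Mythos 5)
Your proof is correct, and it reorganizes the argument in a genuinely different way from the paper. Where the paper works directly with the closed set $\overline{C} = C \sqcup \partial C$ and, for part~(2), takes a single long witnessing chain $Y_0,\ldots,Y_{a+b+2}$ and case-splits on the pivot term $Y_{a+1}$ (either $Y_{a+1}\subseteq\partial C$, so the tail contradicts $d(\partial C)\le b$, or $Y_{a+1}$ meets $C$, so localizing at such a point contradicts $d(\Mm^n)\le a$), you instead isolate a relative frontier rank $d_C$ computed inside the subspace $C$ and prove two clean sublemmas: $d_C(C)\le d(\Mm^n)\le a$, which packages the chart-localization step once and for all, and $d(X)\le d_C(X)+b+1$ for all definable $X\subseteq C$, proved by induction on $d_C(X)$ via the frontier decomposition $\partial Y = \partial_C Y \sqcup (\partial Y \cap \partial C)$. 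The induction systematically unfolds what the paper does in one shot; it is more modular and makes the exact role of local Euclideanity transparent. One small point of care you implicitly handle in part~(1): the witnessing chain in the definition of $d(C)$ a priori lives in $\Mm^m$, not in $C$, but since $Y_{i+1}\subseteq\overline{Y_i}$ and $C$ is closed each $Y_i$ lies in $C$, so $d(C)=d_C(C)$ — worth stating explicitly. Both routes give the same bound $a + d(\partial C) + 1$.
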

In the second part, one could probably remove the ``$+ 1$'' by being
more careful.
\begin{proof}
  \begin{enumerate}
  \item The idea is that a high value of $d(C)$ must be witnessed
    locally at a specific point $p \in C$, and then we can use local
    $n$-Euclideanity to get a similar high value for $d(\Mm^n)$.  In
    more detail, suppose $d(C) > a$.  Then there is a sequence of
    non-empty definable sets $Y_0, Y_1, \ldots, Y_{a+1} \subseteq
    \Mm^m$ with $Y_0 \subseteq X$ and $Y_{i+1} \subseteq \partial Y_i$
    for $0 \le i \le a$.  Note that
    \begin{equation*}
      Y_{i+1} \subseteq \partial Y_i \implies Y_{i+1} \subseteq
      \overline{Y_i} \implies \overline{Y_{i+1}} \subseteq
      \overline{Y_i},
    \end{equation*}
    so
    \begin{equation*}
      C = \overline{C} \supseteq \overline{Y_0} \supseteq
      \overline{Y_1} \supseteq \cdots \supseteq \overline{Y_{a+1}}.
    \end{equation*}
    In particular, each $Y_i$ is contained in $C$.  Take some $p \in
    Y_{a+1} \subseteq C$.  By local Euclideanity, there is an open
    neighborhood $U \ni p$ in $\Mm^m$ and a definable homeomorphism $f
    : C \cap U \to V$ for some open set $V \subseteq \Mm^n$.  Note
    that $Y_{i+1} \cap U \subseteq \partial(Y_i \cap U)$ for each $i$,
    and so
    \begin{equation*}
      f(Y_{i+1} \cap U) \subseteq \partial(f(Y_i \cap U)) \text{ for
        each } i.
    \end{equation*}
    Then the sets $Z_i := f(Y_i \cap U)$ show that $a+1 \le d(V) \le
    d(\Mm^n)$, contradicting the fact that $d(\Mm^n) \le a$.
  \item Let $W = \partial C$.  Then $W$ is closed, because $C$ is
    locally closed.  The disjoint union $C \cup W$ is $\overline{C}$,
    so it is also closed.  Let $b = d(W) = d(\partial C)$.  It
    suffices to show that $d(C \cup W) \le a + b + 1$, for then
    \begin{equation*}
      d(C) \le d(C \cup W) \le a + b + 1 = a + d(\partial C)+1.
    \end{equation*}
    So suppose for the sake of contradiction that $d(C \cup W) \ge a +
    b + 2$.  Then there is a sequence of definable sets $Y_0, Y_1,
    \ldots, Y_{a+b+2}$ with $Y_0 \subseteq C \cup W$ and $Y_{i+1}
    \subseteq \partial Y_i$.  Arguing as in the previous point, we
    have
    \begin{equation*}
      C \cup W = \overline{C \cup W} \supseteq \overline{Y_0}
      \supseteq \overline{Y_1} \supseteq \cdots \supseteq
      \overline{Y_{a+b+2}},
    \end{equation*}
    so each $Y_i$ is contained in $C \cup W$.  If $Y_{a+1} \subseteq
    W$, then the sequence of sets \[Y_{(a+1)+0}, Y_{(a+1)+1}, \ldots,
    Y_{(a+1)+(b+1)}\] shows that $d(W) \ge b+1$, contradicting the
    choice of $b$.  Thus $Y_{a+1} \not\subseteq W$, and there is some
    $p \in Y_{a+1} \cap C$.  Then arguing as in the previous point, we
    see that $d(\Mm^n) \ge a+1$, a contradiction.  \qedhere
  \end{enumerate}
\end{proof}
\begin{proposition}
  $d(\Mm^n) < \infty$ for every $n$.
\end{proposition}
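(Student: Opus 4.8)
The plan is to induct on $n$. The base cases $n\le 1$ are Example~\ref{d-example} ($d(\Mm^0)=0$, $d(\Mm^1)=1$), so suppose $n\ge 2$ and $d(\Mm^{n'})<\infty$ for all $n'<n$. By the inductive characterization of $d$, we have $d(\Mm^n)\le 1+\sup\{d(\partial Y):Y\subseteq\Mm^n\text{ definable}\}$. Each such frontier $\partial Y$ has empty interior (Corollary~\ref{small-boundary-0}), hence dimension $\le n-1$ (Theorem~\ref{dimension-theorem}), hence decomposes into finitely many cells of dimension $\le n-1$ (Theorem~\ref{cd1}), and $d(\partial Y)$ is the maximum of $d(C)$ over these cells (Lemma~\ref{dlem0}(\ref{unions})). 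So the whole proposition reduces to: for each $k\le n-1$, produce a finite bound on $d(C)$ valid for all $k$-cells $C\subseteq\Mm^n$. Since there are only $n$ values of $k$, this gives a uniform bound and hence $d(\Mm^n)<\infty$.

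The heart of the argument is an auxiliary claim, proved by induction on $s$: \emph{for every $s$ and $r$ with $s+r=n$ and every closed definable $F\subseteq\Mm^s$ with $\dim F<s$, one has $d(F\times\Mm^r)<\infty$, by a bound depending only on $s$ and $r$.} For $s\le 1$ the set $F$ is finite, so $F\times\Mm^r$ is a finite union of copies of $\Mm^r$, which is closed with $d(F\times\Mm^r)=d(\Mm^r)<\infty$ since $r\le n-1$. For the inductive step, write $F=\coprod_j W_j$ with each $W_j$ a cell of dimension $b_j<s$ (Theorem~\ref{cd1}); then $d(F\times\Mm^r)=\max_j d(W_j\times\Mm^r)$. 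Writing $W_j=\sigma_j(\Gamma(\psi_j))$ with $\psi_j:V_j\rightrightarrows\Mm^{s-b_j}$ continuous and $V_j\subseteq\Mm^{b_j}$ open, the set $W_j\times\Mm^r$ is again a cell (of dimension $b_j+r$): up to a coordinate permutation it is the graph of the continuous correspondence $(v,z)\mapsto\psi_j(v)$ on the open set $V_j\times\Mm^r\subseteq\Mm^{b_j+r}$. Cells are locally closed and locally Euclidean, so Lemma~\ref{dlem} applies: if $W_j\times\Mm^r$ is closed then $d(W_j\times\Mm^r)\le d(\Mm^{b_j+r})$, finite because $b_j+r\le(s-1)+r=n-1$; otherwise $d(W_j\times\Mm^r)\le d(\Mm^{b_j+r})+d(\partial(W_j\times\Mm^r))+1$. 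Finally, the frontier of the graph of a continuous correspondence over $V_j\times\Mm^r$ lies in $\partial(V_j\times\Mm^r)\times\Mm^{s-b_j}=\partial V_j\times\Mm^{r+s-b_j}$, and $\partial V_j\subseteq\Mm^{b_j}$ is closed with $\dim\partial V_j\le b_j-1<b_j$; as $b_j+(r+s-b_j)=n$, the inductive hypothesis (applied with $b_j<s$ in place of $s$) bounds $d(\partial(W_j\times\Mm^r))\le d(\partial V_j\times\Mm^{r+s-b_j})<\infty$. This proves the claim.

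Granting the claim, fix $k\le n-1$ and a $k$-cell $C=\sigma(\Gamma(f))\subseteq\Mm^n$ with $f:U\rightrightarrows\Mm^{n-k}$ continuous and $U\subseteq\Mm^k$ open. Since $C$ is locally closed and locally $k$-Euclidean, Lemma~\ref{dlem} gives $d(C)\le d(\Mm^k)$ if $C$ is closed, and $d(C)\le d(\Mm^k)+d(\partial C)+1$ otherwise, with $d(\Mm^k)<\infty$ because $k<n$. Moreover $\partial\Gamma(f)\subseteq\partial U\times\Mm^{n-k}$, where $\partial U\subseteq\Mm^k$ is closed with $\dim\partial U\le k-1<k$, so $\partial C$ is a coordinate-permuted copy of a subset of $\partial U\times\Mm^{n-k}$ and therefore $d(\partial C)\le d(\partial U\times\Mm^{n-k})<\infty$ by the claim (with $s=k$, $r=n-k$). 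Hence $d(C)\le d(\Mm^k)+(\text{the claim's bound for }(k,n-k))+1$, a bound independent of $C$; taking the maximum over $k\in\{0,\dots,n-1\}$ and feeding it back into the first paragraph yields $d(\Mm^n)<\infty$.

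The step I expect to be the main obstacle is closing the induction in the auxiliary claim. The naive attempt — to induct on the dimension of the set — fails, since the frontier of a low-dimensional cell sitting in a high-dimensional ambient space can itself be high-dimensional (Theorem~\ref{thm-frontiers}(4) produces $1$-dimensional sets with arbitrarily high-dimensional frontier), so there is no obvious decrease. The fix above is to retain the product shape $F\times\Mm^r$ and to observe that passing from a cell to its frontier replaces the base $V\subseteq\Mm^b$ of the innermost layer by the strictly lower-dimensional closed set $\partial V$; it is the parameter $s$, not the dimension of the whole set, that drops. One must also check that every ambient-space term $d(\Mm^{b_j+r})$ appearing in the recursion has argument $\le n-1$, which is exactly what makes the outer induction on $n$ usable. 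The remaining ingredients — that cells are locally closed and locally Euclidean, that $C\times\Mm^r$ is a cell, that $\partial\Gamma(f)\subseteq\partial U\times\Mm^{n-k}$, that $\partial(V\times\Mm^r)=\partial V\times\Mm^r$, and that $d$ is invariant under coordinate permutations — are routine.
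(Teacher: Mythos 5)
Your proof is correct and follows essentially the same strategy as the paper's: an outer induction on $n$ together with an inner induction on the "base dimension" of a product $F \times \Mm^r$, applied to frontiers of cells via Lemma~\ref{dlem}. The paper's version is the same argument packaged with an explicit recursive bound $f(n) = n(f(n-1)+1)$; the only other cosmetic differences are that your auxiliary claim carries an unused closedness hypothesis and that you unroll the top-level application of the claim through a cell decomposition of $\partial Y$ rather than invoking the claim directly at $k=n$.
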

\begin{proof}
  Define a function $f(n)$ by $f(1) = 1$ and $f(n) = n \cdot (f(n-1) +
  1)$ for $n > 1$.  Note that $f$ is increasing.  We will prove
  $d(\Mm^n) \le f(n)$ by induction on $n$.  The base case $n = 1$ says
  that $d(\Mm) \le 1$, which holds by t-minimality, as discussed in
  Example~\ref{d-example}.  Suppose $n > 1$.  By induction, we know
  that $d(\Mm^{n-1}) \le f(n-1)$.
  \begin{claim}
    If $1 \le k \le n$ and $X \subseteq \Mm^k$ is narrow, then \[d(X
    \times \Mm^{n-k}) \le f(n-1) + (k-1)(f(n-1)+1).\]
  \end{claim}
  \begin{claimproof}
    Proceed by induction on $k$.  If $k = 1$, then $X \subseteq \Mm$
    is finite, and $X \times \Mm^{n-1}$ is a union of finitely many
    parallel hyperplanes.  In particular, $X$ is locally
    $(n-1)$-Euclidean, so $d(X \times \Mm^{n-1}) \le f(n-1)$ by
    Lemma~\ref{dlem}(\ref{dl1}).

    Next suppose $k > 1$.  Write $X$ as
    a union of cells $\coprod_{i=1}^N C_i$.  If $d(C_i \times
    \Mm^{n-k}) \le f(n-1) + (k-1)(f(n-1)+1)$ for each $i$, then $d(X
    \times \Mm^{n-k}) \le f(n-1) + (k-1)(f(n-1)+1)$ because $d(-)$
    behaves correctly with respect to unions
    (Lemma~\ref{dlem0}(\ref{unions})).  Therefore, it suffices to
    handle the case where $X$ is a cell $C$ in $\Mm^k$.

    Let $j = \dim(C)$.  Then $j < k$, since $C$ is narrow in $\Mm^k$.
    Without loss of generality (permuting coordinates if necessary),
    we may assume that $C$ is the graph $\Gamma(g)$ of a continuous
    correspondence $g : U \rightrightarrows \Mm^{k-j}$ for some
    non-empty open $U \subseteq \Mm^j$.  Let $W$ be the set $\partial
    U \times \Mm^{k-j}$.  Note that $W \cap C = \varnothing$ and $W
    \cup C$ is closed, so $W \supseteq \partial C$.  Letting $W' = W
    \times \Mm^{n-k}$ and $C' = C \times \Mm^{n-k}$, then similarly
    $W' \cap C' = \varnothing$ and $W' \cup C'$ is closed, so that $W'
    \supseteq \partial C'$.  The cell $C$ is locally $j$-Euclidean, so
    the set $C'$ is locally $(j+n-k)$-Euclidean, where $j+n-k < n$.
    By the outer inductive hypothesis, $d(\Mm^{j+n-k}) \le f(j+n-k)
    \le f(n-1)$, and thus Lemma~\ref{dlem} shows
    \begin{equation*}
      d(C') \le f(n-1) + d(\partial C) + 1 \le f(n-1) + d(W') + 1.
    \end{equation*}
    Since $\partial U$ is
    narrow in $\Mm^j$ (Corollary~\ref{small-boundary-0}) and $j < k$,
    we know by induction that
    \begin{align*}
      d(W') = d(\partial U \times \Mm^{k-j} \times
      \Mm^{n-k}) &\le f(n-1) + (j-1)(f(n-1)+1) \\ &\le
      f(n-1)+(k-2)(f(n-1)+1).
    \end{align*}
    Putting everything together,
    \begin{align*}
      d(C') &\le f(n-1) + f(n-1) + (k-2)(f(n-1)+1) + 1 \\ &= f(n-1) +
      (k-1)(f(n-1) + 1)
    \end{align*}
    as desired.
  \end{claimproof}
  Taking $k = n$ in the Claim, we see that if $X \subseteq \Mm^n$ is a
  narrow definable set, then
  \begin{align*}
    d(X) = d(X \times \Mm^0) &\le f(n-1) + (n-1)(f(n-1)+1) \\
    &= n(f(n-1)+1) - 1 = f(n) - 1.
  \end{align*}
  Finally, if $Y \subseteq \Mm^n$ is any definable set, then $\partial
  Y$ is narrow (Corollary~\ref{small-boundary-0}), so $d(\partial Y)
  \le f(n) - 1$.  By definition of $d(-)$ this shows $d(\Mm^n) \le
  f(n)$.
\end{proof}
To summarize, we have assigned a number $d(X) \in \Nn \cup
\{-\infty\}$ to every definable set $X \subseteq \Mm^n$, with the
following properties:
\begin{gather*}
  d(X) = -\infty \iff X = \varnothing \\
  d(\partial X) + 1 \le d(X) \\
  X \subseteq Y \implies d(X) \le d(Y) \\
  d(X \cup Y) = \max(d(X),d(Y)).
\end{gather*}
\subsection{Applications of frontier rank} \label{sec:afr}
\begin{theorem} \label{gencon3}
  Let $f : X \rightrightarrows \Mm^n$ be a definable correspondence on
  a definable set $X$.  Then the set of points where $f$ is continuous
  is dense in $X$.
\end{theorem}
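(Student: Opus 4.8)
The plan is to reduce the density statement to the single-point statement ``$f$ has at least one continuity point,'' and then to produce that one point from a cell decomposition adapted to $f$ together with the frontier rank $d(-)$ from Section~\ref{anti-dim}. No induction is needed; the whole argument runs in one pass.

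\textbf{Reduction.} I would first observe that it suffices to prove: \emph{every definable correspondence on a non-empty definable set has at least one point of continuity}. Granting this, let $f : X \rightrightarrows \Mm^n$ be as in the theorem, let $U \subseteq X$ be an arbitrary non-empty relatively open set, say $U = X \cap O$ with $O$ open, and fix $p_0 \in U$. Since the topology has a definable basis of opens, there is a basic definable open $B$ with $p_0 \in B \subseteq O$, so $X \cap B$ is a non-empty definable set with $X \cap B \subseteq U$. Applying the one-point statement to the definable correspondence $f \restriction (X \cap B)$ yields a point $p \in X \cap B \subseteq U$ at which $f \restriction (X \cap B)$ is continuous; as $X \cap B$ is relatively open in $X$ and continuity at a point is a local condition, $f$ itself is continuous at $p$. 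Hence every non-empty relatively open $U$ meets the continuity locus, which is exactly density.

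\textbf{The one point.} To find a continuity point of a definable correspondence $f : X \rightrightarrows \Mm^n$ on a non-empty $X \subseteq \Mm^m$, apply Cell Decomposition~II (Theorem~\ref{cd2}), or simply Lemma~\ref{bonobo}, to write $X = \coprod_{i=1}^N C_i$ with each $C_i$ a cell and $f \restriction C_i$ continuous. Put $Z_i = \partial C_i \cap X = (\overline{C_i} \setminus C_i) \cap X$, a definable set. The key point is that $X \neq \bigcup_{i=1}^N Z_i$: since $Z_i \subseteq \partial C_i$ and $C_i \subseteq X$, the stated properties of $d(-)$ give $d(Z_i) \le d(\partial C_i) \le d(C_i) - 1 \le d(X) - 1$, while $d(X) \le d(\Mm^m) < \infty$; therefore $d\big(\bigcup_i Z_i\big) = \max_i d(Z_i) \le d(X) - 1 < d(X)$, so the union cannot be all of $X$. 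Choose $p \in X \setminus \bigcup_i Z_i$ and let $j$ be the unique index with $p \in C_j$. For $i \ne j$ we have $p \notin \overline{C_i}$, for otherwise $p \in \partial C_i \cap X = Z_i$; hence $O' := \Mm^m \setminus \bigcup_{i \ne j} \overline{C_i}$ is an open neighborhood of $p$ with $X \cap O' = C_j \cap O'$. Thus $X$ coincides with the cell $C_j$ on a neighborhood of $p$, and since $f \restriction C_j$ is continuous at $p$, so is $f : X \rightrightarrows \Mm^n$.

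\textbf{Main obstacle.} There is no serious obstacle once frontier rank is available: the entire content is the inequality $d(\partial C) < d(C)$ for non-empty $C$ together with the finiteness $d(\Mm^m) < \infty$, which is precisely why $d(-)$ was developed. The only points needing a little care are the bookkeeping with $-\infty$ (absorbed into $d(X) \le d(\Mm^m) < \infty$) and checking that ``$X$ agrees with $C_j$ near $p$'' genuinely transports continuity of $f \restriction C_j$ at $p$ to continuity of $f$ on $X$ at $p$ — that is, that the relevant notion of continuity at a point depends only on the germ of $X$ and of $f$ at $p$.
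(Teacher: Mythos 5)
Your proposal is correct and matches the paper's proof essentially line-for-line: cell decomposition adapted to $f$ via Theorem~\ref{cd2}, the frontier-rank inequality $d(\partial C_i) < d(C_i)$ together with $d(\Mm^m) < \infty$ to find a point of $X$ outside every $\partial C_i$, and the observation that $X$ agrees with a single cell near such a point. The only cosmetic difference is that you reduce density to a ``one continuity point'' statement up front and then restrict to $X \cap B$ for a basic open $B$, whereas the paper first produces one point and then reruns the same argument on $f \restriction (X \cap U)$ — the same move in the opposite order.
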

\begin{proof}
  Take a cell decomposition $X = \coprod_{i=1}^N C_i$ such that $f$ is
  continuous on each $C_i$ (using Theorem~\ref{cd2}).  Note that
  $X \not \subseteq \bigcup_{i=1}^N \partial C_i$ because
  \begin{equation*}
    d(X) = \max_{1 \le i \le N} d(C_i) > \max_{1 \le i \le N}
    d(\partial C_i) = d\left(\bigcup_{i=1}^N \partial C_i\right).
  \end{equation*}
  Take a point $p \in X \setminus \bigcup_{i=1}^N \partial C_i$.
  Without loss of generality, $p \in C_1$.  Then $p \notin
  \overline{C_i}$ for $2 \le i \le N$, so there is a neighborhood $U$
  of $p$ such that $U \cap X = U \cap C_1$.  Since $f$ is continuous
  on $C_1$, $f$ is continuous on a neighborhood of $p$.

  This gives \emph{one} point of continuity.  If $U$ is an open set which
  intersects $X$, we can repeat the argument on $f \restriction X \cap
  U$, to see that $f$ is continuous at some point in $X \cap U$.
  Therefore, the set of continuous points is dense.
\end{proof}
\begin{theorem} \label{dense-filter}
  Let $X \subseteq \Mm^n$ be a definable set.  If $Y, Z \subseteq X$
  are definable subsets, dense in $X$, then their intersection $Y \cap
  Z$ is also dense in $X$.  Consequently, the family of dense
  definable subsets of $X$ is a filter.
\end{theorem}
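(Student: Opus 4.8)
The plan is to run everything through the frontier rank $d(-)$ introduced in Section~\ref{anti-dim}, using only the four bullet properties recorded just before Theorem~\ref{gencon3}: $d(X) = -\infty \iff X = \varnothing$, monotonicity, $d(X \cup Y) = \max(d(X),d(Y))$, and $d(\partial X) + 1 \le d(X)$, together with the finiteness $d(\Mm^n) < \infty$. First I would reduce the density assertion to a non-emptiness assertion. Since the basic balls in $\mathcal{B}$ form a neighborhood basis, a definable set is dense in $X$ iff it meets $B \cap X$ for every ball $B$ with $B \cap X \ne \varnothing$. So fix such a $B$ and put $U = B \cap X$: this is a non-empty definable set, relatively open in $X$, and restricting a dense set to a relatively open subset keeps it dense, so $Y \cap U$ and $Z \cap U$ are definable and dense in $U$, in particular both non-empty. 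It therefore suffices to prove the following lemma: if $U$ is a non-empty definable set and $A, A' \subseteq U$ are definable and dense in $U$, then $A \cap A' \ne \varnothing$.

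To prove the lemma, suppose toward a contradiction that $A \cap A' = \varnothing$. Density of $A'$ in $U$ gives $U \subseteq \overline{A'}$, hence $A \subseteq \overline{A'}$, and combined with $A \cap A' = \varnothing$ this yields
\begin{equation*}
  A \subseteq \overline{A'} \setminus A' = \partial A', \qquad \text{so} \qquad d(A) \le d(\partial A') \le d(A') - 1.
\end{equation*}
By the symmetric argument, $d(A') \le d(A) - 1$. Substituting, $d(A) \le d(A') - 1 \le d(A) - 2$. But $A$ is non-empty (being dense in the non-empty set $U$), so $d(A) \ge 0$, and $d(A) \le d(\Mm^m) < \infty$; thus $d(A)$ is an integer satisfying $d(A) \le d(A) - 2$, absurd. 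This proves the lemma, and with the reduction above, it proves that $Y \cap Z$ is dense in $X$.

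For the ``consequently'' clause I would observe that the family of dense definable subsets of $X$ contains $X$ itself, is upward closed among definable subsets of $X$ (if $Y \subseteq W \subseteq X$ with $Y$ dense then $\overline{W} \supseteq \overline{Y} \supseteq X$), and is closed under pairwise intersection by what was just shown; when $X \ne \varnothing$ it does not contain $\varnothing$, so it is a proper filter.

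I expect the only real friction to be bookkeeping rather than mathematics: one must take care that the non-empty relatively open sets used as witnesses of (non-)density are definable, which is exactly why the reduction is phrased through balls $B \in \mathcal{B}$ and the definable set $U = B \cap X$, and one must invoke ``the restriction of a dense definable set to a definable relatively open subset is dense'' in the correct form. Once the two inclusions $A \subseteq \partial A'$ and $A' \subseteq \partial A$ are in hand, the $d$-rank contradiction is immediate from $d(\partial X) + 1 \le d(X)$ and $d(\Mm^m) < \infty$.
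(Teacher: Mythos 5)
Your proof is correct and follows essentially the same route as the paper's: both reduce to two disjoint definable sets, each dense in some relatively open definable $U$, and then derive a contradiction by showing each is contained in the other's frontier, so $d(A) < d(A') < d(A)$ is impossible by the finiteness and monotonicity of the frontier rank. The only differences are cosmetic—you make the reduction via balls $B \in \mathcal{B}$ explicit and work directly with ambient closure (via $\overline{A'}^U \subseteq \overline{A'}$) rather than first arguing in the subspace and then transferring to the ambient frontier as the paper does.
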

\begin{proof}
  Suppose $Y$ and $Z$ are dense in $X$ but $Y \cap Z$ is not.  Working
  within the subspace $X$, there is a non-empty open set $U$ which is
  disjoint from $Y \cap Z$.  But $Y$ is dense, so $Y \cap U$ must be
  non-empty and dense in $U$ .  Similarly, $Z \cap U$ is non-empty and
  dense in $U$.  Since $(Y \cap U) \cap (Z \cap U) = (Y \cap Z) \cap U
  = \varnothing$, the two sets $Y \cap U$ and $Z \cap U$ must be
  contained in each other's frontiers:
  \begin{gather*}
    Y \cap U \subseteq \partial(Z \cap U) \\
    Z \cap U \subseteq \partial(Y \cap U).
  \end{gather*}
  This remains true when we regard $Y \cap U$ and $Z \cap U$ as
  subsets of the ambient space $\Mm^n$.  Then
  \begin{gather*}
    d(Y \cap U) < d(Z \cap U) \\
    d(Z \cap U) < d(Y \cap U)
  \end{gather*}
  which is absurd.
\end{proof}
Say that a definable subset $Y \subseteq X$ is \emph{codense} in $X$
if the complement $X \setminus Y$ is dense.  Equivalently, $Y
\subseteq X$ is codense iff the relative interior $\ter_X Y$ is empty.
\begin{corollary} \label{codense-ideal}
  Let $X$ be definable.
  \begin{enumerate}
  \item Codense sets are an ideal: a union of two codense sets is
    codense.
  \item \label{ci2} If $Y \subseteq X$ is definable, then the relative frontier
    $\partial_X Y = (\partial Y) \cap X$ is codense.
  \item \label{ci3} If a definable set $Y \subseteq X$ is codense, then the relative closure
    $\overline{Y} \cap X$ is codense.
  \item \label{ci4} If a definable set $Y \subseteq X$ is dense, then the relative interior
    $\ter_X Y$ is dense.
  \item \label{ci5} A definable set $Y \subseteq X$ is codense in $X$
    iff it is nowhere dense in $X$.
  \end{enumerate}
\end{corollary}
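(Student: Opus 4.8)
The plan is to deduce all five parts from Theorem~\ref{dense-filter} (the dense definable subsets of $X$ form a filter) together with routine bookkeeping in the subspace topology of $X$, proving the parts in the order (1), (2), (3), (4), (5) so that each may use the earlier ones. Throughout I will use the two equivalent phrasings recorded in the text: for definable $Y \subseteq X$, ``$Y$ is codense in $X$'' means $X \setminus Y$ is dense in $X$, equivalently $\ter_X Y = \varnothing$. Part (1) then splits into two observations. Closure under subsets is immediate: if $Y' \subseteq Y \subseteq X$ and $Y$ is codense, then $X \setminus Y \subseteq X \setminus Y'$ and a superset of a dense set is dense. Closure under binary unions is exactly Theorem~\ref{dense-filter} applied to the complements: if $Y_1$ and $Y_2$ are codense, then $X \setminus Y_1$ and $X \setminus Y_2$ are dense, hence so is $X \setminus (Y_1 \cup Y_2) = (X \setminus Y_1) \cap (X \setminus Y_2)$.

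Part (2) is the only step with genuine (if short) content, and is where I expect any real work to be. I would argue by contradiction: if $U := \ter_X(\partial_X Y)$ were nonempty, write $U = W \cap X$ for an ambient open $W \subseteq \Mm^n$ and pick $p \in U$. Since $U \subseteq \partial Y = \overline{Y} \setminus Y$, on the one hand $p \in \overline{Y}$, so the ambient neighborhood $W$ of $p$ meets $Y$, and as $Y \subseteq X$ this gives $\varnothing \neq W \cap Y = U \cap Y$; on the other hand $U \subseteq \partial Y$ is disjoint from $Y$. This contradiction shows $\ter_X(\partial_X Y) = \varnothing$, i.e.\ $\partial_X Y$ is codense.

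The remaining parts are then purely formal. For (3): since $\overline{Y} = Y \sqcup \partial Y$ and $Y \subseteq X$, we get $\overline{Y} \cap X = Y \cup (\partial Y \cap X) = Y \cup \partial_X Y$, a union of two codense sets (the first by hypothesis, the second by part (2)), hence codense by part (1). For (4): $X \setminus Y$ is a definable subset of $X$ that is codense (its complement $Y$ is dense), so by (3) the relative closure $\overline{X \setminus Y} \cap X$ is codense in $X$; since that relative closure is precisely the complement in $X$ of $\ter_X Y$, it follows that $\ter_X Y$ is dense in $X$. For (5): unwinding, ``$Y$ is nowhere dense in $X$'' says $\ter_X(\overline{Y} \cap X) = \varnothing$, i.e.\ the relative closure $\overline{Y} \cap X$ is codense; one direction of the equivalence is then part (3), and the other is part (1) (since $Y \subseteq \overline{Y} \cap X$).

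The main obstacle, such as it is, is not conceptual but notational: once Theorem~\ref{dense-filter} is available there is no deep step, but one must consistently distinguish the ambient closure and frontier $\overline{(-)}$, $\partial(-)$ taken in $\Mm^n$ from the relative notions $\ter_X$, $\partial_X$, and $\overline{(-)} \cap X$ taken in the subspace $X$, since the whole argument (especially parts (2) and (4)) consists of passing carefully between the two.
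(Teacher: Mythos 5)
Your proof is correct and follows essentially the same route as the paper: reduce (1) to Theorem~\ref{dense-filter}, observe (2) as a general topological fact, and derive (3)--(5) formally. The only cosmetic difference is that for (2) you prove from first principles that a frontier has empty relative interior, whereas the paper simply cites this as a standard fact about topological spaces.
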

\begin{proof}
  \begin{enumerate}
  \item Immediate from Theorem~\ref{dense-filter}.
  \item Recall that $Y$ is codense iff the relative interior $\ter_X Y$ is empty.  In any topological space, frontiers have empty interior.
  \item The relative closure is $Y \cup \partial_X Y$.
  \item This is dual to the previous point.
  \item If $Y$ is codense, then (\ref{ci3}) says the relative closure
    $\overline{Y} \cap X$ is codense, meaning that it has empty
    interior in $X$.  Thus $Y$ is nowhere dense in $X$.  Conversely, if $Y$ is
    nowhere dense in $X$, then it has empty interior in $X$.  \qedhere
  \end{enumerate}
\end{proof}
\begin{example} \label{vincent-example-1}
  Combining Theorem~\ref{gencon3} with
  Corollary~\ref{codense-ideal}(\ref{ci4}), we see that if $f : X
  \rightrightarrows \Mm^n$ is a definable correspondence, then there
  is a dense relatively open definable subset $X' \subseteq X$ such
  that $f \restriction X'$ is continuous.
\end{example}
\begin{theorem} \label{eu-thm}
  If $X \subseteq \Mm^n$ is definable, then there is a dense
  relatively open definable subset $X_{\mathrm{Eu}} \subseteq X$ such
  that $X_{\mathrm{Eu}}$ is locally Euclidean.
\end{theorem}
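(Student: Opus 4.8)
The plan is to carve $X_{\mathrm{Eu}}$ out of a cell decomposition of $X$ and then verify density using the ideal of codense (equivalently, nowhere dense) sets established in Corollary~\ref{codense-ideal}. First I would apply Cell Decomposition (Theorem~\ref{cd1}) to write $X = \coprod_{i=1}^N C_i$ with each $C_i$ a $k_i$-cell. Recall that every $k$-cell is definably locally homeomorphic to an open subset of $\Mm^{k}$ — this is immediate from the definitions of $k$-cell and of continuous correspondence (via Hausdorffness, a cell coincides locally with the graph of one of the $m$ continuous branches), and it was already exploited in Lemma~\ref{gorilla} and Proposition~\ref{inject-dim} — so in particular each $C_i$ is locally Euclidean. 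Now set
\[
  X_{\mathrm{Eu}} \;=\; X \setminus \bigcup_{1 \le i < j \le N} \bigl( \overline{C_i} \cap \overline{C_j} \bigr).
\]
Since each $\overline{C_i}$ is closed and definable (the topology is definable), $X_{\mathrm{Eu}}$ is a definable, relatively open subset of $X$.

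The second step is to check that $X_{\mathrm{Eu}}$ is locally Euclidean. Fix $p \in X_{\mathrm{Eu}}$. Because the $C_i$ partition $X$, there is a unique $j$ with $p \in C_j$, and by construction $p \notin \overline{C_i}$ for every $i \ne j$. Then $W_0 := \bigcap_{i \ne j}(\Mm^n \setminus \overline{C_i})$ is an open neighborhood of $p$ with $W_0 \cap X = W_0 \cap C_j$, and moreover $W_0 \cap X \subseteq X_{\mathrm{Eu}}$ (every point of $W_0 \cap C_j$ lies in $C_j$ and avoids the closures of the other cells). Since $C_j$ is definably locally homeomorphic to $\Mm^{k_j}$, shrinking $W_0$ to a sufficiently small open $W \ni p$ yields a definable homeomorphism from the neighborhood $W \cap X = W \cap C_j$ of $p$ onto an open subset of $\Mm^{k_j}$. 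Hence $X_{\mathrm{Eu}}$ is locally Euclidean, with the local dimension $k$ at $p$ being the dimension $k_j$ of the cell containing $p$.

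The third step is to show $X_{\mathrm{Eu}}$ is dense in $X$, i.e.\ that $X_{\mathrm{bad}} := X \cap \bigcup_{i<j}(\overline{C_i} \cap \overline{C_j})$ is codense in $X$. If $p \in \overline{C_i} \cap \overline{C_j} \cap X$ with $i \ne j$, then $p$ lies in a unique cell $C_\ell$, and $\ell$ differs from at least one of $i,j$, say from $i$; then $p \in \overline{C_i} \setminus C_i = \partial C_i$, so $p \in \partial_X C_i := (\partial C_i) \cap X$. Thus $X_{\mathrm{bad}} \subseteq \bigcup_{i=1}^N \partial_X C_i$. Each relative frontier $\partial_X C_i$ is codense in $X$ by Corollary~\ref{codense-ideal}(\ref{ci2}), a finite union of codense sets is codense by Corollary~\ref{codense-ideal}(1), and any subset of a codense set is codense; hence $X_{\mathrm{bad}}$ is codense, which is exactly the statement that $X_{\mathrm{Eu}} = X \setminus X_{\mathrm{bad}}$ is dense in $X$.

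This is essentially an assembly of earlier results, so I do not expect a genuinely hard step; the only point that needs care is the transfer of local Euclideanity from the single cell $C_j$ to $X$ near a good point $p$. This hinges on $p$ having a neighborhood that meets $X$ inside exactly one cell, which is precisely what the definition of $X_{\mathrm{Eu}}$ via pairwise intersections of cell \emph{closures} is engineered to guarantee — and it is also the reason one must use $\overline{C_i}$ rather than the cells $C_i$ themselves, since a point in $X \setminus \bigcup_{i} \partial_X C_i$ need not have a neighborhood disjoint from all the other cells unless we remove their closures.
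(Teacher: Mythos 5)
Your proof is correct and is essentially the paper's: same cell decomposition, same use of Corollary~\ref{codense-ideal} for density, and same local argument for Euclideanity via a neighborhood of $p$ that meets $X$ in a single cell. In fact your set coincides with the paper's: since the cells partition $X$, a point $p \in X$ lies in $\overline{C_i}\cap\overline{C_j}$ for some $i\ne j$ if and only if it lies in $\partial C_\ell$ for some $\ell$, so $X \setminus \bigcup_{i<j}(\overline{C_i}\cap\overline{C_j}) = X \setminus \bigcup_\ell \partial C_\ell = X\setminus\bigcup_\ell\partial_X C_\ell$. One small correction to your closing remark: removing only $\bigcup_\ell\partial_X C_\ell$ \emph{does} give a neighborhood disjoint from the other cells' closures, because cells are locally closed in $\Mm^n$, hence each $\partial C_\ell$ is closed, hence $\Mm^n\setminus\bigcup_\ell\partial C_\ell$ is open and a point $p\in C_j$ avoiding all frontiers automatically satisfies $p\notin\overline{C_i}$ for $i\ne j$; the paper uses exactly this formulation. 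Your pairwise-closures variant merely sidesteps the (true) observation that cells are locally closed.
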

(However, the local dimension of $X_{\mathrm{Eu}}$ may vary---the set
$X_{\mathrm{Eu}}$ need not be locally $n$-Euclidean for some fixed
$n$.)
\begin{proof}
  Take a cell decomposition $X = \coprod_{i=1}^N C_i$.  Cells are
  locally closed (in the ambient space $\Mm^n$), so their frontiers
  $\partial C_i \subseteq \Mm^n$ are closed.  Let
  \begin{align*}
    X_{\mathrm{Eu}} &= X \setminus \bigcup_{i=1}^N \partial C_i \\
    &= X \setminus \bigcup_{i=1}^N \partial_X C_i.
  \end{align*}
  By the first line, $X_{\mathrm{Eu}}$ is a definable, relatively open
  subset of $X$.  By the second line and
  Corollary~\ref{codense-ideal}, it is dense.  If $p \in
  X_{\mathrm{Eu}}$, then as in the proof of Theorem~\ref{gencon3}
  there is a neighborhood $U \ni p$ such that $U \cap X = U \cap C_i$
  for some cell $C_i$.  Since cells are locally Euclidean, this shows
  that $X_{\mathrm{Eu}}$ is locally Euclidean.
\end{proof}
In certain cases, the ideal of codense (= nowhere dense) sets agrees with the earlier
ideal of low-dimensional sets:
\begin{proposition}
  Suppose $X$ is a definable set which is locally $n$-Euclidean.
  \begin{enumerate}
  \item $\dim(X) = n$.
  \item A subset $Y \subseteq X$ is codense iff $\dim(Y) < n$.
  \end{enumerate}
\end{proposition}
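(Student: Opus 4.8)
The plan is to reduce everything to dimension theory inside $\Mm^n$ by transporting along Euclidean charts. We may assume $X \ne \varnothing$ and $X \subseteq \Mm^m$. For each $p \in X$ fix a relatively open neighborhood $U_p \ni p$ in $X$ together with a definable homeomorphism $\phi_p : U_p \to V_p$ onto an open $V_p \subseteq \Mm^n$, and write $U_p = O_p \cap X$ for some open $O_p \subseteq \Mm^m$.

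For (1), I would first record two bounds on the local dimension $\dim_p(X)$. Since $V_p$ is a non-empty open subset of $\Mm^n$, Theorem~\ref{dimension-theorem}(\ref{dt6}) gives $\dim(V_p) = n$, and then $\dim(U_p) = n$ by invariance under definable bijections (Theorem~\ref{dimension-theorem}(\ref{dt7})); since $U_p \subseteq X$ this already yields $\dim(X) \ge n$. On the other hand, any open $W \ni p$ with $W \subseteq O_p$ satisfies $W \cap X \subseteq U_p$, so $\dim(W \cap X) \le \dim(U_p) = n$ by Theorem~\ref{dimension-theorem}(\ref{dt1}); hence $\dim_p(X) \le n$ for every $p$. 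Applying Proposition~\ref{local-dim} to get a point $q$ with $\dim_q(X) = \dim(X)$, and combining the two bounds, gives $\dim(X) = n$. The same argument applies verbatim to any non-empty relatively open $U \subseteq X$ --- such a $U$ is again locally $n$-Euclidean, since one may restrict the charts $\phi_p$ --- so $\dim(U) = n$ for every non-empty relatively open $U \subseteq X$; I will use this in (2).

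For (2), if $\dim(Y) < n$ and $Y$ failed to be codense, then $\ter_X Y$ would be a non-empty relatively open subset of $X$ contained in $Y$, forcing $\dim(Y) \ge \dim(\ter_X Y) = n$ by the remark just made, a contradiction; so $Y$ is codense. Conversely, suppose $Y$ is codense, i.e.\ $\ter_X Y = \varnothing$; as $\dim(Y) \le \dim(X) = n$ I only need to exclude $\dim(Y) = n$. If $\dim(Y) = n$, Proposition~\ref{local-dim} gives $q \in Y$ with $\dim_q(Y) = n$; choosing an open $W \ni q$ small enough that $W \subseteq O_q$ and $\dim(W \cap Y) = n$, we have $W \cap Y \subseteq U_q$, so $\phi_q(W \cap Y)$ is a definable subset of $\Mm^n$ of dimension $n$ and hence has non-empty interior by Theorem~\ref{dimension-theorem}(\ref{dt5}). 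Pulling a non-empty open subset of $\phi_q(W \cap Y) \subseteq V_q$ back through the homeomorphism $\phi_q$ produces a non-empty subset of $Y$ that is relatively open in $U_q$, hence relatively open in $X$, contradicting $\ter_X Y = \varnothing$. Thus $\dim(Y) < n$.

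I do not anticipate a genuinely hard step: all the real content sits in Theorem~\ref{dimension-theorem} and Proposition~\ref{local-dim}. The only point needing mild care is the topological bookkeeping when moving open sets between $X$, the charts $U_p$, and $\Mm^n$ --- in particular the observation that relatively open subsets of a locally $n$-Euclidean set are again locally $n$-Euclidean, which is what lets (1) be re-applied to $\ter_X Y$ in the proof of (2).
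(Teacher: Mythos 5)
Your proof is correct and follows the same route as the paper: both parts are handled by locating a point of maximal local dimension via Proposition~\ref{local-dim}, transporting along a chart, and invoking the fact that a definable subset of $\Mm^n$ of dimension $n$ has non-empty interior (Theorem~\ref{dimension-theorem}(\ref{dt5}), i.e.\ Proposition~\ref{triad}). You spell out the local-dimension bookkeeping and the pullback of the open set through the chart that the paper leaves implicit, but the argument is the same one.
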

\begin{proof}
  When $X$ is $\Mm^n$ or an open subset of $\Mm^n$, this holds by
  Proposition~\ref{triad}, because ``codense'' is the same as ``empty
  interior''.

  Next we consider the general case.  The local dimension $\dim_p(X)$
  equals $n$ at every point, so the first point holds by
  Proposition~\ref{local-dim}.  For the second point, first suppose
  that $Y$ fails to be codense.  Then $Y$ contains a non-empty
  relatively open subset $U \subseteq X$, so $\dim(Y) \ge \dim(U) =
  n$.  Conversely, suppose that $\dim(Y) = n$.  By
  Proposition~\ref{local-dim}, there is a point $p \in Y \subseteq X$
  such that the local dimension $\dim_p(Y)$ equals $n$.  Take an open
  neighborhood $U \ni p$ such that $U \cap X$ is definably
  homeomorphic to an open set $V \subseteq \Mm^n$.  Then $\dim(U \cap
  Y) = n$.  The corresponding set in $V$ has dimension $n$, so it has
  non-empty interior.  This implies $U \cap Y$ has non-empty relative
  interior, and then so does $Y$.
\end{proof}
In other cases, we get a one-way implication between the two ideals:
\begin{proposition}
  Suppose the theory $T$ is dp-minimal or has the exchange property.
  Suppose that $X$ is an $n$-dimensional definable set.
  \begin{enumerate}
  \item If $Y \subseteq X$ is dense, then $\dim(X \setminus Y) < n$.
  \item If $Y \subseteq X$ is codense, then $\dim(Y) < n$.
  \end{enumerate}
\end{proposition}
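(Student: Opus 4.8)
The plan is to first reduce (1) and (2) to a single statement, then argue by contradiction using cell decomposition. Note that $Y \subseteq X$ is codense precisely when $X \setminus Y$ is dense; so (1) applied to $X \setminus Y$ gives $\dim(Y) = \dim\bigl(X \setminus (X \setminus Y)\bigr) < n$, which is (2), and conversely if $Y$ is dense then $X \setminus Y$ is codense, and (2) gives $\dim(X \setminus Y) < n$, which is (1). Hence it suffices to prove: if $Y \subseteq X$ is codense then $\dim(Y) < n$.

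Suppose for contradiction that $Y$ is codense in $X$ but $\dim(Y) = n$ (it cannot exceed $n$ since $Y \subseteq X$). First I would take a cell decomposition $X = \coprod_{i=1}^N C_i$ via Theorem~\ref{cd1}, with each $C_i$ a locally closed, locally $k_i$-Euclidean $k_i$-cell, $k_i = \dim(C_i) \le n$. Here the hypothesis enters: by Theorem~\ref{usual-frontier} (exchange case) or Fact~\ref{sw-fact} (dp-minimal case), $\dim(\partial C_i) < k_i \le n$ for every $i$. Setting $X' = X \setminus \bigcup_{i=1}^N \partial C_i$ (the locally Euclidean part from Theorem~\ref{eu-thm}), each $\partial C_i$ is closed and $\dim\bigl(\bigcup_i \partial C_i\bigr) = \max_i \dim(\partial C_i) < n$, so $\dim(X \setminus X') < n$ and therefore $\dim(Y \cap X') = n$ by Theorem~\ref{dimension-theorem}(\ref{dt2}).

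Next I would check that each $C_i \cap X'$ is relatively open in $X$: if $p \in C_i \cap X'$ then for $j \ne i$ we have $p \notin C_j$ and $p \notin \partial C_j$, hence $p \notin \overline{C_j}$, so a small ambient neighborhood $U_1 \ni p$ avoiding every closed set $\overline{C_j}$ (for $j \ne i$) and every $\partial C_l$ satisfies $U_1 \cap X = U_1 \cap C_i \subseteq X'$. Since $Y \cap X' = \coprod_i (Y \cap C_i \cap X')$ has dimension $n$, some $i$ has $\dim(Y \cap C_i \cap X') = n$, forcing $\dim(C_i) = n$, so $C_i$ is locally $n$-Euclidean. Applying the preceding Proposition to $C_i$ and its subset $Y \cap C_i \cap X'$ of dimension $n = \dim(C_i)$, this subset is not codense in $C_i$, so there is $p \in \ter_{C_i}(Y \cap C_i \cap X')$ together with an ambient neighborhood $U_0 \ni p$ satisfying $U_0 \cap C_i \subseteq Y$. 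Then $U = U_0 \cap U_1$ gives $U \cap X = U \cap C_i \subseteq Y$, so $p \in \ter_X(Y)$, contradicting the codensity of $Y$.

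The one genuinely essential use of the hypothesis is the strict frontier inequality $\dim(\partial C_i) < \dim(C_i)$; this is what makes $X \setminus X'$ of dimension $< n$, so that full dimension cannot hide on the ``seams'' between cells. Everything else is routine point-set topology together with the already-established dimension theory and the locally Euclidean case handled in the preceding Proposition, so I expect that inequality---or rather, the need to invoke it---to be the only subtle point.
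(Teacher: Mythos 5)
Your proof is correct, but it is considerably more elaborate than the paper's. The paper's argument is a one-liner based on a single observation: since $Y$ is dense in $X$, the relative closure of $Y$ in $X$ is all of $X$, so $X \subseteq \overline{Y}$ (ambient closure) and hence $X \setminus Y \subseteq \overline{Y} \setminus Y = \partial Y$. Then
\[
\dim(X \setminus Y) \le \dim(\partial Y) < \dim(Y) \le \dim(X) = n,
\]
with the strict inequality supplied directly by Theorem~\ref{usual-frontier} in the exchange case or Fact~\ref{sw-fact} in the dp-minimal case. That is the entire proof of (1), and (2) follows by duality as you note. Your route instead reduces to (2), argues by contradiction via a cell decomposition, removes the frontiers of the cells using the same strict frontier inequality to control their dimension, isolates an $n$-dimensional cell containing an $n$-dimensional piece of $Y$, and then invokes the preceding locally Euclidean proposition to extract a relative interior point of $Y$. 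Every step of this checks out (in particular your verification that $C_i \cap X'$ is relatively open in $X$, which is the same argument used in Theorems~\ref{gencon3} and~\ref{eu-thm}), and you correctly identify the strict frontier inequality as the crux. The tradeoff is that you re-derive through cell decomposition what the simple containment $X \setminus Y \subseteq \partial Y$ hands you for free; your approach buys nothing extra here, though it does illustrate how the locally Euclidean proposition could be leveraged if the cleaner containment were not available.
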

\begin{proof}
  The two statements are dual, so it suffices to prove (1).  For part
  (1), if $Y$ is dense in $X$, then $X \setminus Y \subseteq \partial
  Y$, so \[\dim(X \setminus Y) \le \dim(\partial Y) < \dim(Y) \le
  \dim(X).\] The strict inequality holds by
  Theorem~\ref{usual-frontier} if $T$ has the exchange property, or
  Fact~\ref{sw-fact} in the dp-minimal case.
\end{proof}
\begin{example}
  Suppose $T$ has the exchange property or is dp-minimal.
  \begin{enumerate}
  \item If $f : X \rightrightarrows \Mm^n$ is a definable
    correspondence, then there is a dense relatively open definable
    subset $X' \subseteq X$ with $\dim(X \setminus X') < \dim(X)$,
    such that $f \restriction X'$ is continuous
    (Example~\ref{vincent-example-1}).  This example underlies the
    proof of Proposition~\ref{vincent-prop}.
  \item If $X \subseteq \Mm^n$ is definable, then there is a dense
    relatively open definable subset $X_{\mathrel{Eu}} \subseteq X$
    such that $\dim(X \setminus X_{\mathrel{Eu}}) < \dim(X)$ and
    $X_{\mathrel{Eu}}$ is locally Euclidean (Theorem~\ref{eu-thm}).
  \end{enumerate}
\end{example}

\subsection{Nice t-minimal theories}
\begin{definition}
  A t-minimal theory is \emph{nice} if correspondences are generically
  continuous, in the following sense: if $D \subseteq \Mm^n$ is a
  non-empty open definable set, and $f : D \rightrightarrows \Mm^k$ is
  a definable correspondence, then 
  \begin{equation*}
    \dim \{a \in D : f \text{ is not continuous at } a\} < \dim(D).
  \end{equation*}
\end{definition}
Visceral theories are nice, by Proposition~\ref{gencon}.  Other
examples of nice t-minimal theories include dense o-minimal and dense
C-minimal theories.
\begin{remark}
  By inspecting the proofs, one sees that \emph{all} the results of
  \S\ref{vt-section}--\ref{frontier-sec} generalize from visceral
  theories to nice theories, \textsc{EXCEPT} the following:
  \begin{itemize}
  \item Lemma~\ref{strong-nsff} through the end of \S\ref{vt-section},
    because the proof of Lemma~\ref{strong-nsff} uses entourages.
  \item Theorem~\ref{another-NSFF}, which depends on
    Lemma~\ref{strong-nsff}.
  \end{itemize}
  Perhaps the proofs of these results could be modified to work for
  nice t-minimal theories, but I couldn't see how to do it easily.
\end{remark}

\section{Counterexamples} \label{cxsec}

In this section, we give several examples of pathological visceral
theories.
\begin{itemize}
\item In Section~\ref{sec:rcvf3} we give a simple example of a
  visceral theory in which NSFF fails---there is a space filling
  function.
\item In Section~\ref{sec:rcvf2} we give an example of a visceral
  theory with NSFF, in which the frontier dimension inequality
  ($\dim(\partial X) < \dim(X)$) fails---there is a 1-dimensional set
  whose frontier is 1-dimensional.
\item In Section~\ref{sec:ndep} we give a class of visceral theories
  with space-filling functions, showing \emph{arbitrarily bad}
  failures of the frontier dimension inequality and the surjection
  dimension inequality ($\dim(\dom(f)) \ge \dim(\im(f))$).
\end{itemize}
Our examples will come from multi-sorted theories, so in
Section~\ref{multi-sort} we discuss how ``multi-sorted visceral
theories'' can be converted to 1-sorted visceral theories of the kind
we have been considering so far.

\begin{remark}
  The examples in this section are chosen to have DFC, in order to
  make the point that DFC does not prevent the pathologies.
\end{remark}

\subsection{Multi-sorted visceral structures} \label{multi-sort}
The definition of viscerality implicitly assumes a 1-sorted language.
However, the definition would make sense in a multi-sorted theory as
well.
\begin{definition}
  Let $T$ be a complete theory in a multi-sorted language.  Let $\Mm$
  be a monster model.
  \begin{enumerate}
  \item A \emph{visceral uniformity} on a definable set $X$ is a
    definable uniformity on $X$ such that for any definable set $D
    \subseteq X$, $D$ is infinite if and only if $\ter(D) \ne
    \varnothing$.
  \item $T$ is \emph{visceral} if there is a visceral uniformity on
    each sort.
  \end{enumerate}
\end{definition}
One can probably generalize the results of this paper (and \cite{viscerality}) to the setting of a definable set with a visceral
uniformity, though I haven't checked the details.\footnote{The key
point---the reason the arguments should generalize---is that we've
never used the fact that the basis $\mathcal{B}$ was parameterized by
tuples from the home sort.}  At any rate, multi-sorted
visceral theories can be converted to 1-sorted visceral theories, as
long as the number of sorts is finite:
\begin{proposition}\label{to-1-sort}
  Let $T$ be a complete visceral theory with finitely many sorts.  Let
  $\Mm$ be a monster model of $T$, with sorts $X_1, \ldots, X_n$.
  Assume the $X_i$ are pairwise disjoint.  Form a 1-sorted structure
  $\Mm' = X_1 \cup \cdots \cup X_n$ with the induced structure making
  $\Mm$ and $\Mm'$ bi-interpretable.  Then the theory of $\Mm'$ is
  visceral.
\end{proposition}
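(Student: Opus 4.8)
The plan is to equip the single sort $M' = X_1 \cup \cdots \cup X_n$ with an explicit definable uniformity obtained by ``block--diagonalizing'' the given visceral uniformities on the sorts. For each $i$, fix a definable basis of entourages $\mathcal{B}_i$ for the visceral uniformity $\Omega_i$ on $X_i$ (we may take its members symmetric, though we will not need this). Since each $X_i$ is a definable subset of $M'$ and $\Mm, \Mm'$ are bi-interpretable, $\mathcal{B}_i$ is a definable family of subsets of $X_i \times X_i \subseteq M' \times M'$ in $\Mm'$. Let $\mathcal{B}'$ consist of all sets of the form $E_1 \cup E_2 \cup \cdots \cup E_n \subseteq M' \times M'$ with $E_i \in \mathcal{B}_i$; since $n$ is finite, $\mathcal{B}'$ is again a definable family, indexed by the product of the index sets of the $\mathcal{B}_i$.

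First I would check that $\mathcal{B}'$ is a basis for a uniformity $\Omega'$ on $M'$. Each axiom reduces coordinatewise to the corresponding axiom for the $\Omega_i$: block--diagonal relations are closed under finite intersection and under inverse coordinatewise; each $E_1 \cup \cdots \cup E_n$ contains the diagonal of $M'$ because each $E_i$ is reflexive on $X_i$; and if $D_i \circ D_i \subseteq E_i$ for each $i$ then $\left(\bigcup_i D_i\right) \circ \left(\bigcup_i D_i\right) \subseteq \bigcup_i E_i$, the point being that a composition of block--diagonal relations cannot mix sorts, as the $X_i$ are pairwise disjoint. Next I would identify the uniform topology: for $a \in X_i$ and $E = E_1 \cup \cdots \cup E_n \in \mathcal{B}'$ one has $E[a] = E_i[a] \subseteq X_i$, so the neighborhood filter of $a$ in $M'$ coincides with its neighborhood filter in $(X_i, \tau_i)$. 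Hence each $X_i$ is clopen in $M'$, the space $M'$ is the topological coproduct of the $(X_i,\tau_i)$, and $\Omega'$ is separated because each $\tau_i$ is Hausdorff. It also follows that $\ter_{M'}(D) = \bigsqcup_{i=1}^n \ter_{X_i}(D \cap X_i)$ for every $D \subseteq M'$.

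Finally I would verify t-minimality of $\Omega'$. Given $D \subseteq M'$ definable in $\Mm'$, bi-interpretability makes each $D \cap X_i$ definable in $\Mm$, so viscerality of $\Omega_i$ gives: $D \cap X_i$ is infinite iff $\ter_{X_i}(D \cap X_i) \neq \varnothing$. Therefore $D$ is infinite iff some $D \cap X_i$ is infinite iff some $\ter_{X_i}(D \cap X_i)$ is non-empty iff $\ter_{M'}(D) \neq \varnothing$. Thus $\Omega'$ is a definable separated uniformity whose topology is t-minimal, i.e.\ a visceral uniformity on the home sort of $\Mm'$, so $\Th(\Mm')$ is visceral.

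There is no serious obstacle here; the only care required is the coordinatewise bookkeeping of the uniformity axioms (especially the square--root axiom, which uses disjointness of the sorts) and the routine observation that definability of subsets of $M'$ passes correctly through the bi-interpretation between $\Mm$ and $\Mm'$, so that ``$D$ definable in $\Mm'$'' implies ``$D\cap X_i$ definable in $\Mm$''.
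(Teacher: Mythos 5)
Your proposal is correct and follows essentially the same route as the paper: you build the block--diagonal uniformity $\bigcup_i E_i$ from definable bases on the sorts, observe that the induced topology is the disjoint union topology, and reduce t-minimality of a definable $D\subseteq M'$ to t-minimality of each $D\cap X_i$. Your extra care in spelling out the uniformity axioms (especially the square-root axiom via disjointness of sorts) is a fine supplement but does not change the argument.
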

\begin{proof}
  Let $\Omega_i$ be a visceral uniformity on $X_i$ and let
  $\mathcal{B}_i$ be a uniformly definable basis.  Let
  \begin{gather*}
    \Omega = \left\{\bigcup_{i = 1}^n E_i ~ \middle| ~ (E_1,\ldots,E_n)
    \in \prod_{i = 1}^n \Omega_i\right\} \\
    \mathcal{B} = \left\{\bigcup_{i = 1}^n E_i ~ \middle| ~
    (E_1,\ldots,E_n) \in \prod_{i = 1}^n \mathcal{B}_i\right\} \\
  \end{gather*}
  Then $\Omega$ is a uniformity on $\bigcup_{i=1}^n X_i$ and
  $\mathcal{B}$ is a basis.  The basis $\mathcal{B}$ is uniformly
  definable in $\Mm'$.  The induced topology on $\bigcup_{i = 1}^n
  X_i$ is the disjoint union topology.  If $D \subseteq \Mm'$ is
  definable, then we can write $D$ as $\bigcup_{i = 1}^n D_i$ for
  definable sets $D_i \subseteq X_i$.  In the disjoint union topology,
  we have $\ter(D) = \bigcup_{i = 1}^n \ter(D_i)$, and so
  \begin{equation*}
    \ter(D) \ne \varnothing \iff \left(\exists i : \ter(D_i) \ne
    \varnothing\right) \iff \left(\exists i : |D_i| = \infty\right)
    \iff |D| = \infty.
  \end{equation*}
  Thus $\Omega$ is a visceral uniformity.
\end{proof}
In the following sections, we will give examples of multi-sorted
visceral theories with pathological behavior.
Proposition~\ref{to-1-sort} converts these examples into genuine
1-sorted visceral theories.

\subsection{3-sorted RCVF with an angular component} \label{sec:rcvf3}
For background on valued fields, see \cite{PE}.
Let $(K,v)$ be a valued field with value group $\Gamma$ and residue
field $k$.  Recall that an \emph{angular component} is a group
homomorphism $\ac : K^\times \to k^\times$ extending the residue map
on elements of valuation 0.

Let $\RCVF_{\ac,3}$ be the theory of henselian valued fields with an
angular component, such that the residue field is real closed and the
value group is divisible, in the three-sorted language with sorts $K,
\Gamma, k$, with the following functions and relations:
\begin{itemize}
\item The ring structure on $K$.
\item The ordered ring structure on $k$.
\item The ordered group structure on $\Gamma$.
\item The valuation map $v : K^\times \to \Gamma$ and the angular
  component map $\ac : K^\times \to k^\times$.
\end{itemize}
A representative model of $\RCVF_{\ac,3}$ is given by the field of Hahn
series $\Rr((t^{\Qq}))$, formal power series $\sum_{i \in I} c_it^i$
with $I$ a well-ordered subset of $\Qq$ and $c_i$ from $\Rr$.
Specifically, $K = \Rr((t^{\Qq}))$, $\Gamma = \Qq$, $k = \Rr$, and if
$x$ is a Hahn series with leading term $ct^i$, then $\ac(x) = c$ and
$v(x) = i$.

The following (well-known) fact is a consequence of Pas's relative
quantifier elimination for equicharacteristic 0 henselian valued
fields \cite[Theorem~4.1]{Pas}, combined with the well-known
quantifier elimination theorems for DOAG and RCF.
\begin{fact}
  $\RCVF_{\ac,3}$ has quantifier elimination.
\end{fact}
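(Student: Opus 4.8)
The strategy is to combine Pas's relative quantifier elimination with the classical quantifier elimination theorems for divisible ordered abelian groups and for real closed fields, peeling off quantifiers one sort at a time. First I would apply \cite[Theorem~4.1]{Pas}: the theory of equicharacteristic-$0$ henselian valued fields equipped with an angular component map, formulated in a Denef--Pas style three-sorted language, eliminates quantifiers over the valued field sort $K$, and the value group sort and residue field sort are orthogonal and carry only their pure induced structures. Since $\RCVF_{\ac,3}$ is an extension of this theory (by the axioms that the residue field is real closed and the value group divisible---which do not interfere with the valued-field quantifier elimination---together with the naming of the ordered-ring structure on $k$ and the ordered-group structure on $\Gamma$), the same conclusion holds modulo $\RCVF_{\ac,3}$. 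Concretely, every formula $\phi(\bx;\bar\xi;\bar\zeta)$, with $\bx$ from $K$, $\bar\xi$ from $\Gamma$, $\bar\zeta$ from $k$, is $\RCVF_{\ac,3}$-equivalent to a Boolean combination of: polynomial equations $p(\bx)=0$ over $\Zz$; formulas $\theta\bigl(v(p_1(\bx)),\ldots,v(p_r(\bx));\bar\xi\bigr)$ with $\theta$ a formula of the language $\mathcal{L}_{\mathrm{oag}}$ of ordered abelian groups; and formulas $\eta\bigl(\ac(q_1(\bx)),\ldots,\ac(q_s(\bx));\bar\zeta\bigr)$ with $\eta$ a formula of the language $\mathcal{L}_{\mathrm{or}}$ of ordered rings. (Here one reads $v$ as a partial function, so each term $v(p(\bx))$ is tacitly conjoined with $p(\bx)\neq 0$, and one extends $\ac$ by $\ac(0)=0$ as in Denef--Pas; since $\ac$ and its total extension are interdefinable, this is harmless.)

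Next I would remove the remaining quantifiers. In any model of $\RCVF_{\ac,3}$ the value group $\Gamma$ is a model of $\mathrm{DOAG}$, so by quantifier elimination for $\mathrm{DOAG}$ each $\theta$ above, regarded as a formula in variables $y_1,\ldots,y_r,\bar\xi$, is $\mathrm{DOAG}$-equivalent to a quantifier-free $\mathcal{L}_{\mathrm{oag}}$-formula; re-substituting $y_j=v(p_j(\bx))$ turns $\theta$ into a Boolean combination of atomic conditions $\sum_j n_j\, v(p_j(\bx)) = \sum_i m_i\xi_i$ and the same with ``$<$'', where $n_j,m_i\in\Zz$. These are quantifier-free in the language of $\RCVF_{\ac,3}$; divisibility of $\Gamma$ is exactly what lets us avoid congruence predicates. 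Dually, the residue field $k$ is a model of $\RCF$, so by quantifier elimination for $\RCF$ each $\eta$ above is $\RCF$-equivalent to a quantifier-free $\mathcal{L}_{\mathrm{or}}$-formula, and re-substituting $z_j=\ac(q_j(\bx))$ turns $\eta$ into a Boolean combination of polynomial equalities and inequalities in the terms $\ac(q_j(\bx))$ and the variables $\zeta_i$, again quantifier-free. Performing both re-substitutions inside the Boolean combination from the first step yields a quantifier-free formula $\RCVF_{\ac,3}$-equivalent to $\phi$, which is what we want.

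The one point that requires care is the precise form of ``relative quantifier elimination'': one must check that, after Pas's theorem, the surviving quantifiers over $\Gamma$ and over $k$ are genuinely confined to $\mathcal{L}_{\mathrm{oag}}$- and $\mathcal{L}_{\mathrm{or}}$-subformulas of the separated shape displayed above, with no leftover interaction between the two auxiliary sorts and no extra structure induced on either of them. This is exactly the content of \cite[Theorem~4.1]{Pas}, and it is structurally plausible since the only cross-sort maps in the language are $v\colon K^\times\to\Gamma$ and $\ac\colon K^\times\to k^\times$ and there is no definable map between $\Gamma$ and $k$; granting it, the $\mathrm{DOAG}$ and $\RCF$ steps are routine. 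Once quantifier elimination is in hand, completeness of $\RCVF_{\ac,3}$ follows as well, since the common substructure $\Qq$---with trivial valuation, residue field $\Qq$, and identity angular component---embeds into every model.
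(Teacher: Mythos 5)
Your proof is correct and takes essentially the same approach the paper has in mind: the paper doesn't spell out a proof but states that the fact is "a consequence of Pas's relative quantifier elimination for equicharacteristic 0 henselian valued fields, combined with the well-known quantifier elimination theorems for DOAG and RCF," which is precisely the two-step argument you carried out (first eliminate $K$-quantifiers via Pas, then eliminate the residual $\Gamma$- and $k$-quantifiers via DOAG and RCF QE).
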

If $(K,\Gamma,k) \models \RCVF_{\ac,3}$, then $K \models \RCF$.  In
particular, $K$ has a natural order topology.  This order topology
agrees with the valuation topology from $v$.  The maps $\ac(-)$ and
$v(-)$ are locally constant on $K^\times$, with respect to this
topology.

Quantifier elimination tells us something about definable sets in one
variable:
\begin{lemma} \label{rcvf3-key}
  Let $M = (K,\Gamma,k)$ be a model of $\RCVF_{\ac,3}$.  Let $A =
  (K_0,\Gamma_0,k_0)$ be a substructure.
  \begin{enumerate}
  \item If $D \subseteq K$ is $A$-definable, then $\bd(D)$ is a finite
    subset of $K_0^{alg} \cap K$.
  \item If $D \subseteq \Gamma$ is $A$-definable, then $\bd(D)$ is a
    finite subset of $\Qq \cdot \Gamma_0$.
  \item If $D \subseteq k$ is $A$-definable, then $\bd(D)$ is a finite
    subset of $k_0^{alg} \cap k$.
  \end{enumerate}
\end{lemma}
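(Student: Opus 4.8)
The plan is to invoke quantifier elimination for $\RCVF_{\ac,3}$ (the Fact above) and then analyze the atomic formulas sort by sort. In each case an $A$-definable set $D$ in one of the three sorts is cut out by a quantifier-free formula with parameters from $A$ — a Boolean combination of atomic formulas in a single free variable ranging over that sort — and I would read off $\bd(D)$ by listing the possible atomic formulas.

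For part (1), let $\phi(x)$ be a quantifier-free formula over $A$ defining $D \subseteq K$. Since the only function symbols are the ring operations on $K$, the ordered-ring operations on $k$, the ordered-group operations on $\Gamma$, $v \colon K^\times \to \Gamma$ and $\ac \colon K^\times \to k^\times$, and since nothing maps $\Gamma$ or $k$ back into $K$, every atomic subformula of $\phi$ has one of three shapes: (a) a polynomial equation $p(x) = 0$ with $p \in K_0[x]$; (b) a linear (in)equality in $\Gamma$ among the terms $v(p_i(x))$, $p_i \in K_0[x]$, and constants from $\Gamma_0$; (c) an (in)equality in $k$ between polynomials over $k_0$ in the terms $\ac(p_j(x))$, $p_j \in K_0[x]$. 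I would then set $Z \subseteq K$ to be the set of roots in $K$ of all the finitely many non-zero polynomials over $K_0$ occurring anywhere in $\phi$; this $Z$ is finite and contained in $K_0^{alg} \cap K$. The key step is to show $D \setminus Z$ is clopen in $K \setminus Z$: for $x_0 \notin Z$, every such polynomial is non-vanishing at $x_0$, so by continuity of polynomials together with the local constancy of $v$ and $\ac$ on $K^\times$ (noted in the text above), there is a neighbourhood of $x_0$ on which all the terms $v(p_i(x))$ and $\ac(p_j(x))$ are constant — the identically-zero polynomials contribute constant values anyway — so $\phi$ is constant on that neighbourhood and $x_0 \notin \bd(D)$. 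Hence $\bd(D) \subseteq Z$, as required.

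Parts (2) and (3) are easier because the sorts $\Gamma$ and $k$ receive no function from the other sorts. For part (2), an atomic subformula of a defining formula $\psi(\gamma)$ is a relation $m\gamma \bowtie \varepsilon$ with $m \in \Zz$ and $\varepsilon \in \Gamma_0$; divisibility of $\Gamma$ rewrites this as $\gamma \bowtie \varepsilon/m$ (or a trivially true or false condition when $m = 0$), with $\varepsilon/m \in \Qq\cdot\Gamma_0$, so $D$ is a finite Boolean combination of rays and points with endpoints in $\Qq\cdot\Gamma_0$ and $\bd(D) \subseteq \Qq\cdot\Gamma_0$ is finite. For part (3), an atomic subformula of a defining formula $\chi(y)$ is $r(y) \bowtie 0$ with $r \in k_0[y]$; since $k$ is real closed, its truth set is a finite union of points and intervals with endpoints among the roots of $r$, which lie in $k_0^{alg} \cap k$, so $D$ is a finite union of points and intervals with endpoints in $k_0^{alg} \cap k$ and $\bd(D)$ is a finite subset of $k_0^{alg} \cap k$.

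The step I expect to need the most care is pinning down the atomic formulas in part (1): one must be certain that quantifier elimination in this three-sorted language produces only atomic formulas of the three listed types — in particular that no composite term sends $\Gamma$ or $k$ back into $K$ — and one must handle the degenerate cases (the zero polynomial, and the values $v(0)$, $\ac(0)$ under whatever convention totalizes $v$ and $\ac$) so that $Z$ stays finite. Once that list is fixed, the local-constancy argument is routine, and parts (2)--(3) are straightforward applications of quantifier elimination for DOAG and RCF.
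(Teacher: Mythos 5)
Your proposal is correct and follows essentially the same route as the paper's proof: invoke quantifier elimination, observe that no function maps $\Gamma$ or $k$ back into $K$ so that $K$-sort terms are polynomials over $K_0$, and then use local constancy of $v$ and $\ac$ on $K^\times$ (together with continuity of polynomials) to conclude that the truth value of a quantifier-free formula is locally constant outside a finite set of algebraic points. The paper packages this as an explicit induction on term/formula complexity while you organize it by enumerating atomic-formula shapes and collecting the exceptional set $Z$ up front, but the content is the same.
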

\begin{proof}
  Only (1) requires much work.  Let $x$ be a variable in the sort $K$.
  Let $\mathcal{L}(A)$ be the base language expanded by constants for
  the elements of $A$.  By induction on the complexity of terms and
  formulas, we see the following:
  \begin{itemize}
  \item If $t(x)$ is an $\mathcal{L}(A)$-term in the sort $K$, then $t(x)$ is a
    polynomial in $K_0[x]$.
  \item If $t(x)$ is an $\mathcal{L}(A)$-term in the sort $\Gamma$,
    then $t : K \to \Gamma$ is locally constant, off a finite subset
    of $K_0^{alg} \cap K$.
  \item If $t(x)$ is an $\mathcal{L}(A)$-term in the sort $k$, then $t
    : K \to k$ is locally constant, off a finite subset of
    $K_0^{alg} \cap K$.
  \item If $\phi(x)$ is a quantifier-free $\mathcal{L}(A)$-formula,
    then the truth-value of $\phi(x)$ is locally constant, off a
    finite subset of $K_0^{alg} \cap K$.
  \end{itemize}
  By quantifier-elimination, the fourth point says exactly what we
  want.
\end{proof}
\begin{proposition}
  $\RCVF_{\ac,3}$ is a visceral theory with DFC and a space-filling
  function.
\end{proposition}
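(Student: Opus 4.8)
The plan is to check the three assertions—viscerality, DFC, and the existence of a space-filling function—separately. For viscerality one works in the three-sorted theory and transfers via Proposition~\ref{to-1-sort}; the remaining two are easiest to verify directly in the resulting one-sorted structure. To set up viscerality, equip $K$ and $k$ (real closed fields) with the order uniformity, with basic entourages $\{(x,y) : |x-y| < \epsilon\}$ for $\epsilon > 0$, and $\Gamma$ (a divisible ordered abelian group) with the analogous order uniformity; these are definable separated uniformities inducing the order topologies, and none of the three sorts has isolated points. To see that each is visceral, apply Lemma~\ref{rcvf3-key}: for a definable set $D$ inside a single sort, $\bd(D)$ is finite, and since $D \setminus \ter(D) \subseteq \bd(D)$ one gets $D$ infinite $\iff \ter(D) \neq \varnothing$. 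Hence $\RCVF_{\ac,3}$ is a multi-sorted visceral theory, and Proposition~\ref{to-1-sort} yields the genuine one-sorted visceral theory $\Mm' = K \cup \Gamma \cup k$ with which we finish.

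For DFC, use that each sort carries a definable linear order, and glue these into a single definable linear order $<$ on $\Mm'$ by declaring $K < \Gamma < k$ block by block (legitimate since the sort predicates and the three orders belong to the induced structure). Ordering $\Mm'^n$ lexicographically gives a definable linear order for every $n$, and given a definable surjection $f \colon X \to Y$ with finite fibers, the formula ``$x \in f^{-1}(y)$ and $x \le_{\mathrm{lex}} x'$ for all $x' \in f^{-1}(y)$'' defines a section $Y \to X$, since each fiber is a finite nonempty linearly ordered set and so has a minimum.

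For the space-filling function, take $f \colon K^\times \to k^\times \times \Gamma$, $x \mapsto (\ac(x), v(x))$, which is definable from the primitive symbols. Its surjectivity is the one nontrivial point: $v$ is onto $\Gamma$ since $\Gamma$ is the value group, so given $(c,\gamma) \in k^\times \times \Gamma$ choose $y$ with $v(y) = \gamma$, put $d = \ac(y)$, and—since $\ac$ restricted to $\Oo^\times$ is the residue map, which surjects onto $k^\times$—choose a unit $u$ with $v(u) = 0$ and $\ac(u) = c/d$; then $x = uy$ has $v(x) = \gamma$ and $\ac(x) = c$. In the disjoint-union topology on $\Mm'$ the sorts $\Gamma$ and $k$ are clopen and $k^\times = k \setminus \{0\}$ is open in $k$ (singletons are closed), so $Y := k^\times \times \Gamma$ is a nonempty open subset of $\Mm'^2$, while $X = K^\times \subseteq \Mm'^1$; since $1 < 2$, $f$ is a space-filling function and NSFF fails.

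I expect the only step requiring care to be the surjectivity of $(\ac, v)$—specifically the ``correct by a valuation-$0$ unit with prescribed residue'' move, which rests on surjectivity of the residue map $\Oo^\times \to k^\times$—together with checking that $k^\times \times \Gamma$ genuinely has nonempty interior in the one-sorted disjoint-union topology even though its preimage sits inside a single coordinate.
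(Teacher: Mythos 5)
Your proof is correct and follows essentially the same route as the paper: use Lemma~\ref{rcvf3-key} to establish t-minimality of the order/combined uniformities on each sort, invoke the linear orders to get DFC, and exhibit $x\mapsto(v(x),\ac(x))$ (or with coordinates swapped) as the space-filling surjection. Your additional detail — the explicit transfer through Proposition~\ref{to-1-sort}, the lexicographic-minimum argument for DFC, and the unit-correction argument for surjectivity of $(\ac,v)$ — is all sound and simply makes explicit what the paper leaves to the reader.
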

\begin{proof}
  Each of the three sorts $K, \Gamma, k$ is linearly ordered, so DFC
  holds.  Each sort expands an ordered abelian group, so the order
  topology lifts to a definable uniformity.  Definable sets have
  finite boundary by Lemma~\ref{rcvf3-key}.  Each sort is densely
  ordered, so there are no isolated points.  This proves viscerality.
  Finally, the map $K^\times \to \Gamma \times k^\times$ given by $x
  \mapsto (v(x),\ac(x))$ is a space-filling function.
\end{proof}
For use in the next section, we note the following characterization of
$\acl(-)$, which is an immediate consequence of Lemma~\ref{rcvf3-key}:
\begin{lemma}\label{acl-characterization}
  Let $(K,\Gamma,k)$ be a model of $\RCVF_{\ac,3}$.  Suppose $A
  \subseteq K$, $B \subseteq \Gamma$, and $C \subseteq k$.  Let $L$ be
  the subfield of $K$ generated by $A$, and let $v(L)$ and $\ac(L)$ be
  the images of $L$ under the valuation and angular component maps.
  \begin{enumerate}
  \item An element $x \in K$ is model-theoretically algebraic over
    $ABC$ if and only if $x$ is field-theoretically algebraic over
    $A$.
  \item An element $x \in \Gamma$ is model-theoretically algebraic
    over $ABC$ if and only if $x$ is in the $\Qq$-linear span of $v(L)
    \cup B$.
  \item An element $x \in k$ is model-theoretically algebraic over
    $ABC$ if and only if $x$ is field-theoretically algebraic over
    $\ac(L) \cup C$.
  \end{enumerate}
\end{lemma}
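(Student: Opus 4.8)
The plan is to deduce everything from Lemma~\ref{rcvf3-key} together with the general principle that in a visceral (indeed t-minimal) theory, an element $b$ of a single sort lies in $\acl(S)$ if and only if it lies in a finite $S$-definable subset of that sort, and that a finite definable set $D$ in one variable satisfies $\bd(D) = D$ (since $\overline D = D$ and $\ter(D) = \varnothing$). Combining this with the boundary bounds of Lemma~\ref{rcvf3-key}, for $b$ in the sort $K$ we get: $b \in \acl(ABC)$ iff $b \in \bd(D)$ for some $ABC$-definable $D \subseteq K$, and likewise in the sorts $\Gamma$ and $k$.

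First I would identify the substructure $A' = (K_0, \Gamma_0, k_0)$ generated by $A \cup B \cup C$, and note $\acl(ABC) = \acl(A')$ since $ABC \subseteq A' \subseteq \dcl(ABC)$. Because the only basic functions between distinct sorts are $v \colon K^\times \to \Gamma$ and $\ac \colon K^\times \to k^\times$ (and $K$ has characteristic $0$, its residue field being real closed), one computes $K_0 = \Zz[A]$, $\Gamma_0 = \langle B \cup v(K_0 \setminus \{0\}) \rangle$, and $k_0 = \Zz[C \cup \ac(K_0 \setminus \{0\})]$. Since $v$ and $\ac$ are homomorphisms, the subgroup generated by $v(K_0 \setminus \{0\})$ is exactly $v(L^\times)$ and the subfield generated by $\ac(K_0 \setminus \{0\})$ is exactly the one generated by $\ac(L^\times)$, where $L = \Frac(K_0)$ is the subfield of $K$ generated by $A$. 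Hence $K_0^{alg} = L^{alg}$, the $\Qq$-span of $\Gamma_0$ is the $\Qq$-span of $v(L) \cup B$, and $k_0^{alg}$ is the relative algebraic closure in $k$ of the subfield generated by $\ac(L) \cup C$. Plugging these into Lemma~\ref{rcvf3-key} yields the forward (``only if'') implication in all three parts.

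For the reverse implications I would exhibit explicit finite definable sets. If $b \in K$ is algebraic over $A$ as a field element, clearing denominators in a nonzero annihilating polynomial gives a nonzero $p \in \Zz[A][x]$ with $p(b) = 0$; the zero set of $p$ is finite, contains $b$, and is definable over the finitely many elements of $A$ occurring in $p$. If $x \in \Gamma$ lies in the $\Qq$-span of $v(L) \cup B$, then $nx$ equals a fixed element $\gamma$ definable over finitely many parameters from $A \cup B$ for some $n \neq 0$, and $\{y \in \Gamma : ny = \gamma\}$ is a singleton because $\Gamma$ is a divisible ordered (hence torsion-free) abelian group. If $x \in k$ is algebraic over $\ac(L) \cup C$ as a field element, clearing denominators again produces a nonzero polynomial over $\Zz[\ac(L^\times) \cup C]$, whose coefficients involve only finitely many elements of $A$ and $C$, and whose (finite) zero set contains $x$. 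I do not expect a genuine obstacle here: the substance is all in Lemma~\ref{rcvf3-key}, and the only points needing care are the bookkeeping identifying the generators of the substructure $A'$ (and the homomorphism simplifications above) and checking that each ad hoc definable set uses only finitely many parameters from the appropriate sorts.
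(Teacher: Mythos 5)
Your proposal is correct and fleshes out exactly what the paper leaves implicit: the paper states that Lemma~\ref{acl-characterization} is an ``immediate consequence'' of Lemma~\ref{rcvf3-key}, and your argument is precisely the intended unpacking, with the forward direction from the boundary bounds of Lemma~\ref{rcvf3-key} applied to a finite definable set (hence equal to its own boundary) and the reverse direction via explicit finite definable sets witnessing algebraicity. The bookkeeping identifying the generated substructure $(K_0,\Gamma_0,k_0)$ and replacing $v(K_0\setminus\{0\})$ and $\ac(K_0\setminus\{0\})$ by $v(L^\times)$ and $\ac(L^\times)$ when passing to subgroups/subfields is exactly the content the one-line proof in the paper is relying on.
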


\subsection{2-sorted RCVF with an angular component} \label{sec:rcvf2}
Let $\RCVF_{\ac,2}$ be the reduct of $\RCVF_{\ac,3}$ with only the
sorts $K$ and $k$.  The value group and valuation map are
interpretable in $\RCVF_{\ac,2}$, so the two theories are
bi-interpretable.
\begin{proposition}
  $\RCVF_{\ac,2}$ is a visceral theory with DFC.
\end{proposition}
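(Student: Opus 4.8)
The plan is to deduce both conclusions for $\RCVF_{\ac,2}$ from the corresponding facts for $\RCVF_{\ac,3}$, using that $\RCVF_{\ac,2}$ is a \emph{reduct} of $\RCVF_{\ac,3}$: one drops the sort $\Gamma$ and the valuation $v$, but keeps the ring structure on $K$, the ordered ring structure on $k$, and the angular component map $\ac$. Two preliminary remarks set up the argument. First, every $\RCVF_{\ac,2}$-definable set is $\RCVF_{\ac,3}$-definable, and the order topologies on the sorts $K$ and $k$ are unchanged when passing to the reduct. Second, $\Gamma$ is not a sort of $\RCVF_{\ac,2}$, so establishing viscerality only requires producing a visceral uniformity on the two sorts $K$ and $k$.

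For viscerality I would put on each of $K$ and $k$ the uniformity coming from its ordered abelian group structure, with basic entourages $\{(x,y) : |x - y| < \epsilon\}$ for $\epsilon$ ranging over positive elements; this basis is definable in the reduct, and induces the order topology. To check that this uniformity is visceral on a sort $S$, I need: a definable $D \subseteq S$ is infinite iff $\ter(D) \ne \varnothing$. Since $S$ is densely ordered it has no isolated points, so a finite $D$ has empty interior. And when $D$ is infinite: $D$ is also $\RCVF_{\ac,3}$-definable, so $\bd(D)$ is finite by Lemma~\ref{rcvf3-key}(1) (for $D \subseteq K$) or Lemma~\ref{rcvf3-key}(3) (for $D \subseteq k$); since $D \setminus \ter(D) \subseteq \bd(D)$, the interior $\ter(D)$ is cofinite in $D$, hence infinite, hence nonempty. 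This establishes viscerality.

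For DFC I cannot simply quote DFC for $\RCVF_{\ac,3}$, since passing to a reduct can destroy definable sections. Instead I would re-run the $3$-sorted argument directly: each of $K$ and $k$ carries a definable linear order, so any definable set $X$ sits inside a finite product of copies of the sorts $K$ and $k$, which carries a definable lexicographic order; given a definable surjection $f : X \to Y$ with finite fibers, the map sending $y$ to the lexicographically least element of the finite nonempty set $f^{-1}(y)$ is a definable section.

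I do not expect a serious obstacle; the proof is short. The one point needing care is exactly the one just flagged: of the two hypotheses to transport from $\RCVF_{\ac,3}$, ``definable sets in one variable have finite boundary'' transfers for free because a reduct has fewer definable sets (and Lemma~\ref{rcvf3-key} already handles all $\RCVF_{\ac,3}$-definable sets), whereas DFC does not transfer along reducts and must be re-proved from the definable linear orders on $K$ and $k$.
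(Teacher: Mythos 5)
Your proof is correct and, at bottom, uses the same idea as the paper: $\RCVF_{\ac,2}$ is a reduct of $\RCVF_{\ac,3}$, so its definable subsets of one variable still have finite boundary by Lemma~\ref{rcvf3-key}, the order topologies on $K$ and $k$ survive the reduct, and they lift to definable uniformities via the group structure. The paper's entire proof is the single sentence ``Immediate from the fact that $\RCVF_{\ac,3}$ is visceral,'' so your writeup is considerably more explicit. In particular, you flag a genuine subtlety the paper does not address: DFC is not automatically inherited by reducts, since a $\RCVF_{\ac,3}$-definable section of a $\RCVF_{\ac,2}$-definable finite-fibered surjection need not itself be $\RCVF_{\ac,2}$-definable. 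Your resolution is the right one: both remaining sorts still carry definable linear orders in the reduct (on $K$ the order is definable from the ring structure, since $K \models \RCF$; on $k$ it is explicit), so the lexicographic-minimum argument for DFC goes through unchanged. This makes your argument a more careful version of what the paper asserts without proof.
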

\begin{proof}
  Immediate from the fact that $\RCVF_{\ac,3}$ is visceral.
\end{proof}
Consequently, the dimension theory for visceral theories applies.
\begin{proposition}
  In $\RCVF_{\ac,2}$, there is a 1-dimensional set $C$ whose frontier
  $\partial C$ also has dimension 1.
\end{proposition}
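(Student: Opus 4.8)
The plan is to take $C$ to be the graph of the angular component map,
\[
  C \;=\; \{(x,y)\in K\times k : x\neq 0,\ y=\ac(x)\},
\]
viewed as a definable set in the $1$-sorted structure $\Mm'=K\cup k$ produced by Proposition~\ref{to-1-sort}. I will show $\dim(C)=1$ and $\partial C=\{0\}\times k$, the latter having dimension $1$; this proves the proposition. First, $\dim(C)=1$: the assignment $x\mapsto(x,\ac(x))$ is a definable bijection from $K^\times$ onto $C$, and $K^\times$ is a non-empty open subset of the sort $K$, so $\dim(K^\times)=1$ by Theorem~\ref{small-dimension-theorem}(\ref{sdt4}); invariance of dimension under definable bijections (Theorem~\ref{small-dimension-theorem}(\ref{sdt5})) gives $\dim(C)=1$. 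Equivalently, the first-coordinate projection $C\to K$ is injective, so $C$ has a near-injective $1$-projection and $\dim(C)\le 1$, while $C$ is infinite so $\dim(C)\ge 1$.

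The heart of the argument is the computation of $\overline{C}$. I will use two facts: (i) as noted in the previous section, $\ac(-)$ is locally constant on $K^\times$ with respect to the (order $=$ valuation) topology on $K$; and (ii) the homomorphism $(v,\ac):K^\times\to\Gamma\times k^\times$ is surjective — a first-order property witnessed in $\Rr((t^\Qq))$ by $ct^\gamma\mapsto(\gamma,c)$, hence valid in every model. For the inclusion $\{0\}\times k\subseteq\overline{C}$: given $y\in k^\times$, an arbitrary basic neighbourhood of $(0,y)$ contains a set of the form $\{x\in K : v(x)>\gamma\}\times(y-\epsilon,y+\epsilon)$, and by (ii) there is $x\in K^\times$ with $v(x)=\gamma+1$ and $\ac(x)=y$, so the point $(x,\ac(x))\in C$ lies in that neighbourhood; hence $(0,y)\in\overline{C}$. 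For $y=0$, run the same argument choosing $\ac(x)$ to be a nonzero element of $(-\epsilon,\epsilon)$, which exists because $k\models\RCF$; thus $(0,0)\in\overline{C}$ too. Conversely, suppose $x_0\neq 0$ and $y_0\neq\ac(x_0)$. Using (i), choose an open $U\ni x_0$ with $U\subseteq K^\times$ on which $\ac$ is constant, and (by Hausdorffness) an open $V\ni y_0$ with $\ac(x_0)\notin V$; then every point of $C$ over $U$ has second coordinate $\ac(x_0)\notin V$, so $(U\times V)\cap C=\varnothing$ and $(x_0,y_0)\notin\overline{C}$. Therefore $\overline{C}=C\sqcup(\{0\}\times k)$ and $\partial C=\{0\}\times k$. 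This set is in definable bijection with the sort $k$, so it is an infinite subset of $(\Mm')^1$ and hence has dimension exactly $1$ (equivalently $\dim(\{0\}\times k)=\dim\{0\}+\dim k=1$).

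The only genuinely delicate step is the ``$\supseteq$'' half of the closure computation: producing elements of $K^\times$ that converge to $0$ while carrying a prescribed angular component. This is exactly where the surjectivity of $(v,\ac)$ is needed, and it is the mechanism by which the graph of $\ac$ ``escapes to infinity in the $k$-direction'' as $x\to 0$. Two routine points remain: in $\Mm'$ the sorts $K$ and $k$ are clopen, so $K\times k$ is clopen in $(\Mm')^2$ and the frontier of $C$ computed in $(\Mm')^2$ agrees with the one computed in $K\times k$, which is why the conversion of Proposition~\ref{to-1-sort} causes no difficulty; and there is no conflict with Theorem~\ref{usual-frontier}, since $\RCVF_{\ac,2}$ does not have the exchange property (the residue map already witnesses its failure).
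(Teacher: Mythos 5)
Your proof is correct and takes essentially the same approach as the paper: both use the graph of $\ac$ over $K^\times$ and show $\partial C=\{0\}\times k$, then read off that both $C$ and $\partial C$ have dimension $1$. The only cosmetic difference is in establishing $\overline C\subseteq C\cup(\{0\}\times k)$: you argue this pointwise from local constancy of $\ac$, whereas the paper observes that $C$ is the graph of a continuous (locally constant) function on the open set $K^\times$ and is hence locally closed in $K^\times\times k$, so that $C\cup(\{0\}\times k)$ is closed; these are two phrasings of the same idea.
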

\begin{proof}
  Let $C = \{(x,\ac(x)) : x \in K^\times\} \subseteq K \times k$.
  \begin{claim}
    $\partial C = \{0\} \times k$.
  \end{claim}
  \begin{claimproof}
    Let
    $D = \{0\} \times k$.  Then $C \cup D$ is closed, because $D$ is
    closed and $C$ is the graph of a continuous function, hence locally
    closed in $K^\times \times k$.  If $\alpha \in k^\times$, then there
    are $x \in K$ arbitrarily close to 0 with $\ac(x) = \alpha$;
    therefore $(0,\alpha) \in \overline{C}$.  It follows that
    $\overline{C}$ contains $\{0\} \times k^\times$, and therefore also
    contains $\overline{\{0\} \times k^\times} = D$.  So we see that $D
    \subseteq \overline{C} \subseteq C \cup D$, which implies that
    $\overline{C} = C \cup D$ and $\partial C = D$.
  \end{claimproof}
  Both $C$ and $\partial C$ have dimension 1, because they are in
  definable bijection with infinite subsets of $K^\times$ and
  $k^\times$, respectively.
\end{proof}
Lastly, we will prove that $\RCVF_{\ac,2}$ has NSFF.  The proof is a
bit complicated.  The strategy is to show that $\dim(\ba/C)$ satisfies
the anti-pathological property
\begin{equation}
  \dim(\bar{a}/C) \le \dim(\bar{a}\bar{b}/C), \label{target-equality}
\end{equation}
from which NSFF follows easily.  To prove (\ref{target-equality}), we
use Proposition~\ref{dimprops} to explicitly calculate $\dim(-/-)$ in
terms of transcendence degrees.

Fix a monser model $\Mm = (K,k) \models \RCVF_{\ac,2}$, and let
$\Gamma$ be the value group.  If $A$ is a subset of $K$ or $k$, let
$\langle A \rangle$ denote the subfield of $K$ or $k$ generated by
$A$.  If $L$ is a subfield of $K$, we let $v(L)$ and $\ac(L)$ denote
the image of $L$ under the valuation or angular component,
respectively.  We let $\res(L)$ be the residue field of $L$.  The set
$v(L)$ is always a subgroup of $\Gamma$, and $\res(L)$ is always a
subfield of $k$.  The set $\ac(L)$ is a multiplicative subgroup of
$k^\times$ containing $\res(L)$, but not necessarily a subfield.
\begin{lemma}\label{acl-char-2}
  Suppose $A \subseteq K$ and $B \subseteq k$.  Then $\acl(AB)$
  consists of $\langle A \rangle^{alg} \cap K$ in the field sort, and
  $\langle \ac(\langle A \rangle) \cup B\rangle^{alg} \cap k$ in the
  residue field sort.
\end{lemma}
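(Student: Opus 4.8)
The plan is to reduce the statement entirely to Lemma~\ref{acl-characterization} by exploiting the bi-interpretability of $\RCVF_{\ac,2}$ with $\RCVF_{\ac,3}$. First I would note that $\RCVF_{\ac,2}$ is a reduct of $\RCVF_{\ac,3}$, and that the value-group sort $\Gamma$ together with the maps $v$ and $\ac$ is interpretable in $\RCVF_{\ac,2}$. It follows that, for parameters drawn from $K \cup k$, the definable subsets of $K^m \times k^n$ are literally the same in the two theories: a $\RCVF_{\ac,3}$-formula whose free variables lie in the sorts $K$ and $k$ may quantify over $\Gamma$, but every such quantifier can be eliminated using the interpretation of $\Gamma$ inside $\RCVF_{\ac,2}$, while conversely every $\RCVF_{\ac,2}$-formula is already a $\RCVF_{\ac,3}$-formula.

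Consequently, for any $S \subseteq K \cup k$, an element of $K$ (or of $k$) is algebraic over $S$ in $\RCVF_{\ac,2}$ if and only if it is algebraic over $S$ in $\RCVF_{\ac,3}$; in other words, $\acl^{\RCVF_{\ac,2}}(S)$ coincides with $\acl^{\RCVF_{\ac,3}}(S)$ restricted to the two common sorts $K$ and $k$. Since $\RCVF_{\ac,2}$ has only the sorts $K$ and $k$, this pins down $\acl^{\RCVF_{\ac,2}}(S)$ completely.

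Now I would pass to the expansion $(K,\Gamma,k) \models \RCVF_{\ac,3}$ and apply Lemma~\ref{acl-characterization} with its three parameter sets taken to be $A$ (in the sort $K$), $\varnothing$ (in the sort $\Gamma$), and $B$ (in the sort $k$); the field $L$ of that lemma is then $\langle A \rangle$. Part (1) of Lemma~\ref{acl-characterization} yields that the $K$-part of $\acl(AB)$ is $\langle A \rangle^{alg} \cap K$, and part (3) yields that the $k$-part is the field-theoretic algebraic closure of $\ac(\langle A \rangle) \cup B$ taken inside $k$, namely $\langle \ac(\langle A \rangle) \cup B \rangle^{alg} \cap k$. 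This is precisely the asserted description.

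There is no genuinely hard step here; the only point requiring a little care is the claim that algebraic closure on the common sorts is insensitive to the presence of the interpretable sort $\Gamma$, which is handled by the interpretability remark above — and by observing that Lemma~\ref{acl-characterization} never required the $\Gamma$-parameters to be nonempty. Anyone wishing to avoid invoking bi-interpretability abstractly could instead repeat the term-and-formula induction underlying Lemma~\ref{rcvf3-key} directly in the two-sorted language, but that is strictly more work and gives the same answer.
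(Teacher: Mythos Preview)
Your proposal is correct and follows essentially the same approach as the paper: the paper's proof is a single sentence invoking the bi-interpretation to transfer Lemma~\ref{acl-characterization} from $\RCVF_{\ac,3}$ to $\RCVF_{\ac,2}$, and you have simply spelled out that transfer in detail (including the application of Lemma~\ref{acl-characterization} with empty $\Gamma$-parameters).
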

\begin{proof}
  Lemma~\ref{acl-characterization} transfers from $\RCVF_{\ac,3}$ to
  $\RCVF_{\ac,2}$ thanks to the bi-interpretation.
\end{proof}
In particular, if $A \subseteq K$ and $B \subseteq k$, then $(A,B)$ is
(model-theoretically) algebraically closed if and only if the
following three conditions hold:
\begin{itemize}
\item $A$ is a relatively algebraically
closed subfield of $K$.
\item $\ac(A) \subseteq B$.
\item $B$ is a relatively algebraically closed subfield of $k$.
\end{itemize}

\begin{lemma} \label{abhyankaresque}
  Let $K_0 \subseteq K_1 \subseteq K$ be subfields.  Suppose
  $\trdeg(K_1/K_0)$ is finite.  Then
  \begin{equation*}
    \trdeg(\langle \ac(K_1) \rangle / \langle \ac(K_0) \rangle) \le
    \trdeg(K_1/K_0).
  \end{equation*}
  In particular, the left hand side is finite.
\end{lemma}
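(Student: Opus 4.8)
The plan is to deduce the inequality from Abhyankar's inequality for the valued field extension $K_1/K_0$. Throughout, write $v$ for the valuation on $K$ (and its restrictions to subfields), $vK_i$ for the value group of $K_i$, and $\res(K_i)$ for its residue field. Recall from the discussion preceding the lemma that $\res(K_i)^\times \subseteq \ac(K_i)$, so in particular $\res(K_i) \subseteq \langle \ac(K_i)\rangle$. Let $r = \dim_\Qq((vK_1/vK_0)\otimes_\Zz \Qq)$; this is finite, being bounded by $\trdeg(K_1/K_0)$. Choose $b_1,\dots,b_r \in K_1^\times$ whose valuations $v(b_1),\dots,v(b_r)$ form a maximal $\Qq$-linearly independent family in $(vK_1)\otimes\Qq$ modulo $(vK_0)\otimes\Qq$; then the group $vK_1/(vK_0 + \sum_i \Zz\, v(b_i))$ is torsion.

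First I would show that $\langle \ac(K_1)\rangle$ is algebraic over the field $F := \langle\, \ac(K_0) \cup \res(K_1) \cup \{\ac(b_1),\dots,\ac(b_r)\}\,\rangle$. Given $x \in K_1^\times$, torsionness gives an integer $m \ge 1$, an element $\gamma \in vK_0$, and integers $n_1,\dots,n_r$ with $m\,v(x) = \gamma + \sum_i n_i\, v(b_i)$. Picking $a \in K_0^\times$ with $v(a)=\gamma$, the element $y := x^m a^{-1}\prod_i b_i^{-n_i}$ has valuation $0$, so $\ac(y) = \res(y) \in \res(K_1)^\times$. Since $\ac$ is a homomorphism, $\ac(x)^m = \ac(a)\,\ac(y)\,\prod_i \ac(b_i)^{n_i} \in F^\times$, so $\ac(x)$ is algebraic over $F$. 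As $\ac(K_1)$ generates $\langle \ac(K_1)\rangle$ as a field, $\langle \ac(K_1)\rangle$ is algebraic over $F$, whence
\[
  \trdeg(\langle \ac(K_1)\rangle / \langle \ac(K_0)\rangle)
  \;\le\; \trdeg(F / \langle \ac(K_0)\rangle)
  \;\le\; \trdeg(\res(K_1)/\res(K_0)) + r,
\]
the last step using $\res(K_0) \subseteq \langle \ac(K_0)\rangle$ and that adjoining $r$ further elements raises transcendence degree by at most $r$.

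It then remains to bound $\trdeg(\res(K_1)/\res(K_0)) + r$ by $\trdeg(K_1/K_0)$, which is exactly Abhyankar's inequality for $K_1/K_0$ (see \cite{PE}); alternatively one reproves it directly by lifting a transcendence basis of $\res(K_1)/\res(K_0)$ to valuation-$0$ elements of $K_1$ and checking that, together with $b_1,\dots,b_r$, these are algebraically independent over $K_0$. Combining the two displays gives $\trdeg(\langle \ac(K_1)\rangle/\langle \ac(K_0)\rangle) \le \trdeg(K_1/K_0) < \infty$, as desired. The only mildly delicate point is that $vK_1/vK_0$ need not be finitely generated, which is why the argument works with $m$-th powers and only concludes that $\langle\ac(K_1)\rangle$ is \emph{algebraic} (rather than generated) over $F$; everything else is routine valuation theory.
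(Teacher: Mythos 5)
Your proof is correct and is essentially the same argument as the paper's: both reduce to Abhyankar's inequality for $K_1/K_0$ by lifting a $\Qq$-basis of $vK_1/vK_0$ to elements $b_i$, then taking powers of an arbitrary $x\in K_1^\times$ to force the valuation into the $\Zz$-span so that, after dividing by $a$ and the $b_i^{n_i}$, one lands in the residue field. The only cosmetic difference is that you absorb all of $\res(K_1)$ into the auxiliary field $F$ and count transcendence degree at the end, whereas the paper picks a transcendence basis $\alpha_1,\ldots,\alpha_n$ of $\res(K_1)/\res(K_0)$ upfront. (Minor note in your favor: you are careful to take the $b_i$ in $K_1^\times$, which ensures $y\in K_1$ and $\res(y)\in\res(K_1)$; the paper writes ``$b_i\in K$'' at that point, though it clearly intends $b_i\in K_1$.)
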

\begin{proof}
  Let $k_i = \res(K_i)$ for $i = 0, 1$.  Let $\Gamma_i = \Qq \cdot
  v(K_i)$ for $i = 0, 1$.  Then $k_0 \subseteq k_1 \subseteq k$ and
  $\Gamma_0 \subseteq \Gamma_1 \subseteq \Gamma$.  By Abhyankar's
  inequality \cite{Abhyankar}, 
  \begin{equation*}
    \trdeg(k_1/k_0) + \dim_{\Qq}(\Gamma_1/\Gamma_0) \le \trdeg(K_1/K_0).
  \end{equation*}
  In particular, the values on the left hand side are finite.  Let $n
  = \trdeg(k_1/k_0)$ and $m = \dim_{\Qq}(\Gamma_1/\Gamma_0)$.  Then
  $n+m \le \trdeg(K_1/K_0)$.  Take $\alpha_1, \ldots, \alpha_n \in
  k_1$ a transcendence basis over $k_0$.  Take $\gamma_1, \ldots,
  \gamma_m \in v(K_1)$ inducing a $\Qq$-linear basis of
  $\Gamma_1/\Gamma_0$.  Take $b_i \in K$ with $v(b_i) = \gamma_i$.
  Let $\beta_i = \ac(b_i)$.
  \begin{claim}
    The set $S = \{\alpha_1,\ldots,\alpha_n,\beta_1,\ldots,\beta_m\}$
    contains a transcendence basis of $\langle \ac(K_1) \rangle$ over
    $\langle \ac(K_0) \rangle$.
  \end{claim}
  \begin{claimproof}
    It suffices to show that every element of $\ac(K_1)$ is algebraic
    over $S$ and $\ac(K_0)$.  Take $\alpha \in \ac(K_1)$.  Then
    $\alpha = \ac(a)$ for some $a \in K_1$.  Then $v(a) \in v(K_1)$,
    so $v(a)$ is in the $\Qq$-linear span of $v(K_0) \cup
    \{\gamma_1,\ldots,\gamma_m\}$.  Replacing $\alpha$ and $a$ with
    $\alpha^n$ and $a^n$, we may assume that $v(a)$ is in the
    $\Zz$-linear span of $v(K_0) \cup \{\gamma_1,\ldots,\gamma_m\}$:
    \begin{gather*}
      v(a) = v(b) + k_1 \gamma_1 + \cdots + k_m \gamma_m \\
      b \in K_0, ~ k_1,\ldots,k_m \in \Zz.
    \end{gather*}
    Let $a' = a/(b b_1^{k_1} \cdots b_m^{k_m})$.  Then $v(a') = 0$ and
    \begin{equation*}
      \alpha' := \ac(a') = \frac{\ac(a)}{\ac(b)\prod_{i = 1}^m
        \ac(b_m)^{k_m}} = \alpha/(\ac(b)\beta_1^{k_1} \cdots \beta_m^{k_m}),
    \end{equation*}
    where $\ac(b) \in \ac(K_0)$.  Then it remains to show that
    $\alpha'$ is algebraic over $S \cup \ac(K_0)$.  Because $v(a') =
    0$, we have $\alpha' = \ac(a') = \res(a') \in \res(K_1)$.  Then
    $\alpha'$ is algebraic over $\{\alpha_1,\ldots,\alpha_n\} \cup
    \res(K_0) \subseteq S \cup \ac(K_0)$.
  \end{claimproof}
  By the claim, $\trdeg(\langle \ac(K_1) \rangle/\langle \ac(K_0)
  \rangle) \le |S| = n + m \le \trdeg(K_1/K_0)$.
\end{proof}
\begin{lemma} \label{calculation}
  Let $M_0 = (K_0,k_0)$ be a small model, that is, a small elementary
  substructure of $\Mm = (K,k)$.  Suppose $\bar{a}$ is a finite tuple
  from $K \cup k$, and $(K_1,k_1)$ is $\acl(M_0 \bar{a})$.  Then
  \begin{align*}
    \dim(\bar{a}/M_0) &= \trdeg(K_1/K_0) + \trdeg(k_1/k_0) -
    \trdeg(\langle \ac(K_1) \rangle/k_0),
  \end{align*}
  and the transcendence degrees on the right hand side are finite.
\end{lemma}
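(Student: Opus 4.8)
The plan is to write down an explicit $\acl$-basis of $\bar a$ over $M_0$ of size $\trdeg(K_1/K_0) + \trdeg(k_1/F)$, where $F := \langle \ac(K_1)\rangle$, and to check that this number equals the right-hand side of the asserted formula. Write $\bar a = (\bar a_K, \bar a_k)$ for the field-sort and residue-sort coordinates of $\bar a$. Since $M_0$ is a model, the residue map $\Oo_{K_0}^\times \to k_0^\times$ is surjective, whence $\ac(K_0^\times) = k_0^\times$ and $\langle\ac(K_0)\rangle = k_0$; as $K_0 \subseteq K_1$ this gives $k_0 \subseteq F \subseteq k_1$. By Lemma~\ref{acl-char-2} (and $\langle\ac(K_0(\bar a_K))\rangle \subseteq F$, over which $F$ is algebraic by Lemma~\ref{abhyankaresque}), we may rewrite $K_1 = K_0(\bar a_K)^{alg}\cap K$ and $k_1 = F(\bar a_k)^{alg}\cap k$.

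First I would settle the finiteness and the arithmetic. As $K_1$ is algebraic over $K_0(\bar a_K)$ and $k_1$ over $F(\bar a_k)$, we get $\trdeg(K_1/K_0) \le |\bar a_K|$ and $\trdeg(k_1/F) \le |\bar a_k|$, both finite; and $\trdeg(F/k_0) = \trdeg(\langle\ac(K_1)\rangle/\langle\ac(K_0)\rangle) \le \trdeg(K_1/K_0)$ by Lemma~\ref{abhyankaresque}, so it too is finite. Additivity of transcendence degree over $k_0 \subseteq F \subseteq k_1$ gives $\trdeg(k_1/k_0) = \trdeg(k_1/F) + \trdeg(F/k_0)$, so the right-hand side of the target equation is $\trdeg(K_1/K_0) + \trdeg(k_1/F) =: p + q$, with all the listed transcendence degrees finite.

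Now choose $x_1,\ldots,x_p \in K_1$ a transcendence basis of $K_1$ over $K_0$ and $y_1,\ldots,y_q \in k_1$ a transcendence basis of $k_1$ over $F$. I claim $\bar b := (x_1,\ldots,x_p,y_1,\ldots,y_q)$ is an $\acl$-basis of $\bar a$ over $M_0$, so that $\dim(\bar a/M_0) = p+q$ by Definition~\ref{dim-def-0}. All the $\acl$-computations go through Lemma~\ref{acl-char-2}. For generation: the field sort of $\acl(M_0\bar b)$ is $K_0(x_1,\ldots,x_p)^{alg}\cap K$, which equals $K_1$ since $K_1$ is relatively algebraically closed in $K$ and algebraic over $K_0(x_1,\ldots,x_p)$; the residue sort is $\langle \ac(K_0(x_1,\ldots,x_p)) \cup k_0 \cup \{y_1,\ldots,y_q\}\rangle^{alg}\cap k$, and using Lemma~\ref{abhyankaresque} (so that $F$ is algebraic over $\langle\ac(K_0(x_1,\ldots,x_p))\rangle$) this equals $F(y_1,\ldots,y_q)^{alg}\cap k = k_1$; hence $\acl(M_0\bar b) = (K_1,k_1) = \acl(M_0\bar a)$. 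For $\acl$-independence over $M_0$: dropping $x_i$ leaves the field sort equal to $K_0(x_1,\ldots,\widehat{x_i},\ldots,x_p)^{alg}\cap K$, which misses $x_i$ since the $x$'s are algebraically independent over $K_0$; dropping $y_j$ leaves the residue sort equal to $F(y_1,\ldots,\widehat{y_j},\ldots,y_q)^{alg}\cap k$ (again by Lemma~\ref{abhyankaresque}), which misses $y_j$ since the $y$'s are algebraically independent over $F$; and no $x_i$ lies in any residue sort since the two sorts are disjoint.

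The only genuinely delicate point --- and the sole place Lemma~\ref{abhyankaresque} is really needed --- is that replacing $K_1$ by the subfield $K_0(\bar a_K)$ (or $K_0(x_1,\ldots,x_p)$) does not change the $\ac$-image up to algebraicity: $\ac(K_1)$ may be strictly larger, but it is algebraic over $\ac$ of the subfield, so their relative algebraic closures inside $k$ coincide. Once this is in hand, everything else is routine bookkeeping with Lemma~\ref{acl-char-2} and additivity of transcendence degree, and I expect no further obstacle.
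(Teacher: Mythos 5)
Your proof is correct and follows essentially the same strategy as the paper's: reduce via Lemma~\ref{acl-char-2} and Lemma~\ref{abhyankaresque} to explicit transcendence bases $x_1,\ldots,x_p$ of $K_1/K_0$ and $y_1,\ldots,y_q$ of $k_1/\langle\ac(K_1)\rangle$, verify that the concatenated tuple is $\acl$-independent and interalgebraic with $\bar a$ over $M_0$, and conclude $\dim(\bar a/M_0)=p+q$, with additivity of transcendence degree giving the stated formula. One terminological nit: the tuple $(\bar x,\bar y)$ is not a subtuple of $\bar a$, so by the paper's Definition of ``acl-basis'' it is not one; you should say instead (as Definition~\ref{dim-def-0} allows) that it is an $\acl$-independent tuple interalgebraic with $\bar a$ over $M_0$, which is exactly what you actually verify.
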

\begin{proof}
  First note that $\langle \ac(K_0) \rangle = \langle k_0^\times
  \rangle = k_0$, because $M_0$ is a model.  Therefore the
  transcendence degree $\trdeg(\langle \ac(K_1) \rangle / k_0)$ makes
  sense.


  Without loss of generality, $a_1, \ldots, a_n \in K$ and $a_{n+1},
  \ldots, a_{n+m} \in k$.  Let $L = K_0(a_1,\ldots,a_n)$.  By
  Lemma~\ref{acl-char-2},
  \begin{itemize}
  \item $K_1$ is the relative algebraic closure of $L$.
  \item $k_1$ is the relative algebraic closure of $\ac(L) \cup
    \{a_{n+1},\ldots,a_{n+m}\}$.
  \end{itemize}
  Then $\trdeg(K_1/K_0) = \trdeg(L/K_0) \le n < \infty$.  By
  Lemma~\ref{abhyankaresque}, 
  \begin{equation*}
    \trdeg(k_1/k_0) \le m + \trdeg(\langle \ac(L) \rangle / k_0) \le m
    + \trdeg(L/K_0) \le m + n.
  \end{equation*}
  So the transcendence degrees are finite.

  Let $b_1, \ldots, b_p$ be a transcendence basis of $K_1$ over $K_0$.
  Let $c_1, \ldots, c_q$ be a transcendence basis of $k_1$ over
  $\langle \ac(K_1) \rangle$.  Let $F = K_0(b_1,\ldots,b_p)$.  Then
  $K_1/F$ is algebraic.  By Lemma~\ref{abhyankaresque}, $\langle
  \ac(K_1) \rangle$ is algebraic over $\langle \ac(F) \rangle$.
  Therefore $c_1, \ldots, c_q$ is a transcendence basis of $k_1$ over
  $\langle \ac(F) \rangle$.  By Lemma~\ref{acl-char-2}, $(K_1,k_1) =
  \acl(M_0 \bar{b} \bar{c})$.  Therefore $\bar{a}$ is interalgebraic
  with $\bar{b}\bar{c}$ over $M_0$, so $\dim(\bar{a}/M_0) =
  \dim(\bar{b}\bar{c}/M_0)$.

  We claim that $\bar{b}\bar{c}$ is acl-independent over $M_0$:
  \begin{itemize}
  \item Suppose, say, $b_1 \in \acl(M_0, b_2, \ldots, b_p, \bar{c})$.
    By Lemma~\ref{acl-char-2}, $b_1$ is field-theoretically algebraic
    over $K_0 \cup \{b_2,\ldots,b_p\}$, contradicting the algebraic
    independence of $\{b_1,\ldots,b_p\}$ over $K_0$.
  \item Suppose, say, $c_1 \in \acl(M_0, \bar{b}, c_2, \ldots,c_q)$.
    As above, let $F = K_0(b_1,\ldots,b_p)$.  By
    Lemma~\ref{acl-char-2}, $c_1$ is field-theoretically algebraic
    over $\ac(F) \cup \{c_2,\ldots,c_q\}$.  This contradicts the fact
    that $\{c_1,\ldots,c_q\}$ are algebraically independent over
    $\langle \ac(F) \rangle$.
  \end{itemize}
  So $\bar{b}\bar{c}$ is acl-independent over $M_0$ as claimed and
  then $\dim(\bar{b}\bar{c}/M_0)$ is $p+q$.  Therefore
  \begin{align*}
    \dim(\bar{a}/M_0) &= \dim(\bar{b}\bar{c}/M_0) = p +q \\
    &= \trdeg(K_1/K_0) + \trdeg(k_1/\langle \ac(K_1) \rangle) \\
    &= \trdeg(K_1/K_0) + \trdeg(k_1/k_0) - \trdeg(\langle \ac(K_1) \rangle/k_0). \qedhere
  \end{align*}
\end{proof}

\begin{lemma}
  Let $\bar{a}, \bar{b}$ be tuples and $M_0$ be a small model. Then
  $\dim(\bar{a}/M_0) \le \dim(\bar{a}\bar{b}/M_0)$.
\end{lemma}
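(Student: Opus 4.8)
The plan is to combine the explicit transcendence-degree formula of Lemma~\ref{calculation} with the Abhyankar-type estimate of Lemma~\ref{abhyankaresque}. Set $(K_1,k_1) = \acl(M_0\bar{a})$ and $(K_2,k_2) = \acl(M_0\bar{a}\bar{b})$. Since $\acl$ is monotone in its parameters, and since by Lemma~\ref{acl-char-2} the field-sort and residue-field-sort parts of an algebraically closed set are subfields, we obtain towers of fields $K_0 \subseteq K_1 \subseteq K_2$ and $k_0 \subseteq k_1 \subseteq k_2$. Moreover $\langle\ac(K_0)\rangle = \langle k_0^\times\rangle = k_0$ because $M_0$ is a model, so $k_0 \subseteq \langle\ac(K_1)\rangle \subseteq \langle\ac(K_2)\rangle$; and all transcendence degrees appearing below are finite by Lemma~\ref{calculation}.

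By Lemma~\ref{calculation} applied to $\bar{a}$ and to $\bar{a}\bar{b}$,
\begin{align*}
  \dim(\bar{a}/M_0) &= \trdeg(K_1/K_0) + \trdeg(k_1/k_0) - \trdeg(\langle\ac(K_1)\rangle/k_0), \\
  \dim(\bar{a}\bar{b}/M_0) &= \trdeg(K_2/K_0) + \trdeg(k_2/k_0) - \trdeg(\langle\ac(K_2)\rangle/k_0).
\end{align*}
Using additivity of transcendence degree along the three towers $K_0 \subseteq K_1 \subseteq K_2$, $k_0 \subseteq k_1 \subseteq k_2$, and $k_0 \subseteq \langle\ac(K_1)\rangle \subseteq \langle\ac(K_2)\rangle$, subtraction gives
\begin{equation*}
  \dim(\bar{a}\bar{b}/M_0) - \dim(\bar{a}/M_0) = \trdeg(K_2/K_1) + \trdeg(k_2/k_1) - \trdeg(\langle\ac(K_2)\rangle/\langle\ac(K_1)\rangle).
\end{equation*}
So it suffices to prove $\trdeg(\langle\ac(K_2)\rangle/\langle\ac(K_1)\rangle) \le \trdeg(K_2/K_1) + \trdeg(k_2/k_1)$.

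For this one applies Lemma~\ref{abhyankaresque} to the subfields $K_1 \subseteq K_2$ (legitimate since $\trdeg(K_2/K_1)$ is finite): it yields $\trdeg(\langle\ac(K_2)\rangle/\langle\ac(K_1)\rangle) \le \trdeg(K_2/K_1)$, which, since $\trdeg(k_2/k_1) \ge 0$, is already stronger than what is needed. Hence $\dim(\bar{a}/M_0) \le \dim(\bar{a}\bar{b}/M_0)$.

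The computation itself is short; the only point demanding care is the bookkeeping of inclusions --- that each of the three chains really is a tower of field extensions (monotonicity of $\acl$ together with Lemma~\ref{acl-char-2}), that $k_0 \subseteq \langle\ac(K_1)\rangle$ (using that $M_0$ is a model), and that everything in sight has finite transcendence degree so that additivity and Lemma~\ref{abhyankaresque} apply. Once these are in place the inequality falls out immediately, with Lemma~\ref{abhyankaresque} doing the essential work.
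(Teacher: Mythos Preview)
Your proof is correct and follows essentially the same route as the paper's: both apply Lemma~\ref{calculation} to $\bar a$ and $\bar a\bar b$, subtract using additivity of transcendence degree along the towers $K_0\subseteq K_1\subseteq K_2$, $k_0\subseteq k_1\subseteq k_2$, and $k_0\subseteq\langle\ac(K_1)\rangle\subseteq\langle\ac(K_2)\rangle$, and then conclude via Lemma~\ref{abhyankaresque} that $\trdeg(\langle\ac(K_2)\rangle/\langle\ac(K_1)\rangle)\le\trdeg(K_2/K_1)$ together with $\trdeg(k_2/k_1)\ge 0$. Your version simply makes the bookkeeping (inclusions, finiteness, that $M_0$ being a model gives $\langle\ac(K_0)\rangle=k_0$) more explicit than the paper does.
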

\begin{proof}
  Let $(K_0,k_0) = M_0$.  Let $(K_1,k_1) = \acl(M_0\bar{a})$ and
  $(K_2,k_2) = \acl(M_0 \bar{a} \bar{b})$.  By
  Lemma~\ref{calculation},
  \begin{align*}
    \dim(\bar{a}\bar{b}/M_0) - \dim(\bar{a}/M_0) = & \trdeg(K_2/K_0) +
    \trdeg(k_2/k_0) - \trdeg(\langle \ac(K_2) \rangle/k_0) \\
    & - \trdeg(K_1/K_0) - \trdeg(k_1/k_0) +\trdeg(\langle \ac(K_1)
    \rangle / k_0) \\
    = & \trdeg(K_2/K_1) + \trdeg(k_2/k_1) - \trdeg(\langle \ac(K_2)
    \rangle / \langle \ac(K_1) \rangle).
  \end{align*}
  The final value is nonnegative: $\trdeg(k_2/k_1) \ge 0$ is clear, and
  Lemma~\ref{abhyankaresque} gives
  \begin{equation*}
    \trdeg(K_2/K_1) - \trdeg(\langle \ac(K_2) \rangle / \langle
    \ac(K_1) \rangle) \ge 0. \qedhere
  \end{equation*}
\end{proof}

\begin{lemma} \label{final-monotone}
  Let $\bar{a}, \bar{b}$ be tuples and $M_0$ be a small model.  If
  $\bar{a} \in \dcl(M_0\bar{b})$, then $\dim(\bar{a}/M_0) \le
  \dim(\bar{b}/M_0)$.
\end{lemma}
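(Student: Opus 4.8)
The plan is to deduce this immediately from the preceding lemma, which gives $\dim(\bar a/M_0) \le \dim(\bar a\bar b/M_0)$ for any tuples $\bar a, \bar b$ and any small model $M_0$. The only extra input needed is that, under the hypothesis $\bar a \in \dcl(M_0\bar b)$, the concatenation $\bar a\bar b$ has the same dimension over $M_0$ as $\bar b$ alone.

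First I would record the interalgebraicity. Since $\bar a \in \dcl(M_0\bar b) \subseteq \acl(M_0\bar b)$, the tuple $\bar a\bar b$ lies in $\acl(M_0\bar b)$; and $\bar b$ is a subtuple of $\bar a\bar b$, so trivially $\bar b \in \acl(M_0\bar a\bar b)$. Hence $\bar a\bar b$ and $\bar b$ are interalgebraic over $M_0$, and Proposition~\ref{dimprops}(\ref{dp1}) yields $\dim(\bar a\bar b/M_0) = \dim(\bar b/M_0)$.

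Combining this with the preceding lemma gives
\begin{equation*}
  \dim(\bar a/M_0) \le \dim(\bar a\bar b/M_0) = \dim(\bar b/M_0),
\end{equation*}
which is the desired inequality. There is no real obstacle here: the entire content sits in the preceding lemma (which rests in turn on the Abhyankar-style estimate of Lemma~\ref{abhyankaresque} and the explicit transcendence-degree formula of Lemma~\ref{calculation}), and the reduction above only uses the trivial inclusion $\dcl(-) \subseteq \acl(-)$ together with the automorphism-invariant computation of $\dim(-/-)$ via interalgebraic $\acl$-bases.
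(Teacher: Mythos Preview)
Your proof is correct and follows the same approach as the paper: apply the preceding lemma to get $\dim(\bar a/M_0) \le \dim(\bar a\bar b/M_0)$, then use interalgebraicity of $\bar b$ and $\bar a\bar b$ over $M_0$ (via Proposition~\ref{dimprops}(\ref{dp1})) to conclude. The paper's proof is just a terser version of exactly this argument.
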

\begin{proof}
  Clearly $\bar{b}$ is interalgebraic with $\bar{a}\bar{b}$ over $M_0$, so
  \begin{equation*}
    \dim(\bar{a}/M_0) \le \dim(\bar{a}\bar{b}/M_0) =
    \dim(\bar{b}/M_0). \qedhere
  \end{equation*}
\end{proof}

\begin{proposition}
  $\RCVF_{\ac,2}$ has no space-filling functions.
\end{proposition}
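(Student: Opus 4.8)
The plan is to deduce NSFF directly from the monotonicity property established in Lemma~\ref{final-monotone}, which is exactly the statement $\dim(\bar{a}/M_0) \le \dim(\bar{b}/M_0)$ whenever $\bar{a} \in \dcl(M_0\bar{b})$, i.e. the ``no space-filling functions'' inequality at the level of individual types. Suppose toward a contradiction that $f : X \to Y$ is a space-filling function, so $X \subseteq \Mm^n$, $Y \subseteq \Mm^m$, $Y$ has non-empty interior, and $n < m$. First I would fix a small elementary submodel $M_0 \preceq \Mm$ containing the finitely many parameters over which $f$ is defined; then $X = \dom(f)$ and $Y = \im(f)$ are both $M_0$-definable.

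Next, since $Y$ has non-empty interior in $\Mm^m$, Theorem~\ref{dimension-theorem}(\ref{dt5}) gives $\dim(Y) = m$. Because $Y$ is $M_0$-definable, this maximum is attained: there is $\bar{b} \in Y$ with $\dim(\bar{b}/M_0) = m$. Surjectivity of $f$ provides some $\bar{a} \in X$ with $f(\bar{a}) = \bar{b}$, so $\bar{b} \in \dcl(M_0\bar{a})$. Applying Lemma~\ref{final-monotone} yields $m = \dim(\bar{b}/M_0) \le \dim(\bar{a}/M_0)$. On the other hand $\bar{a}$ is an $n$-tuple, so $\dim(\bar{a}/M_0) \le n$ by Proposition~\ref{dimprops}(\ref{dp3}). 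Combining, $m \le n$, contradicting $n < m$; hence no space-filling function exists.

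I do not expect any genuine obstacle here: the substantive content has already been extracted in Lemmas~\ref{abhyankaresque} and \ref{calculation} (the Abhyankar-inequality bound on $\trdeg(\langle\ac(K_1)\rangle/\langle\ac(K_0)\rangle)$ and the resulting transcendence-degree formula for $\dim(\bar{a}/M_0)$), and the present proposition is only the routine translation of that inequality into the language of definable surjections. The one point meriting a moment's care is that Lemmas~\ref{calculation}--\ref{final-monotone} are phrased over a small \emph{model} rather than an arbitrary small parameter set, so one must remember to enlarge the defining parameters of $f$ to a small elementary submodel before invoking them; this is harmless, since $\dim(-)$ is computed in the monster and is independent of the choice of base parameters.
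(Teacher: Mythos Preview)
Your proof is correct and follows essentially the same route as the paper: pick a small model $M_0$ over which $f$ is defined, choose $\bar{b} \in Y$ of maximal dimension over $M_0$, lift to a preimage $\bar{a}$, and invoke Lemma~\ref{final-monotone} to obtain a contradiction. The only cosmetic difference is that the paper phrases the contradiction as $\dim(\bar{a}/M_0) \le \dim(X) < \dim(Y) = \dim(\bar{b}/M_0)$ (using the ambient definable set $X$ rather than the length bound $n$), whereas you use $\dim(\bar{a}/M_0) \le n < m$; both are equivalent here since $\dim(X) \le n$ and $\dim(Y) = m$.
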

\begin{proof}
  Suppose $f : X \to Y$ is a definable surjection with $\dim(X) <
  \dim(Y)$.  Let $M_0$ be a small model over which everything is
  defined.  Take $\bar{b} \in Y$
  with $\dim(\bar{b}/M_0) = \dim(Y)$.  Take $\bar{a} \in X$ with
  $f(\bar{a}) = \bar{b}$.  Then $\dim(\bar{a}/M_0) \le \dim(X) <
  \dim(Y) = \dim(\bar{b}/M_0)$, contradicting
  Lemma~\ref{final-monotone}.
\end{proof}
The conclusion is that the frontier dimension inequality
(Theorem~\ref{usual-frontier}) can fail, even under the
assumptions of DFC and NSFF.

\subsection{Arbitrarily bad behavior} \label{sec:ndep}
If $X$ is a set and $\tau_1, \ldots, \tau_n$ are topologies on $X$,
say that $\{\tau_1,\ldots,\tau_n\}$ are \emph{jointly independent} if
the diagonal embedding
\begin{equation*}
  X \mapsto (X,\tau_1) \times (X,\tau_2) \times \cdots \times (X,\tau_n)
\end{equation*}
has dense image.  Equivalently, if $U_i$ is a non-empty $\tau_i$-open
set for $1 \le i \le n$, then $\bigcap_{i = 1}^n U_i$ is non-empty.
For example, the approximation theorem for V-topologies \cite[Theorem~4.1]{prestel-ziegler} says that if $K$ is a field
and $\tau_1,\ldots,\tau_n$ are distinct (non-trivial) V-topologies on
$K$, then $\{\tau_1,\ldots,\tau_n\}$ are jointly independent.
\begin{definition}
  $T_n$ is the theory of two-sorted structures $(V,\Gamma;v_1,\ldots,v_n)$, where
  \begin{itemize}
  \item $V$ is a divisible ordered abelian group, with symbols $<$, $+$,
    $0$, and a unary function symbol for $q \cdot x$, for each $q \in
    \Qq$.
  \item $\Gamma$ is a divisible ordered abelian group, with similar
    symbols to $V$.
  \item For each $1 \le i \le n$, $v_i$ is a surjection $V \to \Gamma
    \cup \{+\infty\}$ satisfying the following axioms:
    \begin{itemize}
    \item $v_i(x) = +\infty \iff x = 0$.
    \item $v_i(x+y) \ge \min(v_i(x),v_i(y))$.
    \item $v_i(q \cdot x) = v_i(x)$, for $q \in \Qq^\times$.
    \end{itemize}
  \item The topologies $\tau_0, \ldots, \tau_n$ are jointly
    independent, where
    \begin{itemize}
    \item $\tau_0$ is the order topology on $V$.
    \item $\tau_i$ is the topology on $V$ induced by $v_i : V \to
      \Gamma$.
    \end{itemize}
  \end{itemize}
\end{definition}
We first show that $T_n$ is consistent.
\begin{lemma} \label{valued-field-joke}
  There is an ordered field $(K,+,\cdot,<)$ with valuations $v_1,
  \ldots, v_n$ such that the residue characteristics are 0, the value
  groups are non-trivial and divisible, and the order topology and valuation
  topologies are all independent.
\end{lemma}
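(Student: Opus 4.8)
The plan is to arrange matters so that the approximation theorem for V-topologies does all the real work. First recall that the order topology $\tau_0$ of any ordered field is a V-topology: it is a non-discrete Hausdorff field topology, and for $\varepsilon>0$ any $x$ with $|x|\ge 1/\varepsilon$ has $|x^{-1}|\le\varepsilon$, which is precisely the V-topology axiom (cf.\ \cite{PE}); and each valuation topology $\tau_i$ coming from a non-trivial valuation $v_i$ is a non-trivial V-topology as well. So by \cite[Theorem~4.1]{prestel-ziegler} it suffices to exhibit an ordered field $K$ carrying $n$ valuations $v_1,\dots,v_n$ of residue characteristic $0$ with non-trivial divisible value groups such that $\tau_0,\tau_1,\dots,\tau_n$ are pairwise distinct; joint independence then follows for free.

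For $K$ I would take the directed union $K=\bigcup_{N\ge 1}\Qq(t_1^{1/N},\dots,t_n^{1/N})$ of rational function fields in the indeterminates $t_i^{1/N}$ (ordered by divisibility of $N$). For each $i$, define $v_i$ on $\Qq(t_1^{1/N},\dots,t_n^{1/N})$ to be $1/N$ times the discrete valuation with valuation ring $\Qq[t_1^{1/N},\dots,t_n^{1/N}]_{(t_i^{1/N})}$; since $(t_i^{1/NM})^M=t_i^{1/N}$ these are compatible as $N$ grows, so $v_i$ is a well-defined valuation on $K$ with value group $\bigcup_N(1/N)\Zz=\Qq$ (non-trivial, divisible) and residue field $\bigcup_N\Qq(t_j^{1/N}:j\ne i)$, which has characteristic $0$. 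Next, fix algebraically independent reals $r_1,\dots,r_n>1$ and let $\iota\colon K\hookrightarrow\Rr$ be the ring embedding (automatically injective) sending $t_i^{1/N}$ to the positive real $N$-th root of $r_i$; pulling the ordering of $\Rr$ back along $\iota$ makes $K$ an ordered field whose order topology $\tau_0$ is the subspace topology from $\Rr$.

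It then remains to check that $\tau_0,\dots,\tau_n$ are pairwise distinct, and to invoke the approximation theorem. For $i\ne j$, the sequence $(t_i^N)_N$ converges to $0$ in $\tau_i$ because $v_i(t_i^N)=N\to\infty$, but never enters the $\tau_j$-neighbourhood $\{x:v_j(x)>0\}$ of $0$ (as $v_j(t_i^N)=0$), so it does not converge to $0$ in $\tau_j$; and $(t_i^N)_N$ is $\tau_0$-unbounded since $\iota(t_i)=r_i>1$, so it does not converge to $0$ in $\tau_0$ either. Hence the $n+1$ topologies are distinct non-trivial V-topologies, and \cite[Theorem~4.1]{prestel-ziegler} gives joint independence. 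I do not expect any serious obstacle here; the only point requiring a little attention is the separation of the order topology from the valuation topologies, which is exactly why I build the ordering via an embedding into $\Rr$ sending each $t_i$ above $1$.
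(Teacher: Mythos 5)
Your proof is correct, and it takes a genuinely different route from the paper's. The paper's argument proceeds by compactness: it observes that the real algebraic numbers, equipped with extensions of finitely many distinct $p$-adic valuations, satisfy all the requirements except that the residue characteristics are positive (each equal to the corresponding $p$); divisibility of the value groups comes from real-closedness, and the order topology is distinguished from the valuation topologies by noting that the ``recovered'' ring $\{x : \lim_i x^{-i} = 0\}^c = [-1,1]$ is not a ring. It then runs a compactness argument over finite sets of forbidden residue characteristics to upgrade to residue characteristic $0$. You instead construct the field directly as a union of Puiseux-style rational function fields $\bigcup_N \Qq(t_1^{1/N},\ldots,t_n^{1/N})$, read off residue characteristic $0$ and value group $\Qq$ immediately, and obtain the ordering by embedding into $\Rr$ via algebraically independent reals $r_i > 1$; this eliminates both the compactness step and the appeal to real-closed field theory, at the cost of having to set up the field and verify compatibility of the level-$N$ valuations and embeddings under the directed union (which you assert a little tersely but which is routine, using that a DVR of a field is maximal among its proper valuation subrings). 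Both approaches ultimately reduce to showing the $n+1$ V-topologies are pairwise distinct and then invoking the approximation theorem, and both ways of distinguishing the topologies are sound; your explicit sequence $(t_i^N)_N$ is a slightly more hands-on version of the paper's ring-recovery argument.
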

\begin{proof}
  By compactness, it suffices to show that for any finite set of
  primes $S = \{p_1,\ldots,p_m\}$, there is an ordered field
  $(K,+,\cdot,<)$ with valuations $v_1,\ldots,v_n$ such that the
  residue characteristics are not in $S$, the value groups are
  divisible, and the order and valuation topologies are independent.
  Take $K$ to be the field of real algebraic numbers.  Take distinct
  primes $q_1, \ldots, q_n$ outside of $S$.  Let $v_i$ be any
  valuation on $K$ extending the $q_i$-adic valuation on $\Qq$.  Then
  $v_i$ is non-trivial, and has residue characteristic $q_i \notin S$.
  The $v_i$ are pairwise distinct, because they have different residue
  characteristics.  Rank-1 valuations can be recovered from their
  topologies, so the $v_i$ induce distinct V-topologies on $K$.  The
  order topology is also a V-topology, and it is distinct from the
  $v_i$-topology for any $i$.  Indeed, the way to recover a rank 1
  valuation ring $\Oo$ from its topology is via the fact that
  \begin{equation*}
    \Oo = K \setminus \{x \in K : \lim_{i \to \infty} x^{-i} = 0\}.
  \end{equation*}
  Applying this to the order topology, we see that $\Oo = [-1,1]$,
  which is not a ring.

  Because all the V-topologies are distinct, they are independent by
  the approximation theorem for V-topologies.  Finally, we show that
  the value groups of the $v_i$ are divisible.  This follows because
  real-closed fields have $k$th roots of all positive elements, and
  \begin{equation*}
    \frac{1}{k} v_i(x) = v_i\left(\sqrt[k]{|x|}\right). \qedhere
  \end{equation*}
\end{proof}
\begin{proposition}
  $T_n$ is consistent.
\end{proposition}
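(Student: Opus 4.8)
The plan is to produce a model of $T_n$ directly from the ordered field supplied by Lemma~\ref{valued-field-joke}, using its additive group as the sort $V$. First I would apply Lemma~\ref{valued-field-joke} to obtain an ordered field $(K,+,\cdot,<)$ with valuations $v_1,\dots,v_n$ of residue characteristic $0$, with non-trivial divisible value groups, and with the order topology $\tau_0$ together with the valuation topologies $\tau_1,\dots,\tau_n$ jointly independent. A small elaboration of the proof of that lemma lets me assume in addition that all the $v_i$ share a single value group $\Gamma$ (discussed below).

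Next I would take $V$ to be the additive group $(K,+,<)$ equipped with the scalar operations $q\cdot x := qx$ for $q \in \Qq$. Because $K$ has characteristic $0$, its additive group is divisible, so $V$ is a divisible ordered abelian group and $q\cdot x$ is its unique $\Qq$-vector-space structure; likewise $\Gamma$ is a (non-trivial) divisible ordered abelian group. Keeping $v_i \colon V \to \Gamma \cup \{+\infty\}$ as given, the surjectivity onto $\Gamma\cup\{+\infty\}$ and the conditions $v_i(x)=+\infty \iff x=0$ and $v_i(x+y)\ge \min(v_i(x),v_i(y))$ are immediate. The one axiom needing a comment is $v_i(q\cdot x)=v_i(x)$ for $q \in \Qq^\times$: here $v_i(q\cdot x)=v_i(q)+v_i(x)$, and residue characteristic $0$ forces $v_i$ to vanish on $\Qq^\times$, since no rational prime lies in the maximal ideal and hence every nonzero integer has valuation $0$; thus $v_i(q)=0$ and the axiom holds. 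Finally $\tau_0$ is the order topology on $V$, which is just the order topology of $K$, and $\tau_i$ is the topology induced by $v_i$, so $\tau_0,\dots,\tau_n$ are jointly independent by the choice of $K$. Hence $(V,\Gamma;v_1,\dots,v_n)\models T_n$, and $T_n$ is consistent.

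The one point that genuinely requires care is arranging that all the $v_i$ have one and the same value group $\Gamma$ rather than $n$ possibly non-isomorphic ones; for this I would revisit the proof of Lemma~\ref{valued-field-joke} rather than quote only its statement. Its compactness argument witnesses each finite fragment by the field of real algebraic numbers with the $q_i$-adic valuations, where every value group is literally $\Qq$; running that same argument in the three-sorted language $\{V,\Gamma;+,<,0,(q\cdot)_{q\in\Qq},v_1,\dots,v_n\}$ builds a shared $\Gamma$-sort in from the start and delivers the common $\Gamma$. Everything else is routine once the valuations have residue characteristic $0$: that is exactly what forces $v_i$ to be trivial on $\Qq$, which is precisely the content of the scalar-multiplication axiom, and it is the reason that hypothesis was placed in the lemma.
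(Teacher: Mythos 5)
Your proof is correct, and it differs from the paper's in the one non-trivial step: arranging that the $v_i$ all take values in a single group $\Gamma$. The paper treats Lemma~\ref{valued-field-joke} as a black box and repairs the mismatch afterwards: the $n$ value groups $\Gamma_1,\dots,\Gamma_n$ are all nontrivial divisible ordered abelian groups, hence elementarily equivalent, so in a resplendent elementary extension of the whole structure they become isomorphic, and one then identifies them. You instead reopen the compactness argument inside Lemma~\ref{valued-field-joke}, observing that its finite approximations (the real algebraic numbers with extensions of $q_i$-adic valuations) all have value group literally $\Qq$, so a single $\Gamma$-sort can be built in from the start. Your route avoids the resplendency machinery, at the price of re-proving a slight strengthening of the lemma rather than citing it as stated; the paper's route is more modular but leans on a heavier model-theoretic tool. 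The remainder of your verification --- in particular that residue characteristic $0$ forces $v_i$ to vanish on $\Qq^\times$, which gives $v_i(q\cdot x)=v_i(x)$ and explains why that hypothesis sits in the lemma --- matches the parenthetical remark at the end of the paper's proof. One small caution for a final write-up: when you ``run the same argument'' with a shared $\Gamma$-sort, the finite approximations only satisfy $v_i(q\cdot x)=v_i(x)$ for $q$ coprime to the chosen primes $q_1,\dots,q_n$, so (as in the original lemma) the primes must be chosen after fixing the finite fragment of $T_n$ being witnessed; this is implicit in your sketch but worth saying explicitly.
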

\begin{proof}
  Take a multi-valued ordered field $(K,+,\cdot,<,v_1,\ldots,v_n)$ as
  in Lemma~\ref{valued-field-joke}.  Let $\Gamma_i$ be the value group
  of $K$.  As ordered abelian groups, $\Gamma_i \equiv \Gamma_j$.
  Therefore, passing to a resplendent elementary extension, we may
  assume $\Gamma_i \cong \Gamma_j$ for all $i, j$.  Moving things by
  an isomorphism, we may assume $\Gamma_i = \Gamma_j =: \Gamma$ for
  all $i, j$.  Then $(K,\Gamma,v_1,\ldots,v_n) \models T_n$.  (The
  requirement that $v_i(qx) = v_i(x)$ for $q \in \Qq^\times$ holds
  because the residue characteristic is 0.)
\end{proof}
Next, we verify quantifier elimination.
\begin{lemma} \label{infinite-branch}
  Let $M = (V,\Gamma)$ be a model of $T_n$.  Suppose that $b_1,
  \ldots, b_m \in V$ and $c \in \Gamma$.  Suppose that $v_1(b_i - b_j)
  \ge c$ for all $i \le j \le m$.  Then there is $a \in V$ such that
  $v_1(a - b_i) = c$ for all $1 \le i \le m$.
\end{lemma}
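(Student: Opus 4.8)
The plan is to reduce the statement to the standard observation that, with respect to a single valuation $v_1$, a closed ball is never covered by finitely many strictly smaller closed balls --- here the ``residue at level $c$'' is infinite because of the rational scaling built into the language of $T_n$. The order topology, the other valuations $v_2,\ldots,v_n$, and the joint independence hypothesis play no role; this is a purely $v_1$-local fact.

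First I would note that $V$ is a $\Qq$-vector space (a divisible ordered abelian group is torsion-free, hence uniquely divisible), and that for any $\gamma \in \Gamma \cup \{+\infty\}$ the sets
\[
  V_{\ge\gamma} := \{x \in V : v_1(x) \ge \gamma\}, \qquad
  V_{>\gamma} := \{x \in V : v_1(x) > \gamma\}
\]
are $\Qq$-subspaces of $V$: this uses $v_1(x+y) \ge \min(v_1(x),v_1(y))$ and the axiom $v_1(q\cdot x) = v_1(x)$ for $q \in \Qq^\times$ (together with $v_1(0) = +\infty$). Set $B = b_1 + V_{\ge c}$. The hypothesis $v_1(b_i - b_1) \ge c$ says $b_i - b_1 \in V_{\ge c}$, so $B = b_i + V_{\ge c}$ for every $i$, and in particular $b_i + V_{>c} \subseteq B$ for every $i$.

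Next I would show the quotient group $V_{\ge c}/V_{>c}$ is infinite. Since $v_1$ is surjective onto $\Gamma \cup \{+\infty\}$ there is $x \in V$ with $v_1(x) = c$, so $x \in V_{\ge c} \setminus V_{>c}$. For distinct $q, q' \in \Qq^\times$ we have $v_1(q\cdot x - q'\cdot x) = v_1((q-q')\cdot x) = v_1(x) = c$, so the elements $q \cdot x$ ($q \in \Qq^\times$) all lie in $V_{\ge c}$ but in pairwise distinct cosets of $V_{>c}$; hence $V_{\ge c}/V_{>c}$ is infinite. Therefore $B$ is a disjoint union of infinitely many cosets of $V_{>c}$, so the finitely many cosets $b_1 + V_{>c},\ldots,b_m + V_{>c}$ do not cover $B$, and we may pick $a \in B \setminus \bigcup_{i=1}^m (b_i + V_{>c})$.

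Finally I would verify $v_1(a - b_i) = c$ for each $i$: from $a \in B = b_i + V_{\ge c}$ we get $v_1(a - b_i) \ge c$, and from $a \notin b_i + V_{>c}$ we get $v_1(a - b_i) \not> c$, so $v_1(a - b_i) = c$. There is no real obstacle beyond the remark that rational scaling forces the level-$c$ residue to be infinite; everything else is bookkeeping with cosets of the subspaces $V_{\ge c}$ and $V_{>c}$.
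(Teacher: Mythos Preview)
Your proof is correct and follows essentially the same approach as the paper: both define the $\Qq$-subspaces $V_{\ge c}$ and $V_{>c}$ (the paper calls them $G^+$ and $G^-$), observe that all $b_i$ lie in a single coset of $V_{\ge c}$, argue that $V_{\ge c}/V_{>c}$ is infinite, and then pick $a$ in that coset avoiding the finitely many $V_{>c}$-cosets of the $b_i$. The only cosmetic difference is that you exhibit the infinitude of the quotient explicitly via the elements $q\cdot x$ for $q\in\Qq^\times$, whereas the paper simply notes that a nonzero $\Qq$-vector space cannot be finite.
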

\begin{proof}
  Let $G^+$ be the set $\{x \in V : v_1(x) \ge c\}$, and $G^-$ be the
  set $\{x \in V : v_1(x) > c\}$.  Then $G^+$ and $G^-$ are
  $\Qq$-linear subspaces of $V$.  There is at least one element
  $\delta$ with $v_1(\delta) = c$, so $G^+ \supsetneq G^-$.  Then the
  index of $G^-$ in $G^+$ is infinite, because otherwise $G^+/G^-$
  would be a finite non-trivial rational vector space.  By assumption,
  the $b_i$ are all in the same coset $C$ of $G^+$.  The coset $C$ is
  a union of infinitely many cosets of $G^-$.  There are only finitely
  many $b_i$, so we can take some $a \in C$ which is not in any $b_i +
  G^-$.  Then $a - b_i \in G^+ \setminus G^-$ for all $i$.
\end{proof}
\begin{lemma} \label{real-qe}
  Let $M_1 = (V_1,\Gamma_1)$ and $M_2 = (V_2,\Gamma_2)$ be two models
  of $T_n$.  Let $A = (V_0,\Gamma_0)$ be a common substructure of
  $M_1$ and $M_2$.  Suppose that $A \subsetneq M_1$, and $M_2$ is
  $|M_1|^+$-saturated.  Then there is a substructure $A \subsetneq B
  \subseteq M_1$ and an embedding of $B$ into $M_2$ extending the
  inclusion $A \hookrightarrow M_2$.
\end{lemma}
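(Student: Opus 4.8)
The plan is to prove Lemma~\ref{real-qe} by the usual back-and-forth analysis underlying quantifier elimination: given the common substructure $A = (V_0,\Gamma_0) \subsetneq M_1$ and the inclusion $A \hookrightarrow M_2$, we must produce a substructure $B$ with $A \subsetneq B \subseteq M_1$ and an embedding $B \hookrightarrow M_2$ extending the given one. Since $A \subsetneq M_1$ we fix a witnessing element and split into two cases according to its sort, equivalently: (i) $\Gamma_0 \subsetneq \Gamma_1$, or (ii) $\Gamma_0 = \Gamma_1$ while $V_0 \subsetneq V_1$. Case (i) is routine divisible-ordered-abelian-group business: $\Gamma_0$ is an ordered $\Qq$-subspace of $\Gamma_1$, and for $\gamma \in \Gamma_1 \setminus \Gamma_0$ we take $B = (V_0, \Gamma_0 \oplus \Qq\gamma)$, which is a substructure of $M_1$ because $v_i(V_0) \subseteq \Gamma_0$; since $\Gamma_2$ is a model of the complete theory of nontrivial divisible ordered abelian groups and $M_2$ is $|M_1|^+$-saturated, some $\gamma' \in \Gamma_2$ realizes the same cut over $\Gamma_0$ as $\gamma$, and sending $\gamma \mapsto \gamma'$ and extending $\Qq$-linearly gives the embedding.

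In case (ii), fix $a \in V_1 \setminus V_0$. Because $\Gamma_0 = \Gamma_1$, every nonzero element $v_0 + qa$ with $v_0 \in V_0$, $q \in \Qq$ has $v_i(v_0 + qa) \in \Gamma_1 = \Gamma_0$, so the substructure generated by $A$ and $a$ is just $B = (V_0 \oplus \Qq a,\ \Gamma_0)$. A routine verification shows that an $A$-embedding $B \hookrightarrow M_2$ amounts exactly to a choice of $x \in V_2$ realizing the quantifier-free type $p(x)$ over $A$ recording (a) the cut of $a$ over $(V_0,<)$ in the order topology $\tau_0$, and (b) the equalities $v_i(x - v_0) = v_i(a - v_0)$ for every $v_0 \in V_0$ and every $1 \le i \le n$ (all right-hand sides lie in $\Gamma_0 \subseteq \Gamma_2$; $\Qq$-invariance of $v_i$ recovers all $v_i(v_0 + qx)$ from (b), and (a) together with (b) guarantees $x \notin V_0$, so the assignment is a well-defined order- and valuation-preserving embedding). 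Since $|A| \le |M_1|$ and $M_2$ is $|M_1|^+$-saturated, it now suffices to show $p$ is finitely satisfiable in $V_2$.

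A finite fragment of $p$ confines $x$ to an order-interval with endpoints in $V_0 \cup \{\pm\infty\}$ — nonempty in $V_2$ because the corresponding interval in $V_1$ contains $a$, nonemptiness being a quantifier-free property of $A$ and $V_2$ being densely ordered — together with, for each $i$, finitely many sphere conditions $v_i(x - b_j) = \gamma_{ij}$ with $b_j \in V_0$ and $\gamma_{ij} = v_i(a - b_j) \in \Gamma_0$. The interval cuts out a nonempty $\tau_0$-open set, and for each $i$ the finitely many spheres $\{x : v_i(x - b_j) = \gamma_{ij}\}$ are $\tau_i$-open, so they cut out a $\tau_i$-open set $S_i$. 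By the joint independence of $\tau_0,\dots,\tau_n$, the intersection of these $n+1$ open sets is nonempty \emph{provided each $S_i \neq \varnothing$} — and this is the only remaining point. Thus the coordination across the $n+1$ topologies, which looks like the obstacle, is dispatched cleanly by joint independence; the real content is reduced to a richness statement about a single valuation $v_i$.

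For the nonemptiness of $S_i$: the pairs $(b_j,\gamma_{ij})$ satisfy the obvious compatibility relations, namely $\gamma_{ij} \ne \gamma_{ik} \Rightarrow v_i(b_j - b_k) = \min(\gamma_{ij},\gamma_{ik})$ and $\gamma_{ij} = \gamma_{ik} \Rightarrow v_i(b_j - b_k) \ge \gamma_{ij}$, because $x = a$ satisfies them in $M_1$ and these are quantifier-free facts about elements of $A$, hence absolute between $M_1$ and $M_2$. Given such compatible data, I would find $x \in V_2$ with $v_i(x - b_j) = \gamma_{ij}$ for all $j$ by induction on the number of points: reindex so that $\gamma_{i1}$ is minimal among the $\gamma_{ij}$; if all the $\gamma_{ij}$ are equal, this is precisely the hypothesis of Lemma~\ref{infinite-branch} (whose proof uses only axioms common to all the $v_i$, so it holds for each $i$); otherwise pick $j_0$ whose target $\gamma_{ij_0}$ is strictly larger than the minimum, write $x = b_{j_0} + z$, observe that the constraints from the minimal-target points become automatic once $v_i(z)$ exceeds the minimum, and apply the inductive hypothesis to the strictly fewer points $\{b_j - b_{j_0}\}$ with the non-minimal targets. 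The main obstacle I expect is exactly this single-valuation realization together with carefully tracking the compatibility conditions and their absoluteness; everything else is formal.
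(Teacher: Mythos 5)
Your proof is correct and follows essentially the same strategy as the paper: case-split on whether $\Gamma_0 \subsetneq \Gamma_1$, adjoin one new element, set up a quantifier-free type consisting of cut conditions plus sphere conditions, and use joint independence of the $n+1$ topologies together with Lemma~\ref{infinite-branch} to get finite satisfiability. The one local difference is in showing each single-valuation constraint set $S_i$ is nonempty: you induct on the number of spheres after isolating the minimal target value, whereas the paper applies Lemma~\ref{infinite-branch} once to the spheres with the \emph{maximal} target value $c = c_1 = \cdots = c_\ell$ and then checks directly that the remaining conditions $v_1(a'-b_j)=c_j$ for $c_j < c$ follow from the ultrametric inequality $v_1(a'-b_j)=\min(c,c_j)=c_j$; both organizations work and the paper's is perhaps one step shorter.
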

\begin{proof}
  First suppose $\Gamma_0 \subsetneq \Gamma_1$.  By quantifier
  elimination in the theory of divisible ordered abelian groups, there
  is an embedding $j : \Gamma_1 \to \Gamma_2$ fixing $\Gamma_0$.  Let
  $B = (V_0,\Gamma_1)$.  Then we can embed $B$ into $M_2$ by using the
  inclusion $V_0 \hookrightarrow V_2$ and the embedding $j : \Gamma_1
  \to \Gamma_2$.  As $\Gamma_0 \subsetneq \Gamma_1$, we have $A
  \subsetneq B$, and we are done.

  So we may assume $\Gamma_0 = \Gamma_1$.  Then $V_0 \subsetneq V_1$.
  Take some $a \in V_1 \setminus V_0$.  Let $x$ be a variable in the
  sort $V$ and let $\Sigma(x)$ be the quantifier-free type over $A$
  asserting the following:
  \begin{enumerate}
  \item $x < b$, for any $b \in V_0$ with $a < b$.
  \item $x > b$, for any $b \in V_0$ with $a > b$.
  \item $v_i(x - b) = c$, where $c = v_i(a - b)$, for each $i$ and
    each $b \in V_0$.
  \end{enumerate}
  The third point is a formula over $A$ because $v_i(a - b) \in \Gamma_1 = \Gamma_0$.
  \begin{claim}
    The partial type $\Sigma(x)$ is finitely satisfiable in $M_2$.
  \end{claim}
  \begin{claimproof}
    A finite fragment of $\Sigma(x)$ looks like $\phi_0(x) \wedge
    \phi_1(x) \wedge \cdots \wedge \phi_n(x)$, where
    \begin{itemize}
    \item $\phi_0(x)$ is a conjunction of formulas of the form $x <
      b$ or $x > b$.
    \item For $i > 0$, $\phi_i(x)$ is a conjunction of formulas of the
      form $v_i(x - b) = c$.
    \end{itemize}
    On $V_2$, $\phi_0(x)$ defines an open set with respect to the order
    topology and $\phi_i(x)$ defines an open set with respect to the
    $v_i$ topology.  Because $M_2$ is a model, these topologies are
    jointly independent.  So it suffices to show that each $\phi_i$ is
    individually satisfiable in $M_2$.

    First consider $\phi_0$.  It has the form \[\bigwedge_{i = 1}^m (x
    < b_i) \wedge \bigwedge_{i = 1}^\ell (x > c_i)\] for some $b_i,
    c_i \in V_0$.  The fact that this is satisfiable in $V_1$ (by $a$)
    implies that it is satisfiable in $V_2$, because of quantifier
    elimination for DLO.

    Next consider, say, $\phi_1$.  It has the form \[\bigwedge_{i =
      1}^m v_1(x - b_i) = c_i\] for some $b_i \in V_0$ and $c_i \in
    \Gamma_0 = \Gamma_1$.  Without loss of generality,
    \begin{equation*}
      c_1 = c_2 = \cdots = c_\ell > c_{\ell +1} \ge \cdots \ge c_m.
    \end{equation*}
    Let $c = c_1$.  Applying the triangle inequality to $a - b_i$, $a
    - b_j$, and $b_i - b_j$, we see that
    \begin{gather*}
      v_1(b_i - b_j) \ge c \text{ for } i, j \le \ell \\
      v_1(b_i - b_j) = c_j \text{ for } i \le \ell < j.
    \end{gather*}
    These facts only involve elements of $A$, so they transfer from
    $M_1$ to $A$ to $M_2$.  Applying Lemma~\ref{infinite-branch} to
    $M_2$, we get $a' \in V_2$ such that $v_1(a' - b_i) = c = c_i$ for all
    $i \le \ell$.  For $i > \ell$, we have
    \begin{equation*}
      v_1(a' - b_i) = v_1((a' - b_1) + (b_1 - b_i)) =
      \min(v_1(a'-b_1),v_1(b_1-b_i)) = \min(c,c_i) = c_i,
    \end{equation*}
    because $v_1(a'-b_1) = c \ne v_1(b_1 - b_i) = c_i$.  In
    conclusion, $v_1(a' - b_i) = c_i$ for all $i$, and $a'$ satisfies
    $\phi_1$.
  \end{claimproof}
  By saturation of $M_2$ and the claim, there is some $a' \in V_2$
  realizing $\Sigma(x)$.  Let $B = (V_0 +\Qq a, \Gamma_0)$ and define
  an embedding $f$ from $B$ to $M_2$ by sending $b + qa$ to $b + qa'$,
  for $b \in V_0$ and $q \in \Qq$.  This is well-defined as a function
  because $a$ is $\Qq$-linearly independent from $V_0$ (simply because
  $a \notin V_0$ and $V_0$ is a $\Qq$-linear subspace).  It remains to
  show that $f$ is an embedding.  It is clear that $f$ preserves the
  abelian group structure on the $V$-sort, and all the structure on
  the $\Gamma$-sort.  For the valuation map $v_i$, note that
  \begin{gather*}
    v_i(b + q a) = v_i(q^{-1} b + a) \stackrel{*}{=} v_i(q^{-1} b + a') = v_i(b + q
    a'), \text{ for } q \in \Qq^\times \\
    v_i(b + 0 a) = v_i(b) = v_i(b + 0 a'),
  \end{gather*}
  where the starred equality holds because $a'$ satisfies $\Sigma(x)$.
  Therefore $v_i(f(x)) = v_i(x)$ for all $x$, and $f$ preserves the
  symbol $v_i$.  For the order, note that
  \begin{itemize}
  \item If $q < q'$, then
    \begin{gather*}
      b + qa < b' +q'a \iff b - b' < (q' - q)a \iff (q'-q)^{-1}(b -
      b') < a \\ \stackrel{*}{\iff} (q' - q)^{-1}(b - b') < a' \iff
      \cdots \iff b + qa' < b' + q'a',
    \end{gather*}
    where the starred equivalence holds because $a'$ satisfies
    $\Sigma(x)$.
  \item If $q > q'$, then
    \begin{equation*}
      b + qa < b' + q'a \iff b + qa' < b' + q'a'
    \end{equation*}
    by a similar argument.
  \item If $q = q'$, then $b + q a < b' + q' a \iff b < b' \iff b + q a' < b' + q' a'$.
  \end{itemize}
  Either way, we see that $x < y \iff f(x) < f(y)$, and so $f$
  preserves the ordering.  Therefore $f$ is an embedding and we are
  done.
\end{proof}
\begin{proposition}
  $T_n$ has quantifier elimination.
\end{proposition}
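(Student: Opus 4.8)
The plan is to derive quantifier elimination from Lemma~\ref{real-qe} via the standard embedding test: a complete theory $T$ eliminates quantifiers provided that whenever $M_1, M_2 \models T$ share a common substructure $A$ and $M_2$ is $|M_1|^+$-saturated, the inclusion $A \hookrightarrow M_2$ extends to an embedding $M_1 \hookrightarrow M_2$. (We already know $T_n$ is consistent, and since both sorts carry constants $0$, any two models share a common substructure, so this is the right setup; it will also give completeness of $T_n$ as a byproduct, though that is not needed here.) So I would fix such $M_1$, $M_2$, and $A$, and aim to build the embedding $M_1 \hookrightarrow M_2$.

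The main step is a Zorn's lemma argument feeding on Lemma~\ref{real-qe}. I would consider the poset $\mathcal{P}$ of pairs $(B,g)$ where $A \subseteq B \subseteq M_1$ is a substructure and $g : B \to M_2$ is an embedding restricting to the inclusion $A \hookrightarrow M_2$, ordered by extension. Unions of chains in $\mathcal{P}$ again lie in $\mathcal{P}$, so there is a maximal element $(B,g)$, and I would argue that $B = M_1$. If not, then identifying $B$ with its image $g(B) \subseteq M_2$ makes $B$ a common substructure of $M_1$ and $M_2$ with $B \subsetneq M_1$; since $|B| \le |M_1|$, the model $M_2$ is still $|M_1|^+$-saturated, so Lemma~\ref{real-qe} applies and yields $B \subsetneq B' \subseteq M_1$ together with an embedding $B' \to M_2$ extending $g$. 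This strictly extends $(B,g)$ in $\mathcal{P}$, contradicting maximality. Hence $B = M_1$ and $g$ is the desired embedding, completing the verification of the embedding test.

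I do not expect any genuine obstacle in this proposition itself: it is a purely formal consequence of Lemma~\ref{real-qe}, the only points requiring a moment's care being the cardinality bookkeeping (that $|M_1|^+$-saturation of $M_2$ survives at each stage of the Zorn argument because every $B$ in play has size $\le |M_1|$) and the routine identification of $B$ with $g(B)$ so as to match the "common substructure" hypothesis of Lemma~\ref{real-qe}. The real work has already been carried out in Lemma~\ref{real-qe}---which in turn rests on the approximation theorem for jointly independent topologies built into the axioms of $T_n$, together with Lemma~\ref{infinite-branch}---so the hard part is behind us.
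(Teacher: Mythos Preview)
Your proposal is correct and takes essentially the same approach as the paper: both derive quantifier elimination from Lemma~\ref{real-qe} via the standard embedding test, with you spelling out the Zorn's lemma argument that the paper simply absorbs into a citation. One small wording issue: the embedding test does not require $T$ to be complete as a hypothesis (and indeed you use it to deduce completeness afterward), so you should drop the word ``complete'' from your statement of the test.
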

\begin{proof}
  This follows formally from Lemma~\ref{real-qe} by standard tests for
  quantifier elimination; see \cite[Corollary~B.11.10]{transseries}
  for example.
\end{proof}
\begin{corollary}
  $T_n$ is complete.
\end{corollary}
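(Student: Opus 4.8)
The plan is to deduce completeness from quantifier elimination in the usual way, via a prime substructure. Recall the standard fact: if a theory $T$ in a language $\mathcal{L}$ eliminates quantifiers and there is an $\mathcal{L}$-structure $A_0$ that embeds into every model of $T$, then $T$ is complete. Indeed, given any $\mathcal{L}$-sentence $\sigma$, quantifier elimination yields a quantifier-free $\mathcal{L}$-sentence $\sigma'$ with $T \vdash \sigma \leftrightarrow \sigma'$; since $\sigma'$ is a boolean combination of atomic sentences built from closed $\mathcal{L}$-terms, and such terms are evaluated inside $A_0$ (a substructure is closed under the functions and contains the constants), we get $M \models \sigma' \iff A_0 \models \sigma'$ for every model $M$ into which $A_0$ embeds. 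Hence all models of $T$ are elementarily equivalent, so $T$ is complete.

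So it remains to exhibit such an $A_0$ for $T_n$, which is immediate. The language of $T_n$ contains the constant $0$ in the sort $V$, the constant $0$ in the value sort, and the constant $+\infty$; hence closed terms exist and the substructure generated by $\varnothing$ is a nonempty $\mathcal{L}$-structure. Concretely, in any model $(V,\Gamma) \models T_n$ the singleton $\{0\} \subseteq V$ is closed under $+$ and under each $q\cdot(-)$, and $\langle \varnothing \rangle$ inside $\Gamma \cup \{+\infty\}$ is the corresponding prime substructure of the value sort; together with the restrictions of the $v_i$ (which send $0$ to $+\infty$) these assemble into a substructure $A_0$. Since $A_0$ is generated by the empty set, it is rigid and embeds canonically into every model of $T_n$. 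It is irrelevant that $A_0$ is not itself a model of $T_n$ (substructures of models of $T_n$ need not be models).

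Putting this together: $T_n$ is consistent by the previous proposition, $T_n$ eliminates quantifiers by the proposition just proved, and $A_0$ embeds into every model of $T_n$; by the standard fact recalled above, $T_n$ is complete. There is no genuine obstacle here — the only point worth checking is that $\mathcal{L}$ carries enough constant symbols for $A_0$ to be nonempty, which it does since the sort $V$ has the constant $0$.
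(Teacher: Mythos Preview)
Your proof is correct and follows the same approach as the paper: the paper observes that $(\{0\},\{0\})$ is a common substructure of any two models, and then invokes quantifier elimination to conclude $M_1 \equiv M_2$. Your version is simply a more detailed unpacking of the same standard argument.
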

\begin{proof}
  Let $M_1, M_2$ be two models.  Then $(\{0\},\{0\})$ is a common
  substructure of both.  By quantifier elimination, $M_1 \equiv M_2$.
\end{proof}
\begin{definition}
  Let $(V,\Gamma)$ be a model of $T_n$.  The \emph{combined topology} on
  $V$ is the topology generated by the order topology and the
  $v_i$-topology for each $i$.  Equivalently, the combined topology is
  the subspace topology induced by the diagonal embedding
  \begin{equation*}
    V \hookrightarrow (V,\tau_0) \times (V,\tau_1) \times \cdots \times (V,\tau_n),
  \end{equation*}
  where $\tau_0$ is the order topology and $\tau_i$ is the
  $v_i$-topology.
\end{definition}
\begin{remark} \label{loc-const}
  Each $v_i(x)$ determines a function $V \to \Gamma$.  With respect to
  the combined topology on $V$, this function is \emph{locally
  constant} except at $x = 0$.

  Additionally, let $f : V \to \{\top,\bot\}$ send $x$ to the
  truth-value of $x > 0$.  Then $f$ is locally constant except at $x =
  0$.
\end{remark}
\begin{proposition}
  $T_n$ is visceral with respect to the combined topology on $V$ and
  the order topology on $\Gamma$.
\end{proposition}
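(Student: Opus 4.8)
The plan is to verify the multi-sorted definition of viscerality directly, i.e.\ to exhibit a visceral uniformity on each of the two sorts $\Gamma$ and $V$. For $\Gamma$ this will be routine; the content is in the sort $V$ equipped with the combined topology, where two things must be checked: that the combined topology is induced by a definable uniformity, and that it is t-minimal, i.e.\ every definable $D\subseteq V$ in one variable has nonempty interior iff it is infinite (equivalently, $V$ has no isolated points and $\bd(D)$ is finite).

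First I would record the structural preliminaries. From the axioms $v_i$ surjects onto $\Gamma\cup\{+\infty\}$, which has at least two elements, so $|V|\ge 2$; and if $\Gamma$ were the trivial group then each $\tau_i$ for $i\ge 1$ would be the discrete topology on $V$, making joint independence of $\{\tau_0,\dots,\tau_n\}$ fail (two distinct $\tau_1$-open singletons have empty intersection). Hence $\Gamma$ is a nontrivial divisible ordered abelian group, so $\Gamma$ and, via surjectivity of the $v_i$, also $V$ are densely ordered and infinite; in particular the order topology on either sort has no isolated points. Hausdorffness is automatic: the order topologies are Hausdorff, and the combined topology on $V$ refines one of them. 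Next, the combined topology on $V$ is the supremum of the group topologies $\tau_0,\dots,\tau_n$, hence itself a group topology, and it has a definable basis of neighbourhoods of $0$, namely the sets $(-\epsilon,\epsilon)\cap\bigcap_{i=1}^n\{z:v_i(z)>\gamma_i\}$ indexed by $\epsilon\in V_{>0}$ and $\bar\gamma\in\Gamma^n$ (using the elementary ultrametric fact that a $v_i$-ball about $0$ has the form $\{z:v_i(z)>\gamma_i\}$). By the construction in Remark~\ref{group-uniformity-7}, applied to the definable groups $(\Gamma,+)$ and $(V,+)$, this definable group topology and the order topology on $\Gamma$ are induced by definable uniformities.

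For the sort $\Gamma$ I would then use quantifier elimination: a $V$-term in no $V$-variable is a parameter, so an atomic formula in a single variable of sort $\Gamma$ reduces to $qy+\gamma=0$ or $qy+\gamma<0$, whence a definable $D\subseteq\Gamma$ is a finite union of points and intervals, $\bd(D)$ is finite, and with no isolated points this gives t-minimality of $\Gamma$. For $V$, the ``no isolated points'' half is where joint independence does the real work: since the combined topology is a group topology it suffices to see $0$ is not isolated, and given a basic neighbourhood $(-\epsilon,\epsilon)\cap\bigcap_i\{z:v_i(z)>\gamma_i\}$, joint independence of $\{\tau_0,\tau_1,\dots,\tau_n\}$ applied to the nonempty open sets $(0,\epsilon)$ and $\{z:v_i(z)>\gamma_i\}$ yields a point $z$ lying in all of them; then $0<z<\epsilon$, so $z\neq 0$, and $z$ lies in the given neighbourhood. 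The ``finite boundary'' half uses quantifier elimination together with Remark~\ref{loc-const}: an atomic formula in one variable $x$ of sort $V$ is built from affine $V$-terms $qx+b$ and from $\Gamma$-terms that are $\Qq$-linear combinations of $\Gamma$-parameters and expressions $v_i(q'x+b')$; each $v_i$ is locally constant off $0$ for the combined topology and the affine maps are continuous for it (being continuous for $\tau_0$ and for each $\tau_i$), so each $v_i(q'x+b')$ is locally constant off the single point where $q'x+b'=0$, and by Remark~\ref{loc-const} the sign of an order or equality comparison of affine $V$-terms is locally constant off a single point; hence the truth value of every atomic formula, and therefore $\chi_D$, is locally constant off a finite set $F\subseteq V$, so $\bd(D)\subseteq F$.

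Assembling these, each sort carries a definable uniformity for which a definable set in one variable has nonempty interior iff it is infinite, i.e.\ a visceral uniformity, so $T_n$ is visceral. The main obstacle, and the only place the specific hypotheses of $T_n$ really enter, is the t-minimality of $V$: one must carry through the quantifier-elimination bookkeeping for $\Gamma$-valued terms in a single $V$-variable (invoking Remark~\ref{loc-const}) to obtain the finite boundary, and one must observe that the no-isolated-points statement is precisely an instance of joint independence once the combined topology has been identified as a group topology. Neither step is deep, but both require care.
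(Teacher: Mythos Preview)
Your proof is correct and follows essentially the same route as the paper's: lift the definable group topologies to uniformities via Remark~\ref{group-uniformity-7}, use quantifier elimination for o-minimality of $\Gamma$, joint independence for the absence of isolated points in $V$, and the term-by-term analysis with Remark~\ref{loc-const} for finite boundary in $V$. The only quibble is the parenthetical justifying that $\Gamma$ is nontrivial---joint independence is violated by taking one open set \emph{per topology}, e.g.\ a $\tau_1$-singleton $\{a\}$ and a $\tau_0$-interval avoiding $a$, not ``two distinct $\tau_1$-open singletons''---but the conclusion stands, and this preliminary is in any case redundant given the completeness of $T_n$ established earlier.
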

\begin{proof}
  First we verify that the topologies in question are definable.  The
  order topology on $\Gamma$ obviously has a definable basis
  consisting of open intervals $(a,b)$.  For the combined topology on
  $V$, a definable basis is given by sets of the form
  \begin{equation*}
    (a,b) \cap B^1_{c_1}(b_1) \cap \cdots \cap B^n_{c_n}(b_n)
  \end{equation*}
  where $(a,b)$ is an interval in the order, and $B^i_{c_i}(b_i)$ is
  an open $v_i$-ball $\{x \in V : v_i(x - b_i) > c_i\}$.

  This shows that the combined topology on $V$ and order topology on
  $\Gamma$ are definable as \emph{topologies}, and then
  Remark~\ref{group-uniformity-7} lifts the definable topologies to
  definable uniformities.  (The group operations on $V$ are continuous
  with respect to the combined topology.)

  It remains to show that the topologies are t-minimal.  For the order
  topology on $\Gamma$, it is easy to see by quantifier elimination
  that $\Gamma$ is o-minimal: every definable subset is a finite union
  of points and intervals.  The order is dense, so $\Gamma$ is
  t-minimal as desired.

  Now consider the combined topology on $V$.  The independence of the
  order topology and valuation topologies ensures that there are no
  isolated points.  Finally suppose we have a definable set $D
  \subseteq V$.  Let $\mathcal{L}(M)$ be the base language expanded by
  a constant symbol for every element of the model.  Then $D$ is
  defined by a quantifier-free $\mathcal{L}(M)$-formula $\phi(x)$,
  where $x$ is a single variable in the $V$-sort.  By induction on
  complexity, one can see the following:
  \begin{itemize}
  \item If $t(x)$ is an $\mathcal{L}(M)$-term in the $V$ sort, then
    $t$ is of the form $qx + b$, for some $q \in \Qq$ and $b \in V$.
  \item If $s(x)$ is an $\mathcal{L}(M)$-term in the $\Gamma$ sort,
    then the function $s : V \to \Gamma$ is locally constant off a
    finite set.  (This uses the first half of Remark~\ref{loc-const}.)
  \item If $\phi(x)$ is a quantifier-free $\mathcal{L}(M)$-formula,
    then the truth-value of $\phi$ is locally constant off a finite
    set.  (This uses the previous point, as well as the second half of
    Remark~\ref{loc-const}.)
  \end{itemize}
  Applying the third point to the quantifier-free formula $\phi(x)$,
  we see that the characteristic function of $D$ is locally constant
  off a finite set.  This exactly means that $\bd(D)$ is finite.
\end{proof}
\begin{remark}
  $T_n$ has definable finite choice, because of the linear orders on
  $V$ and $\Gamma$.
\end{remark}
\begin{proposition}
  Let $(V,\Gamma)$ be a monster model of $T_n$.
  \begin{enumerate}
  \item There is a definable surjection $f : X \to Y$ where $\dim(X) =
    1$ and $\dim(Y) = n$.
  \item There is a definable set $C \subseteq V \times \Gamma^{n-1}$
    such that $\dim(C) = 1$ and $\dim(\partial C) \ge n-1$.
  \end{enumerate}
\end{proposition}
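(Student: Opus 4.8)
The plan is to exhibit explicit examples and compute their dimensions using the visceral dimension theory of Theorem~\ref{small-dimension-theorem}; throughout I work in the $1$-sorted reduction $\Mm'=V\cup\Gamma$ of Proposition~\ref{to-1-sort}, in which $V$ carries the combined topology, $\Gamma$ carries the order topology, both sorts are open (so each is $1$-dimensional) and the ambient topology is the disjoint union. Two elementary facts will be used constantly. (i) For each $i$ and each $\gamma\in\Gamma$ the level set $\{x\in V: v_i(x)=\gamma\}$ is non-empty (surjectivity of $v_i$) and $\tau_i$-open: if $v_i(x_0)=\gamma$ and $v_i(x-x_0)>\gamma$ then $v_i(x)=\gamma$ by the ultrametric inequality (using $v_i(-y)=v_i(y)$). (ii) Joint independence of $\{\tau_0,\dots,\tau_n\}$ passes to any sub-family, by setting $U_i=V$ at the omitted indices; hence any finite intersection of non-empty opens, one chosen from each of a sub-collection of the $\tau_i$, is non-empty.

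For part (1) I take $X=V\setminus\{0\}$, $Y=\Gamma^n$ and $f(x)=(v_1(x),\dots,v_n(x))$, which lands in $\Gamma^n$ because $v_i(x)\ne+\infty$ for $x\ne 0$. Surjectivity is immediate from (i) and (ii): given $(\gamma_1,\dots,\gamma_n)$, the set $\bigcap_{i=1}^n\{v_i(x)=\gamma_i\}$ is non-empty, and any $x$ in it is non-zero with $f(x)=(\gamma_1,\dots,\gamma_n)$. Finally $\dim(X)=1$ since $X$ is a non-empty open subset of the $1$-dimensional sort $V$, while $\dim(Y)=\dim(\Gamma^n)=n$ by additivity of dimension on products (Theorem~\ref{small-dimension-theorem}); so $f$ is a space-filling function with $\dim(X)=1<n=\dim(Y)$.

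For part (2) the naive candidate $C=\{(x,v_1(x),\dots,v_{n-1}(x)):x\ne 0\}$ does \emph{not} work: each $v_i$ is locally constant off $0$ (Remark~\ref{loc-const}), so this $C$ is closed and $\partial C=\varnothing$; more conceptually, being combined-close to $0$ forces all of $v_1(x),\dots,v_n(x)$ to be \emph{large}, which is incompatible with any $v_i(x)$ equalling a prescribed value. The fix is to graph the \emph{differences} $g_j(x):=v_j(x)-v_n(x)$ for $1\le j\le n-1$, which are definable functions $V\setminus\{0\}\to\Gamma$: a tuple of differences can realize any prescribed value while all the $v_i(x)$ are simultaneously large. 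So I set $C=\{(x,g_1(x),\dots,g_{n-1}(x)):x\in V\setminus\{0\}\}\subseteq V\times\Gamma^{n-1}$ (for $n=1$ just take $C=V\setminus\{0\}$). Then $C$ is the graph of a definable injection, so $\dim(C)=\dim(V\setminus\{0\})=1$ by Theorem~\ref{small-dimension-theorem}, and it remains to show $\{0\}\times\Gamma^{n-1}\subseteq\partial C$; this gives $\dim(\partial C)\ge\dim(\{0\}\times\Gamma^{n-1})=n-1$, since that set is in definable bijection with $\Gamma^{n-1}$ and is disjoint from $C$.

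To show $\{0\}\times\Gamma^{n-1}\subseteq\overline C$, fix $(\gamma_1,\dots,\gamma_{n-1})$ and a basic open neighborhood $U\times W_1\times\cdots\times W_{n-1}$ of $(0,\gamma_1,\dots,\gamma_{n-1})$, where $U=(-a,a)\cap\bigcap_{i=1}^n\{v_i(x)>c_i\}$ and $W_j\ni\gamma_j$; it suffices to find $x\in U$, $x\ne 0$, with $g_j(x)=\gamma_j$ for all $j$. Pick $M\in\Gamma$ with $M>c_n$ and $M+\gamma_j>c_j$ for $1\le j\le n-1$ (possible as $\Gamma$ is unbounded above) and put $\delta_n=M$, $\delta_j=M+\gamma_j$. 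The sets $(-a,a)$, $\{v_j(x)=\delta_j\}$ for $j<n$, and $\{v_n(x)=\delta_n\}$ are non-empty and open in $\tau_0$, $\tau_j$, $\tau_n$ respectively, so by joint independence of $\tau_0,\dots,\tau_n$ there is $x$ in their intersection; then $v_i(x)=\delta_i>c_i$ for every $i$ so $x\in U$, $v_1(x)=\delta_1\in\Gamma$ so $x\ne 0$, and $g_j(x)=\delta_j-\delta_n=\gamma_j$. The step I expect to be the real obstacle is precisely this last one: recognizing that the obvious $1$-dimensional sets have trivial frontier because the combined topology is designed to make the $v_i$ continuous and to force them all large near $0$, and then choosing the target valuations $\delta_i$ large enough to sit inside an arbitrary prescribed neighborhood of $0$ while still realizing the prescribed differences $\gamma_j$ — which is where the full strength of joint independence of all $n+1$ topologies $\tau_0,\dots,\tau_n$ is spent.
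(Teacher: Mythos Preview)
Your proof is correct and follows essentially the same approach as the paper: the same map $x\mapsto(v_1(x),\dots,v_n(x))$ for part~(1), and for part~(2) the graph of the differences $v_j(x)-v_n(x)$ in place of the paper's $v_i(x)-v_1(x)$, with the same joint-independence argument to produce points of $C$ arbitrarily close to any $(0,\gamma_1,\dots,\gamma_{n-1})$. Your additional remarks---that level sets of $v_i$ are $\tau_i$-open, and that graphing the $v_i$ themselves would give a closed set---are helpful clarifications but not departures from the paper's line.
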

\begin{proof}
  ~
  \begin{enumerate}
  \item Take $X = V \setminus \{0\}$, $Y = \Gamma^n$, and $f(x) =
    (v_1(x),\ldots,v_n(x))$.  The independence of the topologies
    ensures that for any $(\gamma_1,\ldots,\gamma_n) \in \Gamma^n$,
    there is some $x \in V$ with $f(x) = \bar{\gamma}$.
  \item Let $C$ be the graph of the function $g : V \setminus \{0\} \to \Gamma^{n-1}$
    given by
    \begin{equation*}
      f(x) = (v_2(x) - v_1(x), v_3(x) - v_1(x), \ldots, v_n(x) - v_1(x)).
    \end{equation*}
    Then $\dim(C) = \dim(V \setminus \{0\}) = 1$.
    \begin{claim}
      If $c_2, \ldots, c_n \in \Gamma$, then $(0,c_2,\ldots,c_n) \in
      \overline{C}$.
    \end{claim}
    \begin{claimproof}
      For any $0 < \epsilon \in V$ and any $\gamma \in \Gamma$,
      we can find
      \begin{itemize}
      \item $\gamma_1 \in \Gamma$ such that $\gamma_1 > \gamma$ and
        $\gamma_1 + c_i > \gamma$ for $i = 2, \ldots, n$.  (Take
        $\gamma_1 \gg 0$.)
      \item Non-zero $x \in V$ such that $0 < x < \epsilon$, $v_1(x) = \gamma_1$, and
        $v_i(x) = \gamma_1 + c_i$ for $i = 2, \ldots, n$.
      \end{itemize}
      Then $C$ contains the point $(x,f(x)) = (x,v_2(x) - v_1(x),
      \ldots) = (x, c_2, \ldots,c_n)$, where $0 < x < \epsilon$ and
      $v_i(x) > \gamma$ for all $i$.  Because $\epsilon$ and $\gamma$
      were arbitrary, we can make $x$ be arbitrarily close to 0 in the
      combined topology.  Therefore $(0,c_2,\ldots,c_n)$ is in the
      closure $\overline{C}$ as claimed.
    \end{claimproof}
    Then $\partial C$ contains $\{0\} \times \Gamma^{n-1}$, which has
    dimension $n - 1$. \qedhere
  \end{enumerate}
\end{proof}

\appendix

\section{Appendix: a strange matroid}\label{strange}
Let $\Mm$ be a monster model of a t-minimal theory.
\begin{proposition}\label{joke}
  Given a finite tuple $\ba \in \Mm^n$ and a set of parameters $C
  \subseteq \Mm^\eq$, the set of $\acl$-bases of $\ba$ over $C$ are
  the bases of a matroid.
\end{proposition}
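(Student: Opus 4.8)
The plan is to verify the basis-exchange characterization of matroids: a non-empty family $\mathcal{B}$ of subsets of a finite set is the set of bases of a matroid if and only if for all $B_1,B_2\in\mathcal{B}$ and $x\in B_1\setminus B_2$ there is $y\in B_2\setminus B_1$ with $(B_1\setminus\{x\})\cup\{y\}\in\mathcal{B}$. Here the ground set is $\{1,\dots,n\}$ and a subset $S$ is declared to be a basis exactly when the subtuple $(a_i)_{i\in S}$ is an $\acl$-basis of $\ba$ over $C$. Non-emptiness is Remark~\ref{acl-bases}(2), and by Remark~\ref{acl-bases}(1) (which rests on Proposition~\ref{rank-equality}) all such $S$ have a common size $k=\dim(\ba/C)$.

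First I would record a convenient reformulation: $S$ is a basis if and only if $|S|=k$ and $\ba\in\acl(C,(a_i)_{i\in S})$. One direction is immediate from the definition. For the other, if $(a_i)_{i\in S}$ generated $\ba$ over $C$ but were not $\acl$-independent over $C$, one could delete a redundant coordinate $a_j$ (since then $\acl(C,(a_i)_{i\in S})=\acl(C,(a_i)_{i\in S\setminus\{j\}})$) and extract from $(a_i)_{i\in S\setminus\{j\}}$, via Remark~\ref{acl-bases}(2), an $\acl$-basis of size $\le k-1<k$, contradicting Proposition~\ref{rank-equality}.

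The substance is the exchange step. Given bases $S_1,S_2$ and $x\in S_1\setminus S_2$, set $C'=C\cup\{a_i:i\in S_1\setminus\{x\}\}$. Since $S_1$ is $\acl$-independent over $C$ we have $a_x\notin\acl(C')$; since $\ba\in\acl(C,(a_i)_{i\in S_1})=\acl(C'a_x)$ and trivially $a_x\in\acl(C'\ba)$, the tuple $\ba$ is interalgebraic with the single element $a_x$ over $C'$, so $\dim(\ba/C')=\dim(a_x/C')=1$ by Proposition~\ref{dimprops}. Because $S_2$ is a basis, $(a_i)_{i\in S_2}$ is interalgebraic with $\ba$ over $C$, hence over $C'$, so $\dim((a_i)_{i\in S_2}/C')=1$ as well. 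Now take an $\acl$-basis of the tuple $(a_i)_{i\in S_2}$ over $C'$: by Remark~\ref{acl-bases}(2) it consists of a single one of its own coordinates, say $a_y$ with $y\in S_2$, satisfying $a_y\notin\acl(C')$ and $(a_i)_{i\in S_2}\in\acl(C'a_y)$. Then $a_y\notin\acl(C')$ forces $y\notin S_1\setminus\{x\}$, and $y\in S_2$ while $x\notin S_2$ forces $y\ne x$, so $y\in S_2\setminus S_1$; moreover $\ba\in\acl(C,(a_i)_{i\in S_2})\subseteq\acl(C'a_y)=\acl(C,(a_i)_{i\in(S_1\setminus\{x\})\cup\{y\}})$, and $|(S_1\setminus\{x\})\cup\{y\}|=k$, so by the reformulation this set is a basis. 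This establishes the exchange axiom, and with it the proposition.

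The one place to be careful — and the step I expect to be the real point — is this exchange step. The naive attempt to show that the replacement works for an arbitrary $y\in S_2\setminus S_1$ with $a_y\notin\acl(C')$ would require the Steinitz exchange property of $\acl$, which genuinely fails in general t-minimal theories, so the argument cannot quantify over candidate replacements. The trick is to produce the right $y$ directly: since $(a_i)_{i\in S_2}$ is interalgebraic over $C'$ with a single element, it has $\dim(\cdot/C')=1$, so one of its own coordinates already generates it over $C'$, and that coordinate is the exchange element. Everything else is routine bookkeeping with interalgebraicity and the elementary properties of $\dim(-/-)$ in Proposition~\ref{dimprops}.
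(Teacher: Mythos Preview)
Your proof is correct, but it follows a different route from the paper's. The paper argues via matroid duality: call a subtuple $\bb\subseteq\ba$ \emph{spanning} if $\ba\in\acl(C\bb)$, and \emph{co-independent} if its complement is spanning. Since $\acl$-bases are exactly the minimal spanning sets, it suffices to show that the co-independent sets form the independent sets of a matroid. The paper checks the ``uniform rank'' axiom for these: for any fixed $\bc\subseteq\ba$, any two minimal spanning sets containing $\bc$ have the same length. Writing such a set as $\bc\cup\be$ with $\be\subseteq\ba\setminus\bc$, minimality forces $\be$ to be an $\acl$-basis of $\ba\setminus\bc$ over $C\bc$, and all such $\be$ have the same length by Remark~\ref{acl-bases}(1).

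Your argument instead verifies the basis-exchange axiom directly, using the dimension theory of Section~\ref{dim-sec-a}: after passing to $C'=C\cup\{a_i:i\in S_1\setminus\{x\}\}$, the tuple indexed by $S_2$ has dimension $1$ over $C'$, so a one-element $\acl$-basis of it provides the exchange element. This is a perfectly good alternative, and your closing paragraph correctly identifies the non-trivial point: one cannot hope that an \emph{arbitrary} $y\in S_2\setminus S_1$ with $a_y\notin\acl(C')$ works, since that would amount to Steinitz exchange for $\acl$; instead the right $y$ is produced by taking an $\acl$-basis over $C'$. The paper's route is a bit slicker in that it never leaves the level of ``all $\acl$-bases have the same size'' and avoids invoking $\dim(-/-)$ and Proposition~\ref{dimprops}, but your approach has the merit of making the exchange element explicit.
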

\begin{proof}
  Say that a subtuple $\bb \subseteq \ba$ is \emph{spanning} if $\ba
  \in \acl(C\bb)$.  By Remark~\ref{acl-bases}(2), the minimal spanning
  sets are exactly the $\acl$-bases, so the spanning sets are exactly
  the sets which contain $\acl$-bases.  Say that a subtuple $\bb
  \subseteq \ba$ is \emph{co-independent} if the complementary tuple
  $\ba \setminus \bb$ is spanning.  The maximal co-independent sets
  are the complements of the $\acl$-bases.  By matroid duality, it
  suffices to show that the co-independent sets constitute (the
  independent sets of) a matroid.  We must prove the following:
  \begin{enumerate}
  \item The empty set is co-independent.
  \item Any subset of a co-independent set is co-independent.
  \item If $\bb$ is a subtuple of $\ba$, then any two maximal
    co-independent subtuples of $\bb$ have the same length.
  \end{enumerate}
  Taking complements, we must prove the following about spanning
  tuples:
  \begin{enumerate}
  \item $\ba$ is a spanning tuple.
  \item If $\bc \subseteq \bb \subseteq \ba$ and $\bc$ is spanning,
    then $\bb$ is spanning.
  \item If $\bc \subseteq \ba$, then any two minimal elements of \[
    \mathcal{F} = \{\bb \subseteq \ba : \bb \supseteq \bc \text{ and
      $\bb$ is spanning}\}\] have the same length.
  \end{enumerate}
  The first two points are clear.  The third point holds because
  \begin{equation*}
    \mathcal{F} = \{\bc \cup \be : \be \in \mathcal{F}'\},
  \end{equation*}
  where
  \begin{align*}
    \mathcal{F'} &= \{\be \subseteq \ba \setminus \bc : \ba \in \acl(C\bc\be)\} \\ &= \{\be \subseteq \ba \setminus \bc : \ba \setminus \bc \in \acl(C\bc\be)\}.
  \end{align*}
  The minimal elements of $\mathcal{F'}$ are the $\acl$-bases of $\ba
  \setminus \bc$ over $C\bc$, which have the same length by
  Remark~\ref{acl-bases}(1).
\end{proof}
The matroid in Proposition~\ref{joke} is counter-intuitive.  For example,
we are not claiming the following:
\begin{nontheorem} \label{joke-nt}
  In the matroid of Proposition~\ref{joke},
  \begin{enumerate}
  \item The closure operation is $\acl_C(-)$.
  \item The independent sets are the $\acl$-independent sets.
  \item The rank of a subtuple $\bb \subseteq \ba$ is $\dim(\bb/C)$.
  \end{enumerate}
\end{nontheorem}

\section{Appendix: guide to the Non-Theorems} \label{guide}
The reader can check the following implications between the exchange
property, NSFF, and the Non-Theorems (\ref{ddag}, \ref{nt-kappa},
\ref{surj-nt}, and \ref{joke-nt}):
\begin{equation*}
  \xymatrix{
    \ref{ddag} \ar@{=>}[r] & \ref{nt-kappa}(2) \ar@{=>}[dr] & \ref{nt-kappa}(1) \ar@{=>}[l] \ar@{=>}[r] & \ref{surj-nt} \ar@{=>}[r] & \mathrm{NSFF} \\
    \ref{joke-nt}(2) \ar@{=>}[u] & \ref{joke-nt}(3) \ar@{=>}[u] & \ref{nt-kappa}(3) \ar@{=>}[u] & \ref{joke-nt}(1) \ar@{=>}[r] & \text{Exchange Property} \ar@{=>}[u]}
\end{equation*}
In Non-Theorem~\ref{nt-kappa}, part (2) implies part (3) via
Proposition~\ref{dimprops}(\ref{dp5}), and part (3) implies (1) by
Proposition~\ref{dimprops}(\ref{dp2}).  Non-Theorem~\ref{surj-nt} implies
NSFF by Theorem~\ref{dimension-theorem}(\ref{dt4}).  The other
implications are straightforward or well-known.

As a consequence, if $T$ has a space-filling function, then every one
of the Non-Theorems must fail.

\begin{acknowledgment}
  The author was supported by the National Natural Science Foundation
  of China (Grant No.\@ 12101131), not to be confused with the NSFF,
  as well as the Ministry of Education of China (Grant No.\@
  22JJD110002).  The author benefitted from helpful communication with
  Alf Dolich and John Goodrick.  Qihang Jing found many typos in
  an earlier draft.
\end{acknowledgment}

\bibliographystyle{plain} \bibliography{mybib}{}

\begin{thebibliography}{10}

\bibitem{Abhyankar}
Shreeram Abhyankar.
\newblock On the valuations centered in a local domain.
\newblock {\em American J. Math.}, 78(2):321--348, 1956.

\bibitem{transseries}
Matthias Aschenbrenner, Lou van~den Dries, and Joris van~der Hoeven.
\newblock {\em Asymptotic Differential Algebra and Model Theory of
  Transseries}, volume 195 of {\em Annals of Mathematical Studies}.
\newblock Princeton University Press, 2017.

\bibitem{viscerality}
Alfred Dolich and John Goodrick.
\newblock Tame topology over definable uniform structures.
\newblock {\em Notre Dame J. Formal Logic}, 63(1):51--79, 2022.

\bibitem{dpExamples}
Alfred Dolich, John Goodrick, and David Lippel.
\newblock Dp-minimality: basic facts and examples.
\newblock {\em Notre Dame Journal of Formal Logic}, 52(3):267--288, 2011.

\bibitem{PE}
Antonio~J. Engler and Alexander Prestel.
\newblock {\em Valued Fields}.
\newblock Springer, 2005.

\bibitem{cminfields}
Deirdre Haskell and Dugald Macpherson.
\newblock Cell decompositions of {C}-minimal structures.
\newblock {\em Annals of Pure and Applied Logic}, 66:113--162, 1994.

\bibitem{p-min}
Deirdre Haskell and Dugald Macpherson.
\newblock A version of $o$-minimality for the $p$-adics.
\newblock {\em J. Symbolic Logic}, 62(4):1075--1092, December 1997.

\bibitem{WJ}
Will Johnson.
\newblock The canonical topology on dp-minimal fields.
\newblock {\em Journal of Mathematical Logic}, 18(2):1850007, 2018.

\bibitem{prdf1a}
Will Johnson.
\newblock Dp-finite fields {I(A)}: the infinitesimals.
\newblock {\em Annals of Pure and Applied Logic}, 172(6):102947, 2021.

\bibitem{dp-add}
Itay Kaplan, Alf Onshuus, and Alexander Usvyatsov.
\newblock Additivity of the dp-rank.
\newblock {\em Trans. Amer. Math. Soc.}, 365(11):5783--5804, November 2013.

\bibitem{c-source}
Dugald Macpherson and Charles Steinhorn.
\newblock On variants of \emph{o}-minimality.
\newblock {\em Annals of Pure and Applied Logic}, 79:165--209, 1996.

\bibitem{mathews}
Larry Mathews.
\newblock Cell decomposition and dimension functions in first-order topological
  structures.
\newblock {\em Proc. London Math. Soc.}, 70(3):1--32, 1995.

\bibitem{Pas}
Johan Pas.
\newblock Uniform $p$-adic cell decomposition and local zeta functions.
\newblock {\em J. Reine Angew. Math.}, 399:137--172, 1989.

\bibitem{Pop-little}
Florian Pop.
\newblock Little survey on large fields - old {\&} new.
\newblock In {\em Valuation Theory in Interaction}, pages 432--463. European
  Mathematical Society Publishing House, 2014.

\bibitem{prestel-ziegler}
Alexander Prestel and Martin Ziegler.
\newblock Model theoretic methods in the theory of topological fields.
\newblock {\em Journal f\"ur die reine und angewandte Mathematik},
  299-300:318--341, 1978.

\bibitem{surprise}
Pierre Simon.
\newblock Dp-minimality: invariant types and dp-rank.
\newblock {\em J. Symbolic Logic}, 79(4):1025--1045, December 2014.

\bibitem{NIPguide}
Pierre Simon.
\newblock {\em A guide to NIP theories}.
\newblock Lecture Notes in Logic. Cambridge University Press, July 2015.

\bibitem{simonWalsberg}
Pierre Simon and Erik Walsberg.
\newblock Tame topology over dp-minimal structures.
\newblock {\em Notre Dame J. Formal Logic}, 60(1):61--76, 2019.

\end{thebibliography}
\end{document}